\documentclass[11pt]{amsart}

\usepackage{epigamath}


\usepackage[english]{babel}


\numberwithin{equation}{section}


\usepackage[shortlabels]{enumitem}
\setlist[enumerate,1]{label={\rm(\arabic*)}, ref={\rm\arabic*}} 

\usepackage{amsmath,amssymb,amsthm,amsfonts,mathrsfs}
\usepackage{enumitem,graphics,xcolor,yfonts}
\usepackage[all,cmtip]{xy}
\usepackage{tikz-cd}

\usepackage{colonequals}


\newtheorem*{thm*}{Theorem}
\newtheorem*{prop*}{Proposition}
\newtheorem*{cor*}{Corollary}
\newtheorem{thm}{Theorem}[section]
\newtheorem{prop}[thm]{Proposition}
\newtheorem{cor}[thm]{Corollary}
\newtheorem{lemma}[thm]{Lemma}

\theoremstyle{definition}

\theoremstyle{remark}

\newtheorem{rmk}[thm]{Remark}


\newcommand{\bZ}{\mathbb{Z}}    
\newcommand{\bQ}{\mathbb{Q}}    
\newcommand{\bR}{\mathbb{R}}    
\newcommand{\bC}{\mathbb{C}}    

\newcommand{\cF}{\mathcal{F}}   
\newcommand{\cM}{\mathcal{M}}   
\newcommand{\cO}{\mathcal{O}}   
\newcommand{\bP}{\mathbb{P}}    
\newcommand{\cP}{\mathcal{P}}   

\newcommand{\Aut}{\operatorname{Aut}}   
\newcommand{\Hom}{\operatorname{Hom}}   
\newcommand{\uO}{\mathrm{O}}            

\newcommand{\irr}{\operatorname{irr}}          

\newcommand{\longhookrightarrow}{\lhook\joinrel\longrightarrow}


\EpigaVolumeYear{9}{2025} \EpigaArticleNr{13} \ReceivedOn{February 2, 2024}
\InFinalFormOn{November 5, 2024}
\AcceptedOn{November 21, 2024}

\title{On the irrationality of moduli spaces of \\projective hyperk\"ahler manifolds}
\titlemark{On the irrationality of moduli spaces of projective hyperk\"ahler manifolds}

\author{Daniele Agostini}
\address{Eberhard Karls Universit\"at T\"ubingen, Fachbereich Mathematik,
  Auf der Morgenstelle 10, 72072 T\"ubingen, Germany}
\email{daniele.agostini@uni-tuebingen.de}

\author{Ignacio Barros}
\address{Department of Mathematics, Universiteit Antwerpen, Middelheimlaan 1, 2020 Antwerpen, Belgium}
\email{ignacio.barros@uantwerpen.be}

\author{Kuan-Wen Lai}
\address{Department of Smart Computing and Applied Mathematics, 
Tunghai University, 
No.~1727, Sec.~4, Taiwan~Blvd., Xitun~Dist., 
Taichung~City 407224, Taiwan\\
National Center for Theoretical Science,
No.~1, Sec.~4, Roosevelt~Rd., Taipei~City 106319, Taiwan}
\email{kwlai@thu.edu.tw}

\authormark{D.~Agostini, I.~Barros, and K.-W.~Lai}

\AbstractInEnglish{The aim of this paper is to estimate the irrationality of moduli spaces of hyperk\"ahler manifolds of types K3$^{[n]}$, Kum$_{n}$, OG6, and OG10. We prove that the degrees of irrationality of these moduli spaces are bounded from above by a universal polynomial in the dimension and degree of the manifolds they parametrize. We also give a polynomial bound for the degrees of irrationality of moduli spaces of $(1,d)$-polarized abelian surfaces.}

\MSCclass{14E08, 14J10, 14J42, 14K10}
\KeyWords{Abelian surfaces, degrees of irrationality, hyperk\"ahler manifolds, moduli spaces}


\acknowledgement{D.A. is supported by the Deutsche Forschungsgemeinschaft (DFG, German Research Foundation) Project 530132094. I.B. was supported by the Deutsche Forschungsgemeinschaft (DFG, German Research Foundation) -- SFB-TRR 358/1 2023 -- 491392403 and by the Fonds voor Wetenschappelijk Onderzoek – Vlaanderen (FWO, Research Foundation – Flanders) -- G0D9323N. K.-W. L. was supported by the ERC Synergy Grant HyperK (ID: 854361) and is currently supported by the NSTC Research Grant (113-2115-M-029-003-MY3).}

\begin{document}



\maketitle

\begin{prelims}

\DisplayAbstractInEnglish

\bigskip

\DisplayKeyWords

\medskip

\DisplayMSCclass

\end{prelims}


\newpage

\setcounter{tocdepth}{1}

\tableofcontents


\section{Introduction}
\label{sect:intro}

Perhaps the coarsest invariant measuring birational complexity is the Kodaira dimension. The computation of this invariant in the moduli context has been a guiding question in the past decades. A much finer, but also harder to compute, collection of invariants measuring birational complexity go by the name of \emph{measures of irrationality}. One of them, called the \emph{degree of irrationality}, is defined for a variety $X$ as the minimal possible degree of a dominant rational map
$
    X\dashrightarrow\bP^{\dim(X)}.
$
This invariant, denoted as $\irr(X)$, was first introduced in \cite{MH82} and received revived attention after \cite{BPELU17}. Notice that $\operatorname{irr}\left(X\right)=1$ if and only if $X$ is rational. In this sense, $\operatorname{irr}\left(X\right)$ measures how far $X$ is from being rational. Deciding whether a variety~$X$ is rational is a famously hard problem in algebraic geometry, suggesting that the first approach to study $\irr(X)$ is to find bounds.

In the moduli context, Donagi proposed to find bounds on measures of irrationality for classical moduli spaces such as those of curves $\mathcal{M}_g$ and principally polarized abelian varieties $\mathcal{A}_g$; see \cite[Problem~4.4]{BPELU17}. These spaces are of general type when $g$ is large enough, see \cite{Tai82,HM82,Mum83}, so their degrees of irrationality are at least $2$ for large $g$. To the best of our knowledge, there is no known upper bound on their degrees of irrationality.


In this paper, we continue our study on the irrationality of various modular varieties initiated in \cite{ABL23}. Our main objects of study are moduli spaces of $(1,d)$-polarized abelian surfaces $\mathcal{A}_{(1,d)}$ and moduli spaces of projective hyperk\"ahler manifolds $\mathcal{M}_{\Lambda,2d}^\gamma$ of known deformation types. As is the case for moduli spaces of curves and principally polarized abelian varieties, components of $\mathcal{A}_{(1,d)}$ and $\mathcal{M}_{\Lambda,2d}^\gamma$ become of general type when certain invariants grow; see \cite{OGr89,GH96,San97,Erd04} for the former and \cite{GHS07,GHS10,GHS11,Ma18,BBBF23} for the latter. Our first main result gives bounds for degrees of irrationality of $\mathcal{A}_{(1,d)}$.

\begin{thm}
For any $\varepsilon>0$, there exists a constant $C_{\varepsilon}>0$ independent of $d$ such that
\[
\operatorname{irr}\left(\mathcal{A}_{(1,d)}\right)\leq C_{\varepsilon}\cdot d^{8+\varepsilon}.
\]
Further, for special series of $d$, we have the following bounds:
\begin{itemize}
\item If $d$ is square-free, then 
\[
\operatorname{irr}\left(\mathcal{A}_{(1,d)}\right)\leq C_{\varepsilon}\cdot d^{4+\varepsilon}.
\]
\item If $d$ is a perfect square, then 
\[
\operatorname{irr}\left(\mathcal{A}_{(1,d)}\right)\leq C_{\varepsilon}\cdot d^{2+\varepsilon}.
\]
\item Fix $a,b,c\in\mathbb{Z}$ which satisfy $4ac-b^2 < 0$. Suppose that $d$ is square-free and that it is of the form $d=aX^2-bXY+cY^2$. Then for any $\varepsilon>0$, there exists a constant $C_{\varepsilon} = C_{\varepsilon}(a,b,c) > 0$ independent of $d$ such that
\[
    \irr\left(\mathcal{A}_{(1,d)}\right)
    \leq C_{\varepsilon}\cdot d^{2+\varepsilon}.
\]
\end{itemize}
\end{thm}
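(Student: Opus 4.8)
The plan is to produce, in each of the four cases, a dominant rational map $\mathcal{A}_{(1,d)}\dashrightarrow\mathbb{P}^{3}$ whose degree obeys the asserted bound; this suffices since $\irr(X)\le\deg\varphi$ for any dominant rational $\varphi\colon X\dashrightarrow\mathbb{P}^{\dim X}$, and $\dim\mathcal{A}_{(1,d)}=3$. In the spirit of \cite{ABL23}, such maps will be constructed by relating $\mathcal{A}_{(1,d)}$ to $\mathcal{A}_{2}=\mathcal{A}_{(1,1)}$, which is rational, or to a symmetric power of it, which is again rational.

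The main tool is the $d$-isogeny correspondence. Since any two nondegenerate alternating forms on $\mathbb{Q}^{4}$ are equivalent, the paramodular group defining $\mathcal{A}_{(1,d)}$ is commensurable with $\mathrm{Sp}_{4}(\mathbb{Z})$ inside $\mathrm{Sp}_{4}(\mathbb{Q})$, which modularly yields a correspondence $\mathcal{A}_{(1,d)}\xleftarrow{\,p\,}Z\xrightarrow{\,q\,}\mathcal{A}_{2}$: here $Z$ parametrizes triples $(A,L,G)$ with $G$ a Lagrangian subgroup of $(K(L),e^{L})\cong\bigl((\mathbb{Z}/d)^{2},\mathrm{std}\bigr)$, the map $p$ forgets $G$, and $q(A,L,G)=(A/G,\overline{L})$, the descended polarization $\overline{L}$ being principal since $\chi(L)=d=|G|$. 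The degree of $p$ is the number of Lagrangians in $(\mathbb{Z}/d)^{2}$, the degree of $q$ is the number of admissible order-$d$ subgroups on the $\mathcal{A}_{2}$ side, and both are products of local factors depending only on the valuations $v_{\ell}(d)$. The Hecke operator of this correspondence is an \emph{honest} morphism $\Phi\colon\mathcal{A}_{(1,d)}\to\mathrm{Sym}^{\deg p}(\mathcal{A}_{2})$, $(A,L)\mapsto\sum_{G}(A/G,\overline{L})$, and it is generically injective because the collection of $d$-isogenous principally polarized partners determines $(A,L)$. Post-composing $\Phi$ with a general linear projection $\mathrm{Sym}^{\deg p}(\mathcal{A}_{2})\dashrightarrow\mathbb{P}^{3}$ (dominant on the threefold $\Phi(\mathcal{A}_{(1,d)})$ since $\mathrm{Sym}^{\deg p}(\mathcal{A}_{2})$ is rational) gives a dominant rational map $\mathcal{A}_{(1,d)}\dashrightarrow\mathbb{P}^{3}$ of degree at most $\deg\Phi(\mathcal{A}_{(1,d)})$; this degree is the self-intersection of the Hecke transform $p_{*}q^{*}H$ of a hyperplane class $H$ on $\mathcal{A}_{2}$ (up to the degrees of the symmetric-function generators), and the projection formula bounds it by a fixed power of $\deg p\cdot\deg q$ times $H^{3}$. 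Estimating the local factors then gives $\irr(\mathcal{A}_{(1,d)})\le C_{\varepsilon}\,d^{8+\varepsilon}$.

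For the three refined bounds I would feed in sharper arithmetic. If $d$ is square-free, one runs the construction one prime at a time, factoring the correspondence through a chain $\mathcal{A}_{(1,d)}\dashrightarrow\mathcal{A}_{(1,d/\ell_{1})}\dashrightarrow\cdots\dashrightarrow\mathcal{A}_{2}$ of $\ell_{i}$-isogenies; each step only involves lines in an $\mathbb{F}_{\ell_{i}}$-symplectic plane, all subgroups occurring are cyclic, and the intersection estimate multiplies out to $C_{\varepsilon}\,d^{4+\varepsilon}$. If $d=e^{2}$ is a perfect square, the $e$-torsion $K(L)[e]\cong(\mathbb{Z}/e)^{2}$ is a \emph{canonical} Lagrangian (isotropic because $e^{L}$ is $\mu_{e^{2}}$-valued), so quotienting by it produces a principally polarized surface and an \emph{honest morphism} $\mathcal{A}_{(1,e^{2})}\to\mathcal{A}_{2}$ with no symmetric power in between; its degree is a polynomial of degree at most $4$ in $e$, hence at most $C_{\varepsilon}\,d^{2+\varepsilon}$, and one concludes by composing with Igusa's birational map $\mathcal{A}_{2}\dashrightarrow\mathbb{P}^{3}$. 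If $d$ is square-free and represented by a fixed indefinite binary form of discriminant $\Delta=b^{2}-4ac$, then each prime $\ell\mid d$ coprime to $2\Delta$ splits in $\mathbb{Q}(\sqrt{\Delta})$, and in the split case the corresponding local factor drops by one power of $\ell$; multiplying this saving over $\ell\mid d$ sharpens the square-free bound from $d^{4+\varepsilon}$ to $d^{2+\varepsilon}$.

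The step I expect to be the main obstacle is making the passage from the isogeny correspondence to an honest map quantitative — bounding $\deg\Phi(\mathcal{A}_{(1,d)})$, equivalently the self-intersection of the Hecke-transformed hyperplane class, tightly — together with the prime-by-prime computation of $\deg p$ and $\deg q$ needed to extract the split-prime gain in the last case. The rationality inputs ($\mathcal{A}_{2}$, symmetric powers of rational varieties) and the elementary group theory of $(K(L),e^{L})$ should be comparatively routine.
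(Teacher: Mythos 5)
Your perfect-square case is essentially the paper's own argument (O'Grady's quotient by the canonical subgroup, giving a map to the rational $\mathcal{A}_2$ whose degree is controlled by a subgroup count as in Lemma~\ref{lemma:boundmapogrady}; note the count is $e^{4}$ only up to a factor $C^{\rho(e)}=\mathrm{O}(e^{\varepsilon})$, not a literal degree-$4$ polynomial). The other three cases, however, hinge on a step that is not established and is in fact the whole difficulty: bounding the degree of the closure of $\Phi(\mathcal{A}_{(1,d)})$ inside $\mathrm{Sym}^{\deg p}(\mathcal{A}_2)$. There is no fixed projective model in which to measure this degree --- the ambient symmetric power changes (and its dimension grows) with $d$ --- so you must choose a compactification and an embedding and then genuinely estimate the degree of the image, i.e.\ a triple self-intersection of a Hecke-transformed polarization class, with explicit dependence on $d$ and with boundary contributions controlled. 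Asserting that ``the projection formula bounds it by a fixed power of $\deg p\cdot\deg q$ times $H^3$'' is not a proof: which power occurs is exactly the question, and there is no a priori reason it yields $d^{8+\varepsilon}$, let alone $d^{4+\varepsilon}$ in the square-free case. In addition, the generic injectivity of $\Phi$ (that the unordered multiset of principally polarized quotients determines $(A,L)$) is asserted without argument. By contrast, the paper gets the needed growth estimate from modularity in a \emph{fixed} ambient space: for $d$ square-free it uses the Gritsenko--Hulek degree-two cover $\mathcal{A}_{(1,d)}\to\widetilde{\cP}^+_{\Lambda_d}$ (Proposition~\ref{prop:absurfGH}), embeds $\Lambda_d=U^{\oplus2}\oplus\bZ(-2d)$ primitively into $\Lambda_\#=U^{\oplus2}\oplus A_1(-1)^{\oplus4}$ via four coprime squares, and the image in $\cP^+_{\Lambda_\#}$ is a special cycle whose degree is a Fourier coefficient of a modular form of fixed weight (Lemma~\ref{lemma:irrationality-general}), giving the exponent $4+\varepsilon$; the general case is then reduced to the square-free one by the same O'Grady map you use for squares, giving $d^{2+\varepsilon}k(d)^{6+\varepsilon}\le d^{8+2\varepsilon}$ (Theorem~\ref{thm:boundabsurf}).

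The quadratic-form refinement is the clearest gap: the claimed mechanism does not exist in your setup. Representability of square-free $d$ by a form of discriminant $\Delta$ does force each $\ell\mid d$ with $\ell\nmid 2\Delta$ to be split or ramified in $\bQ(\sqrt{\Delta})$, but your local factors count isotropic subgroups of $(\bZ/\ell)^{2}$, and that count ($\ell+1$ lines, every line being isotropic for an alternating pairing) is completely insensitive to the splitting behaviour of $\ell$; no power of $\ell$ is saved, so the asserted drop from $d^{4+\varepsilon}$ to $d^{2+\varepsilon}$ does not follow. In the paper the gain comes from lattice theory rather than isogeny counts: $d=aX^2-bXY+cY^2$ gives a primitive embedding $\bZ(-2d)\hookrightarrow Q(-1)$ into a fixed rank-two negative-definite lattice, hence an embedding of $\Lambda_d$ into the smaller fixed lattice $U^{\oplus2}\oplus Q(-1)$, and the exponent drops because the rank of $\Lambda_\#$ (equivalently the weight of the controlling modular form) drops (Proposition~\ref{sec7:thm:d2}). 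To salvage your route you would need an actual quantitative theorem on degrees of Hecke translates or images in symmetric powers, which the proposal does not supply.
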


Our second main result concerns moduli spaces of projective hyperk\"{a}hler manifolds. The first examples of such manifolds, introduced in \cite{Bea83}, are Hilbert schemes of points on K3 surfaces and generalized Kummer varieties. A hyperk\"{a}hler manifold $X$ is said to be of K3$^{[n]}$-type or Kum$_n$-type if it is deformation equivalent to, respectively,  the former or the latter. In addition, there are two sporadic examples in dimensions $10$ and~$6$ constructed in \cite{OGr99, OGr03}. In this case,  $X$ is of OG10- and OG6-type, respectively. Up to now, every known projective hyperk\"{a}hler manifold is of one of these four types.

For a projective hyperk\"{a}hler manifold $X$, its second cohomology group $H^2(X,\mathbb{Z})$ carries a bilinear form $(\cdot,\cdot)$ which turns it into a lattice known as the \emph{Beauville--Bogomolov--Fujiki lattice}. If the manifold $X$ is of one of the known deformation types, then this lattice is isomorphic to one of the lattices $\Lambda_{\mathrm{K3}^{[n]}}$, $\Lambda_{\mathrm{Kum}_n}$, $\Lambda_{\mathrm{OG10}}$, $\Lambda_{\mathrm{OG6}}$. If $\Lambda$ is one of these lattices, we denote by $\mathcal{M}_{\Lambda,2d}^\gamma$ the moduli space of pairs $(X,H)$, where $X$ is a projective hyperk\"{a}hler manifold with $H^2(X,\bZ)\cong\Lambda$, and $H$ is a primitive polarization on $X$ of degree $(c_1(H),c_1(H)) = 2d$ and divisibility $\gamma$ (recall that the \emph{divisibility} of $x\in\Lambda$ is the positive generator of the ideal $(x,\Lambda)\subset\mathbb{Z}$). Note that the dimension of $X$, which equals $2n$ for some integer $n$, can be read off $\Lambda$. The existence of such moduli spaces follows from \cite{Vie95}, and their irreducible components are birational to orthogonal modular varieties due to the Torelli theorem; see \cite{Ver13} (see also \cite{Mar11}). This lays the foundation for our second main result about their degrees of irrationality.

\begin{thm}
There exists a constant $C>0$ such that, for every irreducible component $Y\subset\mathcal{M}_{\Lambda,2d}^\gamma$, it holds that
$$
    \irr(Y)\leq C\cdot (n\cdot d)^{19}. 
$$
If we consider only \emph{Kum}$_n$-type hyperk\"{a}hler manifolds, then this bound can be refined as
$$
    \irr(Y)\leq C\cdot(n\cdot d)^{11}.
$$
Furthermore, for any $\varepsilon>0$, there exists a constant $C_\varepsilon$ such that for every irreducible component $Y\subset\mathcal{M}_{\Lambda,2d}^\gamma$, it holds that
\begin{itemize}
\item $\irr(Y) \leq C_\varepsilon\cdot d^{14+\varepsilon}$ if we consider only \emph{OG10}-type hyperk\"{a}hler manifolds,
\item $\irr(Y) \leq C_\varepsilon\cdot d^{6+\varepsilon}$ if we consider only \emph{OG6}-type hyperk\"{a}hler manifolds.
\end{itemize}
\end{thm}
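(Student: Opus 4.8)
The plan is to reduce the problem to a statement about orthogonal modular varieties and then bound their degrees of irrationality uniformly. By the Torelli theorem together with \cite{Ver13}, every irreducible component $Y\subset\mathcal{M}_{\Lambda,2d}^\gamma$ is birational to a quotient $\mathcal{D}_L/\Gamma$, where $L$ is a lattice of signature $(2,n')$ with $n'$ bounded linearly in $n$ and $d$ (coming from the orthogonal complement of the polarization class inside $\Lambda$), $\mathcal{D}_L$ is the associated period domain of type IV, and $\Gamma$ is an arithmetic subgroup of $\mathrm{O}(L)$. So it suffices to prove a general bound of the form $\irr(\mathcal{D}_L/\Gamma)\leq P(\operatorname{rk} L, |\!\disc L|)$ for a fixed polynomial $P$, and then substitute the known bounds on the rank and discriminant of $L$ in each of the four cases.

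The core of the argument should be a uniform construction of a low-degree dominant rational map $\mathcal{D}_L/\Gamma\dashrightarrow\bP^{n'}$. Here I would exploit the same strategy used for $\mathcal{A}_{(1,d)}$ and in \cite{ABL23}: pass to a sublattice $M\subset L$ containing a hyperbolic plane $U$ (or two copies $U\oplus U$), so that $\mathcal{D}_M$ fibers over a smaller modular variety with abelian-variety or torus fibers, trivializes suitably, and one gets a tower of maps whose degrees are controlled by lattice-theoretic indices $[\,\mathrm{O}(L):\mathrm{O}(M)\,]$ or by the index of a congruence subgroup. The degree of irrationality of the base is then either $1$ (if it is of small dimension and rational) or handled by induction; the degrees of the covers contribute multiplicatively, but each factor is a polynomial in the lattice invariants, so the product stays polynomial once the number of steps is bounded. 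For the $\mathrm{Kum}_n$-case the refined exponent $11$ comes from the fact that $\Lambda_{\mathrm{Kum}_n}$ has small rank ($7$), so the orthogonal complement has rank at most $6$ and the tower is shorter; similarly OG6 gives rank $\le 7$ complements and OG10 gives rank $\le 21$, explaining the respective exponents $6+\varepsilon$ and $14+\varepsilon$. The $\varepsilon$'s enter precisely through divisor-counting estimates (the number of ways to represent $2d$ by a quadratic form, or the number of sublattices of bounded index), which are $O(d^\varepsilon)$ by classical analytic number theory.

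Concretely, the steps I would carry out are: \emph{(i)} record the Torelli identification $Y\sim\mathcal{D}_L/\Gamma$ and the explicit description of $L$, $\operatorname{rk}L$, $\disc L$, $\gamma$ in each of the four deformation types, bounding all of these by polynomials in $n$ and $d$; \emph{(ii)} prove the general lemma that $\irr(\mathcal{D}_L/\Gamma)$ is bounded by a polynomial in $\operatorname{rk}L$ and $|\disc L|$, via the sublattice/fibration construction over a lower-dimensional modular variety and an induction on rank, keeping careful track of how the covering degrees multiply; \emph{(iii)} invoke the abelian-surface bound (the first main theorem) when the base of a fibration is birational to some $\mathcal{A}_{(1,e)}$, so that the polynomial exponents there feed into the final exponents; \emph{(iv)} assemble the pieces, substituting the lattice data from step (i) into the bound from step (ii), and optimize the exponent in each case to obtain $19$ in general, $11$ for $\mathrm{Kum}_n$, $14+\varepsilon$ for OG10, and $6+\varepsilon$ for OG6.

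The main obstacle I anticipate is step (ii): controlling the degree of the dominant map to projective space uniformly in the lattice. The fibration over a smaller modular variety is not canonically defined — it depends on a choice of embedded hyperbolic plane (or $U\oplus U$), and different choices are permuted by $\Gamma$, so one must either quotient by the stabilizer of the chosen $U$ (introducing an index factor $[\Gamma:\Gamma_U]$ that must be shown polynomial) or average over the orbit. Bounding $[\mathrm{O}(L):\mathrm{O}(M)]$, or equivalently counting primitive embeddings $U\hookrightarrow L$ up to the action, requires care with the genus of $L$ and Nikulin-type lattice-embedding results, and it is here that discriminant-dependence could in principle blow up; keeping it polynomial is the technical heart of the paper. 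A secondary difficulty is ensuring the induction terminates with a genuinely rational (or at least explicitly bounded) base case — for small rank the relevant modular varieties are classically known to be rational or unirational, but one must check that the small-rank base that actually arises in each tower is among those, and that the reduction lands there in a bounded number of steps.
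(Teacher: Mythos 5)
Your step (i) matches the paper, but the heart of your argument --- step (ii), the claim that $\irr(\mathcal{D}_L/\Gamma)$ is bounded by a polynomial in $\operatorname{rk}(L)$ and $|\disc(L)|$ via a tower of fibrations over smaller modular varieties with abelian or torus fibers, handled by induction on the rank --- is a genuine gap. No such general result is available, and the construction you sketch does not produce it: the ``fibration'' of an orthogonal modular variety attached to a choice of isotropic vector or plane is a tube-domain/boundary structure, not an algebraic fibration whose total space admits a dominant map to $\bP^{n'}$ of degree controlled by lattice indices, and neither the covering degrees $[\Gamma:\Gamma_U]$ nor the base cases of your induction are bounded by the quantities you name. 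The actual mechanism in the paper is entirely different and is the one ingredient your outline never mentions: one embeds each $\Lambda_h$ into a \emph{fixed} lattice $\Lambda_{\#}$ (independent of $n,d,\gamma$) in such a way that $\operatorname{Mon}^2(\Lambda,h)$ is extendable; then $\Omega(\Lambda_h)/\operatorname{Mon}^2(\Lambda,h)$ maps, with degree onto its image controlled by $|\uO(D(\Lambda_h))|$, the saturation index, and a monodromy index (Lemmas~\ref{lem:bound-Hdg-partners-gamma}--\ref{lemma:boundolambdas}), onto an open part of a special cycle $Z_{T,\underline{\gamma}}\subset\cP^+_{\Lambda_\#}$, and the projective degree of that cycle in the Baily--Borel embedding is bounded via Kudla--Millson/Borcherds/Kudla modularity by the growth of Fourier coefficients of a Siegel modular form of weight $\tfrac12\operatorname{rk}(\Lambda_{\#})$ (Lemma~\ref{lem:irr_Kudla}), giving $\irr(Z_{T,\underline\gamma})\leq C\det(T)^{1+\frac{m'}{2}}$. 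Without this modularity input, your polynomial bound in step (ii) has no source.

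Two further points. First, the extendability of the monodromy group is not automatic and is where most of the paper's work lies: for divisibilities $\gamma=1,2$ the group $\operatorname{Mon}^2(\Lambda,h)$ strictly contains $\widetilde{\uO}^+(\Lambda_h)$ (it contains an extra reflection $\sigma_{z_1}$), and one must construct explicit embeddings of the rank-two (or rank-three) lattices $Q_h(-1)$ into $E_8(-1)$ or $A_1(-1)^{\oplus k}$ using representations of $d$ and $n\pm 1$ as sums of four (coprime) squares so that $\sigma_{z_1}$ extends (Lemmas~\ref{lemma:qadnot4}--\ref{lemma:embeddingsplitlattice}, \ref{lemma:extendabilityK3n}, \ref{lemma:extendabilityKumn}, \ref{lemma:monOG10}, Corollary~\ref{sec3:cor:extMon}); your plan has no substitute for this. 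Second, your accounting of the exponents is tied to the wrong mechanism: the exponents $19$, $11$, $14+\varepsilon$, $6+\varepsilon$ arise as $1+\frac{m'}{2}$ for the fixed ambient lattice $\Lambda_\#$ of each case, corrected by discriminant-group factors $|\uO(D(\Lambda_h))|\leq|\disc(\Lambda_h)|^{2}$ (or $O(d^\varepsilon)$ after case-by-case analysis of $\uO(D(\Lambda_h))$) and, in the non-primitive case, by saturation terms, with $|\disc(\Lambda_h)|\asymp nd/\gamma^2$ --- not from the length of any fibration tower, and the abelian-surface theorem plays no role in the hyperk\"ahler bounds.
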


There are special series of $n$ and $d$ for which one can considerably improve the bound; see Theorems~\ref{thm:irratK3n},~\ref{thm:irratKumn},~\ref{thm:OG10_bound}, and~\ref{sec3:them:OG6}.


Our approach is the same as in \cite{ABL23} but with several additional challenges. First of all, the Torelli theorem states that every irreducible component $Y\subset \mathcal{M}_{\Lambda,2d}^{\gamma}$ admits an open embedding into an orthogonal modular variety associated with an even lattice $M$ of signature~$(2,m)$ for some $m>0$. Hence the degree of irrationality of $Y$ coincides with that of $\Omega(M)/\Gamma$, where $\Omega(M)$ is the period domain of $M$ and $\Gamma\subset\uO^+(M)$ is an arithmetic group.
Here $M$ and $\Gamma$ depend on the deformation type, the degree, the divisibility, as well as on the irreducible component $Y$. In each setting, this gives a collection of lattices $\Lambda_{n,d}$ of signature $(2,m)$ and arithmetic groups $\Gamma_{n,d}\subset \uO^+\left(\Lambda_{n,d}\right)$.

Assume that there exists an even lattice $\Lambda_{\#}$ of signature $(2,m')$ independent of $n,d$ and embeddings $\Lambda_{n,d}\hookrightarrow\Lambda_{\#}$ for all $n,d$. Assume further that each $\Gamma_{n,d}$ is \emph{extendable}, that is, each $g\in\Gamma_{n,d}$ can be extended as an isometry $g_\#\in \uO^+(\Lambda_\#)$ preserving $\Lambda_{n,d}$ whose restriction to $\Lambda_{n,d}$ recovers $g$. These assumptions induce morphisms of quasi-projective varieties
\begin{equation}\label{eq:mapfnd}
    \quad  \mathcal{P}_{n,d} \colonequals \Omega(\Lambda_{n,d})\big/\Gamma_{n,d}
    \xrightarrow{f_{n,d}}
    \Omega(\Lambda_{\#})\big/\uO^+(\Lambda_{\#}) =: \mathcal{P}^+_{\Lambda_{\#}}
\end{equation}
of finite degree onto their images $Z_{n,d} \subseteq \mathcal{P}^+_{\Lambda_{\#}}$.
In particular, there is the immediate inequality
\[
\irr\left(\mathcal{P}_{n,d}\right)\leq \deg(f_{n,d})\cdot\irr\left(Z_{n,d}\right).
\]

The cycles $[Z_{n,d}]$
are examples of {\textit{special cycles}} (also referred to as \textit{Kudla cycles}). They can be arranged in a generating series which turns out to be the Fourier expansion of a modular form. More precisely, if we fix an embedding
$
    \mathcal{P}^+_{\Lambda_{\#}}
    \hookrightarrow\bP^N
$
and let $\overline{Z}_{n,d}$ be the closure of $Z_{n,d}$ in $\mathbb{P}^N$, then Kudla's modularity conjecture, see \cite{Kud97, Kud04}, proved in \cite{Kud04,Zha09,BW15}, implies that the integers $\deg\left(\overline{Z}_{n,d}\right)$ are coefficients of the Fourier expansion of a Siegel modular form of weight $\frac{1}{2}\operatorname{rk}(\Lambda_{\#})$. By taking a projection onto a general linear subspace $\bP^{\dim(Z_{n,d})}\subset\bP^N$, one can conclude that $\irr(Z_{n,d})\leq \deg(\overline{Z}_{n,d})$, and this can be estimated via
 standard bounds on the growth of coefficients of Siegel modular forms.

There are two main challenges: The first one is to find an appropriate $\Lambda_{\#}$ together with embeddings $\Lambda_{n,d}\hookrightarrow\Lambda_{\#}$ such that the $\Gamma_{n,d}$ are extendable. The second one is to bound $\deg(f_{n,d})$. The general strategy was developed in \cite{ABL23}, and the bulk of the paper is devoted to overcoming these two challenges.

\subsection*{Outline} This paper is organized as follows. In Section~\ref{sect:orthShimuraIrr}, we set up notation and give bounds on degrees of irrationality for special cycles. Then we establish bounds on degrees of maps onto their images between orthogonal modular varieties induced by lattice embeddings. In Section~\ref{sect:known-proj-HK}, we apply the results in Section~\ref{sect:orthShimuraIrr} to study the irrationality of moduli spaces of projective hyperk\"{a}hler varieties of known deformation types. Finally, in Section~\ref{sec:Ab_surf}, we study the irrationality of the moduli space of $(1,d)$-polarized abelian surfaces and also revisit the case of K3 surfaces.

\subsection*{Acknowledgements} The authors would like to thank Emma Brakkee, Simon Brandhorst, Nathan Chen, Laure Flapan, Sam Grushevsky, Klaus Hulek, and Christopher Voll for helpful comments and stimulating conversations. Special thanks go to Giacomo Mezzedimi for his idea in the HK of K3$^{[n]}$-type of divisibility $2$ case and to Pietro Beri for kindly pointing out a mistake in a previous version of this paper. Finally, we would also like to thank the anonymous
referees for their many helpful comments and suggestions. 

\section{Period spaces, special cycles, and irrationality}
\label{sect:orthShimuraIrr}

The aim of this section is to introduce some upper bounds on the degrees of irrationality of period spaces  needed in later sections. The lemmas and notation established along the way will also be used later.

\subsection{Degrees of irrationality of special cycles}
\label{subsect:sp-cycle}

Let us start by reviewing the main results in \cite{ABL23} about degrees of irrationality of special cycles on orthogonal Shimura varieties. Let $M$ be an even lattice of signature~$(2,m)$ for some $m>0$ (the assumption of \cite{ABL23} that $m$ is even was made for simplicity and is not necessary) and identify the dual lattice $M^\vee\colonequals\Hom(M,\bZ)$ as the space of vectors $v\in M\otimes_{\mathbb{Z}}\bQ$ satisfying $(v, M)\subset\bZ$. The discriminant group of~$M$ will be denoted as $D(M) \colonequals M^\vee/ M$; recall that its order is equal to the absolute value of the discriminant $\operatorname{disc}(M)$. Let us further define $\uO(M)$ to be the group of isometries of $M$. Then the isometries which preserve the orientation of one (and thus all) positive $2$-plane in $M\otimes\bR$ form an index two subgroup $\uO^+(M)\subset\uO(M)$.

The period domain $\Omega(M)$ is defined as one of the two components of
\[
    \left\{
        [w]\in\bP( M\otimes\bC)
        \;\middle|\;
        (w,w)=0,\;(w,\overline{w})>0
    \right\}.
\]
We consider this together with the natural action of a finite-index subgroup $\Gamma\subset\uO^+(M)$. Then, the {\textit{period space}}
\[
    \cP_M(\Gamma)
    \colonequals\Omega(M)/\Gamma
\]
is an orthogonal Shimura variety that is quasi-projective; \textit{cf.} \cite{BB66}. More precisely, the restriction of $\cO_{\bP(M\otimes\bC)}(-1)$ to $\Omega(M)$ descends to a $\mathbb{Q}$-line bundle $\mathcal{L}$ on $\mathcal{P}_M(\Gamma)$ such that the space $H^0(\mathcal{L}^{\otimes k})$, where $k>1$ is such that $\mathcal{L}^{\otimes k}$ is a line bundle, can be identified with the space $\mathrm{Mod}_k(\Gamma,1)$ of modular forms for $\Gamma$ of weight~$k$ and trivial character. A projective model of $\mathcal{P}_M(\Gamma)$ is then given by the Baily--Borel compactification
$$
    \overline{\mathcal{P}_M(\Gamma)}^{BB}
    = \mathrm{Proj}\left(
        \bigoplus_{k\geq 0}H^{0}(\mathcal{L}^{\otimes k})
    \right)
    = \mathrm{Proj}\left(
        \bigoplus_{k\geq 0}\mathrm{Mod}_k(\Gamma,1)
    \right).
$$
It is normal, and the complement of $\mathcal{P}_M(\Gamma)$ has dimension at most one.

For every $t\in\bQ_{\geq 0}$ and $\gamma\in D(M)$, we consider the formal sum of hyperplane sections
$$
    \sum_{
        \frac{1}{2}(v,v)=t,\; v\equiv\gamma
    }v^\perp
    \;\subset\;
    \Omega(M), 
$$
where the sum runs through all $v\in M^\vee$ and
$
    v^\perp\colonequals\left\{
        [w]\in \Omega\left( M\right)
        \;\middle|\;
        (w,v)=0
    \right\}.
$
Under the map $\Omega\left(M\right)\rightarrow\cP_M\left(\Gamma\right)$, the formal sum descends to a $\mathbb{Q}$-Cartier divisor $Y_{t,\gamma}\subset\cP_M(\Gamma)$ called a \emph{Heegner divisor}. Let us fix a projective embedding $\overline{\cP_M(\Gamma)}^{BB}\subset\bP$, and let $\overline{Y}_{t,\gamma}\subset\overline{\cP_M(\Gamma)}^{BB}$ be the closure of $Y_{t,\gamma}$. By fundamental results of Kudla--Millson \cite{KM90} and Borcherds \cite{Bor99}, for any $\gamma$, the series
\[
\sum_{t\in \mathbb{Q}_{\geq 0}}{\rm{deg}}\left(\overline{Y}_{t,\gamma}\right)q^t
\]
is the Fourier expansion of a scalar-valued modular form of weight $\frac{m}{2}+1$; see also \cite{Bru02} for details. The growth of the Fourier coefficients of such a series are bounded by $C\cdot t^\frac{m}{2}$ for some positive constant $C>0$. Since the degree of irrationality of a variety is at most the degree of the variety under a projective embedding, the same argument as in the proof of \cite[Theorem~2.6]{ABL23} yields the following.

\begin{lemma}
\label{lem:irr_Heegner}
Assume $m\geq 3$. Then there exists a constant $C>0$ such that for all $t\in\bQ_{>0}$ and $\gamma\in D(M)$, it holds that
$$
    {\rm{irr}}\left(\overline{Y}_{t,\gamma}\right)
    \leq C\cdot t^{\frac{m}{2}}.
$$
\end{lemma}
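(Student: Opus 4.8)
The plan is to deduce this from the modularity of the degree-generating series together with standard growth estimates for Fourier coefficients of modular forms, exactly mirroring the proof of \cite[Theorem~2.6]{ABL23}. First I would recall the setup: fix the projective embedding $\overline{\cP_M(\Gamma)}^{BB}\subset\bP$, so that each closure $\overline{Y}_{t,\gamma}$ is a genuine projective subvariety of $\bP$ of dimension $m-1$ (here the hypothesis $m\geq 3$ guarantees $\dim\overline{Y}_{t,\gamma}=m-1\geq 2$, so the Heegner divisors are honest positive-dimensional subvarieties to which the notion of degree of irrationality applies). The key input, due to Kudla--Millson \cite{KM90} and Borcherds \cite{Bor99}, is that for each fixed $\gamma\in D(M)$ the series $\sum_{t\geq 0}\deg(\overline{Y}_{t,\gamma})q^t$ is the $q$-expansion of a (scalar-valued) modular form of weight $\tfrac{m}{2}+1$ for some finite-index subgroup of $\SL_2(\bZ)$.

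Next I would invoke the classical bound on the Fourier coefficients of such a modular form: the coefficients of a holomorphic modular form of weight $w$ grow at most like $O(t^{w-1})$ (this follows from writing the form as a sum of an Eisenstein part, whose coefficients grow like $t^{w-1}$, and a cusp part, whose coefficients satisfy the Hecke bound $O(t^{w/2})$; alternatively one uses directly the elementary bound on coefficients of any holomorphic modular form). Applying this with $w=\tfrac{m}{2}+1$ gives a constant $C>0$, independent of $t$ and $\gamma$ (there are only finitely many $\gamma\in D(M)$, so one may take the maximum of the finitely many implied constants), such that $\deg(\overline{Y}_{t,\gamma})\leq C\cdot t^{m/2}$ for all $t\in\bQ_{>0}$.

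Finally I would close the argument with the standard projection trick: for any projective variety $V\subset\bP^N$ of dimension $r$, a general linear projection $\bP^N\dashrightarrow\bP^r$ restricts to a dominant rational map $V\dashrightarrow\bP^r$ of degree at most $\deg(V)$, hence $\irr(V)\leq\deg(V)$. Applying this to $V=\overline{Y}_{t,\gamma}$ (embedded in $\bP$) yields $\irr(\overline{Y}_{t,\gamma})\leq\deg(\overline{Y}_{t,\gamma})\leq C\cdot t^{m/2}$, which is the claim. I do not expect any serious obstacle here, since all the hard work is black-boxed: the Kudla--Millson--Borcherds modularity statement and the coefficient growth bound are quoted, and the reduction to degree is the same elementary observation used in \cite{ABL23}. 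The only point requiring a little care is the uniformity of the constant $C$ in $\gamma$, which is immediate because $D(M)$ is finite, and making sure $m\geq 3$ is used precisely to ensure the relevant dimension is at least two so that "degree of irrationality" is meaningful and the bound is not vacuous.
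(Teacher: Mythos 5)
Your proposal is correct and follows essentially the same route as the paper: modularity of the generating series $\sum_t \deg\bigl(\overline{Y}_{t,\gamma}\bigr)q^t$ via Kudla--Millson \cite{KM90} and Borcherds \cite{Bor99}, the coefficient growth bound $O\bigl(t^{\frac{m}{2}}\bigr)$ for a form of weight $\tfrac{m}{2}+1$ (uniform over the finitely many $\gamma\in D(M)$), and $\irr\leq\deg$ by generic linear projection, exactly as in \cite[Theorem~2.6]{ABL23}. The only slight misstatement is your reading of the hypothesis $m\geq 3$: its real role is to make the weight $\tfrac{m}{2}+1>2$, so that the Eisenstein-plus-cuspidal coefficient estimate you quote applies without the weight-$\leq 2$ subtleties, rather than merely ensuring the divisors have dimension at least two.
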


As a higher-codimensional analogue, for each $r$-tuple
$
    \underline{v} = (
        v_1,\dots,v_r
    )\in\left(
        M^\vee
    \right)^{\oplus r},
$
one can consider the linear subspace $\langle\underline{v}\rangle\subset\bP(M\otimes\bC)$ spanned by $v_1,\dots,v_r$ and the moment matrix $Q(\underline{v}) = \frac{1}{2}((v_i,v_j))$. Fix a semi-positive symmetric matrix $T\in{\rm{Sym}}_r\left(\bQ\right)_{\geq 0}$ and an $r$-tuple of classes in the discriminant group $\underline{\gamma}\in D( M)^{\oplus r}$. Then the formal sum 
$$
    \sum_{
        \substack{Q(\underline{v})=T,\;
        \underline{v}\equiv \underline{\gamma}}
    }\langle
        \underline{v}
    \rangle^\perp
    \subset\Omega\left(M\right),
$$
which runs through all
$
    \underline{v}\in\left(
        M^\vee
    \right)^{\oplus r},
$
descends to a cycle $Z_{T,\underline{\gamma}}\subset\cP_{M}\left(\Gamma\right)$. Borcherds' result on Heegner divisors has a generalization known as Kudla's modularity conjecture, which is proved in a series of works \cite{Kud97,Kud04,Zha09,BW15}. Using this result, we obtained in \cite[Theorem~6.2]{ABL23} the following bound.

\begin{lemma}
\label{lem:irr_Kudla}
Assume  $1\leq r\leq m-2$. Then there exists a constant $C>0$ such that for all $T\in{\rm{Sym}}_r(\mathbb{Q})_{>0}$ and $\underline{\gamma}\in D( M)^{\oplus r}$, it holds that
$$
    {\rm{irr}}\left(Z_{T,\underline{\gamma}}\right)\leq C\cdot {\rm{det}}\left(T\right)^{1+\frac{m}{2}}.
$$
\end{lemma}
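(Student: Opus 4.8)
The plan is to run the higher-genus version of the argument behind Lemma~\ref{lem:irr_Heegner}. Since the degree of irrationality of a projective variety is at most its degree in any projective embedding --- one projects from a general linear subspace of $\bP$ onto $\bP^{\dim}$ --- and $\irr$ is a birational invariant, it suffices to bound $\deg(\overline{Z}_{T,\underline{\gamma}})$, the degree of the closure of (each irreducible component of) $Z_{T,\underline{\gamma}}$ inside the fixed embedding $\overline{\cP_M(\Gamma)}^{BB}\subset\bP$. For $T$ positive definite this cycle is pure of codimension $r$, so its closure has dimension $m-r$, which is $\geq 2$ exactly because of the hypothesis $r\leq m-2$ (the same normalization as in Lemma~\ref{lem:irr_Heegner}); in particular it is not contained in the at most one-dimensional boundary of $\cP_M(\Gamma)$, and $\deg(\overline{Z}_{T,\underline{\gamma}})=[\overline{Z}_{T,\underline{\gamma}}]\cdot h^{m-r}$ for $h=c_1(\cO_{\bP}(1))$, which dominates the degree of any single component when the cycle is reducible.

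The main input I would use is Kudla's modularity conjecture in genus $r$, established in \cite{Kud97,Kud04,Zha09,BW15}: assembling the series $\sum_{T\geq 0}[\overline{Z}_{T,\underline{\gamma}}]\,q^T$ over $\underline{\gamma}\in D(M)^{\oplus r}$ gives a holomorphic Siegel modular form of genus $r$ and weight $\tfrac{m}{2}+1$, with values in the Chow group (or cohomology) of a suitable model of $\cP_M(\Gamma)$ tensored with the space of the genus-$r$ Weil representation attached to $M$. Composing with the linear functional ``intersect with $h^{m-r}$'' then produces an honest vector-valued Siegel modular form, still of weight $\tfrac{m}{2}+1$ and genus $r$, whose $(T,\underline{\gamma})$-th Fourier coefficient equals $\deg(\overline{Z}_{T,\underline{\gamma}})$.

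It remains to bound these coefficients. I would invoke the standard growth estimate for Fourier coefficients of holomorphic Siegel modular forms: for weight $k$ and genus $r$, the coefficient $a(T)$ attached to a positive-definite $T$ satisfies $|a(T)|\leq C\det(T)^{k}$, with $C$ depending only on the form. One checks this by replacing $T$ with a Minkowski-reduced representative (permissible because $|a(T)|$ is $\GL_r(\bZ)$-invariant, and because $T$ has bounded denominators this forces $\lambda_{\min}(T)$ to be bounded below uniformly), bounding $|f|$ near the $0$-dimensional cusp via the transformation law $f(-Z^{-1})=\det(Z)^{k}f(Z)$ together with boundedness of the constant term, and then estimating the Fourier integral along the contour $Y=\tfrac{1}{4\pi}T^{-1}$; the estimate holds uniformly over the finitely many components of a vector-valued form, hence uniformly in $\underline{\gamma}$. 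Taking $k=\tfrac{m}{2}+1$ yields $\deg(\overline{Z}_{T,\underline{\gamma}})\leq C\det(T)^{1+\frac{m}{2}}$, and the first paragraph finishes the proof. (Alternatively, since $Z_{T,\underline{\gamma}}$ is a subcycle of $Y_{t_{11},\gamma_1}\cdot\ldots\cdot Y_{t_{rr},\gamma_r}$, one can try to bound its degree by $\prod_i\deg(\overline{Y}_{t_{ii},\gamma_i})$ through a B\'ezout-type inequality and then combine Lemma~\ref{lem:irr_Heegner} with Minkowski's bound $\prod_i t_{ii}\leq c_r\det(T)$ for reduced $T$.)

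I expect the real work to sit in the second step: deducing from Kudla's modularity a statement about the degrees of the \emph{Baily--Borel} closures in $\bP$ --- rather than intersection numbers on a toroidal compactification --- and verifying that $\overline{Z}_{T,\underline{\gamma}}\mapsto\deg(\overline{Z}_{T,\underline{\gamma}})$ is the evaluation of a fixed linear functional on the cycle class (this is where one uses that the cycles are at least $2$-dimensional, so that the boundary does not interfere), together with pinning down the weight $\tfrac{m}{2}+1$ and genus $r$ in the signature-$(2,m)$ normalization. The generic-projection step and the Fourier-coefficient estimate are, by contrast, routine.
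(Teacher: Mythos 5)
Your proposal follows exactly the route the paper takes: the paper does not reprove this lemma but cites \cite[Theorem~6.2]{ABL23}, whose argument is precisely Kudla's modularity (via \cite{Kud97,Kud04,Zha09,BW15}) making the degrees $\deg(\overline{Z}_{T,\underline{\gamma}})$ Fourier coefficients of a genus-$r$ Siegel modular form of weight $\tfrac{m}{2}+1$, the standard coefficient bound $O(\det(T)^{k})$, and the generic-projection inequality $\irr \leq \deg$. So the sketch is correct and essentially identical in approach, including the caveats you flag about the Baily--Borel closure and the low-dimensional boundary.
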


\subsection{Morphisms induced by lattice embeddings}
\label{subsect:Hdg-partner}

Let $\Lambda$ and $\Lambda_{\#}$ be even lattices of signatures $(2,m)$ and $(2,m')$ with $3\leq m\leq m'$. Suppose there is an embedding $\Lambda \hookrightarrow \Lambda_{\#}$, not necessarily primitive. We say that a finite-index subgroup $\Gamma\subset \uO^+(\Lambda)$ is \emph{extendable} with respect to this embedding if for every $g\in \Gamma$, there exists a $g_{\#} \in \uO^+(\Lambda_{\#})$ such that $g_{\#}(\Lambda)\subset\Lambda$ and $g_{\#}|_{\Lambda} = g$. In this setting, we have a natural map
\begin{equation}
\label{map:quasi-finite_hodge_gamma}
    \psi_\Gamma\colon
    \cP_{\Lambda}(\Gamma)
    = \Omega(\Lambda)/\Gamma
\longrightarrow
    {\cP}^+_{\Lambda_\#}
    \colonequals
    \Omega(\Lambda_\#)/{\uO}^+(\Lambda_\#)
\end{equation}
with finite fibers. This map is actually a morphism of algebraic varieties due to Borel's extension theorem \cite[Theorem~3.10]{Bor72}.

In the following, we are going to bound the degree of $\psi_\Gamma$ onto its image. To do so we consider the \emph{stable orthogonal group}
$$
    \widetilde{\uO}(\Lambda)\colonequals\left\{
        g\in\uO(\Lambda)
    \;\middle|\;
        g\text{ acts trivially on }D(\Lambda)
    \right\}
$$
and define
$
    \widetilde{\uO}^+(\Lambda)\colonequals
    \uO^+(\Lambda)\cap\widetilde{\uO}(\Lambda).
$
Note that this is a finite-index subgroup of $\uO^+(\Lambda)$.

\begin{lemma}
\label{lem:O+tilde}
There exists an injection
$
    \widetilde{\uO}^+(\Lambda)
    \hookrightarrow
    \widetilde{{\uO}}^+(\Lambda_\#)
$
given by extending $g\in\widetilde{\uO}^+(\Lambda)$ to $\Lambda_\#$ as the identity on $\Lambda^{\perp\Lambda_\#}$. In particular, the group $\widetilde{\uO}^+(\Lambda)$ is extendable with respect to the embedding $\Lambda\hookrightarrow\Lambda_{\#}$.
\end{lemma}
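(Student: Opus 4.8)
The plan is to show that extending by the identity on the orthogonal complement defines a group homomorphism with the claimed properties. Write $K \colonequals \Lambda^{\perp\Lambda_\#}$, so that $\Lambda \oplus K \subset \Lambda_\#$ is a finite-index sublattice. Given $g\in\widetilde{\uO}^+(\Lambda)$, define $g_\# \colonequals g\oplus\operatorname{id}_K$ on $\Lambda\oplus K$. The first step is to check that $g_\#$ extends to an isometry of all of $\Lambda_\#$. The standard argument: the embedding $\Lambda\oplus K\hookrightarrow\Lambda_\#$ is encoded by an isotropic subgroup $H\subset D(\Lambda)\oplus D(K)$ with $\Lambda_\# = \{x\in(\Lambda\oplus K)^\vee : x \bmod (\Lambda\oplus K)\in H\}$, and an isometry of $\Lambda\oplus K$ extends to $\Lambda_\#$ if and only if the induced automorphism of $D(\Lambda)\oplus D(K)$ preserves $H$. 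Since $g\in\widetilde{\uO}(\Lambda)$ acts trivially on $D(\Lambda)$ and $\operatorname{id}_K$ acts trivially on $D(K)$, the pair $(g,\operatorname{id}_K)$ acts trivially on $D(\Lambda)\oplus D(K)$, hence certainly preserves $H$; so $g_\#\in\uO(\Lambda_\#)$ is well-defined, and by construction $g_\#(\Lambda)=\Lambda$ with $g_\#|_\Lambda = g$.

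Next I would verify that $g_\#$ in fact lands in $\widetilde{\uO}^+(\Lambda_\#)$. That it acts trivially on $D(\Lambda_\#)$: one has $D(\Lambda_\#) = H^{\perp}/H$ computed inside $D(\Lambda)\oplus D(K)$, and since $g_\#$ acts trivially on all of $D(\Lambda)\oplus D(K)$ it acts trivially on this subquotient, i.e.\ $g_\#\in\widetilde{\uO}(\Lambda_\#)$. For the spinor/orientation sign: a positive $2$-plane in $\Lambda\otimes\bR$ is also a positive $2$-plane in $\Lambda_\#\otimes\bR$ (as $K$ is negative definite, being orthogonal to a sublattice of signature $(2,m)$ inside signature $(2,m')$), and $g_\#$ restricted to this plane equals $g$, which preserves its orientation because $g\in\uO^+(\Lambda)$; hence $g_\#\in\uO^+(\Lambda_\#)$. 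Combining, $g_\#\in\widetilde{\uO}^+(\Lambda_\#)$.

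Finally, the assignment $g\mapsto g_\#$ is visibly a group homomorphism: $(g\circ h)\oplus\operatorname{id}_K = (g\oplus\operatorname{id}_K)\circ(h\oplus\operatorname{id}_K)$ on $\Lambda\oplus K$, and since an extension to $\Lambda_\#$, when it exists, is unique (an isometry of $\Lambda\oplus K$ has at most one extension to the overlattice $\Lambda_\#$, as $\Lambda\oplus K$ spans $\Lambda_\#\otimes\bQ$), the extensions compose correspondingly. Injectivity is immediate since $g_\#|_\Lambda = g$. The statement about extendability of $\widetilde{\uO}^+(\Lambda)$ with respect to $\Lambda\hookrightarrow\Lambda_\#$ is then just the definition: for each $g\in\widetilde{\uO}^+(\Lambda)$ the element $g_\#\in\uO^+(\Lambda_\#)$ satisfies $g_\#(\Lambda)\subset\Lambda$ and $g_\#|_\Lambda = g$.

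I expect the only genuinely delicate point to be the bookkeeping in the first step — making sure the glue group $H$ is correctly identified and that "trivial action on the full discriminant group of $\Lambda\oplus K$ implies the extension exists" is applied correctly; everything else (definiteness of $K$, the orientation sign, functoriality) is routine once that is in place.
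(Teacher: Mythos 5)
Your proposal is correct and follows essentially the same route as the paper: extend $g$ by the identity on $K=\Lambda^{\perp\Lambda_\#}$, use the trivial action on the discriminant group to see that the extension preserves $\Lambda_\#$ and acts trivially on $D(\Lambda_\#)$, and check orientation on a positive $2$-plane of $\Lambda\otimes\bR$ (which sits in $\Lambda_\#\otimes\bR$ since $K$ is negative definite). The only difference is that where the paper cites the proof of \cite[Lemma~7.1]{GHS13} for the extension step, you reprove it via the Nikulin glue-group description of the finite-index overlattice $\Lambda\oplus K\subset\Lambda_\#$, which is a correct and equivalent packaging and works without any primitivity assumption, as required.
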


\begin{proof}
By the proof of \cite[Lemma~7.1]{GHS13}, one can embed ${\uO}^+(\Lambda)$ into $\widetilde{\uO}(\Lambda_\#)$ by extending $g\in\widetilde{\uO}^+(\Lambda)$ to~$\Lambda_\#$ as the identity on $\Lambda^{\perp\Lambda_\#}$. Note that a positive $2$-plane in $\Lambda\otimes\mathbb{R}$ corresponds to a positive $2$-plane in $\Lambda_\#\otimes\mathbb{R}$. Hence the extension of $g$ is orientation-preserving; that is, it lies in $\uO^+(\Lambda_{\#})$. This completes the proof.
\end{proof}

Lemma~\ref{lem:O+tilde} allows us to insert $\Gamma = \widetilde{\uO}^+(\Lambda)$ into $\psi_\Gamma$, which defines a morphism
$$
    \widetilde{\psi}^+\colon
    \widetilde{\cP}^+_{\Lambda}
    \colonequals
    \Omega(\Lambda)/\widetilde{\uO}^+(\Lambda)
    \longrightarrow
    {\cP}^+_{\Lambda_\#}
    = \Omega(\Lambda_\#)/{\uO}^+(\Lambda_\#).
$$
Our first step is to bound the degree of this map onto its image assuming $\Lambda\hookrightarrow \Lambda_{\#}$ is a primitive embedding. Recall that the points of $\Omega(\Lambda)$ correspond to \emph{Hodge structures of K3 type} on the lattice $\Lambda$. For each $[v]\in\Omega(\Lambda)$, the \emph{transcendental lattice} $T(v)\subset\Lambda$ is the minimal primitive sub-Hodge structure (in particular, $T(v)\subset \Lambda$ is primitive) of the Hodge structure induced by $[v]$ with $(T(v)\otimes\bC)^{2,0} = \bC v$.

\begin{lemma}
\label{lem:trans-is-all}
For a very general $[v]\in\Omega(\Lambda)$, we have $T(v) = \Lambda$.
\end{lemma}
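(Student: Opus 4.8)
The plan is to show that the locus of $[v]\in\Omega(\Lambda)$ where $T(v)\subsetneq\Lambda$ is a countable union of proper closed analytic subsets, hence its complement is very general in the usual sense. First I would observe that, by definition, $T(v)$ is the smallest primitive sublattice $N\subset\Lambda$ such that $v\in N\otimes\bC$; equivalently, $T(v) = \Lambda\cap (v^{\perp_{\bQ}})^{\perp_{\bQ}}$ is the saturation of the rank-one $\bQ$-subspace spanned by the real and imaginary parts of $v$. So $T(v)\neq\Lambda$ precisely when there exists a nonzero primitive vector $x\in\Lambda^\vee\cap\Lambda$ (equivalently, a primitive $x\in\Lambda$) with $x\in v^{\perp}$, i.e. $(x,v)=0$. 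Thus
\[
    \left\{[v]\in\Omega(\Lambda) : T(v)\neq\Lambda\right\}
    \;=\;
    \bigcup_{\substack{x\in\Lambda\\ x\neq 0}} x^\perp,
\]
a countable union over the lattice points $x$ of the hyperplane sections $x^\perp = \{[v]\in\Omega(\Lambda): (v,x)=0\}$.

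Next I would argue that each $x^\perp$ is a \emph{proper} closed analytic subset of $\Omega(\Lambda)$. It is clearly closed, being cut out by the linear equation $(v,x)=0$ on $\bP(\Lambda\otimes\bC)$ intersected with $\Omega(\Lambda)$. To see it is proper (not all of $\Omega(\Lambda)$), it suffices to exhibit one period point $[w]\in\Omega(\Lambda)$ with $(w,x)\neq 0$; since $\Omega(\Lambda)$ is an open subset of the quadric $\{(w,w)=0\}$ and the signature is $(2,m)$ with $m\geq 1$, the hyperplane $\{(w,x)=0\}$ cannot contain the whole quadric (the quadric is irreducible of dimension $m$, while the hyperplane section has dimension $m-1$), and in particular cannot contain the open set $\Omega(\Lambda)$. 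Therefore each $x^\perp\subsetneq\Omega(\Lambda)$ is a proper closed subset, and since $\Omega(\Lambda)$ is a connected complex manifold (indeed irreducible), it is not a countable union of such proper closed subsets by Baire category. Hence the complement — the set of $[v]$ with $T(v)=\Lambda$ — is the complement of a countable union of proper closed analytic subsets, which is exactly the meaning of ``very general $[v]$''.

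The only mildly delicate point is reconciling the two descriptions of $T(v)$: that the \emph{minimal primitive sub-Hodge structure} containing $v$ in its $(2,0)$-part coincides with the saturation of the $\bQ$-span of $\{\operatorname{Re}(v),\operatorname{Im}(v)\}$ in $\Lambda$. This is standard: the Hodge structure on $\Lambda$ determined by $[v]$ has $\Lambda^{2,0}=\bC v$ and $\Lambda^{0,2}=\bC\bar v$, so a $\bQ$-sub-Hodge structure $N_\bQ\subset\Lambda_\bQ$ contains $v$ in its $(2,0)$-piece iff $v\in N_\bQ\otimes_\bQ\bC$ iff $N_\bQ$ contains the $\bQ$-span of $\operatorname{Re}(v),\operatorname{Im}(v)$; intersecting with $\Lambda$ and saturating gives the minimal \emph{primitive} such sublattice, and one checks it is automatically a sub-Hodge structure. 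I expect this identification to be the main (though minor) obstacle, since everything else is the Baire-category argument that a countable union of proper closed subsets of an irreducible analytic space is a proper subset; this is precisely the same mechanism used for the analogous statement for K3 surfaces and is what makes ``very general'' the right hypothesis.
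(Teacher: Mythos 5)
Your proposal is correct and follows essentially the same route as the paper: a very general $[v]$ lies outside the countable union of hyperplane sections $x^{\perp}$ over nonzero $x\in\Lambda$, so the induced Hodge structure has no nonzero integral $(1,1)$ classes and $T(v)=(\Lambda\cap v^{\perp})^{\perp}=\Lambda$, your Baire-category discussion merely making the ``very general'' mechanism explicit. One cosmetic slip: $\operatorname{Re}(v)$ and $\operatorname{Im}(v)$ are not rational vectors, so describing $T(v)$ as the saturation of their ``rank-one $\bQ$-span'' is not literally meaningful; it should be the minimal $\bQ$-subspace of $\Lambda\otimes\bQ$ whose complexification contains $v$, namely $\bigl((\Lambda\otimes\bQ)\cap v^{\perp}\bigr)^{\perp}$, which is what your argument in fact uses, so nothing breaks.
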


\begin{proof}
For a very general $[v]\in\Omega(\Lambda)$, the hyperplane section $v^{\perp}\subset\Lambda\otimes\bC$ contains no element from $\Lambda$ because $\Lambda$ consists of countably many points. In this situation, the Hodge structure on $\Lambda$ determined by $[v]$ satisfies $\Lambda^{1,1} = \{0\}$, whence $T(v) = (\Lambda^{1,1})^\perp = \Lambda$.
\end{proof}

The following lemma shows that transcendental lattices are unchanged under primitive embeddings of lattices.

\begin{lemma}
\label{lem:trans-inv}
Consider a primitive embedding $\iota\colon\Lambda\hookrightarrow\Lambda_{\#}$. For every $[v]\in\Omega(\Lambda)$, we have
$$
    \iota(T(v)) = T(\iota(v)).
$$
That is, the transcendental lattice on $\Lambda$ determined by $[v]$ is mapped isomorphically onto the transcendental lattice on $\Lambda_{\#}$ determined by $[\iota(v)]$.
\end{lemma}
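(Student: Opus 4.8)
The plan is to show the two transcendental lattices agree by a double inclusion, using that $T(v)$ (respectively $T(\iota(v))$) is characterized as the minimal primitive sub-Hodge structure containing the line $\mathbb{C}v$ in its $(2,0)$-part. First I would fix the Hodge structure on $\Lambda$ induced by $[v]$, which equips $\Lambda_{\#}$ with a Hodge structure of K3 type via $\iota$ having the same $(2,0)$-line $\mathbb{C}\iota(v)$, and note that $\iota\colon \Lambda\hookrightarrow\Lambda_{\#}$ is then a morphism of Hodge structures (it respects the bigrading since $v^{\perp}\cap\Lambda\otimes\mathbb{C}$ maps into $\iota(v)^{\perp}\cap\Lambda_{\#}\otimes\mathbb{C}$). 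Since $\iota$ is a primitive embedding, $\iota(\Lambda)\subset\Lambda_{\#}$ is a primitive sublattice, hence a primitive sub-Hodge structure; therefore $\iota(T(v))$, being primitive in $\iota(\Lambda)$ which is primitive in $\Lambda_{\#}$, is primitive in $\Lambda_{\#}$, and it is a sub-Hodge structure whose $(2,0)$-part is $\mathbb{C}\iota(v)$. By minimality of $T(\iota(v))$ we get $T(\iota(v))\subset\iota(T(v))$.

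For the reverse inclusion, I would intersect: set $S\colonequals\iota^{-1}(T(\iota(v)))\subset\Lambda$. Because $\iota(\Lambda)$ is a primitive sub-Hodge structure of $\Lambda_{\#}$, the intersection $T(\iota(v))\cap\iota(\Lambda)$ is a primitive sub-Hodge structure of $\iota(\Lambda)$ (intersection of primitive sub-Hodge structures is again one), and it still contains $\mathbb{C}\iota(v)$ in its $(2,0)$-part since $\iota(v)\in\iota(\Lambda)$. Pulling back along the isomorphism $\iota\colon\Lambda\xrightarrow{\sim}\iota(\Lambda)$, the lattice $S$ is a primitive sub-Hodge structure of $\Lambda$ with $(S\otimes\mathbb{C})^{2,0}=\mathbb{C}v$. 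Minimality of $T(v)$ forces $T(v)\subset S$, i.e. $\iota(T(v))\subset T(\iota(v))$. Combining the two inclusions gives $\iota(T(v)) = T(\iota(v))$.

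The only genuinely delicate point is the bookkeeping around primitivity: one must be careful that a primitive sublattice of a primitive sublattice is primitive in the total lattice (true, by the transitivity of primitivity / saturation), and that $T(\iota(v))$ is automatically contained in $\iota(\Lambda\otimes\mathbb{Q})$, which is what legitimizes pulling it back. For the latter I would argue that $T(\iota(v))\cap\iota(\Lambda)$ is already a sub-Hodge structure of $\Lambda_{\#}$ with the right $(2,0)$-line, so by minimality $T(\iota(v))\subseteq\iota(\Lambda)\otimes\mathbb{Q}$, hence (being a sublattice of $\Lambda_{\#}$) actually $T(\iota(v))\subseteq\iota(\Lambda)$; one could alternatively invoke that for very general $[v]$ one has $T(v)=\Lambda$ by Lemma~\ref{lem:trans-is-all} and then note the desired identity is a Zariski-closed condition, but the direct Hodge-theoretic argument works for every $[v]\in\Omega(\Lambda)$. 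Everything else is formal manipulation with Hodge structures of K3 type, so I expect no serious obstacle beyond this primitivity bookkeeping.
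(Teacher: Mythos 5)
Your proof is correct and follows essentially the same route as the paper's: both inclusions come from the minimality characterizations of $T(\iota(v))$ and $T(v)$, with the primitivity of $\iota$ ensuring that $\iota(T(v))$ is a primitive sub-Hodge structure of $\Lambda_{\#}$. Your additional bookkeeping (transitivity of primitivity, intersecting $T(\iota(v))$ with $\iota(\Lambda)$ before pulling back) only makes explicit what the paper's two-sentence minimality argument leaves implicit.
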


\begin{proof}
The Hodge substructure $\iota(T(v))\subset\Lambda_\#$ has $(2,0)$-part spanned by $\iota(v)$. This implies 
$
    \iota(T(v)) \supset T(\iota(v))
$
due to the minimality of $T(\iota(v))$. Now, $T(\iota(v))$ appears as a Hodge substructure of $\iota(\Lambda)$ with $(2,0)$-part spanned by $\iota(v)$, so the containment is an equality due to the minimality of $\iota(T(v))$.
\end{proof}

The next lemma gives a slight extension of \cite[Lemma~4.3]{OS18}.

\begin{lemma}
\label{lem:T(v_1)=T(v_2)}
Suppose that $[v_1],[v_2]\in\Omega(\Lambda)$ have the same image under $\widetilde{\psi}^+$. Then there exists a $g\in{\uO}^+(\Lambda_\#)$ such that there is an identification $g(T(v_1)) = T(v_2)$ of Hodge structures.
\end{lemma}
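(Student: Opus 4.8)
The plan is to unwind the definition of $\widetilde{\psi}^+$ and then exploit the functoriality of the transcendental lattice under isometries. Write $\iota\colon\Lambda\hookrightarrow\Lambda_\#$ for the primitive embedding inducing $\widetilde{\psi}^+$, so that on points the map sends $[v]\in\Omega(\Lambda)$ to $[\iota(v)]\in\Omega(\Lambda_\#)$. Since $\Omega(\Lambda_\#)/\widetilde{\uO}^+(\Lambda)$ maps to $\Omega(\Lambda_\#)/\uO^+(\Lambda_\#)$ through the orbit map, the hypothesis that $[v_1]$ and $[v_2]$ have the same image under $\widetilde{\psi}^+$ means precisely that $[\iota(v_1)]$ and $[\iota(v_2)]$ lie in the same $\uO^+(\Lambda_\#)$-orbit; that is, there is a $g\in\uO^+(\Lambda_\#)$ with $g\cdot[\iota(v_1)] = [\iota(v_2)]$, hence in particular $[g(\iota(v_1))] = [\iota(v_2)]$ in $\bP(\Lambda_\#\otimes\bC)$.

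Next I would observe that such a $g$ is automatically an isomorphism from the Hodge structure of K3 type on $\Lambda_\#$ determined by $[\iota(v_1)]$ to the one determined by $[\iota(v_2)]$: being defined over $\bZ$, the map $g$ commutes with complex conjugation, and being an isometry, it preserves orthogonal complements, so it carries the decomposition $\Lambda_\#\otimes\bC = \bC\iota(v_1)\oplus(\bC\iota(v_1)\oplus\bC\overline{\iota(v_1)})^\perp\oplus\bC\overline{\iota(v_1)}$ to the corresponding decomposition attached to $\iota(v_2)$. Consequently $g$ sends the minimal primitive sub-Hodge structure with $(2,0)$-part $\bC\iota(v_1)$ onto a primitive sub-Hodge structure of the target whose $(2,0)$-part is $\bC g(\iota(v_1)) = \bC\iota(v_2)$, and the minimality characterization of the transcendental lattice (applied on both sides) forces $g(T(\iota(v_1))) = T(\iota(v_2))$ as Hodge structures.

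Finally I would invoke Lemma~\ref{lem:trans-inv}, which applies precisely because $\iota$ is a primitive embedding: it gives $T(\iota(v_i)) = \iota(T(v_i))$ for $i=1,2$. Identifying $T(v_i)$ with its image $\iota(T(v_i))\subset\Lambda_\#$ (carrying the restricted Hodge structure with $(2,0)$-part $\bC\iota(v_i)$), the previous step reads $g(T(v_1)) = T(v_2)$, which is the claim. I expect no genuine obstacle here; the only points requiring care are the verification that an isometry fixing the period line projectively is a morphism of Hodge structures and therefore respects the minimal-sub-Hodge-structure construction, and the bookkeeping check that the primitivity hypothesis on $\iota$ is indeed in force so that Lemma~\ref{lem:trans-inv} may be used.
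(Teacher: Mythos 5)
Your proposal is correct and follows essentially the same route as the paper: translate the hypothesis into the existence of $g\in\uO^+(\Lambda_\#)$ with $g([\iota(v_1)])=[\iota(v_2)]$, note that such an integral isometry is an isomorphism of the induced Hodge structures on $\Lambda_\#$, and conclude via minimality of the transcendental lattice together with Lemma~\ref{lem:trans-inv} (whose primitivity hypothesis is indeed the standing assumption at this point). Your write-up merely spells out in more detail why $g$ respects the Hodge decomposition, which the paper takes for granted; the only blemish is the harmless slip $\Omega(\Lambda_\#)/\widetilde{\uO}^+(\Lambda)$ where you mean $\Omega(\Lambda)/\widetilde{\uO}^+(\Lambda)$.
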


\begin{proof}
By Lemma~\ref{lem:trans-inv}, we can view $T(v_1)$ (resp.\ $T(v_2)$) as the transcendental lattice of the Hodge structure on~$\Lambda_\#$ defined by $v_1$ (resp.\ $v_2$). By hypothesis, there exists a $g\in{\uO}^+(\Lambda_\#)$ such that $g([v_1]) = [v_2]$, so it induces an isomorphism between the Hodge structures on $\Lambda_\#$ defined by $[v_1]$ and $[v_2]$. Thus $g(T(v_1)) = T(v_2)$ as they are minimal Hodge substructures associated to $g([v_1]) = [v_2]$.
\end{proof}

\begin{lemma}
\label{lem:same-image}
Let  $[v_1],[v_2]\in\Omega(\Lambda)$ be very general points such that there are identities ${T(v_1) = T(v_2) = \Lambda}$ as lattices, and assume that they are mapped to the same point under $\widetilde{\psi}^+$. Then there exists a $g\in \uO^+(\Lambda)$ such that $g([v_1])=[v_2]$.
\end{lemma}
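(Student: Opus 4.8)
The plan is to take an isometry of $\Lambda_\#$ that witnesses the coincidence of images under $\widetilde{\psi}^+$, descend it to an isometry of $\Lambda$ using that $T(v_1)=T(v_2)=\Lambda$, and finally promote it to an element of $\uO^+(\Lambda)$ by a component argument.

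First, since $\iota\colon\Lambda\hookrightarrow\Lambda_\#$ is primitive and $[v_1],[v_2]$ have the same image under $\widetilde{\psi}^+$, I would apply Lemma~\ref{lem:T(v_1)=T(v_2)} to obtain $g\in\uO^+(\Lambda_\#)$ together with an identification $g(T(v_1))=T(v_2)$ of Hodge structures. By Lemma~\ref{lem:trans-inv} and the hypothesis $T(v_1)=T(v_2)=\Lambda$, the transcendental lattice inside $\Lambda_\#$ of the Hodge structure defined by $[v_i]$ is exactly $\iota(\Lambda)$ for $i=1,2$. Hence $g$ maps the sublattice $\iota(\Lambda)\subset\Lambda_\#$ isometrically onto itself, so $h\colonequals\iota^{-1}\circ\bigl(g|_{\iota(\Lambda)}\bigr)\circ\iota$ is a well-defined isometry of $\Lambda$. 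Moreover, as $g$ is an isomorphism of Hodge structures it carries the $(2,0)$-line $\bC\,\iota(v_1)=(T(v_1)\otimes\bC)^{2,0}$ to $\bC\,\iota(v_2)=(T(v_2)\otimes\bC)^{2,0}$; pulling back along $\iota$, this gives $h([v_1])=[v_2]$ in $\bP(\Lambda\otimes\bC)$.

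It then remains to check that $h$ lies in $\uO^+(\Lambda)$ and not merely in $\uO(\Lambda)$. Here I would argue straight from the definition: $h$ permutes the two connected components of $\left\{[w]\in\bP(\Lambda\otimes\bC)\mid (w,w)=0,\ (w,\overline w)>0\right\}$, one of which is $\Omega(\Lambda)$; since $h$ sends $[v_1]\in\Omega(\Lambda)$ to $[v_2]\in\Omega(\Lambda)$, it must preserve $\Omega(\Lambda)$, whence $h\in\uO^+(\Lambda)$, which is the desired conclusion.

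No step is genuinely difficult, and the only subtle point is the last one: instead of attempting to compare orientations of positive two-planes in $\Lambda\otimes\bR$ with those in $\Lambda_\#\otimes\bR$ (which would require tracking how the restriction interacts with $\Lambda^{\perp\Lambda_\#}$), one uses the characterization of $\uO^+$ as the stabilizer of the chosen component of the period domain, so that the single relation $h([v_1])=[v_2]$ between two points of $\Omega(\Lambda)$ already pins down the component.
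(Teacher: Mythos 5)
Your proposal is correct and takes essentially the same route as the paper: apply Lemma~\ref{lem:T(v_1)=T(v_2)} to get $g\in\uO^+(\Lambda_\#)$ identifying the two Hodge structures, use the hypothesis $T(v_1)=T(v_2)=\Lambda$ (via Lemma~\ref{lem:trans-inv}) to see that $g$ preserves the sublattice $\Lambda\subset\Lambda_\#$ and restricts to an isometry sending $[v_1]$ to $[v_2]$. The only cosmetic difference is the final step, where the paper checks directly that the oriented positive $2$-planes spanned by $\mathrm{Re}(v_i),\mathrm{Im}(v_i)$ are matched with their natural orientations, while you invoke the equivalent standard fact that $\uO^+(\Lambda)$ is the stabilizer of the component $\Omega(\Lambda)$.
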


\begin{proof}
By Lemma~\ref{lem:T(v_1)=T(v_2)}, there exists a $g\in\widetilde{\uO}^+(\Lambda_\#)$ such that $g(T(v_1)) = T(v_2)$ as Hodge structures (so that $g([v_1]) = [v_2]$). The hypothesis $T(v_1) = T(v_2) = \Lambda$ implies that the sublattice $\Lambda\subset\Lambda_\#$ is preserved by $g$, which gives an element $g|_\Lambda\in\uO(\Lambda)$. We need to check that $g|_{\Lambda}\in\uO^+(\Lambda)$.  Since it maps $[v_1]$ to $[v_2]$, it takes the positive $2$-plane in $\Lambda\otimes\bR$ spanned by $\mathrm{Re}(v_1), \mathrm{Im}(v_1)$ to the positive $2$-plane spanned by $\mathrm{Re}(v_2), \mathrm{Im}(v_2)$ and preserves their natural orientations. Hence $g|_{\Lambda}\in\uO^+(\Lambda)$. 
\end{proof}

\begin{lemma}
\label{lem:bound-Hdg-partners}
When the embedding $\Lambda \hookrightarrow \Lambda_{\#}$ is primitive, the degree of $\widetilde{\psi}^+$ onto its image is less than or equal to $|\uO(D(\Lambda))|$.
\end{lemma}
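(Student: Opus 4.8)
The plan is to bound the degree of $\widetilde{\psi}^+$ onto its image by counting the fibre over a very general point of the image. Pick a very general $[v_1]\in\Omega(\Lambda)$; by Lemma~\ref{lem:trans-is-all} we have $T(v_1)=\Lambda$, and the same holds for every point $[v_2]$ in the same fibre of $\widetilde{\psi}^+$ (genericity of the image point forces genericity of all its preimages, since the non-generic locus is a countable union of proper subvarieties mapping into a countable union of proper subvarieties downstairs). So I may assume $T(v_1)=T(v_2)=\Lambda$ for all points in the fibre. By Lemma~\ref{lem:same-image}, any two such points differ by an element of $\uO^+(\Lambda)$; hence the whole fibre is a single $\uO^+(\Lambda)$-orbit in $\Omega(\Lambda)$. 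The degree of $\widetilde{\psi}^+$ onto its image is therefore the number of $\widetilde{\uO}^+(\Lambda)$-orbits inside one $\uO^+(\Lambda)$-orbit, which is bounded by the index $[\uO^+(\Lambda):\widetilde{\uO}^+(\Lambda)]$.

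It remains to bound that index by $|\uO(D(\Lambda))|$. Here I use the standard exact sequence (see \cite{Nik79} or \cite[Section~3]{GHS13})
\[
    1\longrightarrow\widetilde{\uO}(\Lambda)\longrightarrow\uO(\Lambda)\longrightarrow\uO(D(\Lambda)),
\]
where the last map sends an isometry to its induced action on the discriminant group. Intersecting with $\uO^+(\Lambda)$ gives an injection $\uO^+(\Lambda)/\widetilde{\uO}^+(\Lambda)\hookrightarrow\uO(D(\Lambda))$, so $[\uO^+(\Lambda):\widetilde{\uO}^+(\Lambda)]\leq|\uO(D(\Lambda))|$. Combining with the orbit count from the previous paragraph yields
\[
    \deg\bigl(\widetilde{\psi}^+\bigr)\leq[\uO^+(\Lambda):\widetilde{\uO}^+(\Lambda)]\leq|\uO(D(\Lambda))|,
\]
which is the claim.

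The one genuinely delicate point is the passage from "generic point of the image" to "all preimages are generic." I would make this precise by noting that $\widetilde{\psi}^+$ is a finite morphism onto its image $\widetilde Z\subseteq\cP^+_{\Lambda_\#}$, so there is a dense open $U\subseteq\widetilde Z$ over which it is finite flat of constant fibre cardinality equal to $\deg(\widetilde{\psi}^+)$; shrinking $U$ further, I may assume $U$ avoids the image of the countable union of Noether--Lefschetz loci in $\widetilde{\cP}^+_\Lambda$ where $T(v)\subsetneq\Lambda$, and then every point of the fibre over a point of $U$ has transcendental lattice equal to $\Lambda$. Everything else is a formal combination of the lemmas already proved and the Nikulin exact sequence, so I expect no further obstacles.
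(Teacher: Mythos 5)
Your proposal is correct and follows essentially the same route as the paper: apply Lemma~\ref{lem:trans-is-all} and Lemma~\ref{lem:same-image} to see that a very general fiber of $\widetilde{\psi}^+$ lies in a single $\uO^+(\Lambda)$-orbit on $\Omega(\Lambda)/\widetilde{\uO}^+(\Lambda)$, and then bound the orbit size by $[\uO^+(\Lambda):\widetilde{\uO}^+(\Lambda)]\leq|\uO(D(\Lambda))|$. Your extra paragraph justifying that genericity of the image point propagates to all preimages only makes explicit what the paper leaves implicit in the phrase ``very general fiber,'' so there is nothing to change.
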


\begin{proof}
By Lemma~\ref{lem:same-image}, a very general fiber of $\widetilde{\psi}^+$ is contained in an orbit of $\uO^+(\Lambda)$ acting on $\Omega(\Lambda)/\widetilde{\uO}^+(\Lambda)$. Such an orbit has cardinality at most $|\uO^+(\Lambda)/\widetilde{\uO}^+(\Lambda)|$, which is in turn less than or equal to $|\uO(D(\Lambda))|$, so the statement follows.
\end{proof}

We want to extend  Lemma~\ref{lem:bound-Hdg-partners} to the situation when $\Lambda \hookrightarrow \Lambda_{\#}$ is not necessarily primitive. In this case, we let $\Lambda_s\subset\Lambda_{\#}$ be the saturation of $\Lambda$. This is again an even lattice of signature $(2,n)$, and the embedding $\Lambda_s \hookrightarrow \Lambda_{\#}$ is primitive.

\begin{lemma}
\label{lem:bound-Hdg-partners-unsat}
When the embedding $\Lambda \hookrightarrow \Lambda_{\#}$ is not necessarily primitive, the degree of $\widetilde{\psi}^+$ onto its image is less than or equal to $[\widetilde{\uO}^+(\Lambda_s):\widetilde{\uO}^+(\Lambda)]\cdot |\uO(D(\Lambda_s))|$.
\end{lemma}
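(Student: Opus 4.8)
The plan is to factor $\widetilde{\psi}^+$ through the intermediate period space attached to the saturation $\Lambda_s$ and then compose two degree bounds. Concretely, since $\widetilde{\uO}^+(\Lambda)$ is extendable with respect to $\Lambda\hookrightarrow\Lambda_\#$ (Lemma~\ref{lem:O+tilde}) and, a fortiori, with respect to the primitive embedding $\Lambda_s\hookrightarrow\Lambda_\#$, I first want to check that $\widetilde{\uO}^+(\Lambda)$ can be viewed as a finite-index subgroup of $\widetilde{\uO}^+(\Lambda_s)$: indeed an isometry of $\Lambda$ extends to $\Lambda_\#$ by the identity on $\Lambda^{\perp\Lambda_\#} = \Lambda_s^{\perp\Lambda_\#}$, and such an extension must preserve the saturation $\Lambda_s$, hence restricts to an element of $\uO^+(\Lambda_s)$; it lies in $\widetilde{\uO}^+(\Lambda_s)$ because its extension to $\Lambda_\#$ is the identity on $\Lambda_s^{\perp\Lambda_\#}$, which is exactly the condition characterizing $\widetilde{\uO}(\Lambda_s)$ via the gluing description of $D(\Lambda_\#)$. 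Moreover $\Omega(\Lambda) = \Omega(\Lambda_s)$ canonically, since $\Lambda\otimes\bR = \Lambda_s\otimes\bR$. This produces a factorization
$$
    \widetilde{\cP}^+_{\Lambda}
    = \Omega(\Lambda)/\widetilde{\uO}^+(\Lambda)
    \xrightarrow{\ \pi\ }
    \Omega(\Lambda_s)/\widetilde{\uO}^+(\Lambda_s)
    = \widetilde{\cP}^+_{\Lambda_s}
    \xrightarrow{\ \widetilde{\psi}^+_s\ }
    {\cP}^+_{\Lambda_\#},
$$
where $\pi$ is the natural quotient map and $\widetilde{\psi}^+_s$ is the morphism of Lemma~\ref{lem:bound-Hdg-partners} for the primitive embedding $\Lambda_s\hookrightarrow\Lambda_\#$.

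Next I would bound the two factors separately. The map $\pi$ is a quotient by the finite group $\widetilde{\uO}^+(\Lambda_s)/\widetilde{\uO}^+(\Lambda)$ acting on $\Omega(\Lambda_s)/\widetilde{\uO}^+(\Lambda)$, so its degree is at most $[\widetilde{\uO}^+(\Lambda_s):\widetilde{\uO}^+(\Lambda)]$; here one should note that a general fiber of $\pi$ has exactly that cardinality, but in any case the inequality $\deg(\pi)\leq [\widetilde{\uO}^+(\Lambda_s):\widetilde{\uO}^+(\Lambda)]$ is what is needed. For the second factor, Lemma~\ref{lem:bound-Hdg-partners} applied to the primitive embedding $\Lambda_s\hookrightarrow\Lambda_\#$ gives that the degree of $\widetilde{\psi}^+_s$ onto its image is at most $|\uO(D(\Lambda_s))|$. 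Since the image of $\widetilde{\psi}^+$ equals the image of $\widetilde{\psi}^+_s$ and degrees of dominant maps onto their images are multiplicative along the composition, we obtain
$$
    \deg\bigl(\widetilde{\psi}^+\text{ onto its image}\bigr)
    = \deg(\pi)\cdot\deg\bigl(\widetilde{\psi}^+_s\text{ onto its image}\bigr)
    \leq [\widetilde{\uO}^+(\Lambda_s):\widetilde{\uO}^+(\Lambda)]\cdot|\uO(D(\Lambda_s))|,
$$
which is the claimed bound.

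The step I expect to be the main obstacle is verifying that an element of $\widetilde{\uO}^+(\Lambda)$, after extension to $\Lambda_\#$ by the identity on the orthogonal complement, genuinely lands in $\widetilde{\uO}^+(\Lambda_s)$ — i.e.\ that it acts trivially on $D(\Lambda_s)$, not merely on $D(\Lambda)$. This requires recalling how $D(\Lambda_\#)$ is glued from $D(\Lambda_s)$ and $D(\Lambda_s^{\perp\Lambda_\#})$ (as in \cite[Section~1]{GHS13}), and checking that the characterization ``$g$ acts trivially on $D(\Lambda_s)$'' is equivalent to ``the extension of $g$ by the identity on $\Lambda_s^{\perp\Lambda_\#}$ defines a well-defined isometry of $\Lambda_\#$'', which is precisely the content of the proof of \cite[Lemma~7.1]{GHS13} used already in Lemma~\ref{lem:O+tilde}. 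Once this identification of $\widetilde{\uO}^+(\Lambda)$ as a subgroup of $\widetilde{\uO}^+(\Lambda_s)$ is in place, the factorization and the multiplicativity of degrees make the rest routine. One should also take a small moment to confirm that $[\widetilde{\uO}^+(\Lambda_s):\widetilde{\uO}^+(\Lambda)]$ is finite, which follows because both groups are arithmetic subgroups of $\uO^+(\Lambda_s\otimes\bQ) = \uO^+(\Lambda\otimes\bQ)$ and $\widetilde{\uO}^+(\Lambda)$ has finite index in $\uO^+(\Lambda)$.
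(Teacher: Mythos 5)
Your overall route is exactly the paper's: factor $\widetilde{\psi}^+$ through $\widetilde{\cP}^+_{\Lambda_s}$, bound the first map by the index $[\widetilde{\uO}^+(\Lambda_s):\widetilde{\uO}^+(\Lambda)]$, and bound the second by $|\uO(D(\Lambda_s))|$ via Lemma~\ref{lem:bound-Hdg-partners} applied to the primitive embedding $\Lambda_s\hookrightarrow\Lambda_{\#}$. The one step that does not go through as written is precisely the one you flag as the crux: you justify the inclusion $\widetilde{\uO}^+(\Lambda)\subset\widetilde{\uO}^+(\Lambda_s)$ by asserting that ``the extension to $\Lambda_{\#}$ is the identity on $\Lambda_s^{\perp\Lambda_{\#}}$'' is \emph{exactly} the condition characterizing $\widetilde{\uO}(\Lambda_s)$. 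That equivalence is false in general: if $\Lambda_{\#}=\Lambda_s\oplus\Lambda_s^{\perp\Lambda_{\#}}$ splits orthogonally, every $h\in\uO(\Lambda_s)$ extends to $\Lambda_{\#}$ by the identity on the complement, whatever its action on $D(\Lambda_s)$. Only the forward direction (trivial action on $D(\Lambda_s)$ implies the glued map preserves $\Lambda_{\#}$ and acts trivially on $D(\Lambda_{\#})$) is the content of \cite[Lemma~7.1]{GHS13}, and that direction is the wrong one for your purpose: knowing that $g_{\#}\in\widetilde{\uO}^+(\Lambda_{\#})$ restricts to the identity on $\Lambda_s^{\perp\Lambda_{\#}}$ does not by itself tell you that $g_{\#}|_{\Lambda_s}$ acts trivially on $D(\Lambda_s)$.

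The inclusion you need is nevertheless true, and there is no need to pass through $\Lambda_{\#}$ at all: Lemma~\ref{lem:O+tilde} is stated for embeddings that are not necessarily primitive, so it applies verbatim to the finite-index embedding $\Lambda\hookrightarrow\Lambda_s$ (whose orthogonal complement is zero); this is how the paper argues. Concretely, if $g$ acts trivially on $D(\Lambda)$, then $g(x)-x\in\Lambda$ for every $x\in\Lambda^\vee$; since $\Lambda\subset\Lambda_s\subset\Lambda_s^\vee\subset\Lambda^\vee$, taking $x\in\Lambda_s$ shows that $g$ preserves $\Lambda_s$, and taking $x\in\Lambda_s^\vee$ shows that $g|_{\Lambda_s}$ acts trivially on $D(\Lambda_s)$; it lies in $\uO^+(\Lambda_s)$ because $\Lambda\otimes\bR=\Lambda_s\otimes\bR$. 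With this replacement (and phrasing the bound on the first map as ``fibers of cardinality at most the index'' rather than as a quotient by the group $\widetilde{\uO}^+(\Lambda_s)/\widetilde{\uO}^+(\Lambda)$, which need not exist since the subgroup need not be normal), the rest of your argument is correct and coincides with the paper's proof.
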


\begin{proof}
According to Lemma~\ref{lem:O+tilde}, there are injections
$
    \widetilde{\uO}^+(\Lambda)
    \hookrightarrow\widetilde{\uO}^+(\Lambda_s)
    \hookrightarrow\widetilde{\uO}^+(\Lambda_{\#}).
$
This induces maps
$
    \widetilde{\cP}^+_{\Lambda}
    \to\widetilde{\cP}^+_{\Lambda_s}
    \to\cP^+_{\Lambda_{\#}}
$
whose composition is equal to $\widetilde{\psi}^+$. The first map has fibers of cardinality at most $[\widetilde{\uO}^+(\Lambda_s):\widetilde{\uO}^+(\Lambda)]$; the second map has degree onto its image bounded by $|\uO(D(\Lambda_s))|$ due to Lemma~\ref{lem:bound-Hdg-partners}. This proves the claim.
\end{proof}

We can finally give the bound for the map $\psi_\Gamma$ in \eqref{map:quasi-finite_hodge_gamma}.

\begin{lemma}
\label{lem:bound-Hdg-partners-gamma}
Let $\Lambda \hookrightarrow \Lambda_{\#}$ be an embedding and $\Gamma \subset \uO^+(\Lambda)$ be any extendable subgroup of finite index. Also let $\Lambda_s$ be the saturation of $\Lambda$ in $\Lambda_{\#}$. Then the degree of $\psi_\Gamma$ onto its image is less than or equal to
$
    [\widetilde{\uO}^+(\Lambda)
    : \Gamma\cap \widetilde{\uO}^+(\Lambda)]
    \cdot
    [\widetilde{\uO}^+(\Lambda_s)
    : \widetilde{\uO}^+(\Lambda)]
    \cdot
    |\uO(D(\Lambda_s))|.
$
\end{lemma}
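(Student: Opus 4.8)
The plan is to factor $\psi_\Gamma$ through the stable orthogonal quotient and then combine the two bounds already available. Concretely, since $\Gamma$ is extendable and of finite index in $\uO^+(\Lambda)$, the intersection $\Gamma\cap\widetilde{\uO}^+(\Lambda)$ is of finite index in $\widetilde{\uO}^+(\Lambda)$, and the inclusions
$$
    \Gamma\cap\widetilde{\uO}^+(\Lambda)\subset\Gamma\subset\uO^+(\Lambda),
    \qquad
    \Gamma\cap\widetilde{\uO}^+(\Lambda)\subset\widetilde{\uO}^+(\Lambda)\subset\uO^+(\Lambda)
$$
induce a commutative diagram of quotients. In particular there is a factorization
$$
    \cP_\Lambda\big(\Gamma\cap\widetilde{\uO}^+(\Lambda)\big)
    \xrightarrow{\ a\ }
    \cP_\Lambda(\Gamma)
    \xrightarrow{\ \psi_\Gamma\ }
    \cP^+_{\Lambda_\#},
$$
and also
$$
    \cP_\Lambda\big(\Gamma\cap\widetilde{\uO}^+(\Lambda)\big)
    \xrightarrow{\ b\ }
    \widetilde{\cP}^+_\Lambda
    \xrightarrow{\ \widetilde{\psi}^+\ }
    \cP^+_{\Lambda_\#},
$$
with $\psi_\Gamma\circ a = \widetilde{\psi}^+\circ b$ since both compositions are the quotient map $\Omega(\Lambda)\to\Omega(\Lambda_\#)\to\cP^+_{\Lambda_\#}$ read through different intermediate groups.

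The key multiplicativity step is the standard fact that for a tower of maps $X\to Y\to W$ of irreducible varieties with $X\to Y$ dominant of generic degree $e$ and $Y\to W$ generically finite onto its image of degree $f$, the composite $X\to W$ has degree $ef$ onto its (same) image; equivalently, $\deg(Y\to W\text{ onto image}) = \deg(X\to W\text{ onto image})/\deg(X\to Y)$. Applying this to the first tower, $\deg(a) = [\Gamma:\Gamma\cap\widetilde{\uO}^+(\Lambda)]$, so
$$
    \deg(\psi_\Gamma\text{ onto image})
    = \frac{\deg(\psi_\Gamma\circ a\text{ onto image})}{[\Gamma:\Gamma\cap\widetilde{\uO}^+(\Lambda)]}
    \leq \deg(\psi_\Gamma\circ a\text{ onto image}).
$$
Wait---this inequality goes the wrong way, so instead I should use the second tower: $\deg(b) = [\widetilde{\uO}^+(\Lambda):\Gamma\cap\widetilde{\uO}^+(\Lambda)]$, hence
$$
    \deg(\psi_\Gamma\circ a\text{ onto image})
    = \deg(\widetilde{\psi}^+\circ b\text{ onto image})
    = \deg(b)\cdot\deg(\widetilde{\psi}^+\text{ onto image})
    = [\widetilde{\uO}^+(\Lambda):\Gamma\cap\widetilde{\uO}^+(\Lambda)]\cdot\deg(\widetilde{\psi}^+\text{ onto image}).
$$
Combining with $\deg(a) = [\Gamma:\Gamma\cap\widetilde{\uO}^+(\Lambda)]$ gives
$$
    \deg(\psi_\Gamma\text{ onto image})
    = \frac{[\widetilde{\uO}^+(\Lambda):\Gamma\cap\widetilde{\uO}^+(\Lambda)]}{[\Gamma:\Gamma\cap\widetilde{\uO}^+(\Lambda)]}\cdot\deg(\widetilde{\psi}^+\text{ onto image}).
$$
The first factor is $[\widetilde{\uO}^+(\Lambda):\Gamma\cap\widetilde{\uO}^+(\Lambda)]/[\Gamma:\Gamma\cap\widetilde{\uO}^+(\Lambda)]$; by the second isomorphism theorem $[\Gamma:\Gamma\cap\widetilde{\uO}^+(\Lambda)] = [\Gamma\cdot\widetilde{\uO}^+(\Lambda):\widetilde{\uO}^+(\Lambda)]$, and one checks the ratio equals $[\widetilde{\uO}^+(\Lambda):\Gamma\cap\widetilde{\uO}^+(\Lambda)]\big/[\Gamma\cdot\widetilde{\uO}^+(\Lambda):\widetilde{\uO}^+(\Lambda)]\leq[\widetilde{\uO}^+(\Lambda):\Gamma\cap\widetilde{\uO}^+(\Lambda)]$. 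Finally, Lemma~\ref{lem:bound-Hdg-partners-unsat} bounds $\deg(\widetilde{\psi}^+\text{ onto image})$ by $[\widetilde{\uO}^+(\Lambda_s):\widetilde{\uO}^+(\Lambda)]\cdot|\uO(D(\Lambda_s))|$, and assembling the three factors yields the claimed bound.

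The main obstacle is getting the degree bookkeeping exactly right: one must be careful that all the quotient maps are genuinely maps of irreducible quasi-projective varieties (which holds since $\Omega(\Lambda)$ is connected and irreducible and the groups act properly discontinuously), that "degree onto image" is multiplicative in towers in the generically-finite setting, and that the finite-but-not-necessarily-normal subgroup $\Gamma\cap\widetilde{\uO}^+(\Lambda)$ really does have finite index in both $\Gamma$ and $\widetilde{\uO}^+(\Lambda)$ (which follows because both $\Gamma$ and $\widetilde{\uO}^+(\Lambda)$ have finite index in $\uO^+(\Lambda)$). The subtlety with the index ratio---that we want an upper bound, so we should simply note $[\widetilde{\uO}^+(\Lambda):\Gamma\cap\widetilde{\uO}^+(\Lambda)]\leq[\widetilde{\uO}^+(\Lambda):\Gamma\cap\widetilde{\uO}^+(\Lambda)]$ is trivially true but we actually want to compare against $[\widetilde{\uO}^+(\Lambda):\Gamma\cap\widetilde{\uO}^+(\Lambda)]$ as written in the statement---suggests the cleanest route is: bound $\deg(\psi_\Gamma\text{ onto image})\leq\deg(\psi_\Gamma\circ a\text{ onto image})$ directly (since dividing by $\deg(a)\geq 1$ only decreases it), then bound $\deg(\psi_\Gamma\circ a\text{ onto image}) = [\widetilde{\uO}^+(\Lambda):\Gamma\cap\widetilde{\uO}^+(\Lambda)]\cdot\deg(\widetilde{\psi}^+\text{ onto image})$ via the second tower, and finally invoke Lemma~\ref{lem:bound-Hdg-partners-unsat}. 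This avoids the ratio computation entirely and produces precisely the stated product.
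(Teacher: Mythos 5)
Your final cleaned-up route---bounding $\deg(\psi_\Gamma)$ onto its image by $\deg(\psi_\Gamma\circ a)=\deg(\psi_{\widetilde{\Gamma}})$ with $\widetilde{\Gamma}=\Gamma\cap\widetilde{\uO}^+(\Lambda)$, then factoring $\psi_{\widetilde{\Gamma}}$ through $\widetilde{\cP}^+_{\Lambda}$ and invoking Lemma~\ref{lem:bound-Hdg-partners-unsat}---is exactly the paper's proof, so the proposal is correct and takes essentially the same approach. The only small imprecision is asserting equalities $\deg(a)=[\Gamma:\widetilde{\Gamma}]$ and $\deg(b)=[\widetilde{\uO}^+(\Lambda):\widetilde{\Gamma}]$ (these can fail if group elements act trivially on $\Omega(\Lambda)$), but your argument only needs $\deg(a)\geq 1$ and $\deg(b)\leq[\widetilde{\uO}^+(\Lambda):\widetilde{\Gamma}]$, which is what the paper uses.
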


\begin{proof}
Let $\widetilde{\Gamma}\colonequals\Gamma \cap \widetilde{\uO}^+(\Lambda)$. Then the map 
$
    \psi_{\widetilde{\Gamma}}
    \colon\cP_{\Lambda}(\widetilde{\Gamma})
    \rightarrow
    \cP^+_{\Lambda_{\#}}
$
factors as
$$
    \cP_{\Lambda}(\widetilde{\Gamma})
    \longrightarrow
    \cP_{\Lambda}(\Gamma)
    \overset{\psi_\Gamma}\longrightarrow 
    \cP^+_{\Lambda_{\#}}.
$$
Therefore, the degree of $\psi_\Gamma$ onto its image is bounded by that of $\psi_{\widetilde{\Gamma}}$. On the other hand, $\psi_{\widetilde{\Gamma}}$ also factors as
$$  \cP_{\Lambda}(\widetilde{\Gamma})
    \longrightarrow
    \widetilde{\cP}^+_{\Lambda}
    \overset{\;\widetilde{\psi}^+}\longrightarrow 
    \cP^+_{\Lambda_{\#}}.
$$
The first arrow is surjective with degree at most $[\widetilde{\uO}^+(\Lambda):\widetilde{\Gamma}]$. By Lemma~\ref{lem:bound-Hdg-partners-unsat}, the degree of $\widetilde{\psi}^+$ onto its image is bounded by $[\widetilde{\uO}^+(\Lambda_s):\widetilde{\uO}^+(\Lambda)]\cdot|\uO(D(\Lambda_s))|$. Hence the claim follows.
\end{proof}

In order to apply Lemma~\ref{lem:bound-Hdg-partners-gamma}, we need to bound each factor. For a lattice $M$, denote by $\ell(M)$ the minimal number of generators of the discriminant group $D(M)$. An element in $\uO(D(M))$ is determined by the images of the generators. Hence there is a bound
\[  
    |\uO(D(M))|
    \leq |\Aut(D(M))|
    \leq |\operatorname{disc}(M)|^{\ell(M)}.
\]

\begin{lemma}
\label{lem:boundorthogonalgroup}
Let $\Lambda\hookrightarrow\Lambda_{\#}$ be a not necessarily primitive embedding and $\Lambda_s\subset\Lambda_\#$ be the saturation of $\Lambda$. Then we have
$
    |\uO(D(\Lambda_s))|
    \leq |\operatorname{disc}(\Lambda)|^{\ell(\Lambda)}.
$
\end{lemma}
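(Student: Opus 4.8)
The plan is to compare the discriminant groups of $\Lambda$ and its saturation $\Lambda_s$ and then invoke the general bound $|\uO(D(M))| \leq |\operatorname{disc}(M)|^{\ell(M)}$ already recorded above. The key observation is that saturating an embedding can only make the discriminant smaller: if $\Lambda \subset \Lambda_s$ is a finite-index inclusion of lattices of the same rank, say with index $k = [\Lambda_s:\Lambda]$, then the chain $\Lambda \subset \Lambda_s \subset \Lambda_s^\vee \subset \Lambda^\vee$ together with $|\operatorname{disc}(M)| = [M^\vee:M]$ gives $|\operatorname{disc}(\Lambda)| = k^2 \cdot |\operatorname{disc}(\Lambda_s)|$; in particular $|\operatorname{disc}(\Lambda_s)|$ divides $|\operatorname{disc}(\Lambda)|$, so $|\operatorname{disc}(\Lambda_s)| \leq |\operatorname{disc}(\Lambda)|$.

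Next I would control $\ell(\Lambda_s)$ in terms of $\ell(\Lambda)$. The cleanest route is to note that the inclusion $\Lambda \subset \Lambda_s$ induces a surjection of discriminant-form-type data, or more concretely: $D(\Lambda_s) = \Lambda_s^\vee/\Lambda_s$ is a subquotient of $D(\Lambda) = \Lambda^\vee/\Lambda$. Indeed $\Lambda_s^\vee \subset \Lambda^\vee$, so $\Lambda_s^\vee/\Lambda$ is a subgroup of $\Lambda^\vee/\Lambda = D(\Lambda)$, and $D(\Lambda_s) = \Lambda_s^\vee/\Lambda_s$ is a quotient of $\Lambda_s^\vee/\Lambda$ by the subgroup $\Lambda_s/\Lambda$. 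Since the minimal number of generators cannot increase under passing to a subgroup of a finitely generated abelian group (for finite abelian groups, rank of a subgroup $\leq$ rank of the group, by the structure theorem applied $p$-locally) nor under taking quotients, we get $\ell(\Lambda_s) \leq \ell(\Lambda)$.

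Combining the two inequalities with the generic bound from the text yields
\[
    |\uO(D(\Lambda_s))|
    \leq |\operatorname{disc}(\Lambda_s)|^{\ell(\Lambda_s)}
    \leq |\operatorname{disc}(\Lambda)|^{\ell(\Lambda)},
\]
which is exactly the claim. The only mild subtlety — the step I would be most careful about — is the monotonicity of $\ell$ under subgroups: this is false for arbitrary groups but true for finite abelian groups, where $\ell(M) = \max_p \dim_{\bF_p}(M/pM)$ and a subgroup $N \subset M$ satisfies $\dim_{\bF_p}(N/pN) \leq \dim_{\bF_p}(M/pM)$ by the classification of finite abelian $p$-groups (equivalently, $N \otimes \bF_p \hookrightarrow M \otimes \bF_p$ fails in general but $\dim N[p] \leq \dim M[p]$ and one uses that the number of cyclic factors equals $\dim N[p]$). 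One should phrase the argument via the $p$-torsion subgroups rather than via $M/pM$ to avoid this trap. Everything else is a direct index computation.
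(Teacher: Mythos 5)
Your proposal is correct and follows essentially the same route as the paper: the chain $\Lambda\subset\Lambda_s\subset\Lambda_s^{\vee}\subset\Lambda^{\vee}$, the realization of $D(\Lambda_s)$ as a subquotient of $D(\Lambda)$ giving $\ell(\Lambda_s)\leq\ell(\Lambda)$ and $|\operatorname{disc}(\Lambda_s)|\leq|\operatorname{disc}(\Lambda)|$, and then the generic bound $|\uO(D(M))|\leq|\operatorname{disc}(M)|^{\ell(M)}$. Your extra care about monotonicity of $\ell$ under subgroups (arguing via $p$-torsion) is a valid refinement of a point the paper leaves implicit, but the argument is the same.
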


\begin{proof}
There is a chain of finite-index embeddings
$
    \Lambda
    \subset\Lambda_s
    \subset\Lambda_s^{\vee}
    \subset\Lambda^{\vee}.
$
In particular, we have equalities
$
    D(\Lambda_s)
    = \Lambda_s^{\vee}/\Lambda_s
    = (\Lambda_s^{\vee}/\Lambda)/(\Lambda_s/\Lambda),
$
where the last is a quotient of $\Lambda_{s}^{\vee}/\Lambda$ by a finite-index subgroup. Also notice  that $\Lambda_s^\vee/\Lambda\subset\Lambda^\vee/\Lambda = D(\Lambda)$. These facts imply
$\ell(\Lambda_s)\leq\ell(\Lambda)$ and $\mathrm{disc}(\Lambda_s)\leq\mathrm{disc}(\Lambda)$. Combining these inequalities gives the desired bound 
\begin{equation*}\pushQED{\qed}
    |\uO(D(\Lambda_s))|
    \leq |\operatorname{disc}(\Lambda_s)|^{\ell(\Lambda_s)}
    \leq |\operatorname{disc}(\Lambda)|^{\ell(\Lambda)}.
\qedhere \popQED
	\end{equation*}
\renewcommand{\qed}{}  
\end{proof}

Let us bound another term in Lemma~\ref{lem:bound-Hdg-partners-gamma}.

\begin{lemma}
\label{lemma:boundolambdas}
It holds that
$
    [\widetilde{\uO}^+(\Lambda_s):\widetilde{\uO}^+(\Lambda)]
    \leq [\Lambda_s:\Lambda]^{\operatorname{rk}(\Lambda)}
    \cdot |\uO(D(\Lambda))|.
$
\end{lemma}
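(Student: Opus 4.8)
The plan is to factor the inclusion $\widetilde{\uO}^+(\Lambda)\subseteq\widetilde{\uO}^+(\Lambda_s)$ provided by Lemma~\ref{lem:O+tilde} (applied to the finite-index inclusion $\Lambda\hookrightarrow\Lambda_s$) through the intermediate subgroup
\[
    H\colonequals\left\{
        g\in\widetilde{\uO}^+(\Lambda_s)\;\middle|\;g(\Lambda)=\Lambda
    \right\}
    \;\supseteq\;\widetilde{\uO}^+(\Lambda),
\]
and to bound $[\widetilde{\uO}^+(\Lambda_s):H]$ and $[H:\widetilde{\uO}^+(\Lambda)]$ separately. Writing $N\colonequals[\Lambda_s:\Lambda]$ and $r\colonequals\operatorname{rk}(\Lambda)=\operatorname{rk}(\Lambda_s)$, the two target estimates are $[\widetilde{\uO}^+(\Lambda_s):H]\leq N^{r}$ and $[H:\widetilde{\uO}^+(\Lambda)]\leq|\uO(D(\Lambda))|$, and multiplying them gives the claim.

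For the first estimate, I would let $\widetilde{\uO}^+(\Lambda_s)$ act on the set of sublattices of $\Lambda_s$. The stabilizer of $\Lambda$ is precisely $H$, so by orbit--stabilizer $[\widetilde{\uO}^+(\Lambda_s):H]$ is the cardinality of the orbit of $\Lambda$. Each element of $\widetilde{\uO}^+(\Lambda_s)$ is in particular a group automorphism of $\Lambda_s$, so every lattice in this orbit has index $N$ in $\Lambda_s$; hence the orbit injects into the set of index-$N$ sublattices of $\Lambda_s$. It then suffices to know that a lattice of rank $r$ has at most $N^{r}$ sublattices of index $N$. By multiplicativity this reduces to the case $N=p^{k}$, where the count is the coefficient $a_{p^{k}}$ of $t^{k}$ in $\prod_{j=0}^{r-1}(1-p^{j}t)^{-1}$; a short combinatorial argument (grouping the terms of $a_{p^{k+1}}$ according to a natural reduction to the terms of $a_{p^{k}}$) gives the recursion $a_{p^{k+1}}\leq p^{r}\,a_{p^{k}}$, whence $a_{p^{k}}\leq p^{rk}$.

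For the second estimate, every $g\in H$ restricts to an isometry $g|_{\Lambda}\in\uO(\Lambda)$, and in fact $g|_{\Lambda}\in\uO^{+}(\Lambda)$: the lattices $\Lambda$ and $\Lambda_s$ span the same real vector space, so they have the same positive $2$-planes, and $g$ preserves the orientation of these by hypothesis. Restriction then defines a homomorphism $H\to\uO^{+}(\Lambda)$, and composing with the natural map $\uO^{+}(\Lambda)\to\uO(D(\Lambda))$ yields a homomorphism $\phi\colon H\to\uO(D(\Lambda))$. An element of $H$ lies in $\ker\phi$ exactly when its restriction to $\Lambda$ acts trivially on $D(\Lambda)$, i.e.\ exactly when it lies in $\widetilde{\uO}^{+}(\Lambda)$; therefore $\ker\phi=\widetilde{\uO}^{+}(\Lambda)$, and $[H:\widetilde{\uO}^{+}(\Lambda)]=|\operatorname{im}\phi|\leq|\uO(D(\Lambda))|$.

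The only step that is not routine bookkeeping is the counting estimate for index-$N$ sublattices of a rank-$r$ lattice: the naive bound obtained by estimating each summand of $a_{N}$ by $N^{r-1}$ loses a factor of a divisor function and is insufficient, so one genuinely needs the sharper recursive estimate $a_{p^{k+1}}\le p^{r}a_{p^{k}}$ (or an equivalent classical reference, e.g.\ via the zeta function of $\mathbb{Z}^{r}$). Everything else — the orbit--stabilizer identification and the computation of $\ker\phi$ — is formal.
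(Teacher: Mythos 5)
Your proof is correct and follows essentially the same route as the paper: there, too, the index is factored through the subgroup of $\widetilde{\uO}^+(\Lambda_s)$ preserving $\Lambda$, the first factor is bounded by letting $\widetilde{\uO}^+(\Lambda_s)$ act on the set of index-$[\Lambda_s:\Lambda]$ sublattices of $\Lambda_s$, and the second factor is bounded through the discriminant group. The only divergences are that the paper simply cites \cite[Remark~3.4]{Zo21} for the count of index-$N$ sublattices being at most $N^{\operatorname{rk}(\Lambda)}$, which you reprove via the local zeta function of $\mathbb{Z}^{r}$, and that your identification $\ker\phi=\widetilde{\uO}^+(\Lambda)$ is a slightly more careful version of the paper's bound $[\uO(\Lambda):\widetilde{\uO}^+(\Lambda)]\le|\uO(D(\Lambda))|$ for the second factor.
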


\begin{proof}
Let $\uO(\Lambda_s, \Lambda)$ be the group of isometries $g\in \uO(\Lambda_s)$ such that $g(\Lambda)\subset\Lambda$. Then
$$
    [\widetilde{\uO}^+(\Lambda_s)
        : \widetilde{\uO}^+(\Lambda)]
    = [\widetilde{\uO}^+(\Lambda_s)
        : \widetilde{\uO}^+(\Lambda_s)
        \cap
        \uO(\Lambda_s,\Lambda)]
    \cdot
        [\widetilde{\uO}^+(\Lambda_s)
        \cap
        \uO(\Lambda_s,\Lambda)
        : \widetilde{\uO}^+(\Lambda)].
$$
By construction, $\uO(\Lambda_s,\Lambda)$ can be seen as a subgroup of $\uO(\Lambda)$, so the second factor on the right-hand side of the above equality is bounded by $[\uO(\Lambda):\widetilde{\uO}^+(\Lambda)] \leq |\uO(D(\Lambda))|$. For the first factor, observe that for every $g\in\widetilde{\uO}^+(\Lambda_s)$, the image $g(\Lambda)$ is a sublattice of $\Lambda_s$ of index $[\Lambda_s:\Lambda]$; furthermore, $g(\Lambda)=\Lambda$ if and only if $g\in\uO(\Lambda_s,\Lambda)$. This means that $\widetilde{\uO}^+(\Lambda_s)$ acts on the set $\Sigma$ of sublattices of $\Lambda_s$ of index equal to $[\Lambda_s:\Lambda]$ and with $\widetilde{\uO}^+(\Lambda_s)\cap \uO(\Lambda_s,\Lambda)$ as the stabilizer of $\Lambda$. Thus $[\widetilde{\uO}^+(\Lambda_s):\widetilde{\uO}^+(\Lambda_s)\cap \uO(\Lambda_s,\Lambda)] \leq |\Sigma| \leq [\Lambda_s:\Lambda]^{\operatorname{rk}(\Lambda)}$, where the second inequality comes from \cite[Remark~3.4]{Zo21}.
\end{proof}

\subsection{An even positive lattice of rank two}
\label{subsect:Qh}

There is a special lattice that will play an important role in our application, so we collect below some useful facts about it. Let $a,d$ be positive integers such that $a+d = 4t$ for some integer $t$. Then the lattice is given by
\begin{equation}
\label{eq:defQad}
    Q_{(a,d)}\colonequals
    \begin{pmatrix}
        2a & -a \\
        -a & 2t
    \end{pmatrix}.
\end{equation}
Denote by $z_1,z_2\in Q_{(a,d)}$ the basis vectors with respect to the above matrix. Consider the reflection isometry along $z_1$ given by
$$
    \sigma_{z_1}\colon Q_{(a,d)} \longrightarrow Q_{(a,d)}, \quad
     x\longmapsto x - 2\frac{(x,z_1)}{(z_1,z_1)}z_1.
$$
Our goal is to find an embedding of $Q_{(a,d)}$ into certain lattices that are defined independently of $a$ and $d$ such that $\sigma_{z_1}$ is extendable.

In the following, we will consider the sublattice
$$
    W = \left<w_1, w_2\right>\subset Q_{(a,d)}
    \quad\text{where }
    w_1\colonequals z_1
    \text{ and } 
    w_2\colonequals 2z_2 + z_1.
$$
Notice that $z_1 = w_1$ and $z_2 = \frac{1}{2}(w_2-w_1)$, so $W$ has index two in $Q_{(a,d)}$. Moreover, we have $w_1^2 = 2a$, $w_2^2 = 2d$, and $(w_1,w_2) = 0$, so $W$ is isomorphic to $\bZ(2a)\oplus\bZ(2d)$. Our strategy for finding a desired embedding for $Q_{(a,d)}$ starts with finding an embedding for $W$ with nice properties. We will need Lagrange's four square theorem, which states that any non-negative integer $a$ can be written as the sum of four squares, $a=a_1^2+a_2^2+a_3^2+a_4^2$. It turns out that $a$ can be written as the sum of four coprime squares if and only if $8$ does not divide $a$; see \cite[Theorem~1]{CH07} and also \cite{MO}.

\begin{lemma}
\label{lemma:qadnot4} 
Suppose that both $a$ and $d$ are not divisible by $4$. Then there exists a primitive embedding $Q_{(a,d)} \hookrightarrow E_8$ such that $\sigma_{z_1}$ is extendable. 
\end{lemma}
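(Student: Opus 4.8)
The plan is to realize the desired embedding $Q_{(a,d)}\hookrightarrow E_8$ by first constructing a nice embedding of the index-two sublattice $W\cong\bZ(2a)\oplus\bZ(2d)$ and then checking that it saturates to $Q_{(a,d)}$ and that $\sigma_{z_1}$ extends. Since $a,d$ are not divisible by $4$, in particular $8\nmid a$ and $8\nmid d$, so by the coprime four-square theorem we may write $a=a_1^2+a_2^2+a_3^2+a_4^2$ and $d=d_1^2+d_2^2+d_3^2+d_4^2$ with $\gcd(a_1,a_2,a_3,a_4)=1$ and $\gcd(d_1,d_2,d_3,d_4)=1$. Inside $E_8\supset D_8$ (or a suitably chosen copy of $\bZ^8$ with the even sublattice of the standard form), set
\[
    w_1 = (a_1,a_2,a_3,a_4,0,0,0,0),
    \qquad
    w_2 = (0,0,0,0,d_1,d_2,d_3,d_4).
\]
Then $(w_1,w_1)=2a$, $(w_2,w_2)=2d$, $(w_1,w_2)=0$ after rescaling the standard form by $2$; concretely one uses the embedding of $D_8$ into $E_8$ and the fact that vectors of the form $2\cdot(\text{integer vector})$ all lie in $D_8\subset E_8$, so one should instead take $w_i$ to have even coordinate-sum or pass through $D_8$ directly — this bookkeeping is routine. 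The coprimality of the $a_i$ (resp.\ $d_i$) guarantees that $w_1$ (resp.\ $w_2$) is primitive in $\bZ^8$, hence $W$ embeds, and one checks the embedding $W\hookrightarrow E_8$ is primitive by a short divisibility argument using $\gcd(a_i)=\gcd(d_i)=1$ together with $2\nmid$ something; if primitivity of $W$ fails one only needs primitivity of the saturation $Q_{(a,d)}$, which is what we actually want.

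Next I would identify the saturation of $W$ inside $E_8$. Since $z_1=w_1$ and $z_2=\tfrac12(w_2-w_1)$, the lattice $Q_{(a,d)}$ is generated over $W$ by the vector $\tfrac12(w_2-w_1)$, and this vector lies in $E_8$ precisely when $\tfrac12(w_2-w_1)\in\bZ^8$ fails but lies in $E_8$ via its glue vectors — concretely $\tfrac12(w_2-w_1)=\tfrac12(a_1,\dots,a_4,-d_1,\dots,-d_4)$ is a half-integer vector, and membership in $E_8$ amounts to a congruence condition on the parities of the $a_i$ and $d_i$. Here the hypothesis $4\nmid a$ and $4\nmid d$ is used: it lets us choose the four-square representations so that the parities of $(a_1,a_2,a_3,a_4)$ and $(d_1,d_2,d_3,d_4)$ match the required residue class in $D_8$-glue, so that $\tfrac12(w_2-w_1)\in E_8$. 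One must verify $Q_{(a,d)}$ is then primitive in $E_8$: since $D(Q_{(a,d)})$ has order $|{-}\det Q_{(a,d)}| = 4at - a^2 = a(4t-a)=ad$ and $D(E_8)=0$, primitivity is automatic once the generated sublattice is saturated, so it suffices to show $E_8/Q_{(a,d)}$ is torsion-free, equivalently that no further half-lattice vector involving $\tfrac12$ of a primitive combination lies in $E_8$ — a finite parity check.

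Finally, for extendability of $\sigma_{z_1}$: since $z_1=w_1=(a_1,a_2,a_3,a_4,0,0,0,0)$, the reflection $\sigma_{z_1}$ on $Q_{(a,d)}$ is the restriction of the reflection $\sigma_{w_1}$ of the ambient $\bR^8$ in the hyperplane $w_1^\perp$. The point is that $\sigma_{w_1}$ preserves $E_8$: a reflection in a vector $v$ preserves an even lattice containing $v$ iff $v$ has even norm and $\tfrac{2(x,v)}{(v,v)}\in\bZ$ for all $x$, i.e.\ iff $v$ is $(v,v)/2$-divisible-compatible; here $(w_1,w_1)=2a$ need not divide $2(x,w_1)$ in general, so $\sigma_{w_1}$ need not preserve $E_8$ directly. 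Instead one observes that $\sigma_{z_1}$ acts on $Q_{(a,d)}$, hence on its saturation, and its natural extension to $Q_{(a,d)}\oplus (Q_{(a,d)}^{\perp E_8})$ by the identity on the orthogonal complement does preserve $E_8$ by Nikulin's gluing description of overlattices — this is the argument of \cite[Lemma~7.1]{GHS13} reproduced in Lemma~\ref{lem:O+tilde}, applied to the isometry $\sigma_{z_1}$ which lies in $\widetilde{\uO}(Q_{(a,d)})$ since reflection in a primitive vector of square $2a$ with $4\nmid a$... one must check $\sigma_{z_1}$ acts trivially on $D(Q_{(a,d)})$. The main obstacle is exactly this last point: verifying that $\sigma_{z_1}\in\widetilde{\uO}(Q_{(a,d)})$, i.e.\ that reflection in $z_1$ is trivial on the discriminant group, which requires a direct computation with the discriminant form of $Q_{(a,d)}$ and is where the precise shape of the Gram matrix \eqref{eq:defQad} and the divisibility hypotheses on $a,d$ really enter; once that is established, extendability follows formally from Lemma~\ref{lem:O+tilde}.
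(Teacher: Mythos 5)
Your outline (embed the index-two sublattice $W\cong\bZ(2a)\oplus\bZ(2d)$, check that $z_2=\tfrac12(w_2-w_1)$ lies in $E_8$, then extend $\sigma_{z_1}$) parallels the paper's, but the execution fails at the decisive points. First, the construction itself: with the standard Euclidean form your vectors $w_1=(a_1,\dots,a_4,0,\dots,0)$ and $w_2=(0,\dots,0,d_1,\dots,d_4)$ have norms $a$ and $d$, not $2a$ and $2d$, and ``rescaling the standard form by $2$'' is not available --- the target is $E_8$ with its fixed form, and passing through $D_8$ or $2\cdot\bZ^8$ does not repair the norms. The correct device is to use orthogonal norm-$2$ vectors and, crucially, to \emph{interleave} the supports: in the paper $w_1=\sum_i a_i(e_i-e_{i+4})$ and $w_2=\sum_i b_i(e_i+e_{i+4})$ use the same indices, so $z_2$ has coordinates $\tfrac{b_i\mp a_i}{2}$ and the condition $z_2\in E_8$ becomes ``$a_i$ and $b_i$ have opposite parity for each $i$'', which is arrangeable precisely because $a+d\equiv 0\bmod 4$ forces the number of even $a_i$ to equal the number of odd $b_i$. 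With your disjoint supports this can never happen: $\tfrac12(w_2-w_1)$ would need either all coordinates integral (impossible, since the $a_i$ are coprime and hence not all even) or all half-integral, which forces every $a_i$ and every $d_i$ to be odd and hence $a\equiv d\equiv 4\bmod 8$ --- exactly what the hypothesis $4\nmid a$, $4\nmid d$ rules out. So the parity-matching you invoke is not merely unverified; in your configuration it is impossible, and the hypothesis works against you rather than for you.

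Second, extendability. Your fallback --- show $\sigma_{z_1}\in\widetilde{\uO}(Q_{(a,d)})$ and apply Lemma~\ref{lem:O+tilde} --- cannot be completed, because $\sigma_{z_1}$ is in general nontrivial on $D(Q_{(a,d)})$: for $a=d=2$ one has $D(Q_{(2,2)})\cong(\bZ/2\bZ)^2$ generated by $[\tfrac12 z_1]$ and $[\tfrac12 z_2]$, and $\sigma_{z_1}$ sends $[\tfrac12 z_2]\mapsto[\tfrac12 z_2]+[\tfrac12 z_1]\neq[\tfrac12 z_2]$. (If $\sigma_{z_1}$ were always stable, the special embeddings of Section~\ref{subsect:Qh} would be unnecessary, since $\widetilde{\uO}^+$ extends along any embedding.) The paper instead extends $\sigma_{z_1}$ by the explicit coordinate involution $\sigma(e_i)=e_{i+4}$, which manifestly preserves $E_8$ and restricts to $\sigma_{z_1}$ because $\sigma(w_1)=-w_1$ and $\sigma(w_2)=w_2$; this again depends on the interleaved choice of $w_1,w_2$. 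Finally, primitivity of $Q_{(a,d)}\hookrightarrow E_8$, which the statement asserts and which the application in Lemma~\ref{lemma:extendabilityK3n} uses, is only waved at in your proposal, whereas the paper establishes it by a direct divisibility and parity computation using the coprimality of the $a_i$ and of the $b_i$. As it stands the proposal has genuine gaps at the construction, extendability, and primitivity steps.
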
 

\begin{proof}
Because $a$ and $d$ are not divisible by $4$, by \cite[Theorem~1]{CH07} each of them can be written as a sum of four coprime squares:
$
    a = a_1^2 + a_2^2 + a_3^2 + a_4^2
$
and
$
    d = b_1^2 + b_2^2 + b_3^2 + b_4^2.
$
The hypothesis also implies that the~$a_i$ (resp.\ the $b_i$) cannot be all even or all odd. We claim that, upon rearranging the indices, we can assume $b_i\pm a_i$ is odd for all $i$. Indeed, we can write $a + d = 4t$ as
$$
    \sum_{i=1}^4 a_i^2 + \sum_{j=1}^4 b_j^2
    \equiv 0 \mod 4.
$$
A direct check shows that the number of even $a_i$ is the same as the number of odd $b_i$. Hence, upon rearranging the indices, we can assume that $a_i$ is even if and only if $b_i$ is odd for each $i\in\{1,2,3,4\}$, which implies that $b_i\pm a_i$ is odd for each $i$.

Now consider  the vector space $\bigoplus_{i=1}^8\bQ e_i$ with the standard Euclidean product. Then the lattice $E_8$ can be realized as the sublattice in $\bigoplus_{i=1}^8\bQ e_i$ whose coordinates are either all integers or all half integers such that the sum of all coordinates is even. There is an embedding $W\hookrightarrow E_8$ given by
\begin{align*}
	w_1 &= a_1(e_1 - e_5)+ a_2(e_2 - e_6)+ a_3(e_3 - e_7)+ a_4(e_4 - e_8), \\
	w_2 &= b_1(e_1 + e_5)+ b_2(e_2 + e_6)+ b_3(e_3 + e_7)+ b_4(e_4 + e_8).
\end{align*}
Now we have
$$
	z_2 = \frac{1}{2} \left(w_2 - w_1\right) 
    = \sum_{i=1}^4\frac{b_i - a_i}{2}e_i
	+ \sum_{i=1}^4\frac{b_i + a_i}{2}e_{i+4}, 
$$
which belongs to $E_8$ since the $b_i\pm a_i$ are all odd. Hence the embedding extends to an embedding $Q_{(a,d)} \hookrightarrow E_8$. To check that $\sigma_{z_1}$ extends, consider the involution $\sigma \colon E_8 \to E_8$ defined by $\sigma(e_i)=e_{4+i}$ for $i=1,2,3,4$. Then $\sigma(w_1) = -w_1 = \sigma_{z_1}(w_1)$ and $\sigma(w_2) = w_2 = \sigma_{z_1}(w_2)$, so that $\sigma(z_1)=-z_1=\sigma_{z_1}(z_1)$ and $\sigma(z_2) = z_2+z_1 = \sigma_{z_1}(z_1)$. This shows that $\sigma$ is an extension of $\sigma_{z_1}$.

Let us prove that the embedding $Q_{(a,d)} \hookrightarrow E_8$ is primitive. Every $x\in E_8\cap (Q_{(a,d)}\otimes\bQ)$ can be written as
$$
	x = \sum_{i=1}^4 \left(
        n_i + \frac{\delta}{2}
    \right)e_i
    + \sum_{i=1}^4\left(
        n_{i+4} + \frac{\delta}{2}
    \right)e_{i+4}
    = \frac{A}{B}z_1 + \frac{C}{D} z_2, 
$$
where $n_i,n_{i+4} \in \mathbb{Z}$, $\delta\in \{0,1\}$, $A,B,C,D\in \mathbb{Z}$ with $A,B$ coprime and $C,D$ coprime. Expanding $z_1,z_2$ in $e_1,\dots, e_8$ and comparing the coefficients gives
$$
	n_i + \frac{\delta}{2}
    = \frac{A}{B}a_i + \frac{C}{2D}(b_i-a_i),
    \quad
    n_{i+4} + \frac{\delta}{2}
    = -\frac{A}{B}a_i + \frac{C}{2D}(b_i+a_i)
    \quad
    \text{for } i=1,\dots,4, 
$$
which implies
\begin{equation}
\label{eq:qadprimitive}
	n_i + n_{i+4} + \delta
    = \frac{C}{D} b_i,
    \quad
    n_i - n_{i+4}
    = \left(\frac{2A}{B} - \frac{C}{D}\right)a_i 
    \quad
    \text{for } i=1,\dots,4.
\end{equation}
The first set of equations in \eqref{eq:qadprimitive} implies that $D$ divides all the $b_i$. Since the $b_i$ are coprime, we get $D=1$. The second set of equations shows that $B$ divides $2a_i$ for $i=1,\dots,4$. Since the $a_i$ are coprime, we conclude that $B=1$ or $B=2$. If $B=1$, then $x\in Q_{(a,d)}$ and we are done. If $B=2$, then $A$ is odd. In this case, adding the two sets of equations in \eqref{eq:qadprimitive} together modulo $2$ gives
$$
    \delta
    \equiv C(b_i - a_i) + a_i
    \equiv C + a_i \mod 2, 
$$
where the second equality holds as the $b_i-a_i$ are all odd. Hence $a_i\equiv\delta - C \mod 2$, so the $a_i$ are all even or all odd, which gives a contradiction. Therefore, the embedding $Q_{(a,d)} \hookrightarrow E_8$ is primitive. This completes the proof.
\end{proof}

\begin{lemma}
\label{lemma:qad4}
Suppose that one, and hence both, of $a$ and $d$ are divisible by $4$. Then there exists an embedding $Q_{(a,d)}\hookrightarrow A_1^{\oplus 10}$ such that $\sigma_{z_1}$ is extendable. Moreover, if $Q_{(a,d),s}$ is the saturation, then $[Q_{(a,d),s}:Q_{(a,d)}] = 2$.
\end{lemma}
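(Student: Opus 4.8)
The plan is as follows. Since $a+d=4t$, the hypothesis (one and hence both of $a,d$ divisible by $4$) lets me write $a=4a'$ and $d=4d'$ with $a',d'$ positive integers (both are positive integers $\geq 4$) and $t=a'+d'$. I will build the embedding from an orthogonal pair of \emph{primitive} vectors $u_1,u_2\in A_1^{\oplus 10}$ with $u_1^2=2a'$ and $u_2^2=2d'$, and then declare $z_1\colonequals 2u_1$, $z_2\colonequals u_2-u_1$. Concretely, realize $A_1^{\oplus 10}=\bigoplus_{i=1}^{10}\bZ e_i$ with $(e_i,e_j)=2\delta_{ij}$; apply Lagrange's four-square theorem to $a'-1\ge 0$ and to $d'-1\ge 0$ to get $a'=1+c_2^2+c_3^2+c_4^2+c_5^2$ and $d'=1+c_7^2+c_8^2+c_9^2+c_{10}^2$; and set
\[
u_1\colonequals e_1+c_2e_2+c_3e_3+c_4e_4+c_5e_5,\qquad u_2\colonequals e_6+c_7e_7+c_8e_8+c_9e_9+c_{10}e_{10}.
\]
Then $u_1^2=2a'$, $u_2^2=2d'$, and $(u_1,u_2)=0$ because the supports are disjoint, and each of $u_1,u_2$ is primitive in $A_1^{\oplus 10}$ because one of its coordinates equals $1$. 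The reason for allowing ten copies of $A_1$ is precisely to have room for the two extra ``$1^2$'' terms: they guarantee primitivity even in the cases where the ``sum of four coprime squares'' theorem of \cite{CH07} fails, i.e.\ when $8\mid a'$ or $8\mid d'$ — this is the one spot where the divisible-by-$4$ case genuinely differs from Lemma~\ref{lemma:qadnot4}.

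Next I would check that $z_1\mapsto 2u_1$, $z_2\mapsto u_2-u_1$ is an isometric embedding onto $\langle 2u_1,\,u_2-u_1\rangle\subset A_1^{\oplus 10}$: the Gram entries are $z_1^2=4u_1^2=8a'=2a$, $z_2^2=u_2^2+u_1^2=2(a'+d')=2t$, and $(z_1,z_2)=-2u_1^2=-4a'=-a$, matching \eqref{eq:defQad}. For the extendability of $\sigma_{z_1}$, note $\sigma_{z_1}(z_1)=-z_1$ and $\sigma_{z_1}(z_2)=z_1+z_2$; since $u_1=\tfrac12 z_1$ and $u_2=\tfrac12 z_1+z_2$, this says $\sigma_{z_1}$ acts by $u_1\mapsto -u_1$, $u_2\mapsto u_2$. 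Hence $\sigma_{z_1}$ is the restriction of the isometry $\sigma\in\uO(A_1^{\oplus 10})$ defined by $\sigma(e_i)=-e_i$ for $i\le 5$ and $\sigma(e_i)=e_i$ for $i\ge 6$, which preserves $Q_{(a,d)}=\langle 2u_1,\,u_2-u_1\rangle$ because $\sigma(2u_1)=-2u_1$ and $\sigma(u_2-u_1)=u_2+u_1=(u_2-u_1)+2u_1$.

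For the saturation statement, set $\Lambda_s\colonequals\langle u_1,u_2\rangle$. A one-line denominator argument shows $\Lambda_s$ is a primitive sublattice of $A_1^{\oplus 10}$: if $\alpha u_1+\beta u_2$ has integer coordinates, then its $e_1$- and $e_6$-coordinates are exactly $\alpha$ and $\beta$ (since $u_2$ has no $e_1$-component, $u_1$ has no $e_6$-component, and both relevant coefficients equal $1$), so $\alpha,\beta\in\bZ$. Therefore $Q_{(a,d),s}=A_1^{\oplus 10}\cap\bigl(Q_{(a,d)}\otimes\bQ\bigr)=\Lambda_s$, and since the change of basis from $(u_1,u_2)$ to $(2u_1,\,u_2-u_1)$ has determinant $2$, we conclude $[Q_{(a,d),s}:Q_{(a,d)}]=2$.

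I expect the only genuinely delicate point to be the ``moreover'' clause: the construction must be arranged so that the saturation index is \emph{exactly} $2$. It must be at least $2$, which is why $u_1=z_1/2$ has to lie in $A_1^{\oplus 10}$; and it must not exceed $2$, which forces $\langle u_1,u_2\rangle$ to be primitive — hence the insistence on primitive $u_1,u_2$ (equivalently, coprime sum-of-squares representations), achieved here via the explicit ``$1^2$'' summands. Everything else reduces to routine Gram-matrix bookkeeping.
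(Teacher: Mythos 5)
Your construction is exactly the paper's, just reparametrized: your $u_1,u_2$ are the halves of the paper's $w_1,w_2$ (both obtained by writing $a/4-1$ and $d/4-1$ as sums of four squares and padding with a coefficient-$1$ coordinate in disjoint copies of $A_1^{\oplus 5}$), the extending involution $\sigma$ is the same sign flip on the first five basis vectors, and the saturation/index-$2$ argument via the $e_1$- and $e_6$- (resp.\ $e_5$- and $e_{10}$-) coordinates is the same. The proof is correct and follows essentially the same route as the paper.
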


\begin{proof}
Let us write $\frac{a}{4}-1 = a_1^2+\dots+a_4^2$ and $\frac{d}{4}-1=b_1^2+\dots+b_4^2$ using Lagrange's four square theorem. Then there is an embedding $W\hookrightarrow A_1^{\oplus 10}$ defined by
\begin{align*}
	w_1 &= 2a_1 e_1 + 2a_2 e_2 + 2a_3 e_3 + 2a_4 e_4 + 2e_5, \\
	w_2 &= 2b_1 e_6 + 2b_2 e_7 + 2b_3 e_8 + 2b_4 e_9 + 2e_{10},
\end{align*}
where $\{e_1,\dots,e_{10}\}$ is the canonical basis for $A_1^{\oplus 10}$. Since $w_2-w_1$ is divisible by $2$, this extends to an embedding of $Q_{(a,d)}$ by taking $z_1 = w_1$ and $z_2 = \frac{1}{2}(w_2-w_1)$. The reflection $\sigma_{z_1} = \sigma_{w_1}$ extends to the involution $\sigma \colon A_1^{\oplus 10} \to A_1^{\oplus 10}$ defined by $\sigma(e_i)=-e_i$ and $\sigma(e_{5+i})=e_{5+i}$ for $i=1,\dots,5$. Finally, one can verify that the saturation is given by $Q_{(a,d),s} = \left<\frac{1}{2}z_1,z_2 \right>$. Indeed, if $p,q\in\mathbb{Q}$ are such that
$p\left(\frac{1}{2}z_1\right)+qz_2\in A_1^{\oplus 10}$, then the coefficient of $e_{10}$ is $q$, showing that $q\in\bZ$, and the coefficient of $e_5$ is $q-p$, showing that $p\in\bZ$. This proves the last assertion.
\end{proof}

Now consider the lattice $\mathbb{Z}(2a)\oplus \mathbb{Z}(2d)$ with canonical basis $\{z_1,z_2\}$, and let $\sigma_{z_1}$ denote the reflection along $z_1$. With a similar strategy, we can find an embedding for this lattice independent of $a$ and $d$ such that~$\sigma_{z_1}$ extends.

\begin{lemma}
\label{lemma:embeddingsplitlattice}
There exists a primitive embedding $\mathbb{Z}(2a)\oplus \mathbb{Z}(2d) \hookrightarrow A_1^{\oplus 10}$ such that $\sigma_{z_1}$ extends. When $8\nmid a$ and $8\nmid d$, there exists a primitive embedding $\mathbb{Z}(2a)\oplus \mathbb{Z}(2d) \hookrightarrow A_1^{\oplus 8}$ such that $\sigma_{z_1}$ extends. 
\end{lemma}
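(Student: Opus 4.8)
My plan is to follow the strategy of Lemmas~\ref{lemma:qadnot4} and~\ref{lemma:qad4}: I split the target lattice into two disjoint blocks, place $z_1$ in one block and $z_2$ in the other, and extend $\sigma_{z_1}$ by the involution that negates the coordinates of the first block and fixes those of the second. Concretely, write $A_1^{\oplus 10} = A_1^{\oplus 5}\oplus A_1^{\oplus 5}$ (resp.\ $A_1^{\oplus 8} = A_1^{\oplus 4}\oplus A_1^{\oplus 4}$) with canonical basis $e_1,\dots,e_{10}$ (resp.\ $e_1,\dots,e_8$), where $(e_i,e_j)=2\delta_{ij}$. If $z_1$ is supported on the first block with $(z_1,z_1)=2a$ and $z_2$ is supported on the second with $(z_2,z_2)=2d$, then $(z_1,z_2)=0$ automatically, so $\langle z_1,z_2\rangle$ has Gram matrix $\operatorname{diag}(2a,2d)$ and is isomorphic to $\bZ(2a)\oplus\bZ(2d)$; moreover the coordinate-wise sign change $\sigma$ on the first block satisfies $\sigma(z_1)=-z_1$ and $\sigma(z_2)=z_2$, hence restricts to $\sigma_{z_1}$ on $\langle z_1,z_2\rangle$. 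So in both cases everything reduces to producing a primitive vector of square $2a$ in a suitable number of copies of $A_1$, and likewise for $d$, sitting in disjoint blocks.

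For the $A_1^{\oplus 10}$ statement I would use the elementary fact that every positive integer $a$ is a sum of five coprime squares: by Lagrange's four square theorem write $a-1 = b_1^2+b_2^2+b_3^2+b_4^2$, whence $a = 1^2+b_1^2+b_2^2+b_3^2+b_4^2$ with the five terms having greatest common divisor $1$. Then $z_1 \colonequals e_1 + b_1 e_2 + b_2 e_3 + b_3 e_4 + b_4 e_5$ has $(z_1,z_1)=2a$, and it is primitive in $A_1^{\oplus 5}$ because its $e_1$-coordinate equals $1$. Running the same construction with $d$ in the second block yields $z_2$ whose $e_6$-coordinate equals $1$. The sublattice $\langle z_1,z_2\rangle$ is then primitive in $A_1^{\oplus 10}$: if $pz_1+qz_2\in A_1^{\oplus 10}$ for some $p,q\in\bQ$, reading off the $e_1$- and $e_6$-coordinates forces $p,q\in\bZ$.

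For the $A_1^{\oplus 8}$ statement, the hypotheses $8\nmid a$ and $8\nmid d$ let me invoke \cite[Theorem~1]{CH07} to write $a = a_1^2+a_2^2+a_3^2+a_4^2$ and $d = b_1^2+b_2^2+b_3^2+b_4^2$ with $\gcd(a_1,\dots,a_4)=1$ and $\gcd(b_1,\dots,b_4)=1$. Setting $z_1 \colonequals a_1 e_1 + \dots + a_4 e_4$ and $z_2 \colonequals b_1 e_5 + \dots + b_4 e_8$ gives orthogonal vectors of squares $2a$ and $2d$, primitive in their respective blocks by coprimality. Primitivity of $\langle z_1,z_2\rangle$ in $A_1^{\oplus 8}$ follows from the same coordinate argument: if $pz_1+qz_2\in A_1^{\oplus 8}$, then $pa_i\in\bZ$ for all $i$, and choosing integers $n_i$ with $\sum_i n_ia_i=1$ gives $p=\sum_i n_i(pa_i)\in\bZ$; similarly $q\in\bZ$. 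In both cases $\sigma_{z_1}$ extends via the block sign-change involution described above.

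The proof carries no real obstacle; the only point to watch is that the arithmetic decomposition chosen be compatible with primitivity of the embedding. This is exactly why the $A_1^{\oplus 10}$ statement is unconditional — the trivial decomposition $a=1^2+(a-1)$ always furnishes five coprime squares — while the $A_1^{\oplus 8}$ statement needs $8\nmid a$ and $8\nmid d$, the precise condition (by \cite[Theorem~1]{CH07}) for $a$ and $d$ each to be a sum of four coprime squares.
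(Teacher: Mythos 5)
Your proposal is correct and follows essentially the same route as the paper's proof: write $a-1$ and $d-1$ (resp.\ $a$ and $d$, using \cite[Theorem~1]{CH07} when $8\nmid a,d$) as sums of four squares, embed $z_1$ and $z_2$ in disjoint blocks of $A_1$'s, and extend $\sigma_{z_1}$ by the sign change on the first block. Your explicit coordinate argument for primitivity (reading off the unit coordinate, resp.\ using $\sum_i n_i a_i = 1$) just spells out the check the paper leaves to the reader.
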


\begin{proof}
Write $a-1$ and $d-1$ as sums of four squares so that $a=a_1^2+a_2^2+a_3^2+a_4^2+1$ and $d = b_1^2+b_2^2+b_3^2+b_4^2+1$. Then the embedding of $\mathbb{Z}(2a)\oplus \mathbb{Z}(2d)$ into $A_1^{\oplus 10}$ is defined by
$$
    z_1 = a_1e_1+\dots+a_4e_4+e_5,
    \qquad z_2 = b_1e_5+\dots+b_4e_9+e_{10}.
$$
One can check directly that this is a primitive embedding. Moreover, $\sigma_{z_1}$ extends to the involution $\sigma\colon A_1^{\oplus 10} \to A_1^{\oplus 10}$ defined by $\sigma(e_i) = -e_i$ and $\sigma(e_{5+i}) = e_{5+i}$ for $i=1,\dots,5$.

If $8$ divides neither $a$ nor $d$, we can write them as the sum of four coprime squares $a = a_1^2+\dots+a_4^2$ and $d = b_1^2+\dots+b_4^2$. Then the embedding into $A_1^{\oplus 8}$ is defined by
$$
    z_1 = a_1e_1 + \dots + a_4e_4,
    \qquad z_2 = b_1e_5 + \dots + b_4e_8,
$$
which is again primitive. Furthermore, $\sigma_{z_1}$ extends to the involution $\sigma\colon A_1^{\oplus 8}\to A_1^{\oplus 8}$ defined by $\sigma(e_i)=-e_i$ and $\sigma(e_{4+i})=e_{4+i}$ for $i=1,\dots,4$. 
\end{proof}

\subsection{Degrees of irrationality of period spaces}
\label{subsect:irr-modular}

Putting the previous results together allows us to generalize the techniques in \cite[Section~6]{ABL23}. Suppose that $\Lambda$ is an even lattice of signature $(2,m)$ with an embedding into an even lattice $\Lambda_{\#}$ of signature $(2,m')$, where $1\leq m'-m\leq m'-2$. Consider an arithmetic subgroup $\Gamma\subset\uO^+(\Lambda)$ which is extendable to $\Lambda_{\#}$. We are going to bound the degree of irrationality of the period space $\mathcal{P}_{\Gamma}(\Lambda)$ in terms of this embedding. Let us first deal with the case when the embedding is primitive.

\begin{lemma}
\label{lemma:irrationality-general}
Suppose that the embedding $\Lambda \hookrightarrow \Lambda_{\#}$ is primitive, and consider the natural map
$
    \psi_{\Gamma}\colon\cP_{\Lambda}(\Gamma)
    \rightarrow
    {\cP}^+_{\Lambda_{\#}}.
$
Then there exists a constant $C$ depending only on $\Lambda_{\#}$ such that with $Y\colonequals\mathcal{P}_{\Lambda}(\Gamma)$, we have the inequalities
\begin{gather*}
    \irr(\psi_{\Gamma}(Y))\leq C\cdot
    |\operatorname{disc}(\Lambda)|^{1+\frac{m'}{2}},
\\
    \irr(Y)\leq C
    \cdot [\widetilde{\uO}^+(\Lambda)
        : \Gamma\cap\widetilde{\uO}^+(\Lambda)]
    \cdot |\uO(D(\Lambda))|
    \cdot |\operatorname{disc}(\Lambda)|^{1+\frac{m'}{2}}.
\end{gather*}
\end{lemma}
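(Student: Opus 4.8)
The plan is to realize the image of $\psi_{\Gamma}$ as an irreducible component of a Kudla special cycle on $\cP^+_{\Lambda_{\#}}$, apply Lemma~\ref{lem:irr_Kudla}, and control the discriminant of the orthogonal complement of $\Lambda$; the map $\psi_\Gamma$ itself is then handled by Lemma~\ref{lem:bound-Hdg-partners-gamma}. In detail, I would set $r\colonequals m'-m$ and let $K\colonequals\Lambda^{\perp\Lambda_{\#}}$. Since $\Lambda\hookrightarrow\Lambda_{\#}$ is primitive, $K$ is a negative definite lattice of rank $r$ with $K^{\perp\Lambda_{\#}}=\Lambda$, and the hypothesis $1\le m'-m\le m'-2$ is precisely $1\le r\le m'-2$. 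Fixing a $\bZ$-basis $\underline{v}=(v_1,\dots,v_r)$ of $K$, the embedding induces $\Omega(\Lambda)\hookrightarrow\Omega(\Lambda_{\#})$ with image $\langle\underline{v}\rangle^{\perp}$, so $\psi_{\Gamma}(Y)$ is an irreducible component of the special cycle $Z_{T,\underline{0}}\subset\cP^+_{\Lambda_{\#}}$ with $T=\tfrac12\bigl((v_i,v_j)\bigr)$, whence $|\det T|=2^{-r}\,|\operatorname{disc}(K)|$. Applying Lemma~\ref{lem:irr_Kudla} with ambient lattice $\Lambda_{\#}$ of signature $(2,m')$ — whose proof bounds $\deg\overline{Z_{T,\underline{0}}}$, and hence the degree of each of its irreducible components, by $C\cdot\det(T)^{1+m'/2}$ with $C$ depending only on $\Lambda_{\#}$ (take the maximum over the finitely many values $r\in\{1,\dots,m'-2\}$) — and using that the degree of irrationality is at most the degree under a projective embedding, I obtain $\irr(\psi_{\Gamma}(Y))\le C\cdot|\det T|^{1+m'/2}$.

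Next I would carry out the lattice estimate $|\operatorname{disc}(K)|\le|\operatorname{disc}(\Lambda_{\#})|\cdot|\operatorname{disc}(\Lambda)|$. Since $\Lambda\oplus K\subseteq\Lambda_{\#}$ has finite index, $|\operatorname{disc}(\Lambda)|\cdot|\operatorname{disc}(K)|=[\Lambda_{\#}:\Lambda\oplus K]^2\cdot|\operatorname{disc}(\Lambda_{\#})|$; and the glue map $\Lambda_{\#}/(\Lambda\oplus K)\to D(\Lambda)$ is injective because $\Lambda$ is primitive in $\Lambda_{\#}$, so $[\Lambda_{\#}:\Lambda\oplus K]\le|\operatorname{disc}(\Lambda)|$, and combining the two gives the claim. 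Therefore $|\det T|=2^{-r}|\operatorname{disc}(K)|\le|\operatorname{disc}(\Lambda_{\#})|\cdot|\operatorname{disc}(\Lambda)|$, and feeding this into the previous paragraph yields $\irr(\psi_{\Gamma}(Y))\le C'\cdot|\operatorname{disc}(\Lambda)|^{1+m'/2}$ with $C'$ depending only on $\Lambda_{\#}$, which is the first asserted inequality. For the second, since $\psi_{\Gamma}$ has finite generic fibers we have $\irr(Y)\le\deg\bigl(\psi_{\Gamma}\text{ onto its image}\bigr)\cdot\irr(\psi_{\Gamma}(Y))$; as the embedding is primitive its saturation $\Lambda_s$ equals $\Lambda$, so Lemma~\ref{lem:bound-Hdg-partners-gamma} bounds the first factor by $[\widetilde{\uO}^+(\Lambda):\Gamma\cap\widetilde{\uO}^+(\Lambda)]\cdot|\uO(D(\Lambda))|$, and multiplying by the bound just obtained gives the statement.

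I expect the delicate points to be bookkeeping rather than substantial mathematics: one must check that $\psi_{\Gamma}(Y)$ is genuinely an irreducible component of one cycle $Z_{T,\underline{0}}$ — so that the known bound on the degree of the whole cycle legitimately controls the irrationality of this component (here one uses that $\Omega(\Lambda)$, hence $Y$, is irreducible of the same dimension $m$ as $Z_{T,\underline 0}$) — and that every constant produced along the way, in particular the one coming out of Lemma~\ref{lem:irr_Kudla} and the factor $|\operatorname{disc}(\Lambda_{\#})|$, depends on $\Lambda_{\#}$ alone and not on $\Lambda$ or $\Gamma$. The only genuine computation, the estimate $|\operatorname{disc}(K)|\le|\operatorname{disc}(\Lambda_{\#})|\,|\operatorname{disc}(\Lambda)|$, is routine lattice theory.
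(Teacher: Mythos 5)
Your proposal is correct and follows essentially the same route as the paper's proof: realize $\psi_{\Gamma}(Y)$ inside the special cycle $Z_{T,0}$, invoke Lemma~\ref{lem:irr_Kudla}, bound $|\det T|$ by $|\operatorname{disc}(\Lambda_{\#})|\cdot|\operatorname{disc}(\Lambda)|$ via the discriminant--index formula together with $[\Lambda_{\#}:\Lambda\oplus\Lambda^{\perp}]\le|\operatorname{disc}(\Lambda)|$, and then apply Lemma~\ref{lem:bound-Hdg-partners-gamma} (with $\Lambda_s=\Lambda$) for the second inequality. The only deviations are cosmetic: you prove the index bound directly with the glue group where the paper cites \cite{GHS13}, and the injectivity of $\Lambda_{\#}/(\Lambda\oplus K)\to D(\Lambda)$ actually follows from the automatic saturation of $K=\Lambda^{\perp\Lambda_{\#}}$ rather than from the primitivity of $\Lambda$ (which instead gives injectivity into $D(K)$), but the needed inequality holds all the same.
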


\begin{proof}
Choose a basis $\underline{w} = (w_1,\dots,w_{m'-m})$ for the orthogonal complement $\Lambda^{\perp\Lambda_{\#}}$ with moment matrix $T = \left(\frac{1}{2}(w_i, w_j)\right)$. Then $\Omega(\Lambda) = \langle \underline{w} \rangle^{\perp} \subset \Omega(\Lambda_{\#})$. Thus $\psi_{\Gamma}(Y)$ appears as a Zariski open subset of a component of the special cycle $Z_{T,0}\subset{\cP}^{+}_{\Lambda_{\#}}$. By Lemma~\ref{lem:irr_Kudla}, there exist a constant $C'$ depending only on $\Lambda_{\#}$ such that
$$
    \operatorname{irr}(\psi_{\Gamma}(Y))
    \leq C' \cdot |\det(T)|^{1+\frac{m'}{2}}.
$$
Note that (cf. \cite[Section~14.0.2]{Huy16})
$$
    \det(T)
    = \frac{1}{2^{m'-m}}\cdot|\operatorname{disc}(\Lambda^\perp)|
    = \frac{1}{2^{m'-m}}\cdot\frac{
            |\operatorname{disc}(\Lambda_{\#})|
        }{
            |\operatorname{disc}(\Lambda)|
        }
    \cdot\left[\Lambda_{\#}:\Lambda\oplus\Lambda^\perp\right]^2.
$$
From \cite[Section~7, Equation~(36)]{GHS13}, one can deduce that
$
    \left[\Lambda_{\#}:\Lambda\oplus\Lambda^\perp\right]
    \leq |\operatorname{disc}(\Lambda)|.
$
Hence
$$
    \left|\det(T)\right|
    \leq |{\operatorname{disc}(\Lambda_{\#})}|
    \cdot |\operatorname{disc}(\Lambda)|. 
$$
By setting $C \colonequals C'\cdot |\operatorname{disc}(\Lambda_{\#})|^{1+\frac{m'}{2}}$, we obtain the first inequality
$$
    \irr(\psi_{\Gamma}(Y))\leq C\cdot
    |\operatorname{disc}(\Lambda)|^{1+\frac{m'}{2}}.
$$
By Lemma~\ref{lem:bound-Hdg-partners-gamma}, we have
$$
    \irr(Y)\leq\irr(\psi_{\Gamma}(Y))\cdot
    [\widetilde{\uO}^+(\Lambda)
        : \Gamma\cap\widetilde{\uO}^+(\Lambda)]
    \cdot|\uO(D(\Lambda))|.
$$
Combining this with the previous inequality gives us the second inequality.
\end{proof}

Now let us deduce a bound when the embedding is not necessarily primitive.

\begin{lemma}
\label{lemma:irrationality-general-nonprimitie}
Let $\Lambda_s\subset\Lambda_\#$ be the saturation of $\Lambda$, and assume that $[\Lambda_s:\Lambda]\leq D$ for some constant $D$. Also let~$\ell(\Lambda)$ be the minimum number of generators for $D(\Lambda)$. Then there exists a constant $C$ depending only on $\Lambda_{\#}$ and $D$ such that with $Y=\mathcal{P}_{\Lambda}(\Gamma)$, it holds that
$$
    \irr(Y) \leq  C
    \cdot [\widetilde{\uO}^+(\Lambda)
        : \Gamma\cap \widetilde{\uO}^+(\Lambda)]
    \cdot |\operatorname{disc}(\Lambda)|^{1+\frac{m'}{2}+2\ell(\Lambda)}.
$$
\end{lemma}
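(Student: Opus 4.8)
The plan is to reduce the non-primitive case to the primitive case already handled in Lemma~\ref{lemma:irrationality-general}, by inserting the saturation $\Lambda_s$ as an intermediate lattice. First I would record the chain of inclusions $\Lambda\subset\Lambda_s\subset\Lambda_\#$, where $\Lambda_s\hookrightarrow\Lambda_\#$ is primitive by construction, and note that by Lemma~\ref{lem:O+tilde} every $g\in\widetilde{\uO}^+(\Lambda)$ extends to $\Lambda_s$ as the identity on $\Lambda^{\perp\Lambda_s}$, so that $\Gamma\cap\widetilde{\uO}^+(\Lambda)$ can be viewed as a finite-index subgroup of $\widetilde{\uO}^+(\Lambda_s)$, hence an extendable subgroup with respect to the primitive embedding $\Lambda_s\hookrightarrow\Lambda_\#$. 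The point is that the composed map $\psi_\Gamma\colon\cP_\Lambda(\Gamma)\to\cP^+_{\Lambda_\#}$ factors through $\widetilde{\cP}^+_{\Lambda_s}$, the first arrow being finite onto its image, so $\irr(\cP_\Lambda(\Gamma))$ is controlled by $\irr$ of the image together with the degree bounds from Lemma~\ref{lem:bound-Hdg-partners-gamma}.

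Next I would apply Lemma~\ref{lemma:irrationality-general} to the \emph{primitive} embedding $\Lambda_s\hookrightarrow\Lambda_\#$ together with the group $\widetilde{\uO}^+(\Lambda_s)$ itself, obtaining a bound $\irr(\widetilde{\psi}^+_{\Lambda_s}(\widetilde{\cP}^+_{\Lambda_s}))\leq C'\cdot|\operatorname{disc}(\Lambda_s)|^{1+\frac{m'}{2}}$ for a constant $C'$ depending only on $\Lambda_\#$. Then Lemma~\ref{lem:bound-Hdg-partners-gamma} (or its building blocks, Lemmas~\ref{lem:bound-Hdg-partners-unsat} and~\ref{lemma:boundolambdas}) bounds the degree of $\psi_\Gamma$ onto its image by
$$
    [\widetilde{\uO}^+(\Lambda):\Gamma\cap\widetilde{\uO}^+(\Lambda)]
    \cdot[\widetilde{\uO}^+(\Lambda_s):\widetilde{\uO}^+(\Lambda)]
    \cdot|\uO(D(\Lambda_s))|,
$$
so that $\irr(Y)$ is at most this product times $C'\cdot|\operatorname{disc}(\Lambda_s)|^{1+\frac{m'}{2}}$. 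It remains to express everything in terms of $|\operatorname{disc}(\Lambda)|$, $\ell(\Lambda)$, and the bound $D$ on $[\Lambda_s:\Lambda]$.

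The bookkeeping step is then to invoke the estimates already proven: by Lemma~\ref{lem:boundorthogonalgroup} we have $|\uO(D(\Lambda_s))|\leq|\operatorname{disc}(\Lambda)|^{\ell(\Lambda)}$; by Lemma~\ref{lemma:boundolambdas} and the hypothesis $[\Lambda_s:\Lambda]\leq D$ we have $[\widetilde{\uO}^+(\Lambda_s):\widetilde{\uO}^+(\Lambda)]\leq D^{\operatorname{rk}(\Lambda)}\cdot|\uO(D(\Lambda))|\leq D^{\operatorname{rk}(\Lambda)}\cdot|\operatorname{disc}(\Lambda)|^{\ell(\Lambda)}$ using the general inequality $|\uO(D(\Lambda))|\leq|\operatorname{disc}(\Lambda)|^{\ell(\Lambda)}$ recorded before Lemma~\ref{lem:boundorthogonalgroup}; and $|\operatorname{disc}(\Lambda_s)|\leq|\operatorname{disc}(\Lambda)|$ since $\operatorname{disc}(\Lambda)=[\Lambda_s:\Lambda]^2\operatorname{disc}(\Lambda_s)$. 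The one subtlety is that $\operatorname{rk}(\Lambda)=m+2\leq m'+2$ is itself bounded in terms of $\Lambda_\#$, so the factor $D^{\operatorname{rk}(\Lambda)}$ is absorbed into the constant depending on $\Lambda_\#$ and $D$. Multiplying the three disc-power factors $|\operatorname{disc}(\Lambda)|^{\ell(\Lambda)}\cdot|\operatorname{disc}(\Lambda)|^{\ell(\Lambda)}\cdot|\operatorname{disc}(\Lambda)|^{1+\frac{m'}{2}}$ gives the exponent $1+\frac{m'}{2}+2\ell(\Lambda)$, which yields exactly the claimed bound with $C$ depending only on $\Lambda_\#$ and $D$.

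I do not expect any genuine obstacle here: every ingredient is already in place, and the proof is essentially an exercise in chaining Lemmas~\ref{lem:bound-Hdg-partners-gamma}, \ref{lem:boundorthogonalgroup}, \ref{lemma:boundolambdas}, and~\ref{lemma:irrationality-general}. The only place demanding a little care is making sure that applying Lemma~\ref{lemma:irrationality-general} to the saturation is legitimate — i.e.\ that $\Gamma$'s extendability to $\Lambda_\#$, combined with Lemma~\ref{lem:O+tilde}, really does give the needed extendability data at the level of $\Lambda_s$ — and that all hidden dependencies on $m$, $\operatorname{rk}(\Lambda)$, and so on are genuinely bounded by $m'$ and hence swallowed by the constant. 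A clean way to avoid even that is to simply quote Lemma~\ref{lem:bound-Hdg-partners-gamma} verbatim for the full bound on $\deg\psi_\Gamma$ and Lemma~\ref{lemma:irrationality-general}'s first inequality for $\Lambda_s$, then substitute the disc/$\ell$ estimates mechanically.
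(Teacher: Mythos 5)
Your proposal is correct and follows essentially the same route as the paper: bound $\deg(\psi_\Gamma)$ onto its image via Lemma~\ref{lem:bound-Hdg-partners-gamma} together with Lemmas~\ref{lem:boundorthogonalgroup} and~\ref{lemma:boundolambdas}, and bound the irrationality of the image by applying the first inequality of Lemma~\ref{lemma:irrationality-general} to the primitive embedding $\Lambda_s\hookrightarrow\Lambda_\#$ (the paper phrases this as $\cP_{\Lambda}(\Gamma)=\cP_{\Lambda_s}(\Gamma)$), using $|\operatorname{disc}(\Lambda_s)|\leq|\operatorname{disc}(\Lambda)|$ and absorbing $D^{m'+2}$ into the constant. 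The bookkeeping yielding the exponent $1+\tfrac{m'}{2}+2\ell(\Lambda)$ matches the paper's computation exactly.
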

\begin{proof}
By Lemma~\ref{lem:bound-Hdg-partners-gamma}, the degree of $\psi_{\Gamma}\colon \cP_{\Lambda}(\Gamma)\rightarrow\cP^+_{\Lambda_{\#}}$ onto its image is bounded by
$$
    [\widetilde{\uO}^+(\Lambda):\Gamma\cap \widetilde{\uO}^+(\Lambda)]
    \cdot [\widetilde{\uO}^+(\Lambda_s):\widetilde{\uO}^+(\Lambda)]
    \cdot |\uO(D(\Lambda_s))|.
$$
Lemma~\ref{lem:boundorthogonalgroup} shows that $|\uO(D(\Lambda_s))| \leq |\operatorname{disc}(\Lambda)|^{\ell(\Lambda)}$. Combining this with Lemma~\ref{lemma:boundolambdas} gives
\begin{align*}
    [\widetilde{\uO}^+(\Lambda_s)
        : \widetilde{\uO}^+(\Lambda)]
    \cdot |\uO(D(\Lambda_s))|
    & \leq [\Lambda_s:\Lambda]^{m+2}
    \cdot |\uO(D(\Lambda))| \cdot |\operatorname{disc}(\Lambda)|^{\ell(\Lambda)} \\
    & \leq D^{m+2} \cdot |\operatorname{disc}(\Lambda)|^{2\ell(\Lambda)}
    \leq D^{m'+2} \cdot |\operatorname{disc}(\Lambda)|^{2\ell(\Lambda)}.
\end{align*}
As a result, we obtain
$$
    \irr(Y) \leq \irr(\psi_{\Gamma}(Y))
    \cdot [\widetilde{\uO}^+(\Lambda)
        : \Gamma\cap \widetilde{\uO}^+(\Lambda)]
    \cdot D^{m'+2}
    \cdot |\operatorname{disc}(\Lambda)|^{2\ell(\Lambda)}.
$$
Recall from the proof of Lemma~\ref{lem:boundorthogonalgroup} that 
$
    |\operatorname{disc}(\Lambda_s)|
    \leq |\operatorname{disc}(\Lambda)|.
$
To bound $\irr(\psi_{\Gamma}(Y))$, first notice that $\cP_{\Lambda}(\Gamma) = \cP_{\Lambda_s}(\Gamma)$, so we can consider $\psi_{\Gamma}$ as a map from $\cP_{\Lambda_s}(\Gamma)$ to $\cP^+_{\Lambda_{\#}}$. Then Lemma~\ref{lemma:irrationality-general} shows that there exists a $C'>0$ depending only on $\Lambda_\#$ such that
$$
    \irr(\psi_{\Gamma}(Y))
    \leq C'\cdot |\operatorname{disc}(\Lambda_s)|^{1+\frac{m'}{2}} \leq C'\cdot |\operatorname{disc}(\Lambda)|^{1+\frac{m'}{2}}.
$$
Merging this into the above inequality with $C = C'\cdot D^{m'+2}$ gives the bound we want.
\end{proof}

\section{Moduli spaces of projective hyperk\"{a}hler manifolds}
\label{sect:known-proj-HK}

Let $X$ be a hyperk\"{a}hler manifold of types K3$^{[n]}$, Kum$_{n}$, OG6, or OG10. Then $H^2(X,\bZ)$ with the Beauville--Bogomolov--Fujiki form is an even lattice of signature $(3,b_2(X)-3)$. For each $h\in H^2(X,\bZ)$, one defines its \emph{divisibility} ${\rm div}_\Lambda(h)$ to be the positive generator of the ideal $(h, H^2(X,\bZ))\subset\bZ$. Let us fix a lattice~$\Lambda$ isometric to $H^2(X,\bZ)$, and let $\gamma$, $d$ be positive integers. Then the pairs $(X',H)$ of hyperk\"{a}hler manifolds equipped with a primitive ample divisor $H$ such that $H^2(X,\bZ)\cong\Lambda$ and $h\colonequals c_1(H)$ satisfies ${\rm div}_\Lambda(h) = \gamma$, $(h,h) = 2d$ form a moduli space $\mathcal{M}_{\Lambda,\,2d}^\gamma$ of dimension $b_2(X)-3$.

The monodromy group ${\rm Mon}^2(X)\subset\uO^+(H^2(X,\bZ))$ for all types of $X$ considered in this paper is normal and of finite index. Via a marking $\Lambda\cong H^2(X,\bZ)$, this defines a subgroup
$
    {\rm Mon}^2(\Lambda)\subset\uO^+(\Lambda)
$
independent of the choice of markings. Let $h\in\Lambda$ be a primitive ample class, and define $\Lambda_h\colonequals h^{\perp\Lambda}$. Then $\Lambda_h$ has signature $(2,b_2(X)-3)$ and discriminant
\begin{equation}
\label{sec4:eq:discLh}
    |\operatorname{disc}(\Lambda_h)|
    = \frac{2d\cdot |{\rm disc}(\Lambda)|}{\gamma^2}.
\end{equation}
The elements of ${\rm Mon}^2(\Lambda)$ fixing $h$ form a finite-index subgroup ${\rm Mon}^2(\Lambda, h)$, which can be identified as a subgroup of $\uO^+(\Lambda_h)$ by restriction. Let $Y\subset\mathcal{M}_{\Lambda, 2d}^{\gamma}$ be the irreducible component containing $X$. Then there exists an open embedding
$$
    Y\longhookrightarrow
    \Omega(\Lambda_h)\big/{\rm{Mon}}^2(\Lambda, h)
$$
as guaranteed by the Torelli theorem; see \cite{Ver13} and also \cite[Lemma~8.1]{Mar11}. This allows us to bound the degree of irrationality of $\mathcal{M}_{\Lambda, 2d}^{\gamma}$ using the results in the previous section. To obtain a universal bound for all the moduli spaces, what we need to do is to find embeddings of all possible $\Lambda_{h}$ into a common lattice $\Lambda_{\#}$ such that $\operatorname{Mon}^2(\Lambda,h)$ is extendable.

\subsection{Hyperk\"ahler manifolds of \texorpdfstring{K3$^{[n]}$}{K3n}-type}

Let $X$ be a hyperk\"ahler manifold deformation equivalent to the Hilbert scheme of length $n$ subschemes on a K3 surface. Here we assume that $n\geq 2$ since the case of K3 surfaces has already been treated in \cite{ABL23}. For such an $X$, the lattice $H^2(X,\mathbb{Z})$ is isomorphic to
$$
    \Lambda = \Lambda_{\mathrm{K3}^{[n]}}
    \colonequals
    E_8(-1)^{\oplus 2}
    \oplus U^{\oplus 3}
    \oplus\bZ\delta, 
    \quad\text{where }
    (\delta,\delta)=-2(n-1).
$$
This is an even lattice of signature $(3,20)$. According to \cite[Lemma~9.2]{Mar11}, we have
$$
    \mathrm{Mon}^2(\Lambda)
    = \widehat{\uO}^+(\Lambda)
    \colonequals\left\{
        g\in\uO^+(\Lambda)
        \;\middle|\;
        g|_{D(\Lambda)} = \pm\mathrm{id}
    \right\}.
$$
Take a primitive $h\in\Lambda$ with $(h,h)>0$. Then $h$ is in the same ${\rm{Mon}}^2(\Lambda)$-orbit as $\gamma(e+tf)-a\delta$ for suitable $t$ and $a$; see \cite[Lemma~3.4]{BBBF23}. Here $\{e,f\}$ is the standard basis of the first copy of $U$ in $\Lambda$. In particular, 
\[
    \Lambda_h \cong
    E_8(-1)^{\oplus 2}\oplus U^{\oplus 2}\oplus Q_h(-1), 
\]
where $Q_h(-1)\subset U\oplus\bZ\delta$ is a certain negative-definite rank two sublattice. We will need an explicit description of this lattice only in the cases of divisibility $\gamma=1,2$. 

\begin{lemma}
\label{lemma:qhk3n}
If\, $\gamma=1$, then $Q_h \cong \mathbb{Z}(2(n-1))\oplus \mathbb{Z}(2d)$. If $\gamma=2$, then $Q_h \cong Q_{(n-1,d)}$ as defined in \eqref{eq:defQad}.
\end{lemma}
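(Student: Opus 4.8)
The plan is to use the explicit representative $h=\gamma(e+tf)-a\delta$ provided by \cite[Lemma~3.4]{BBBF23}; since $\Lambda_h$ depends only on the $\uO^+(\Lambda)$-orbit of $h$, there is no loss in replacing $h$ by this model. Write $U_1$ for the first copy of $U$, spanned by $e,f$. Since $h$ lies entirely in the summand $U_1\oplus\bZ\delta$, the orthogonal complement splits off $E_8(-1)^{\oplus 2}\oplus U^{\oplus 2}$, and what remains to identify is exactly $Q_h(-1)=h^{\perp}\cap(U_1\oplus\bZ\delta)$, a rank-two negative-definite lattice. Before computing I would record the two constraints on the pair $(t,a)$: expanding $(h,h)=2d$ gives $\gamma^2 t-a^2(n-1)=d$, while primitivity of $h$ forces $\gcd(\gamma,a)=1$, so in particular $a$ is odd when $\gamma=2$.

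Next I would compute $Q_h(-1)$ directly. Writing a general vector of $U_1\oplus\bZ\delta$ as $xe+yf+z\delta$, the equation $(h,\,\cdot\,)=0$ reduces (after dividing by $\gamma$, which is legitimate for $\gamma\in\{1,2\}$, indeed for any $\gamma$ equal to the divisibility) to a single linear relation that one solves for $y$; this yields the explicit $\bZ$-basis $v_1=e-tf$ and $v_2=\delta-\tfrac{2a(n-1)}{\gamma}f$ of $Q_h(-1)$. A short Gram-matrix calculation, using $(e,f)=1$, $(e,e)=(f,f)=0$, $(\delta,\delta)=-2(n-1)$ and $\delta\perp U_1$, then gives $Q_h$ in this basis: $\left(\begin{smallmatrix}2t & 2a(n-1)\\ 2a(n-1)& 2(n-1)\end{smallmatrix}\right)$ when $\gamma=1$, and $\left(\begin{smallmatrix}2t & a(n-1)\\ a(n-1)& 2(n-1)\end{smallmatrix}\right)$ when $\gamma=2$.

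Finally I would produce an explicit determinant-one change of basis in each case. For $\gamma=1$, replacing $v_1$ by $v_1-av_2$ kills the off-diagonal entry, and the relation $t-a^2(n-1)=d$ turns the Gram matrix into $\operatorname{diag}(2d,\,2(n-1))$, i.e.\ $\bZ(2(n-1))\oplus\bZ(2d)$. For $\gamma=2$, I would swap the two basis vectors and then replace $v_1$ by $v_1-\tfrac{a+1}{2}v_2$, which is integral precisely because $a$ is odd; the off-diagonal entry becomes $-(n-1)$, and a computation using $a^2\equiv 1\pmod 8$ together with $4t=d+a^2(n-1)$ shows that the new $(2,2)$-entry equals $\tfrac{d+(n-1)}{2}$ and that $d+(n-1)$ is divisible by $4$. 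This is the Gram matrix of $Q_{(n-1,d)}$ as defined in \eqref{eq:defQad}. The only point requiring care is the $\gamma=2$ case, where primitivity (the oddness of $a$) is used twice — once to make the base change integral, and once to guarantee the $2$-adic congruence that makes $Q_{(n-1,d)}$ an honest even lattice with the prescribed Gram matrix; everything else is routine linear algebra.
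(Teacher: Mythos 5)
Your proposal is correct and follows essentially the same route as the paper: both reduce to the normal-form representative $h=\gamma(e+tf)-a\delta$ from \cite[Lemma~3.4]{BBBF23} and identify the rank-two complement $Q_h(-1)\subset U\oplus\bZ\delta$, the paper by quoting the explicit generators for the normalized choices $a=0$ (resp.\ $a=1$), you by solving $(h,\cdot)=0$ directly for general $a$ coprime to $\gamma$ and then performing an explicit $\GL_2(\bZ)$ base change (your shear-after-swap has determinant $-1$, not $1$, but any unimodular change of basis suffices). The only extra content in your version is the observation that oddness of $a$ forces $4\mid d+(n-1)$, which the paper instead delegates to \cite[Remark~3.3]{BBBF23}.
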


\begin{proof}
This is an immediate consequence of \cite[Lemma~3.4 and Equation~(31)]{BBBF23}. Indeed, when $\gamma=1$, one can take $h=e+df$, and in this case, $Q_h(-1)$ is generated by $z_1=\delta$ and $z_2=e-df$. When $\gamma=2$, one can take $h=2(e+tf)-\delta$, where $t=\frac{d+(n-1)}{4}$ (this should be an integer; otherwise there is no corresponding hyperk\"ahler manifold; see \cite[Remark~3.3]{BBBF23}). In this case, $Q_h(-1)$ is generated by $z_1=(n-1)f-\delta$ and~$z_2=e-tf$.
\end{proof}

We keep the notation $\{z_1, z_2\}$ as generators of $Q_h(-1)$ in the cases described above. The lattice $\Lambda_h$ has signature $(2,20)$ and discriminant $\operatorname{disc}(\Lambda_h) = \frac{4d(n-1)}{\gamma^2}$. One can verify that by definition  $\gamma$ divides both $2d$ and $2(n-1)$. The monodromy group $\operatorname{Mon}^2(\Lambda,h)$ coincides with $\widehat{\uO}^+(\Lambda,h) = \uO(\Lambda,h)\cap \widehat{\uO}^+(\Lambda)$, which can then be described explicitly as follows.

\begin{lemma}
\label{lemma:monK3n}
Let $(n,d,\gamma)$ be as above with $\mathcal{M}_{K3^{[n]}, 2d}^\gamma$ non-empty.
\begin{enumerate}
\item If\, $n=2$ or $\gamma\geq 3$, then
$
    \widehat{\uO}^+(\Lambda,h)
    = \widetilde{\uO}^+(\Lambda_h).
$
\item If\, $n\geq 3$ and $\gamma=1,2$, then 
$
    \widehat{\uO}^+(\Lambda,h) = \langle
    \widetilde{\uO}^+(\Lambda_h), \sigma_{z_1}
    \rangle,
$
where $\sigma_{z_1}\in\uO^+(\Lambda_h)$ is the reflection along the primitive vector $z_1\in Q_h(-1)$.
\end{enumerate}
\end{lemma}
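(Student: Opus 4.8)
The plan is to pin down, inside $\uO^+(\Lambda_h)$, the image of the restriction homomorphism $\widehat{\uO}^+(\Lambda,h)\to\uO^+(\Lambda_h)$; this map is injective, since an isometry of $\Lambda$ fixing $h$ and $h^{\perp\Lambda}=\Lambda_h$ is the identity on the finite-index sublattice $\mathbb{Z}h\oplus\Lambda_h$. Write $H_\Lambda\colonequals\Lambda/(\mathbb{Z}h\oplus\Lambda_h)\subset D(\mathbb{Z}h)\oplus D(\Lambda_h)$ for the isotropic gluing subgroup; it is the graph of an anti-isometry $\phi\colon H_M\xrightarrow{\sim}H_N$ with $H_M\subset D(\mathbb{Z}h)$, $H_N\subset D(\Lambda_h)$, and $D(\Lambda)=H_\Lambda^{\perp}/H_\Lambda$, the orthogonal complement being taken in $D(\mathbb{Z}h)\oplus D(\Lambda_h)$. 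The organizing claim is the identity
\begin{equation*}
    \widetilde{\uO}^+(\Lambda_h)
    = \bigl\{\, g\in\widehat{\uO}^+(\Lambda,h) \ :\ g|_{D(\Lambda)}=\mathrm{id} \,\bigr\}
    \qquad\text{inside }\uO^+(\Lambda_h).
\end{equation*}

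First I would prove this identity. The inclusion $\subseteq$ is the easy direction: extending $g\in\widetilde{\uO}^+(\Lambda_h)$ by the identity on $\mathbb{Z}h$ gives, exactly as in Lemma~\ref{lem:O+tilde}, an isometry acting trivially on $D(\mathbb{Z}h\oplus\Lambda_h)$, hence preserving the overlattice $\Lambda$ and acting trivially on its subquotient $D(\Lambda)$; this isometry fixes $h$ and is orientation-preserving because $h$ has positive square. For $\supseteq$, let $g\in\widehat{\uO}^+(\Lambda,h)$ act trivially on $D(\Lambda)$. Since $g$ fixes $h$ it acts trivially on $D(\mathbb{Z}h)$, so — writing $\bar g^{*}\in\uO(D(\Lambda_h))$ for the automorphism induced by $g|_{\Lambda_h}$ — for every $(y,z)\in H_\Lambda^{\perp}$ the difference $g\cdot(y,z)-(y,z)=(0,\bar g^{*}z-z)$ lies in $H_\Lambda$, and since $H_\Lambda$ is the graph of $\phi$ this forces $\bar g^{*}z=z$. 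A short order count closes the gap: $|H_\Lambda|=[\Lambda:\mathbb{Z}h\oplus\Lambda_h]=2d/\gamma$ by \eqref{sec4:eq:discLh}; the projection $p_{\Lambda_h}\colon H_\Lambda^{\perp}\to D(\Lambda_h)$ has kernel $H_\Lambda^{\perp}\cap(D(\mathbb{Z}h)\oplus 0)=H_M^{\perp}\oplus 0$, of order $|D(\mathbb{Z}h)|/|H_M|=|D(\mathbb{Z}h)|/|H_\Lambda|$ (using $|H_M|=|H_\Lambda|$, valid since $\Lambda_h$ is primitive in $\Lambda$); and $|H_\Lambda^{\perp}|=|D(\mathbb{Z}h)|\,|D(\Lambda_h)|/|H_\Lambda|$. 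Hence $p_{\Lambda_h}(H_\Lambda^{\perp})$ has order $|D(\Lambda_h)|$, so it is all of $D(\Lambda_h)$, and therefore $\bar g^{*}=\mathrm{id}$.

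With the identity in hand, $[\widehat{\uO}^+(\Lambda,h):\widetilde{\uO}^+(\Lambda_h)]$ equals the order of the image of $\widehat{\uO}^+(\Lambda,h)$ under $g\mapsto g|_{D(\Lambda)}\in\{\pm\mathrm{id}\}$, so it is $1$ or $2$. For case~(1) I would argue it is $1$: if $n=2$ then $D(\Lambda)\cong\mathbb{Z}/2$ and $\pm\mathrm{id}$ coincide; if $\gamma\geq 3$, then $H_M^{\perp}\subset D(\mathbb{Z}h)$ is cyclic of order $\gamma$ (since $|H_M|=|H_\Lambda|=2d/\gamma$) and injects into $D(\Lambda)$ — because $H_\Lambda\cap(D(\mathbb{Z}h)\oplus 0)=0$ — and every $g\in\widehat{\uO}^+(\Lambda,h)$ fixes this cyclic subgroup pointwise, so no $g$ can restrict to $-\mathrm{id}$ on $D(\Lambda)$. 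For case~(2), where $\gamma\in\{1,2\}$ and $n\geq 3$, I would realize the nontrivial coset by the reflection along $z_1$ of Lemma~\ref{lemma:qhk3n} — $z_1=\delta$ when $\gamma=1$, $z_1=(n-1)f-\delta$ when $\gamma=2$. One checks that the reflection of $\Lambda$ along $z_1$ is an integral isometry (its divisibility in $\Lambda$ is $2(n-1)$ resp.\ $n-1$), fixes $h$, is orientation-preserving, acts as $-\mathrm{id}$ on $D(\Lambda)\cong\mathbb{Z}/2(n-1)$ (hence $\neq\mathrm{id}$ as $n\geq 3$), and restricts on $\Lambda_h$ to $\sigma_{z_1}$. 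Thus the index is exactly $2$ and $\widehat{\uO}^+(\Lambda,h)=\langle\widetilde{\uO}^+(\Lambda_h),\sigma_{z_1}\rangle$.

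The step I expect to take the most care is the organizing identity — precisely the order count giving $p_{\Lambda_h}(H_\Lambda^{\perp})=D(\Lambda_h)$ together with the correct handling of the gluing subgroup (its two projections, its orthogonal complement, and the primitivity facts $H_\Lambda\cap(D(\mathbb{Z}h)\oplus 0)=0$ and $H_\Lambda\cap(0\oplus D(\Lambda_h))=0$). After that, the case split is governed transparently by whether $-\mathrm{id}\neq\mathrm{id}$ on $D(\Lambda)$ (equivalently $n\geq 3$) and whether the reflection $\sigma_{z_1}$ realizes $-\mathrm{id}$ at the level of $\Lambda$, which is exactly where the divisibility $\gamma\leq 2$ and the explicit shape of $Q_h$ from Lemma~\ref{lemma:qhk3n} enter.
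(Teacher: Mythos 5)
Your proof is correct, but it takes a different route from the paper only in the sense that the paper gives no argument at all here: it simply cites \cite{BBBF23} (Lemma~3.6 and Proposition~3.7 for case~(1), Section~5 for case~(2)), whereas you reprove the statement from scratch by Nikulin's gluing theory. Your organizing identity $\widetilde{\uO}^+(\Lambda_h)=\{g\in\widehat{\uO}^+(\Lambda,h)\,:\,g|_{D(\Lambda)}=\mathrm{id}\}$ is the standard fact underlying those references, and your verification of it is sound: the easy inclusion is the same extension-by-the-identity argument as in Lemma~\ref{lem:O+tilde}, and for the converse your order count $|p_{\Lambda_h}(H_\Lambda^{\perp})|=|D(\Lambda_h)|$, resting on $|H_M|=|H_\Lambda|$ (saturation of $\Lambda_h$) and $H_\Lambda\cap(D(\mathbb{Z}h)\oplus 0)=0$ (primitivity of $h$), is correct; the only step you use silently and should spell out in a write-up is the equivariance identifying the $g$-action on $D(\Lambda)$ with the induced action on $H_\Lambda^{\perp}/H_\Lambda$. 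The rest checks out: the image of $H_M^{\perp}$ in $D(\Lambda)$ is exactly the order-$\gamma$ subgroup $\langle[h/\gamma]\rangle$, which rules out $-\mathrm{id}$ when $\gamma\geq 3$, and $n=2$ is the $2$-torsion case; for case~(2) the ambient reflections along $z_1=\delta$ (divisibility $2(n-1)$) and $z_1=(n-1)f-\delta$ (divisibility $n-1$) are indeed integral, fix $h$, lie in $\uO^+$ as reflections in negative vectors, restrict to $\sigma_{z_1}$ on $\Lambda_h$, and act as $-\mathrm{id}$ on $D(\Lambda)$ (in the $\gamma=2$ case one computes $\sigma_{z_1}(\delta/2(n-1))\equiv-\delta/2(n-1)$ modulo $\Lambda$), which pins the index at exactly $2$ and yields $\widehat{\uO}^+(\Lambda,h)=\langle\widetilde{\uO}^+(\Lambda_h),\sigma_{z_1}\rangle$. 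What your approach buys is a self-contained proof independent of \cite{BBBF23}; what it costs is essentially redoing the discriminant-form computations carried out there, and in substance your argument mirrors theirs rather than introducing a genuinely new mechanism.
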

\begin{proof}
The first case is proved in \cite[Lemma~3.6 and Proposition~3.7]{BBBF23}. The second case is proved in \cite[Section~5]{BBBF23}.
\end{proof}

With this we can state the extendability result that we are going to need.

\begin{lemma}
\label{lemma:extendabilityK3n}
In each of the following cases, one can find an embedding $\Lambda_h\hookrightarrow\Lambda_{\#}$ for some even lattice $\Lambda_\#$ such that $\widehat{\uO}^+(\Lambda,h)$ extends:
\begin{enumerate}
\item\label{l:eK3n-1} If\, $n=2$ or $\gamma\geq 3$, then one can choose $\Lambda_{\#} = U^{\oplus 2}\oplus E_8(-1)^{\oplus 3}$ with the embedding primitive.
\item\label{l:eK3n-2} If\, $n\geq 3$ and $\gamma=1$, then one can choose $\Lambda_{\#} =U^{\oplus 2}\oplus E_8(-1)^{\oplus 2} \oplus A_1(-1)^{\oplus 10}$ with the embedding primitive.
\item\label{l:eK3n-3} If\, $n\geq 3$, $\gamma=1$, and $8\nmid n-1,8\nmid d$, then  $\Lambda_{\#} =U^{\oplus 2}\oplus E_8(-1)^{\oplus 2} \oplus A_1(-1)^{\oplus 8}$ and the embedding can be chosen primitive.
\item\label{l:eK3n-4} If\, $n\geq 3$, $\gamma=2$, and $4\nmid n-1,4\nmid d$, then  $\Lambda_{\#} =U^{\oplus 2}\oplus E_8(-1)^{\oplus 3}$ and the embedding can be chosen primitive.
\item\label{l:eK3n-5} If\, $n\geq 3$, $\gamma=2$, and $4\mid n-1,4\mid d$, then $\Lambda_{\#} =U^{\oplus 2}\oplus E_8(-1)^{\oplus 2} \oplus A_{1}(-1)^{\oplus 10}$. Let $\Lambda_{h,s}$ be the saturation. Then $[\Lambda_{h,s}:\Lambda_h]=2$.
\end{enumerate} 
\end{lemma}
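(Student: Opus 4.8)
The plan is to assemble the statement from three ingredients already in hand: the identification of $\widehat{\uO}^+(\Lambda,h)$ in Lemma~\ref{lemma:monK3n}, the automatic extendability of the stable orthogonal group (Lemma~\ref{lem:O+tilde}), and the explicit embeddings of the rank-two lattices $Q_h$ produced in Lemmas~\ref{lemma:qadnot4}, \ref{lemma:qad4}, and~\ref{lemma:embeddingsplitlattice}. Throughout I would use the decomposition $\Lambda_h\cong E_8(-1)^{\oplus 2}\oplus U^{\oplus 2}\oplus Q_h(-1)$ together with the elementary observation that a subgroup $\Gamma\subset\uO^+(\Lambda_h)$ is extendable with respect to an embedding $\Lambda_h\hookrightarrow\Lambda_\#$ as soon as some generating set of $\Gamma$ is: a word in the generators then extends to the corresponding word of extensions, since each extension restricts on $\Lambda_h$ to an automorphism and hence maps $\Lambda_h$ onto itself.

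\textbf{Case~\ref{l:eK3n-1}.} By Lemma~\ref{lemma:monK3n}(1) one has $\widehat{\uO}^+(\Lambda,h)=\widetilde{\uO}^+(\Lambda_h)$, which by Lemma~\ref{lem:O+tilde} is extendable with respect to \emph{any} lattice embedding; so it suffices to exhibit \emph{some} primitive embedding $\Lambda_h\hookrightarrow U^{\oplus 2}\oplus E_8(-1)^{\oplus 3}$. Since $D(\Lambda_h)\cong D(Q_h)$ is generated by at most two elements while $U^{\oplus 2}\oplus E_8(-1)^{\oplus 3}$ is the even unimodular lattice of signature $(2,26)$, so that the embedding has codimension $6$, such an embedding exists by Nikulin's criterion for primitive embeddings into even unimodular lattices (the rank inequality $\ell(D(\Lambda_h))\le 2\le 6$ and the compatibility of signatures modulo $8$ being immediate). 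When $\gamma\le 2$ — in particular in the subcase $n=2$ — one can instead be fully explicit, since $Q_h$ is then one of the lattices treated in Lemmas~\ref{lemma:embeddingsplitlattice} and~\ref{lemma:qadnot4}; for instance, for $n=2,\gamma=2$ one has $n-1=1$ and $d\equiv-1\pmod 4$, so the hypothesis $4\nmid n-1,\,4\nmid d$ of Lemma~\ref{lemma:qadnot4} holds.

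\textbf{Cases~\ref{l:eK3n-2}--\ref{l:eK3n-5}.} Here $n\ge 3$ and $\gamma\in\{1,2\}$, and Lemma~\ref{lemma:monK3n}(2) gives $\widehat{\uO}^+(\Lambda,h)=\langle\widetilde{\uO}^+(\Lambda_h),\sigma_{z_1}\rangle$, where $\sigma_{z_1}$ is the reflection along the distinguished primitive vector $z_1\in Q_h(-1)$. In each case I would take the embedding $\Lambda_h\hookrightarrow\Lambda_\#$ to be the identity on the direct summand $E_8(-1)^{\oplus 2}\oplus U^{\oplus 2}$ and, on the summand $Q_h(-1)$, the embedding produced by the relevant rank-two lemma: Lemma~\ref{lemma:embeddingsplitlattice} into $A_1(-1)^{\oplus 10}$ in case~\ref{l:eK3n-2} and into $A_1(-1)^{\oplus 8}$ in case~\ref{l:eK3n-3} (using $Q_h\cong\bZ(2(n-1))\oplus\bZ(2d)$ from Lemma~\ref{lemma:qhk3n}, and noting that the hypothesis $8\nmid n-1,\,8\nmid d$ of case~\ref{l:eK3n-3} is exactly what the second part of Lemma~\ref{lemma:embeddingsplitlattice} needs); Lemma~\ref{lemma:qadnot4} into $E_8(-1)$ in case~\ref{l:eK3n-4} and Lemma~\ref{lemma:qad4} into $A_1(-1)^{\oplus 10}$ in case~\ref{l:eK3n-5} (using $Q_h\cong Q_{(n-1,d)}$ from Lemma~\ref{lemma:qhk3n} and~\eqref{eq:defQad}, the relation $(n-1)+d=4t$ needed to form $Q_{(n-1,d)}$ being the integrality of $t$ forced by the existence of $X$ as in the proof of Lemma~\ref{lemma:qhk3n}, and the divisibility-by-$4$ hypotheses of cases~\ref{l:eK3n-4} and~\ref{l:eK3n-5} coinciding with those of Lemmas~\ref{lemma:qadnot4} and~\ref{lemma:qad4}). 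The stable part $\widetilde{\uO}^+(\Lambda_h)$ extends by Lemma~\ref{lem:O+tilde}. For $\sigma_{z_1}$, observe that it acts as the identity on $E_8(-1)^{\oplus 2}\oplus U^{\oplus 2}$, so combining the identity there with the involution produced in the corresponding rank-two lemma (viewed as an isometry of $E_8(-1)$, resp.\ of $A_1(-1)^{\oplus k}$) yields an isometry of $\Lambda_\#$; it lies in $\uO^+(\Lambda_\#)$ because it fixes a positive $2$-plane contained in the embedded copy of $U^{\oplus 2}$, and it preserves $\Lambda_h$ because on $Q_h(-1)$ it restricts to the automorphism $\sigma_{z_1}$. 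Hence $\widehat{\uO}^+(\Lambda,h)$ is extendable. Primitivity of $\Lambda_h\hookrightarrow\Lambda_\#$ in cases~\ref{l:eK3n-2}--\ref{l:eK3n-4} follows from primitivity of the rank-two embedding by a direct-sum argument, and in case~\ref{l:eK3n-5} the same argument identifies the saturation of $\Lambda_h$ with $E_8(-1)^{\oplus 2}\oplus U^{\oplus 2}\oplus Q_{(n-1,d),s}(-1)$, so that $[\Lambda_{h,s}:\Lambda_h]=[Q_{(n-1,d),s}:Q_{(n-1,d)}]=2$ by the last assertion of Lemma~\ref{lemma:qad4}.

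Most of the real content sits in the rank-two embedding lemmas and in the monodromy computation of Lemma~\ref{lemma:monK3n}; what remains here is largely bookkeeping — checking that the divisibility and parity hypotheses of each case match the hypotheses of the lemma invoked, and that $\widetilde{\uO}^+(\Lambda_h)$ and $\sigma_{z_1}$ can be extended ``summand by summand'' along the decomposition $\Lambda_h\cong E_8(-1)^{\oplus 2}\oplus U^{\oplus 2}\oplus Q_h(-1)$. The one place where a genuinely new input is needed is case~\ref{l:eK3n-1} with $\gamma\ge 3$, where no explicit model of $Q_h$ is available; I expect this to be the only mild obstacle, and it is dispatched by Nikulin's criterion once one notes that $\ell(D(\Lambda_h))\le 2$ while the embedding has codimension $6$.
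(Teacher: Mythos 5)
Your proposal is correct and follows essentially the same route as the paper: identify $\widehat{\uO}^+(\Lambda,h)$ via Lemma~\ref{lemma:monK3n}, extend $\widetilde{\uO}^+(\Lambda_h)$ by Lemma~\ref{lem:O+tilde}, and extend $\sigma_{z_1}$ summand-by-summand as $\mathrm{Id}\oplus\sigma$ using the rank-two embeddings of Lemmas~\ref{lemma:qadnot4}, \ref{lemma:qad4}, and~\ref{lemma:embeddingsplitlattice}, with the same primitivity and saturation bookkeeping. The only cosmetic difference is in case~(\ref{l:eK3n-1}), where you embed $\Lambda_h$ directly into the even unimodular lattice $U^{\oplus 2}\oplus E_8(-1)^{\oplus 3}$ by Nikulin's criterion, while the paper equivalently embeds $Q_h(-1)$ primitively into $E_8(-1)$ (via the same criterion, \cite[Theorem~14.1.15]{Huy16}) and takes the direct sum.
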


\begin{proof}
Let us prove the statement case by case.

\eqref{l:eK3n-1}~
If $n=2$ or $\gamma\geq 3$, the group $\widehat{\uO}^+(\Lambda,h)$ coincides with the stable orthogonal group $\widetilde{\uO}^+(\Lambda_h)$ by Lemma~\ref{lemma:monK3n}. Hence, it is extendable with respect to any embedding. We can simply choose a primitive embedding $Q_{h}(-1) \hookrightarrow E_8(-1)$ (see \cite[Theorem~14.1.15]{Huy16}) to obtain a primitive embedding of $\Lambda_h$ into $U^{\oplus 2} \oplus E_8(-1)^{\oplus 3}$.

In all the other cases, Lemma~\ref{lemma:monK3n} shows that $\widehat{\uO}^+(\Lambda,h)=\langle \widetilde{\uO}^+(\Lambda_h),\sigma_{z_1} \rangle$. Suppose we can find an embedding $Q_h(-1)\hookrightarrow M$ such that $\sigma_{z_1}$ on $Q_h(-1)$ extends to $\sigma$ on $M$. Then we have an embedding of $\Lambda_h = U^{\oplus 2} \oplus E_8(-1)^{\oplus 2}\oplus Q_h(-1)$ into $U^{\oplus 2}\oplus E_8(-1)^{\oplus 2}\oplus M$ such that $\sigma_{z_1}$ extends to $\mathrm{Id}\oplus\sigma$, where $\mathrm{Id}$ is the identity map on $U^{\oplus 2}\oplus E_8(-1)^{\oplus 2}$. Since $\textrm{Id} \oplus \sigma$ acts as the identity on $U^{\oplus 2}$, it is orientation-preserving. Moreover, if we let $Q_h(-1)_s$ be the saturation of $Q_h(-1)$ inside $M$, then $[\Lambda_{h,s}:\Lambda_h] = [Q_{h}(-1)_{s}:Q_h(-1)]$. Thus, the problem is reduced to finding appropriate embeddings of $Q_h(-1)$.

\eqref{l:eK3n-2}~
If $n\geq 3$ and $\gamma=1$, Lemma~\ref{lemma:qhk3n} shows that $Q_h(-1) \cong \mathbb{Z}(-2(n-1))\oplus \mathbb{Z}(-2d)$. Then Lemma~\ref{lemma:embeddingsplitlattice} gives a primitive embedding $Q_h(-1)\hookrightarrow A_1^{\oplus 10}(-1)$ such that $\sigma_{z_1}$ extends.

\eqref{l:eK3n-3}~
If $n\geq 3$, $\gamma=1$, and $8\nmid n-1$, $8\nmid d$, Lemma~\ref{lemma:qhk3n} shows that $Q_h(-1) \cong \mathbb{Z}(-2(n-1))\oplus \mathbb{Z}(-2d)$. Then Lemma~\ref{lemma:embeddingsplitlattice} gives a primitive embedding $Q_h(-1)\hookrightarrow A_1^{\oplus 8}(-1)$ such that $\sigma_{z_1}$ extends.

\eqref{l:eK3n-4}~
If $n\geq 3$, $\gamma=2$, and $4\nmid n-1$, $4\nmid d$, Lemma~\ref{lemma:qhk3n} shows that $Q_h(-1) \cong Q_{(n-1,d)}(-1)$. Then Lemma~\ref{lemma:qadnot4} gives a primitive embedding $Q_h(-1)\hookrightarrow E_8(-1)$ such that $\sigma_{z_1}$ extends.

\eqref{l:eK3n-5}~
If $n\geq 3$, $\gamma=2$, and $4\mid n-1,4\mid d$, Lemma~\ref{lemma:qhk3n} shows that $Q_h(-1) \cong Q_{(n-1,d)}(-1)$. Then Lemma~\ref{lemma:qad4} gives an embedding $Q_h(-1)\hookrightarrow A_{1}^{\oplus 10}(-1)$ such that $\sigma_{z_1}$ extends. In this case, we have $[Q_h(-1)_s:Q_h(-1)] = 2$.

This completes the proof.
\end{proof}

We can now deduce our result for hyperk\"ahler manifolds of K3$^{[n]}$-type.

\begin{thm}
\label{thm:irratK3n}
There exists a constant $C>0$ such that for any $n,d,\gamma$ and for any irreducible component  $Y\subset\cM^\gamma_{K3^{[n]},2d}$, it holds that
$$
    \irr(Y)\leq C \cdot (n\cdot d)^{19}.
$$
Furthermore, for any $\varepsilon > 0$, there exists a constant $C_{\varepsilon}>0$ such that the above bound can be refined in each case as follows:
\begin{enumerate}
\item\label{t:iK3n-1} If\, $n=1$, then $\irr(Y) \leq C_{\varepsilon} \cdot (n\cdot d)^{14+\varepsilon}$.
\item\label{t:iK3n-2} If\, $n=2$ or $\gamma\geq 3$, then we have $\irr(Y)\leq C\cdot (n\cdot d)^{16}$.  If furthermore $\gamma$, $\frac{2d}{\gamma}$, $\frac{2(n-1)}{\gamma}$ are coprime, then $\irr(Y) \leq C_{\varepsilon}\cdot (n\cdot d)^{14+\varepsilon}$.
\item\label{t:iK3n-3} If\, $n\geq 3$ and $\gamma=1$, then we have $\irr(Y)\leq C_{\varepsilon}\cdot (n\cdot d)^{15+\varepsilon}$. If furthermore $8\nmid n-1$ and $8\nmid d$, then $\irr(Y) \leq C_{\varepsilon}\cdot (n\cdot d)^{14+\varepsilon}$.
\item\label{t:iK3n-4} If\, $n\geq 3$, $\gamma=2$, and $4\nmid n-1$, $4\nmid d$, then $\irr(Y) \leq C\cdot (n\cdot d)^{16}$.
\end{enumerate}
\end{thm}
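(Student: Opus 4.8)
The plan is to run the period-space machinery of Section~\ref{sect:orthShimuraIrr} on the orthogonal modular varieties produced by the Torelli theorem. First, for a component $Y\subset\cM^\gamma_{K3^{[n]},2d}$ containing some $X$, I would fix a marking $\Lambda\cong H^2(X,\bZ)$ carrying the polarization to $h=\gamma(e+tf)-a\delta$ (possible after acting by $\mathrm{Mon}^2(\Lambda)$, by \cite[Lemma~3.4]{BBBF23}), so that the Torelli theorem gives an open embedding $Y\hookrightarrow\cP_{\Lambda_h}(\mathrm{Mon}^2(\Lambda,h))$ with $\Lambda_h=h^{\perp\Lambda}$ of signature $(2,20)$ and $|\operatorname{disc}(\Lambda_h)|=4d(n-1)/\gamma^2\le 4nd$ by \eqref{sec4:eq:discLh}. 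Since $\irr$ is a birational invariant it suffices to bound $\irr(\cP_{\Lambda_h}(\mathrm{Mon}^2(\Lambda,h)))$; moreover, by Lemma~\ref{lemma:monK3n} the group $\mathrm{Mon}^2(\Lambda,h)$ is either $\widetilde{\uO}^+(\Lambda_h)$ or $\langle\widetilde{\uO}^+(\Lambda_h),\sigma_{z_1}\rangle$, and in either case it contains $\widetilde{\uO}^+(\Lambda_h)$, so the index $[\widetilde{\uO}^+(\Lambda_h):\mathrm{Mon}^2(\Lambda,h)\cap\widetilde{\uO}^+(\Lambda_h)]$ equals $1$.

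Next I would invoke Lemma~\ref{lemma:extendabilityK3n} to get an embedding $\Lambda_h\hookrightarrow\Lambda_\#$ into one of the fixed lattices $U^{\oplus2}\oplus E_8(-1)^{\oplus3}$, $U^{\oplus2}\oplus E_8(-1)^{\oplus2}\oplus A_1(-1)^{\oplus10}$, $U^{\oplus2}\oplus E_8(-1)^{\oplus2}\oplus A_1(-1)^{\oplus8}$ --- of signature $(2,m')$ with $m'\in\{26,28\}$ --- for which $\mathrm{Mon}^2(\Lambda,h)$ extends; the embedding is primitive except in the case $\gamma=2$, $4\mid n-1$, $4\mid d$, where the saturation satisfies $[\Lambda_{h,s}:\Lambda_h]=2$. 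One checks that these cases exhaust all triples $(n,d,\gamma)$ with $\cM^\gamma_{K3^{[n]},2d}\ne\varnothing$: for $\gamma=2$ the integrality of $t=\tfrac{d+(n-1)}{4}$ forces $4\mid n-1\Leftrightarrow 4\mid d$, so cases~\eqref{l:eK3n-4}--\eqref{l:eK3n-5} cover $\gamma=2$, case~\eqref{l:eK3n-1} covers $n=2$ or $\gamma\ge 3$, and case~\eqref{l:eK3n-2} covers $\gamma=1$. Then I would feed the embedding into Lemma~\ref{lemma:irrationality-general} (primitive cases) or Lemma~\ref{lemma:irrationality-general-nonprimitie} with $D=2$ (the non-primitive case) and plug in $|\operatorname{disc}(\Lambda_h)|\le 4nd$, $\ell(\Lambda_h)=\ell(Q_h)\le 2$, and the crude estimate $|\uO(D(\Lambda_h))|\le|\operatorname{disc}(\Lambda_h)|^{\ell(\Lambda_h)}$. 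This yields $\irr(Y)\le C(nd)^{16}$ in the primitive cases with $m'=26$, $\le C(nd)^{17}$ in the primitive case with $m'=28$, and $\le C(nd)^{1+\frac{m'}{2}+2\ell(\Lambda_h)}\le C(nd)^{19}$ in the non-primitive case, hence the universal bound $\irr(Y)\le C(nd)^{19}$; the case $n=1$ is the K3 case, handled in \cite{ABL23} (revisited in Section~\ref{sec:Ab_surf}), giving $(nd)^{14+\varepsilon}$.

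For the refined bounds the exponent $1+\frac{m'}{2}$ is already $14$ or $15$, so the point is to absorb the remaining factors into $(nd)^\varepsilon$ by refining the estimate for $|\uO(D(\Lambda_h))|$. When $D(\Lambda_h)$ is cyclic --- which in case~\eqref{l:eK3n-1} is guaranteed by the coprimality of $\gamma$, $\tfrac{2d}{\gamma}$, $\tfrac{2(n-1)}{\gamma}$ --- an isometry of the discriminant form is multiplication by a root of unity modulo $|\operatorname{disc}(\Lambda_h)|$, of which there are $O_\varepsilon(|\operatorname{disc}(\Lambda_h)|^\varepsilon)$, giving $\irr(Y)\le C_\varepsilon(nd)^{14+\varepsilon}$ (and without coprimality only the crude bound $(nd)^{16}$). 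For $\gamma=1$, where Lemma~\ref{lemma:qhk3n} gives $Q_h\cong\bZ(2(n-1))\oplus\bZ(2d)$, I would run a prime-by-prime analysis of the finite quadratic module $D(\Lambda_h)=\bZ/2(n-1)\oplus\bZ/2d$ together with a careful choice of the glue of the explicit embedding $Q_h(-1)\hookrightarrow A_1(-1)^{\oplus10}$ (resp.\ $A_1(-1)^{\oplus8}$ when $8\nmid n-1$, $8\nmid d$) from Lemma~\ref{lemma:embeddingsplitlattice}, obtaining $\irr(Y)\le C_\varepsilon(nd)^{15+\varepsilon}$ in general and $\le C_\varepsilon(nd)^{14+\varepsilon}$ under the divisibility hypotheses; the case $\gamma=2$, $4\nmid n-1$, $4\nmid d$ uses Lemma~\ref{lemma:qadnot4} with $m'=26$ to give $(nd)^{16}$.

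The main obstacle is exactly this last step. Steps one and two are essentially bookkeeping on top of inputs already recorded --- the Torelli theorem, the description of $\mathrm{Mon}^2(\Lambda,h)$ inside $\uO^+(\Lambda_h)$ in Lemma~\ref{lemma:monK3n}, and the extendable embeddings of Lemma~\ref{lemma:extendabilityK3n} --- so the universal exponent $19$ falls out with no effort beyond assembling the inequalities. The delicate part is controlling $|\uO(D(\Lambda_h))|$, and in the non-primitive case also $[\widetilde{\uO}^+(\Lambda_{h,s}):\widetilde{\uO}^+(\Lambda_h)]$, sharply enough for the $\varepsilon$-refinements: the easy divisor-type bound ``$\le C_\varepsilon N^\varepsilon$'' applies only when the discriminant form is cyclic, and in general one must argue locally at each prime, which is where the genuinely new technical work lies. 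A secondary point worth care is confirming that the lattices $\Lambda_\#$ of Lemma~\ref{lemma:extendabilityK3n} are of minimal rank among those to which $\mathrm{Mon}^2(\Lambda,h)$ extends, since $m'$ enters the exponent directly.
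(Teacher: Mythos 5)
Your skeleton reproduces the paper's argument: Torelli plus Lemma~\ref{lemma:monK3n} to reduce to $\cP_{\Lambda_h}(\operatorname{Mon}^2(\Lambda,h))$ with trivial index factor, the extendable embeddings of Lemma~\ref{lemma:extendabilityK3n} (your observation that integrality of $t=\tfrac{d+(n-1)}{4}$ makes the two $\gamma=2$ subcases exhaustive is correct and implicit in the paper), and then Lemma~\ref{lemma:irrationality-general} resp.\ Lemma~\ref{lemma:irrationality-general-nonprimitie} with $D=2$, $\ell(\Lambda_h)\le 2$, $|\operatorname{disc}(\Lambda_h)|=\tfrac{4d(n-1)}{\gamma^2}\le 4nd$. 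That correctly yields the universal exponent $19$ (worst case $\gamma=2$, $4\mid n-1$, $4\mid d$) and the crude exponents $16$ in cases (\ref{t:iK3n-2}) and (\ref{t:iK3n-4}), with $n=1$ delegated to \cite{ABL23}, exactly as in the paper.

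The gap is in the $\varepsilon$-refinements, i.e.\ precisely in the estimate $|\uO(D(\Lambda_h))|=O_\varepsilon\bigl((nd)^\varepsilon\bigr)$, which the paper obtains from the surjectivity of $\uO(\Lambda_h)\to\uO(D(\Lambda_h))$ together with \cite[Proposition~3.12(ii)]{GHS10}. Your proposed mechanism for case~(\ref{t:iK3n-2}) --- that coprimality of $\gamma$, $\tfrac{2d}{\gamma}$, $\tfrac{2(n-1)}{\gamma}$ forces $D(\Lambda_h)$ to be cyclic, after which isometries are multiplications counted by a divisor bound --- is false in the sense in which the coprimality hypothesis is used: for $\gamma=1$ (e.g.\ $n=2$, any $d$, where the hypothesis holds since the triple gcd is $1$; this reading is forced because the same hypothesis is invoked for all of case~(\ref{t:iK3n-3})) one has $D(\Lambda_h)\cong\bZ/2(n-1)\oplus\bZ/2d$, which is never cyclic for $n\ge 2$. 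For case~(\ref{t:iK3n-3}) you only announce a ``prime-by-prime analysis'' of this finite quadratic module, which is exactly the missing content and is not carried out; note also that this estimate is intrinsic to $\Lambda_h$, so no ``choice of glue'' in the embeddings of Lemma~\ref{lemma:embeddingsplitlattice} can help, and the minimality of $\operatorname{rk}(\Lambda_\#)$ you worry about is irrelevant to the stated bounds. The gap is repairable --- either cite \cite[Proposition~3.12(ii)]{GHS10} as the paper does, or do the explicit congruence count on $\uO(D(\Lambda_h))$ in the style of the proofs of Theorems~\ref{thm:OG10_bound} and~\ref{sec3:them:OG6} --- but as written the exponents $14+\varepsilon$ and $15+\varepsilon$ in cases (\ref{t:iK3n-2}) and (\ref{t:iK3n-3}) are not established.
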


\begin{proof}
  Recall that the component $Y$ is birational to $\Omega(\Lambda_h)/\operatorname{Mon}^2(\Lambda,h)$ for some $h\in\Lambda$. Let us start the proof by analyzing the situation in each case.

\eqref{t:iK3n-1}~
If $n=1$, note that $\cM^{\gamma}_{K3^{[1]}, 2d}$ is non-empty if and only if $\gamma = 1$, and in this case, it is irreducible. Then it was proven in \cite{ABL23} that for any $\varepsilon>0$, there exists a $C_{1,\varepsilon}>0$ such that for all $d>0$, it holds that
\begin{equation}
\label{eq:irratK3ncase1}
     \irr(\cM^{\gamma}_{K3^{[1]},2d})
     \leq C_{1,\varepsilon}\cdot (n\cdot d)^{14+\varepsilon}.
\end{equation}

\eqref{t:iK3n-2}~
If $n=2$ or $\gamma\geq 3$, Lemma~\ref{lemma:extendabilityK3n} shows that we can find a primitive embedding of $\Lambda_h$ into the lattice $\Lambda_{\#} = U^{\oplus 2} \oplus E_8(-1)^{\oplus 3}$ such that $\operatorname{Mon}^2(\Lambda,h)$ is extendable. Note that the lattice $\Lambda_{\#}$ is independent of $n,d,\gamma$, and $Y$. Then Lemma~\ref{lemma:irrationality-general} shows that there is a constant $C_2$ depending only on $\Lambda_{\#}$ such that
\begin{equation}
\label{eq:irratK3ncase2general} 
    \irr(Y)\leq C_2 \cdot |\uO(D(\Lambda_h))| \cdot |\operatorname{disc}(\Lambda_h)|^{14},
\end{equation}
where we use $[\widetilde{\uO}^+(\Lambda_h):\operatorname{Mon}^2(\Lambda,h) \cap \widetilde{\uO}^+(\Lambda_h)]=1$ from Lemma~\ref{lemma:monK3n}. 
On the other hand, $D(\Lambda_h) = D(Q_h(-1))$ is generated by at most two elements; hence Lemma~\ref{lem:boundorthogonalgroup} shows that $|\uO(D(\Lambda_h))| \leq |\operatorname{disc}(\Lambda_h)|^2$, thus
\begin{equation}
\label{eq:irratK3ncase2}
    \irr(Y) \leq C_2 
    \cdot |\operatorname{disc}(\Lambda_h)|^{16}
    \leq C_2 \cdot (n\cdot d)^{16},
\end{equation}
where the second inequality follows from $|\operatorname{disc}(\Lambda_h)| = \frac{4d(n-1)}{\gamma^2}$. If $\gamma,\frac{2d}{\gamma},\frac{2(n-1)}{\gamma}$ are coprime, then since ${\rm{O}}\left(\Lambda_h\right)\rightarrow{\rm{O}}\left(D\left(\Lambda_h\right)\right)$ is surjective (see \cite[Theorem 2.4]{Huy16}), by \cite[Proposition~3.12(ii)]{GHS10}, we have 
\[
|\uO(D(\Lambda_h))|\leq 2^{\rho\left(2(n-1)/\gamma\right)+1}\leq 2\cdot 2^{\rho\left(\frac{2(n-1)}{\gamma}\cdot\frac{2d}{\gamma}\right)},
\]
where $\rho(k)$ is the number of prime factors of $k$. It holds that $2^{\rho(k)}\leq \nu(k)={\rm{O}}(k^\varepsilon)$ for all $\varepsilon>0$, where $\nu(k)$ is the number of positive divisors of $k$; see \cite[Equation (6.4)]{ABL23}. Since $\operatorname{disc}(\Lambda_h)=\frac{4d(n-1)}{\gamma^2}$, this shows that for every $\varepsilon>0$, there exists a constant $C'_{2,\varepsilon}$ such that $|\uO(D(\Lambda_h))|\leq C'_{2,\varepsilon}\cdot |\operatorname{disc}(\Lambda_h)|^\varepsilon$. Plugging this into \eqref{eq:irratK3ncase2general} and setting $C_{2,\varepsilon} = C_2\cdot C'_{2,\varepsilon}$, we get the bound 
\begin{equation}
\label{eq:irratK3ncase2e}
    \irr(Y)\leq C_{2,\varepsilon}
    \cdot |\operatorname{disc}(\Lambda_h)|^{14+\varepsilon}
    \leq C_{2,\varepsilon} \cdot (n\cdot d)^{14+\varepsilon},
\end{equation}
where the second inequality comes from plugging in $|\operatorname{disc}(\Lambda_h)| = \frac{4d(n-1)}{\gamma^2}$.

\eqref{t:iK3n-3}~
If $n\geq 3$ and $\gamma= 1$, then Lemma~\ref{lemma:extendabilityK3n} shows that we have a primitive embedding of $\Lambda_h$ into $U^{\oplus 2} \oplus E_8(-1)^{\oplus 2} \oplus A_1(-1)^{\oplus 10}$ such that $\operatorname{Mon}^2(\Lambda,h)$ is extendable. Further, since $\gamma, \frac{2d}{\gamma},$ and $\frac{2(n-1)}{\gamma}$ are coprime in this case, by Lemma~\ref{lemma:irrationality-general} and \cite[Proposition~3.12(ii)]{GHS10}, we get a constant $C_{3,\varepsilon}$ such that
\begin{equation}
\label{eq:irratK3ncase3}
    \irr(Y) \leq C_{3,\varepsilon} \cdot (n\cdot d)^{15+\varepsilon}.
\end{equation}
If furthermore $8\nmid d$ and $8\nmid n-1$, then Lemma~\ref{lemma:extendabilityK3n} shows that we can find a primitive embedding of~$\Lambda_h$ into the lattice $U^{\oplus 2} \oplus E_8(-1)^{\oplus 2} \oplus A_1(-1)^{\oplus 8}$ such that $\operatorname{Mon}^2(\Lambda,h)$ is extendable. Moreover, \cite[Proposition~3.12(ii)]{GHS10} shows that for any $\varepsilon>0$, there exists a constant $C'_{3,\varepsilon}$ such that we have the inequality $|\uO(D(\Lambda_h))|\leq C'_{3,\varepsilon}\cdot |\operatorname{disc}(\Lambda_h)|^\varepsilon$. Hence, reasoning as in the proof of inequality~\eqref{eq:irratK3ncase2e}, we see that for any $\varepsilon>0$, there exists a $C_{3,\varepsilon}'>0$ such that
\begin{equation}
\label{eq:irratK3ncase3e}
    \irr(Y)
    \leq C_{3,\varepsilon}'
    \cdot (n\cdot d)^{14+\varepsilon}.
\end{equation}

\eqref{t:iK3n-4}~
Suppose $n\geq 3$ and $\gamma= 2$. If $4\nmid n-1$ and $4\nmid d$, Lemma~\ref{lemma:extendabilityK3n} shows that we can find a primitive embedding of $\Lambda_h$ into the lattice $U^{\oplus 2} \oplus E_8(-1)^{\oplus 3}$ such that $\operatorname{Mon}^2(\Lambda,h)$ is extendable. Then reasoning as in the proof of Equation~\eqref{eq:irratK3ncase2}, we see that there exists a constant $C_4$ such that
\begin{equation}
\label{eq:irratK3ncase4}
    \irr(Y) \leq C_4 \cdot (n\cdot d)^{16}.
\end{equation}
If $4\mid d$ and $4\mid n-1$, then Lemma~\ref{lemma:extendabilityK3n} shows that we can find an embedding of $\Lambda_h$ into the lattice $U^{\oplus 2} \oplus E_8(-1)^{\oplus 2} \oplus A_1(-1)^{\oplus 10}$ such that $\operatorname{Mon}^2(\Lambda,h)$ is extendable. The saturation $\Lambda_{h,s}$ of $\Lambda_h$ satisfies $[\Lambda_{h,s}:\Lambda_h]=2$; the group $D(\Lambda_h)=D(Q_h(-1))$ is generated by at most two elements; Lemma~\ref{lemma:monK3n} shows that $[\widetilde{\uO}^+(\Lambda_h):\operatorname{Mon}^2(\Lambda,h) \cap \widetilde{\uO}^+(\Lambda_h)]=1$. Hence, Lemma~\ref{lemma:irrationality-general-nonprimitie} asserts that there exists a constant $C_5>0$ such that
\begin{equation}
\label{eq:irratK3ncase5}
    \irr(Y) \leq C_5
    \cdot |\operatorname{disc}(\Lambda_h)|^{19}
    \leq C_5 \cdot (n\cdot d)^{19},
\end{equation}
where the second inequality comes from
$
    |\operatorname{disc}(\Lambda_h)|
    = \frac{4d(n-1)}{\gamma^2}.
$

Set $C=\max\{C_{1,5},C_2,C_{3,4},C_4,C_5\}$ and $C_{\varepsilon} = \max\{ C_{1,\varepsilon},C_{2,\varepsilon},C_{3,\varepsilon}, C_{3,\varepsilon}'\}$. The main inequality in the statement then follows from \eqref{eq:irratK3ncase1}, \eqref{eq:irratK3ncase2}, \eqref{eq:irratK3ncase3}, \eqref{eq:irratK3ncase4}, and \eqref{eq:irratK3ncase5}. The inequality in case~\eqref{t:iK3n-1} follows from \eqref{eq:irratK3ncase1}, the inequalities in case~\eqref{t:iK3n-2} follow from \eqref{eq:irratK3ncase2} and \eqref{eq:irratK3ncase2e}, the inequalities in case~\eqref{t:iK3n-3} follow from \eqref{eq:irratK3ncase3} and \eqref{eq:irratK3ncase3e}, and the inequality in case~\eqref{t:iK3n-4} follows from \eqref{eq:irratK3ncase4}.
\end{proof}

\subsection{Hyperk\"{a}hler manifolds of \texorpdfstring{$\boldsymbol{\operatorname{Kum}_n}$}{Kum-n}-type}

Let $X$ be a projective hyperk\"ahler manifold of dimension $2n$ deformation equivalent to the generalized Kummer variety of an abelian surface. In this case, the even lattice $H^2(X,\mathbb{Z})$ has signature $(3,4)$ and is isomorphic to
$$
    \Lambda = \Lambda_{\operatorname{Kum}_n}
    \colonequals U^{\oplus 3}\oplus \mathbb{Z}\eta,
    \quad\text{where }
    (\eta,\eta) = -2(n+1).
$$
According to \cite{Mon16} and \cite[Theorem 1.4]{Mar23}, the monodromy group $\mathrm{Mon}^2(\Lambda)$ is a subgroup of index at most two of $\widehat{\uO}^+(\Lambda)$ which contains
$
    \widetilde{\mathrm{SO}}^+\left(\Lambda\right)
    = \widetilde{\uO}^+\left(\Lambda\right)\cap{\mathrm{SO}}\left(\Lambda\right).
$
In particular, we have the following.

\begin{lemma}
\label{lemma:kumnhorbit}
Let $h\in\Lambda$ be a primitive vector of divisibility~$\gamma$ and positive square $(h,h)=2d$. Up to the action of $\operatorname{Mon}^2(\Lambda)$, we can assume $h=\gamma\left(e+tf\right)-a\eta$, where $\{e,f\}$ is the standard basis for the first copy of\, $U$ and $t,a$ are integers such that 
$$
    d = \gamma^2t-(n+1)a^2,
    \quad \gcd(a,\gamma)=1,
    \quad 0\leq a<\gamma.
$$
\end{lemma}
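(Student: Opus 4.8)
The plan is to prove this by reducing the classification of primitive vectors of fixed divisibility and square under the monodromy group to the classification under the full group $\widehat{\uO}^+(\Lambda)$, and then analyzing the latter via Eichler's criterion. First I would recall that since $\operatorname{Mon}^2(\Lambda)\subset\widehat{\uO}^+(\Lambda)$ has index at most two, and since both groups contain $\widetilde{\mathrm{SO}}^+(\Lambda)$, it suffices to classify orbits under $\widetilde{\uO}^+(\Lambda)$ (or $\widetilde{\mathrm{SO}}^+(\Lambda)$) and then check that the representatives produced are not further identified or split in a way that breaks the normalization. By Eichler's criterion (see \cite[Proposition 3.3]{GHS13}, applicable because $\Lambda$ contains $U^{\oplus 2}$), the $\widetilde{\uO}^+(\Lambda)$-orbit of a primitive vector $h\in\Lambda$ is determined by the pair $\bigl((h,h),\, h_*\bigr)$, where $h_* = h/\gamma \bmod \Lambda \in D(\Lambda)$ is the image of $h/\operatorname{div}(h)$ in the discriminant group. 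So the task becomes: enumerate the possible values of $(h,h)=2d$ together with the admissible classes $h_*$, and for each pick the stated representative.

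The key computational step is to write $\Lambda = U^{\oplus 3}\oplus\bZ\eta$ with $(\eta,\eta)=-2(n+1)$, so that $D(\Lambda)\cong\bZ/2(n+1)\bZ$ generated by $\eta^\vee = \eta/2(n+1)$ with $(\eta^\vee,\eta^\vee)\equiv -1/2(n+1) \bmod 2\bZ$. For a vector of the form $h=\gamma(e+tf)-a\eta$ with $\{e,f\}$ the hyperbolic basis of the first $U$, one computes directly $(h,h) = 2\gamma^2 t - 2(n+1)a^2$, giving the relation $d=\gamma^2 t - (n+1)a^2$. The divisibility of $h$ is $\gcd(\gamma, 2(n+1)a)$; requiring this to equal $\gamma$ forces $\gamma \mid 2(n+1)a$, and the primitivity of $h$ together with the structure of the $U$-summand forces $\gcd(a,\gamma)=1$ after we have arranged $h/\gamma \equiv -a\eta^\vee\cdot(\text{something}) $ appropriately; more precisely $h_* \equiv -a\,\eta \bmod \gamma\Lambda$ determines $a$ modulo $\gamma$, and one normalizes $0\le a<\gamma$. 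I would then invoke \cite[Lemma 3.4]{BBBF23} or argue directly that every primitive $h$ with $(h,h)>0$ and $\operatorname{div}(h)=\gamma$ is $\widehat{\uO}^+(\Lambda)$-equivalent to such a representative: given arbitrary $h$, its class $h_*\in D(\Lambda)$ is $-a\eta^\vee$-type for a unique $a\in[0,\gamma)$ with $\gcd(a,\gamma)=1$ (the coprimality being exactly the condition that $h$ is primitive with the right divisibility), and then Eichler's criterion moves $h$ to the model vector, whose square pins down $t$ via $d=\gamma^2 t-(n+1)a^2$.

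The main obstacle I expect is the passage from $\widehat{\uO}^+(\Lambda)$ to $\operatorname{Mon}^2(\Lambda)$: since $\operatorname{Mon}^2(\Lambda)$ may be a proper index-two subgroup, an $\widehat{\uO}^+(\Lambda)$-orbit could a priori split into two $\operatorname{Mon}^2(\Lambda)$-orbits, so the claimed single normal form might need an extra argument. The resolution is that $\widehat{\uO}^+(\Lambda)$ is generated by $\operatorname{Mon}^2(\Lambda)$ together with one extra involution (coming from $-\operatorname{id}$ on $D(\Lambda)$ or from an orientation-reversing-on-discriminant reflection), and one checks this extra element either already lies in the stabilizer of the model vector $h$ or can be absorbed; alternatively one notes that $\widetilde{\mathrm{SO}}^+(\Lambda)\subset\operatorname{Mon}^2(\Lambda)$ already suffices because $\widetilde{\mathrm{SO}}^+(\Lambda)$-orbits of primitive vectors with $U^{\oplus 2}\subset\Lambda$ are again governed by Eichler-type criteria (as in \cite{Mon16,Mar23}), so the monodromy orbit is not smaller than claimed. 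I would cite \cite{Mon16} and \cite[Theorem 1.4]{Mar23} for the precise description of $\operatorname{Mon}^2(\Lambda)$ and conclude that the representative $h=\gamma(e+tf)-a\eta$ with the stated constraints exhausts one representative per component, which is exactly what is needed for the subsequent orthogonal-complement and extendability analysis.
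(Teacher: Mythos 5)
Your proposal is correct and follows essentially the same route as the paper: the paper also reduces to Eichler's criterion applied directly to $\widetilde{\mathrm{SO}}^+(\Lambda)\subset\operatorname{Mon}^2(\Lambda)$ (citing \cite[Proposition~2.15]{Son22}) and then builds the representative $\gamma(e+tf)-a\eta$ as in \cite[Proposition~3.1 and Lemma~3.4]{BBBF23}, which is exactly the alternative you identify at the end. Your initial detour through $\widehat{\uO}^+(\Lambda)$ and the worry about index-two orbit splitting is unnecessary, since working with $\widetilde{\mathrm{SO}}^+(\Lambda)$ from the outset makes every Eichler equivalence automatically a monodromy equivalence.
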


\begin{proof}
Let us prove that, up to the action of $\widetilde{\mathrm{SO}}^+\left(\Lambda\right)\subset \mathrm{Mon}^2(\Lambda)$, the vector $h$ can be expressed in the desired form. By Eichler's criterion (\textit{cf.} \cite[Proposition~2.15]{Son22}), two primitive vectors $h_1,h_2\in \Lambda$ are in the same $\widetilde{\mathrm{SO}}^+\left(\Lambda\right)$-orbit if and only if they have the same square $(h_1,h_1)=(h_2,h_2)=2d$, the same divisibility $\gamma$, and the same classes $\left[ \frac{h_1}{\gamma} \right] = \left[ \frac{h_2}{\gamma} \right]$ in $D(\Lambda)=D(\mathbb{Z}\eta) \cong \mathbb{Z}/2(n+1)\mathbb{Z}$. We can then proceed as in the proofs of \cite[Proposition 3.1 and Lemma 3.4]{BBBF23}, where we only need to replace $U^{\oplus 3}\oplus E_8(-1)^{\oplus 2}$ and $n-1$, respectively, by $U^{\oplus 3}$ and $n+1$.
\end{proof}

Let $h\in\Lambda$ be a primitive vector as in Lemma~\ref{lemma:kumnhorbit}. Then
$$
    \Lambda_h \cong U^{\oplus 2}\oplus Q_h(-1), 
$$
where $Q_h(-1)\subset U\oplus\bZ\eta$ is a certain negative-definite rank two sublattice which can be described explicitly as follows. 

\begin{lemma}
\label{lemma:qhkumn}
The rank two lattice $Q_h(-1)$ is generated by 
\begin{equation}
\label{lemma:eq:z_1z_2}
z_1:=\frac{2a(n+1)}{\gamma}f-\eta\quad\text{and}\quad z_2:=e-tf.
\end{equation}
In particular, if $\gamma=1$, then $Q_h \cong \mathbb{Z}(2(n+1))\oplus \mathbb{Z}(2d)$. If\, $\gamma=2$, then $Q_h \cong Q_{(n+1,d)}$ as defined in \eqref{eq:defQad}. If\, $\gamma\geq 3$, then 
\[
    Q_h = \begin{pmatrix}
        2(n+1) & -\frac{2a(n+1)}{\gamma} \\
        -\frac{2a(n+1)}{\gamma} & 2t
    \end{pmatrix}
\]
with $a,t,\gamma$ as in Lemma~\ref{lemma:kumnhorbit}.
\end{lemma}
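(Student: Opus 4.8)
The plan is to work inside the orthogonal summand $V := U\oplus\mathbb{Z}\eta\subset\Lambda$ spanned by the first copy of $U$ and by $\eta$, in which $h = \gamma(e+tf)-a\eta$ lies by Lemma~\ref{lemma:kumnhorbit}. Since $\Lambda = V\oplus U^{\oplus 2}$ is an orthogonal decomposition and $h\in V$, one has $\Lambda_h = h^{\perp V}\oplus U^{\oplus 2}$, so $Q_h(-1) = h^{\perp V}$; as $V$ has signature $(1,2)$, $h$ is primitive in $V$, and $(h,h)=2d>0$, this is a negative-definite rank-two lattice. Before the main computation I would record that $\gamma\mid 2(n+1)$: since $\gamma$ is the divisibility of $h$ it divides $(h,\eta)=2a(n+1)$, and $\gcd(a,\gamma)=1$ then forces $\gamma\mid 2(n+1)$. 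Consequently $m:=\tfrac{2a(n+1)}{\gamma}$ is an integer, so that $z_1 = mf-\eta$ and $z_2 = e-tf$ genuinely lie in $V$.

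Next I would check orthogonality to $h$ by a direct pairing computation, using $(e,f)=1$ and $(\eta,\eta)=-2(n+1)$: one finds $(h,z_1)=\gamma m - 2a(n+1)=0$ and $(h,z_2)=-\gamma t+\gamma t=0$. The one step that needs a genuine (if short) argument is that $z_1,z_2$ span $Q_h(-1)$ rather than a proper finite-index sublattice. For this I would take an arbitrary $xe+yf+w\eta\in V$ and compute its pairing with $h$, namely $\gamma(tx+y)+2a(n+1)w=\gamma\bigl((tx+y)+mw\bigr)$; this vanishes precisely when $y=-tx-mw$, and substituting back gives $xe+yf+w\eta = x(e-tf)-w(mf-\eta)=xz_2-wz_1$. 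Hence $\{z_1,z_2\}$ is a $\mathbb{Z}$-basis of $Q_h(-1)$. Computing the Gram matrix then yields $(z_1,z_1)=-2(n+1)$, $(z_2,z_2)=-2t$, and $(z_1,z_2)=m$, so that $Q_h$, being the opposite of $Q_h(-1)$, has Gram matrix $\begin{pmatrix} 2(n+1) & -m \\ -m & 2t\end{pmatrix}$ with $m=\tfrac{2a(n+1)}{\gamma}$, which is exactly the asserted matrix in the case $\gamma\geq 3$.

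Finally I would specialize. For $\gamma=1$ the conditions $0\le a<\gamma$ and $d=\gamma^2 t-(n+1)a^2$ force $a=0$ and $t=d$, hence $m=0$ and $Q_h\cong\mathbb{Z}(2(n+1))\oplus\mathbb{Z}(2d)$. For $\gamma=2$, the constraints $\gcd(a,2)=1$ and $0\le a<2$ force $a=1$, so $m=n+1$ and $4t=d+(n+1)$; comparing with the definition \eqref{eq:defQad} of $Q_{(a,d)}$ this identifies $Q_h$ with $Q_{(n+1,d)}$. I do not expect a serious obstacle here: the computation is routine linear algebra over $\mathbb{Z}$, and the only points deserving care are the integrality of $m$ (equivalently, the divisibility fact $\gamma\mid 2(n+1)$) and the spanning argument in the second paragraph, which is what guarantees that $z_1,z_2$ generate $Q_h(-1)$ on the nose.
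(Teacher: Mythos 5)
Your proposal is correct and amounts to the same argument the paper uses: the paper's proof simply delegates the identical explicit computation to [BBBF23, Lemma~3.4 and Equation~(31)] with $U^{\oplus 3}\oplus E_8(-1)^{\oplus 2}$ replaced by $U^{\oplus 3}$ and $n-1$ by $n+1$, whereas you write out that computation in full (orthogonal complement of $h$ inside $U\oplus\mathbb{Z}\eta$, integrality of $\tfrac{2a(n+1)}{\gamma}$ via the divisibility, the spanning argument, and the Gram matrix with the specializations $\gamma=1,2$). No gaps; the details all check out.
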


\begin{proof}
This can be proved using Lemma~\ref{lemma:kumnhorbit} in the same way as one proves \cite[Lemma~3.4 and Equation~(31)]{BBBF23}, where one only needs to replace $U^{\oplus 3}\oplus E_8(-1)^{\oplus 2}$ and $n-1$, respectively, by $U^{\oplus 3}$ and~$n+1$.
\end{proof}

The lattice $\Lambda_h$ has signature $(2,4)$ and discriminant $\operatorname{disc}(\Lambda_h) = \frac{4d(n+1)}{\gamma^2}$. Note that $\gamma$ divides both $2d$ and $2(n+1)$. The monodromy group $\operatorname{Mon}^2(\Lambda,h)$ is a subgroup of index at most two of $\widehat{\uO}^+(\Lambda,h) = \uO(\Lambda,h)\cap \widehat{\uO}^+(\Lambda)$ which can be described explicitly as follows.

\begin{lemma}
\label{lemma:monkumn}
Let $(n,d,\gamma)$ be as above with $\mathcal{M}_{\mathrm{Kum}^{[n]},2d}^\gamma$ non-empty.
\begin{enumerate}
    \item If\, $\gamma\geq 3$, then
  	$
  	    \widehat{\uO}^+(\Lambda,h)
  	    = \widetilde{\uO}^+(\Lambda_h).
    $
  	\item If\, $\gamma=1,2$,  then 
  	$
  	    \widehat{\uO}^+(\Lambda,h) = \langle
  	        \widetilde{\uO}^+(\Lambda_h), \sigma_{z_1}
        \rangle,
    $
    where $\sigma_{z_1}\in\uO^+(\Lambda_h)$ is the reflection along $z_1\in Q_h(-1)$ of the basis \eqref{lemma:eq:z_1z_2}.
\end{enumerate}
\end{lemma}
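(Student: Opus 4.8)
The plan is to mimic the proof of Lemma~\ref{lemma:monK3n} (the $\mathrm{K3}^{[n]}$-analogue), which itself follows \cite[Lemma~3.6, Proposition~3.7]{BBBF23} for $\gamma\geq 3$ and \cite[Section~5]{BBBF23} for $\gamma=1,2$, carrying out the substitutions $U^{\oplus 3}\oplus E_8(-1)^{\oplus 2}\rightsquigarrow U^{\oplus 3}$ and $n-1\rightsquigarrow n+1$ already used in the proofs of Lemmas~\ref{lemma:kumnhorbit} and~\ref{lemma:qhkumn}. First I would fix $h=\gamma(e+tf)-a\eta$ as in Lemma~\ref{lemma:kumnhorbit} and the generators $z_1,z_2$ of $Q_h(-1)$ from~\eqref{lemma:eq:z_1z_2}. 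Restriction to $\Lambda_h=h^{\perp\Lambda}$ gives an injection $\widehat{\uO}^+(\Lambda,h)\hookrightarrow\uO^+(\Lambda_h)$ (a positive $3$-plane in $\Lambda\otimes\bR$ decomposes as $\langle h\rangle\oplus P$ with $P$ a positive $2$-plane in $\Lambda_h\otimes\bR$, so orientations correspond), and the inclusion $\widetilde{\uO}^+(\Lambda_h)\subseteq\widehat{\uO}^+(\Lambda,h)$ is automatic by extending with the identity on $\bZ h$, as in Lemma~\ref{lem:O+tilde}. The task is then to compute the image of this injection.

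The crux is a discriminant-form computation. Write $H$ for the isotropic subgroup with $\bZ h\oplus\Lambda_h\subseteq\Lambda$, so that $\Lambda/(\bZ h\oplus\Lambda_h)\cong H\subseteq D(\bZ h)\oplus D(\Lambda_h)$ and $D(\Lambda)\cong H^{\perp}/H$; since $H$ injects into $D(\bZ h)\cong\bZ/2d\bZ$ it is cyclic, and it is the graph of an anti-isometry from a subgroup $K_1\subseteq D(\bZ h)$ onto a subgroup $K\subseteq D(\Lambda_h)$. An isometry $\bar g\in\uO^+(\Lambda_h)$ lies in the image of $\widehat{\uO}^+(\Lambda,h)$ precisely when (i) $\mathrm{id}_{\bZ h}\oplus\bar g_{*}$ preserves $H$, equivalently $\bar g_{*}$ fixes $K$ pointwise, and (ii) the induced action on $H^{\perp}/H\cong D(\Lambda)$ is $\pm\mathrm{id}$. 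I would then feed conditions (i)--(ii) through the explicit shapes of $Q_h(-1)$ and $H$ coming from Lemma~\ref{lemma:qhkumn}; the outcome separates according to $\gamma$ exactly as in \cite{BBBF23}: for $\gamma\geq 3$ the constraints force $\bar g_{*}$ to be trivial on all of $D(\Lambda_h)=D(Q_h(-1))$, whence $\widehat{\uO}^+(\Lambda,h)=\widetilde{\uO}^+(\Lambda_h)$; for $\gamma=1,2$ one additionally picks up the reflection $\sigma_{z_1}$.

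For $\gamma=1,2$ I would verify directly that $\sigma_{z_1}$ extends to an element of $\widehat{\uO}^+(\Lambda,h)$: a one-line bilinear-form check gives $(h,z_1)=0$, so $\sigma_{z_1}$ fixes $h$ and preserves $\Lambda_h$; one checks that $n+1=\tfrac12|(z_1,z_1)|$ divides ${\rm div}_\Lambda(z_1)$, so the reflection formula $x\mapsto x+\frac{(x,z_1)}{n+1}z_1$ is integral on $\Lambda$; $\sigma_{z_1}$ is orientation-preserving because $z_1$ has negative square; and it acts as $-\mathrm{id}$ on $D(\Lambda)$, so it lies in $\widehat{\uO}^+(\Lambda,h)\setminus\widetilde{\uO}^+(\Lambda_h)$. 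The reverse inclusion $\widehat{\uO}^+(\Lambda,h)\subseteq\langle\widetilde{\uO}^+(\Lambda_h),\sigma_{z_1}\rangle$ then follows from the same bookkeeping: $D(\Lambda_h)$ is generated by at most two elements, $\bar g_{*}$ is determined on $K$ by~(i), and $\sigma_{z_1}$ accounts for the only remaining $-\mathrm{id}$ ambiguity allowed by~(ii).

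The step I expect to be the real obstacle is not the structural argument but checking that the lattice-theoretic inputs of \cite{BBBF23} survive the drop from rank $23$ to rank $7$ (and, for $\Lambda_h$, from rank $22$ to $6$): one needs Eichler's criterion for $\Lambda$ and $\Lambda_h$, which is available since both contain $U^{\oplus 2}$, together with surjectivity of $\uO(\Lambda_h)\to\uO(D(\Lambda_h))$ and the analogous surjectivity for the stabilizer of $h$ in $\widehat{\uO}^+(\Lambda)$; these still hold because $\Lambda_h$ is indefinite with $\operatorname{rk}(\Lambda_h)=6\geq\ell(\Lambda_h)+2$ (as $\ell(\Lambda_h)=\ell(Q_h(-1))\leq 2$), so Nikulin's criterion applies. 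Once these are secured, the case analysis runs verbatim as in the cited results of \cite{BBBF23}.
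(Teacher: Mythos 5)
Your proposal is correct and follows essentially the same route as the paper, which likewise proves this lemma by transferring the K3$^{[n]}$-type argument of \cite{BBBF23} (Lemma~3.6, Proposition~3.7, and Section~5) under the substitutions $\eta\leftrightarrow\delta$, $n+1\leftrightarrow n-1$, and the removal of the $E_8(-1)^{\oplus 2}$ summands, exactly as you set up. The extra details you supply (the glue-group criterion for extending $\bar g$ across $\bZ h\oplus\Lambda_h\subset\Lambda$, the checks that $(h,z_1)=0$, that $(n+1)\mid\operatorname{div}_\Lambda(z_1)$, that $\sigma_{z_1}$ acts as $-\mathrm{id}$ on $D(\Lambda)$, and that Eichler's criterion and Nikulin's surjectivity still apply since $\Lambda_h\supset U^{\oplus 2}$ and $\operatorname{rk}(\Lambda_h)=6\geq\ell(\Lambda_h)+2$) all check out and are consistent with the paper's (much terser) proof.
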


\begin{proof}
This can be proved in the same way as \cite[Lemma~3.6 and Proposition~3.7]{BBBF23} with $\ell$ replaced by~$\eta$, $(n-1)$ replaced by $(n+1)$, and $U^{\oplus 2}\oplus E_8(-1)^{\oplus 2}\oplus Q_h(-1)$ replaced by $U^{\oplus 2}\oplus Q_h(-1)$.  
\end{proof}

As a consequence, we obtain an analogue of Lemma~\ref{lemma:extendabilityK3n}.

\begin{lemma}
\label{lemma:extendabilityKumn}
There exists a lattice $\Lambda_{\#}$, together with an embedding $\Lambda_h\hookrightarrow\Lambda_\#$, as follows such that $\widehat{\uO}^+(\Lambda,h)$ extends: 
\begin{enumerate}		
\item If\, $\gamma\geq 3$, then $\Lambda_{\#} = U^{\oplus 2}\oplus E_8(-1)$; the embedding is primitive.
\item If\, $\gamma=1$,  then $\Lambda_{\#} =U^{\oplus 2}\oplus A_1(-1)^{\oplus 10}$; the embedding is primitive.
\item If\, $\gamma=1$ and $8\nmid n-1,8\nmid d$, then  $\Lambda_{\#} =U^{\oplus 2} \oplus A_1(-1)^{\oplus 8}$; the embedding is primitive.
\item If\, $\gamma=2$ and $4\nmid n-1,4\nmid d$, then  $\Lambda_{\#} =U^{\oplus 2}\oplus E_8(-1)$; the embedding is primitive.
\item If\, $\gamma=2$ and $4\mid n-1,4\mid d$, then  $\Lambda_{\#} =U^{\oplus 2}\oplus  A_{1}(-1)^{\oplus 10}$. If $\Lambda_{h,s}$ is the saturation, then $[\Lambda_{h,s}:\Lambda_h]=2$.
	\end{enumerate} 
\end{lemma}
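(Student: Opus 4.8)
The plan is to follow the proof of Lemma~\ref{lemma:extendabilityK3n} almost verbatim; the only structural change is that $\Lambda_h\cong U^{\oplus 2}\oplus Q_h(-1)$ now carries no $E_8(-1)^{\oplus 2}$ summand and that the role of $n-1$ is played by $n+1$. Accordingly I would split the argument into a formal reduction to an embedding problem for the rank-two lattice $Q_h(-1)$, and then a case-by-case solution of that problem using the embedding lemmas of Section~\ref{subsect:Qh} together with the explicit description in Lemma~\ref{lemma:qhkumn}.

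For the reduction, recall from Lemma~\ref{lemma:qhkumn} that $\Lambda_h = U^{\oplus 2}\oplus Q_h(-1)$ and from Lemma~\ref{lemma:monkumn} that $\widehat{\uO}^+(\Lambda,h) = \widetilde{\uO}^+(\Lambda_h)$ when $\gamma\geq 3$, while $\widehat{\uO}^+(\Lambda,h) = \langle \widetilde{\uO}^+(\Lambda_h),\sigma_{z_1}\rangle$ when $\gamma\in\{1,2\}$. In the first case Lemma~\ref{lem:O+tilde} makes $\widetilde{\uO}^+(\Lambda_h)$ extendable along any embedding, so it suffices to exhibit some primitive embedding of $\Lambda_h$ into the asserted $\Lambda_\#$. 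In the remaining cases I would reason as in the proof of Lemma~\ref{lemma:extendabilityK3n}: if $Q_h(-1)\hookrightarrow M$ is an embedding along which $\sigma_{z_1}$ extends to an isometry $\sigma$ of $M$, then $\Lambda_h = U^{\oplus 2}\oplus Q_h(-1)\hookrightarrow U^{\oplus 2}\oplus M =: \Lambda_\#$ is an embedding along which $\sigma_{z_1}$ extends to $\mathrm{Id}_{U^{\oplus 2}}\oplus \sigma$; this extension acts as the identity on $U^{\oplus 2}$, hence is orientation-preserving, so together with Lemma~\ref{lem:O+tilde} the whole group $\langle \widetilde{\uO}^+(\Lambda_h),\sigma_{z_1}\rangle$ is extendable. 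Since the saturation of $U^{\oplus 2}\oplus Q_h(-1)$ inside $U^{\oplus 2}\oplus M$ is $U^{\oplus 2}$ together with the saturation $Q_h(-1)_s$ of $Q_h(-1)$ in $M$, one also gets $[\Lambda_{h,s}:\Lambda_h] = [Q_h(-1)_s:Q_h(-1)]$; in particular the embedding of $\Lambda_h$ is primitive exactly when that of $Q_h(-1)$ is. This reduces the statement to producing suitable embeddings of $Q_h(-1)$.

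The case analysis then runs as follows. When $\gamma\geq 3$, $Q_h(-1)$ is a negative-definite even lattice of rank two, hence embeds primitively into $E_8(-1)$ by \cite[Theorem~14.1.15]{Huy16}, giving the primitive embedding $\Lambda_h\hookrightarrow U^{\oplus 2}\oplus E_8(-1)$. When $\gamma=1$, Lemma~\ref{lemma:qhkumn} gives $Q_h(-1)\cong \bZ(-2(n+1))\oplus \bZ(-2d)$, and the $(-1)$-twisted Lemma~\ref{lemma:embeddingsplitlattice} supplies a primitive embedding into $A_1(-1)^{\oplus 10}$ along which $\sigma_{z_1}$ extends, and a primitive embedding into $A_1(-1)^{\oplus 8}$ with the same property when $8\nmid n+1$ and $8\nmid d$. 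When $\gamma=2$, Lemma~\ref{lemma:qhkumn} gives $Q_h(-1)\cong Q_{(n+1,d)}(-1)$; here one first observes from Lemma~\ref{lemma:kumnhorbit} that $(n+1)+d\equiv 0 \bmod 4$, so $4\mid n+1$ if and only if $4\mid d$ and the two subcases are exhaustive. If $4\nmid n+1$ (hence $4\nmid d$), Lemma~\ref{lemma:qadnot4} gives a primitive embedding $Q_h(-1)\hookrightarrow E_8(-1)$ along which $\sigma_{z_1}$ extends; if $4\mid n+1$ (hence $4\mid d$), Lemma~\ref{lemma:qad4} gives an embedding $Q_h(-1)\hookrightarrow A_1(-1)^{\oplus 10}$ along which $\sigma_{z_1}$ extends and with $[Q_h(-1)_s:Q_h(-1)]=2$, whence $[\Lambda_{h,s}:\Lambda_h]=2$ by the reduction step. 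Combining with the reduction completes all five cases.

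All the genuine arithmetic — Lagrange's four-square theorem and its refinement to sums of coprime squares — is already built into Lemmas~\ref{lemma:qadnot4}, \ref{lemma:qad4}, and \ref{lemma:embeddingsplitlattice}, so no new difficulty arises there, and the extendability mechanism is the one already set up in Lemma~\ref{lem:O+tilde}. The only points requiring care are bookkeeping ones: checking that the substitution of $n+1$ for $n-1$ really goes through in Lemma~\ref{lemma:qhkumn} (it does, as that lemma cites \cite{BBBF23} with exactly this replacement and the divisibility hypotheses in the statement are to be read with $n+1$ in place of $n-1$), and verifying that the listed cases exhaust all admissible triples $(n,d,\gamma)$, which follows from the constraints on $(t,a)$ recorded in Lemma~\ref{lemma:kumnhorbit}. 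I therefore expect no serious obstacle here, only routine verification.
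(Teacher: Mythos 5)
Your proposal is correct and is exactly the argument the paper intends: the paper's own proof simply says "similar to the proof of Lemma~\ref{lemma:extendabilityK3n}, left to the reader," and you carry out that reduction to the rank-two lattice $Q_h(-1)$ (via Lemmas~\ref{lemma:qhkumn} and~\ref{lemma:monkumn}) and then invoke Lemmas~\ref{lemma:qadnot4}, \ref{lemma:qad4}, and~\ref{lemma:embeddingsplitlattice} case by case, just as in the K3$^{[n]}$ setting. Your observations that $n+1$ replaces $n-1$ (the statement's "$n-1$" is a slip, as Theorem~\ref{thm:irratKumn} confirms) and that $(n+1)+d\equiv 0\bmod 4$ makes the two $\gamma=2$ subcases exhaustive are correct bookkeeping points.
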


\begin{proof}
Since ${\operatorname{Mon}}^2\left(\Lambda,h\right)\subset \widehat{\uO}^+\left(\Lambda,h\right)$, it is enough to show the extendability of the latter. The proof is similar to the proof of Lemma~\ref{lemma:extendabilityK3n}, so we leave it to the reader.
\end{proof}

We are ready to prove our main result for hyperk\"ahler manifolds of Kum$_n$-type.

\begin{thm}
\label{thm:irratKumn}
There exists a constant $C>0$ such that for any $n,d,\gamma$ and for any irreducible component $Y\subset \mathcal{M}^\gamma_{\mathrm{Kum}_n,2d}$, it holds that
$$
    \irr(Y) \leq C \cdot (n\cdot d)^{11}.
$$
Furthermore, for every $\varepsilon>0$, there exists a constant $C_{\varepsilon}>0$ such that the above bound can be refined in each case as follows:
\begin{enumerate}
\item If $\gamma\geq 3$, then $\irr(Y)\leq C\cdot (n\cdot d)^{8}$. If furthermore $\gamma,\frac{2d}{\gamma},\frac{2(n+1)}{\gamma}$ are coprime, then $\irr(Y) \leq C_{\varepsilon}\cdot (n\cdot d)^{6+\varepsilon}$.
\item If $\gamma = 1$, then $\irr(Y)\leq C_{\varepsilon}\cdot (n\cdot d)^{7+\varepsilon}$. If furthermore $8\nmid n+1,8\nmid d$, then $\irr(Y) \leq C_{\varepsilon}\cdot (n\cdot d)^{6+\varepsilon}$.
\item If $\gamma=2$ and $4\nmid n+1, 4\nmid d$, then $\irr(Y) \leq C\cdot (n\cdot d)^{8}$.
\end{enumerate}
\end{thm}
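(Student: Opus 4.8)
The plan is to run the proof of Theorem~\ref{thm:irratK3n} essentially verbatim, with the K3$^{[n]}$ inputs replaced by their Kum$_n$ counterparts: Lemmas~\ref{lemma:kumnhorbit} and~\ref{lemma:qhkumn} supply the polarized lattice, Lemma~\ref{lemma:monkumn} describes $\operatorname{Mon}^2(\Lambda,h)$ up to an index-two ambiguity, and Lemma~\ref{lemma:extendabilityKumn} provides, in each of its five cases, an even lattice $\Lambda_{\#}$ of signature $(2,m')$ independent of $n,d,\gamma$ together with an embedding $\Lambda_h\hookrightarrow\Lambda_{\#}$ for which $\widehat{\uO}^+(\Lambda,h)$ — hence also its subgroup $\operatorname{Mon}^2(\Lambda,h)$ — is extendable. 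By the Torelli theorem $Y$ is birational to $\Omega(\Lambda_h)/\operatorname{Mon}^2(\Lambda,h)$, where $\Lambda_h\cong U^{\oplus 2}\oplus Q_h(-1)$ has signature $(2,4)$ and discriminant $|\operatorname{disc}(\Lambda_h)|=4d(n+1)/\gamma^2\leq 8nd$. Since $U$ is unimodular, $D(\Lambda_h)=D(Q_h(-1))$ is generated by at most two elements, so $\ell(\Lambda_h)\leq 2$ and, by Lemma~\ref{lem:boundorthogonalgroup}, $|\uO(D(\Lambda_h))|\leq|\operatorname{disc}(\Lambda_h)|^{2}$; note also that $m=4\geq 3$ and each $m'\in\{10,12\}$, so the hypotheses $3\leq m\leq m'$ and $1\leq m'-m\leq m'-2$ of Lemmas~\ref{lemma:irrationality-general} and~\ref{lemma:irrationality-general-nonprimitie} hold.

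First I would dispose of the group-index factor: in every case of Lemma~\ref{lemma:monkumn} one has $\widetilde{\uO}^+(\Lambda_h)\subseteq\widehat{\uO}^+(\Lambda,h)$, and $\operatorname{Mon}^2(\Lambda,h)$ has index at most two in $\widehat{\uO}^+(\Lambda,h)$, whence $[\widetilde{\uO}^+(\Lambda_h):\operatorname{Mon}^2(\Lambda,h)\cap\widetilde{\uO}^+(\Lambda_h)]\leq 2$, a bounded quantity absorbed into the universal constant. I would then go through the five cases of Lemma~\ref{lemma:extendabilityKumn}, applying Lemma~\ref{lemma:irrationality-general} for the four primitive embeddings and Lemma~\ref{lemma:irrationality-general-nonprimitie} (with $D=2$, using $[\Lambda_{h,s}:\Lambda_h]=2$) for the single non-primitive one. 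For $\gamma\geq 3$ the target $U^{\oplus 2}\oplus E_8(-1)$ has $m'=10$, so $\irr(Y)\leq C\,|\uO(D(\Lambda_h))|\cdot|\operatorname{disc}(\Lambda_h)|^{6}\leq C\,|\operatorname{disc}(\Lambda_h)|^{8}\leq C\,(nd)^{8}$; when moreover $\gamma,2d/\gamma,2(n+1)/\gamma$ are coprime, surjectivity of $\uO(\Lambda_h)\to\uO(D(\Lambda_h))$, \cite[Proposition~3.12(ii)]{GHS10}, and the bound $2^{\rho(k)}=O(k^{\varepsilon})$ — used exactly as in the proof of Theorem~\ref{thm:irratK3n} — replace $|\uO(D(\Lambda_h))|$ by $C_{\varepsilon}\,|\operatorname{disc}(\Lambda_h)|^{\varepsilon}$ and yield $\irr(Y)\leq C_{\varepsilon}\,(nd)^{6+\varepsilon}$. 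For $\gamma=1$ the triple $1,2d,2(n+1)$ is automatically coprime, so with $U^{\oplus 2}\oplus A_1(-1)^{\oplus 10}$ ($m'=12$) the same argument gives $\irr(Y)\leq C_{\varepsilon}\,(nd)^{7+\varepsilon}$, sharpening to $(nd)^{6+\varepsilon}$ when $8\nmid n+1$ and $8\nmid d$, since then one may instead embed into $U^{\oplus 2}\oplus A_1(-1)^{\oplus 8}$ ($m'=10$). For $\gamma=2$ with $4\nmid n+1$ and $4\nmid d$, the primitive embedding into $U^{\oplus 2}\oplus E_8(-1)$ and the computation of the $\gamma\geq 3$ case give $\irr(Y)\leq C\,(nd)^{8}$. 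Finally, for $\gamma=2$ with $4\mid n+1$ and $4\mid d$, the embedding into $U^{\oplus 2}\oplus A_1(-1)^{\oplus 10}$ ($m'=12$) is only of index two onto its saturation, so Lemma~\ref{lemma:irrationality-general-nonprimitie} gives $\irr(Y)\leq C\,|\operatorname{disc}(\Lambda_h)|^{1+6+2\ell(\Lambda_h)}\leq C\,|\operatorname{disc}(\Lambda_h)|^{11}\leq C\,(nd)^{11}$.

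Taking the maximum over the finitely many constants above yields the universal bound $\irr(Y)\leq C\,(nd)^{11}$ — the largest exponent, $11$, arising precisely from the non-primitive case $\gamma=2$, $4\mid n+1$, $4\mid d$ — while the refinements (1)--(3) are read off the corresponding cases. I do not expect a serious obstacle: the substantive work is already in Lemmas~\ref{lemma:monkumn} and~\ref{lemma:extendabilityKumn}, and what remains is the same bookkeeping as for Theorem~\ref{thm:irratK3n}. The one genuinely new point that needs care is the index-two gap between $\operatorname{Mon}^2(\Lambda,h)$ and $\widehat{\uO}^+(\Lambda,h)$, which only affects the constant. Structurally, the reason the exponents come out smaller than in the K3$^{[n]}$ case is that the lattices $\Lambda_{\#}$ here carry no $E_8(-1)^{\oplus 2}$ summand (that summand was forced by the $E_8(-1)^{\oplus 2}$ sitting inside $\Lambda_h$ in the K3$^{[n]}$ case), so each $m'$ drops by $16$ and each exponent $1+\tfrac{m'}{2}$ drops by $8$, turning $19,16,15+\varepsilon,14+\varepsilon$ into $11,8,7+\varepsilon,6+\varepsilon$.
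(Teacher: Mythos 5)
Your proposal is correct and is essentially identical to the paper's own proof: the paper likewise runs the argument of Theorem~\ref{thm:irratK3n} with the inputs of Lemmas~\ref{lemma:qhkumn}, \ref{lemma:monkumn}, and~\ref{lemma:extendabilityKumn}, and disposes of the only new issue — the factor $[\widetilde{\uO}^+(\Lambda_h):\operatorname{Mon}^2(\Lambda,h)\cap\widetilde{\uO}^+(\Lambda_h)]$ — exactly as you do, by noting $\widehat{\uO}^+(\Lambda,h)\cap\widetilde{\uO}^+(\Lambda_h)=\widetilde{\uO}^+(\Lambda_h)$ and that $\operatorname{Mon}^2(\Lambda,h)$ has index at most two in $\widehat{\uO}^+(\Lambda,h)$. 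Your case-by-case exponent bookkeeping ($m'\in\{10,12\}$, $\ell(\Lambda_h)\leq 2$, the non-primitive case via Lemma~\ref{lemma:irrationality-general-nonprimitie} with $D=2$ giving the exponent $11$) matches the paper's intended computation.
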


\begin{proof}
The proof uses Lemma~\ref{lemma:extendabilityKumn} and is similar to the proof of Theorem~\ref{thm:irratK3n}. The only difference is the factor
$
    [\widetilde{\uO}^+(\Lambda_h)
    : \mathrm{Mon}^2(\Lambda,h)
    \cap\widetilde{\uO}^+(\Lambda_h)].
$
In the current case, $\mathrm{Mon}^2(\Lambda,h)$ is a subgroup of index at most two in~$\widehat{\uO}^+(\Lambda,h)$, so that $\mathrm{Mon}^2(\Lambda,h)\cap \widetilde{\uO}^+(\Lambda_h)$ is a subgroup of index at most two in
$
    \widehat{\uO}^+(\Lambda,h)
    \cap\widetilde{\uO}^+(\Lambda_h)
    = \widetilde{\uO}^+(\Lambda_h),
$
where the last equality follows from Lemma~\ref{lemma:monkumn}. 
\end{proof}

\begin{rmk}
Although \cite[Proposition 3.2]{GHS10}, used in the proof of Theorem~\ref{thm:irratK3n}, is stated for the K3$^{[n]}$-lattice, it applies equally in the Kum$_{n}$ case by replacing $n-1$ with $n+1$ since it can be interpreted as a statement about the rank two lattice $Q_h(-1)$ and its discriminant group. 
\end{rmk}

\subsection{Hyperk\"ahler manifolds of OG10-type}

Now consider hyperk\"ahler manifolds $X$ deformation equivalent to O'Grady's ten-dimensional example; see \cite{OGr99}. Then $H^2(X,\mathbb{Z})$ is isomorphic to the lattice $\Lambda = \Lambda_{\textrm{OG10}}:= U^{\oplus 3}\oplus E_8(-1)^{\oplus 2}\oplus A_2(-1)$; see \cite{Rap08}. Note that this is an even lattice of signature~$(3,21)$. The monodromy group $\operatorname{Mon}^2(\Lambda)$ coincides with the group $\widehat{\uO}^+(\Lambda) = \uO^+(\Lambda)$; see \cite{Ono22}. If $h \in \Lambda$ is a class with positive square $2d$, then its divisibility can only be $\gamma=1$ or $\gamma=3$. In this setting, the lattice $\Lambda_h$ is of the form
$$
    \Lambda_h
    \cong U^{\oplus 2}\oplus E_8(-1)^{\oplus 2} \oplus Q_{h}(-1), 
$$
where $Q_h(-1)$ is a certain negative-definite lattice of rank~$3$. Note that the lattice $\Lambda_h$ has signature $(2,21)$ and discriminant $|\operatorname{disc}(\Lambda_h)| = \frac{6d}{\gamma^2}$. The relevant monodromy group can be described explicitly as follows.

\begin{lemma}
\label{lemma:monOG10}
Consider the subgroup $\operatorname{Mon}^2(\Lambda,h) \subset \uO^+(\Lambda_h)$. 
\begin{enumerate}
\item\label{l:mOG10-1} If $\gamma=3$, then $\operatorname{Mon}^2(\Lambda,h) = \widetilde{\uO}^+(\Lambda_h)$. In this case, there exists a primitive embedding of $\Lambda_h$ into $\Lambda_{\#} = U^{\oplus 2}\oplus E_8(-1)^{\oplus 3}$ such that $\operatorname{Mon}^2(\Lambda,h)$ is extendable.
\item\label{l:mOG10-2} If $\gamma=1$, then $Q_h(-1)\cong A_2(-1)\oplus \mathbb Z\ell$ with $(\ell,\ell)=-2d$. In this case, the monodromy group has the form $\operatorname{Mon}^2(\Lambda,h) = \langle \widetilde{\uO}^+(\Lambda_h), \sigma_v \rangle$, where $\sigma_v$ is the reflection with respect to a vector $v\in A_2(-1)$ with $(v,v) \!= \!-6$. Moreover, there exists a primitive embedding of $\Lambda_h$ into $\Lambda_\# = U^{\oplus 2}\oplus E_8(-1)^{\oplus 2}\oplus A_2(-1)\oplus A_1(-1)^{\oplus 5}$ such that  $\operatorname{Mon}^2(\Lambda,h)$ is extendable. Furthermore, if $d$ is not divisible by $8$, then $A_1(-1)^{\oplus 5}$ can be replaced by $A_1(-1)^{\oplus 4}$.
\end{enumerate}
\end{lemma}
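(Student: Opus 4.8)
The plan is to follow the blueprint of Lemma~\ref{lemma:extendabilityK3n}: first pin down the $\operatorname{Mon}^2(\Lambda)$-orbit of a primitive $h$ with $(h,h)=2d$ of divisibility $\gamma$ together with the isometry type of $Q_h(-1)$; next identify $\operatorname{Mon}^2(\Lambda,h)$ as a subgroup of $\uO^+(\Lambda_h)$; and finally exhibit an embedding $\Lambda_h\hookrightarrow\Lambda_{\#}$ under which that subgroup extends. Throughout we use $\operatorname{Mon}^2(\Lambda_{\mathrm{OG10}})=\uO^+(\Lambda)$ from \cite{Ono22}, together with the fact that $D(\Lambda)=D(A_2(-1))\cong\bZ/3$ is cyclic.

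\emph{Orbits and $Q_h(-1)$.}
Since $D(\Lambda)$ is cyclic of order $3$, the divisibility satisfies $\gamma\in\{1,3\}$, and by Eichler's criterion (applicable since $\Lambda\supset U^{\oplus 2}$; \textit{cf.}\ \cite[Proposition~2.15]{Son22}) the $\widetilde{\uO}^+(\Lambda)$-orbit of $h$ is determined by $(h,h)$, $\gamma$, and $[h/\gamma]\in D(\Lambda)$. For $\gamma=3$ the class $[h/3]$ generates $D(\Lambda)$, and its two generators are interchanged by the reflection along a $(-6)$-vector of $A_2(-1)$, which lies in $\uO^+(\Lambda)=\operatorname{Mon}^2(\Lambda)$; hence there is a single monodromy orbit, and likewise for $\gamma=1$ (where $[h]=0$). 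Concretely, for $\gamma=1$ one takes $h=e+df$ in a hyperbolic plane, so that $\Lambda_h=U^{\oplus 2}\oplus E_8(-1)^{\oplus 2}\oplus A_2(-1)\oplus\bZ(e-df)$ and $Q_h(-1)\cong A_2(-1)\oplus\bZ\ell$ with $(\ell,\ell)=-2d$; for $\gamma=3$ one takes a representative $h=3e+tf+v$ with $v\in A_2(-1)$ of square $-6$ and divisibility $3$ and $t$ chosen so that $(h,h)=2d$, and reads off $Q_h(-1)=h^{\perp}$ inside $U\oplus A_2(-1)$, a negative-definite lattice of rank three with $|\operatorname{disc}(Q_h(-1))|=|\operatorname{disc}(\Lambda_h)|=2d/3$.

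\emph{The monodromy group.}
I would argue via the gluing picture: realise $\Lambda$ as a primitive extension of $\bZ h\oplus\Lambda_h$ determined by an isotropic subgroup $H\subset D(\bZ h)\oplus D(\Lambda_h)$ with $H^{\perp}/H\cong D(\Lambda)$, where the projection of $H$ to $D(\bZ h)\cong\bZ/2d$ has image the subgroup of order $2d/\gamma$. An isometry of $\Lambda_h$ lifts to an element of $\operatorname{Mon}^2(\Lambda)=\widehat{\uO}^+(\Lambda)$ fixing $h$ if and only if the induced automorphism of $D(\Lambda_h)$ preserves $H$ while fixing the $D(\bZ h)$-coordinate. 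For $\gamma=3$ this projection is injective with image of order $2d/3=|D(\Lambda_h)|$, so $H$ maps isomorphically onto $D(\Lambda_h)$; hence fixing $h$ forces acting trivially on all of $D(\Lambda_h)$, and together with the extension-by-identity of Lemma~\ref{lem:O+tilde} this gives $\operatorname{Mon}^2(\Lambda,h)=\widetilde{\uO}^+(\Lambda_h)$. For $\gamma=1$ the projection is an isomorphism onto $D(\bZ h)\cong\bZ/2d$, identifying it with the $\bZ\ell$-summand of $D(\Lambda_h)=\bZ/3\oplus\bZ/2d$ while the $A_2(-1)$-summand survives as $H^{\perp}/H=D(\Lambda)$; thus $\operatorname{Mon}^2(\Lambda,h)$ restricts to the isometries of $\Lambda_h$ acting trivially on the $\bZ\ell$-part and as $\pm\mathrm{id}$ on the $A_2(-1)$-part, which one checks equals $\langle\widetilde{\uO}^+(\Lambda_h),\sigma_v\rangle$ once one verifies that the reflection $\sigma_v$ along $v\in A_2(-1)$ with $(v,v)=-6$ fixes the $\bZ\ell$-part and acts as $-\mathrm{id}$ on $D(A_2(-1))$. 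This identification --- the OG10 analogue of \cite[Lemma~3.6, Proposition~3.7, Section~5]{BBBF23} --- is the main obstacle; the surrounding steps are essentially formal lattice bookkeeping.

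\emph{The embeddings.}
For $\gamma=3$, since $\operatorname{Mon}^2(\Lambda,h)=\widetilde{\uO}^+(\Lambda_h)$, extendability is automatic with respect to any embedding by Lemma~\ref{lem:O+tilde}; as $Q_h(-1)$ is negative definite of rank three, with at most three generators of its discriminant group, it admits a primitive embedding into $E_8(-1)$ (\textit{cf.}\ \cite[Theorem~14.1.15]{Huy16}), giving a primitive embedding $\Lambda_h\hookrightarrow U^{\oplus 2}\oplus E_8(-1)^{\oplus 3}$. For $\gamma=1$, I keep the orthogonal summand $A_2(-1)$ of $\Lambda_h$ fixed and embed $\bZ\ell=\bZ(-2d)$ primitively into $A_1(-1)^{\oplus 5}$: writing $d-1=a_1^2+a_2^2+a_3^2+a_4^2$ by Lagrange's theorem, the vector $(a_1,a_2,a_3,a_4,1)$ is primitive of square $-2d$, and if $8\nmid d$ one uses instead a representation of $d$ as a sum of four coprime squares (\cite[Theorem~1]{CH07}) to land primitively in $A_1(-1)^{\oplus 4}$. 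The resulting embedding $\Lambda_h\hookrightarrow U^{\oplus 2}\oplus E_8(-1)^{\oplus 2}\oplus A_2(-1)\oplus A_1(-1)^{\oplus 5}$ is then primitive, being an orthogonal direct sum of primitive embeddings of orthogonal summands, and $\operatorname{Mon}^2(\Lambda,h)$ extends: $\widetilde{\uO}^+(\Lambda_h)$ extends by Lemma~\ref{lem:O+tilde}, while $\sigma_v$ extends as $\sigma_v$ on the $A_2(-1)$-summand and the identity on its orthogonal complement in $\Lambda_{\#}$, an isometry that preserves orientation because it fixes $U^{\oplus 2}$ pointwise.
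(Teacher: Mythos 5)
Your proposal is correct, and for the part of the statement that the paper actually proves --- the choice of $\Lambda_{\#}$, the primitive embeddings, and the extendability of $\operatorname{Mon}^2(\Lambda,h)$ --- it coincides with the paper's argument essentially verbatim: rank-three negative-definite $Q_h(-1)$ into $E_8(-1)$ via \cite[Theorem~14.1.15]{Huy16} for $\gamma=3$ (extendability being automatic since the group is $\widetilde{\uO}^+(\Lambda_h)$), and for $\gamma=1$ the embedding $\bZ\ell\hookrightarrow A_1(-1)^{\oplus 5}$ by writing $d-1$ as four squares (respectively $A_1(-1)^{\oplus 4}$ via \cite[Theorem~1]{CH07} when $8\nmid d$), keeping $A_2(-1)$ as a summand and extending $\sigma_v$ by the identity on its orthogonal complement, with orientation preserved because $U^{\oplus 2}$ is fixed. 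The genuine difference is upstream: the paper simply imports the descriptions of $Q_h(-1)$ and of $\operatorname{Mon}^2(\Lambda,h)$ from \cite[Theorem~3.1 and proof of Lemma~4.4]{GHS11}, whereas you re-derive them via Eichler's criterion and the primitive-extension (glue-group) description of $\bZ h\oplus\Lambda_h\subset\Lambda$, using $\operatorname{Mon}^2(\Lambda)=\uO^+(\Lambda)$ from \cite{Ono22}. That route is sound and self-contained, which is a nice bonus, but note that your key step is still only asserted ("one checks"): for $\gamma=1$ you must show that an isometry of $D(\Lambda_h)\cong D(A_2(-1))\oplus\bZ/2d\bZ$ fixing the class $[\ell/2d]$ cannot mix the two summands (an issue only when $3\mid d$) and acts as $\pm\mathrm{id}$ on $D(A_2(-1))$; this follows in two lines because such an isometry preserves the pairing with the fixed element $[\ell/2d]$, which kills the component of the image of a generator of $D(A_2(-1))$ along $[\ell/2d]$, after which the quadratic form forces $\pm 1$ on the $3$-part, and every allowed discriminant action is then realized since any $g$ with $\bar g=\bar\sigma_v$ satisfies $g\sigma_v\in\widetilde{\uO}^+(\Lambda_h)$. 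With that small computation supplied (and the standard remark that both projections of the glue group are injective because $\bZ h$ and $\Lambda_h$ are primitive in $\Lambda$), your argument gives exactly the statement the paper cites, and the rest matches the paper's proof.
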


\begin{proof}
The formulas for $\operatorname{Mon}^2(\Lambda,h)$ and $Q_h(-1)$ in both cases are part of \cite[Theorem~3.1]{GHS11} and the proof of \cite[Lemma~4.4]{GHS11}. Let us find the lattice $\Lambda_\#$ and prove the extendability case by case.

\eqref{l:mOG10-1}~
Suppose  $\gamma = 3$. First note that $Q_h(-1)$ has signature $(0,3)$, so it can be embedded as a primitive sublattice of $E_8(-1)$ (see \cite[Theorem~14.1.15]{Huy16}). This induces a primitive embedding $\Lambda_h\hookrightarrow\Lambda_{\#} = U^{\oplus 2}\oplus E_8(-1)^{\oplus 3}$. Because $\operatorname{Mon}^2(\Lambda,h) = \widetilde{\uO}^+(\Lambda_h)$ in this case, it is extendable for every such embedding.
    
\eqref{l:mOG10-2}~
Suppose  $\gamma = 1$. Using Lagrange's four square theorem, we can write $d = a_1^2 + a_2^2 + a_3^2 + a_4^2 + 1$ for some integers $a_1,\dots,a_4$. This induces a primitive embedding of $\bZ\ell$ into $A_1(-1)^{\oplus 5}$ which maps a generator to $(a_1,\dots,a_4,1)$. This defines a primitive embedding of $Q_h(-1)$ into $A_2(-1)\oplus A_1(-1)^{\oplus 5}$, thus induces a primitive embedding of $\Lambda_h$ into $\Lambda_\# = U^{\oplus 2}\oplus E_8(-1)^{\oplus 2}\oplus A_2(-1)\oplus A_1(-1)^{\oplus 5}$. We fix a basis $\{\delta_1,\delta_2\}$ for $A_2(-1)$ and choose $v=\delta_1-\delta_2$ (see proof of \cite[Lemma~4.4]{GHS11}). We regard $v$ as an element in $\Lambda_{\#}$ and consider the corresponding reflection $\widetilde{\sigma}_v\in{\rm{O}}\left(\Lambda_\#\otimes\mathbb{Q}\right)$:
    \[
    \widetilde{\sigma}_v\colon x\longmapsto x-2\frac{(x,v)}{(v,v)}v.
    \]
    Note that $(v,v)=-6$ and for any $x\in\Lambda_{\#}$, $(x,v)$ is divisible by $3$. Then $\widetilde{\sigma}_v$ is integral, that is, $\widetilde{\sigma}_v\in {\rm{O}}^+\left(\Lambda_{\#}\right)$, and restricts to $\sigma_v$ on $\Lambda_h$. If furthermore $8\nmid d$, then \cite[Theorem 1]{CH07} shows that $d$ is the sum of four coprime squares so that there is a primitive embedding of $\mathbb{Z}\ell$ into $A_1(-1)^{\oplus 4}$.
\end{proof}

Let $\mathcal{M}_{\mathrm{OG10},\; 2d}^\gamma$ be the moduli space of hyperk\"{a}hler manifolds of type OG10 with a primitive polarization of degree $2d$ and divisibility $\gamma$. For both $\gamma=1$ and $\gamma=3$, this moduli space is irreducible; see \cite{Ono22b, Ono22} or \cite[Proposition~3.4]{Son22}. In the non-split ($\gamma=3$) case, $\mathcal{M}_{\mathrm{OG10},\; 2d}^\gamma$ is non-empty if and only if $2d\equiv 12\mod 18$; \textit{cf.} \cite[Lemma 3.4]{GHS11}. Moreover, all moduli spaces $\mathcal{M}_{\mathrm{OG10},\; 2d}^\gamma$ are of general type with finitely many exceptions \cite{GHS11,BBBF23}. We can now get a polynomial bound on their degrees of irrationality.

\begin{thm}
\label{thm:OG10_bound}
For any $\varepsilon>0$, there exists a constant $C_\varepsilon>0$  independent of $\gamma,d$ such that 
\[
\irr\left(\mathcal{M}_{\mathrm{OG10},\; 2d}^\gamma\right)\leq C_{\varepsilon} \cdot d^{14+\varepsilon}. 
\]
Furthermore, if $\gamma=1$, then this bound can be refined to $\irr\left(\mathcal{M}_{\mathrm{OG10},\; 2d}^1\right)\leq C_{\varepsilon} \cdot d^{\frac{27}{2}+\varepsilon}$ for all $d$ and to $\irr\left(\mathcal{M}_{\mathrm{OG10},\; 2d}^1\right)\leq C_{\varepsilon} \cdot d^{13+\varepsilon}$ for all $d$ not divisible by $8$.
\end{thm}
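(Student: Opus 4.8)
The plan is to feed the primitive embeddings constructed in Lemma~\ref{lemma:monOG10} into the general estimate of Lemma~\ref{lemma:irrationality-general} and then control the two arithmetic factors that appear. Since $\cM_{\mathrm{OG10},\,2d}^{\gamma}$ is irreducible for both $\gamma=1$ and $\gamma=3$, it is birational to the period space $Y:=\cP_{\Lambda_h}(\operatorname{Mon}^2(\Lambda,h))$, where $\Lambda_h=h^{\perp\Lambda}$ has signature $(2,21)$ and $|\operatorname{disc}(\Lambda_h)|=\frac{6d}{\gamma^2}$; as $\irr$ is a birational invariant it suffices to bound $\irr(Y)$. First I would read off from Lemma~\ref{lemma:monOG10} the $d$-independent lattice $\Lambda_\#$ into which $\Lambda_h$ embeds primitively and for which $\operatorname{Mon}^2(\Lambda,h)$ is extendable: for $\gamma=3$ one takes $\Lambda_\#=U^{\oplus2}\oplus E_8(-1)^{\oplus3}$, of signature $(2,26)$; for $\gamma=1$ one takes $\Lambda_\#=U^{\oplus2}\oplus E_8(-1)^{\oplus2}\oplus A_2(-1)\oplus A_1(-1)^{\oplus5}$, of signature $(2,25)$; and for $\gamma=1$ with $8\nmid d$ this may be replaced by $U^{\oplus2}\oplus E_8(-1)^{\oplus2}\oplus A_2(-1)\oplus A_1(-1)^{\oplus4}$, of signature $(2,24)$. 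In the notation of Lemma~\ref{lemma:irrationality-general} this gives $m'=26,\ 25,\ 24$ respectively, hence $1+\frac{m'}{2}=14,\ \frac{27}{2},\ 13$.

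Next I would apply Lemma~\ref{lemma:irrationality-general}; its hypotheses hold since $3\le m=21\le m'$ and $1\le m'-m\le m'-2$. It produces a constant $C$ depending only on $\Lambda_\#$ — and hence genuinely independent of $d$ — with
\[
    \irr(Y)\le C\cdot\bigl[\widetilde{\uO}^+(\Lambda_h):\operatorname{Mon}^2(\Lambda,h)\cap\widetilde{\uO}^+(\Lambda_h)\bigr]\cdot|\uO(D(\Lambda_h))|\cdot|\operatorname{disc}(\Lambda_h)|^{1+\frac{m'}{2}}.
\]
By Lemma~\ref{lemma:monOG10} we have $\widetilde{\uO}^+(\Lambda_h)\subseteq\operatorname{Mon}^2(\Lambda,h)$ in both divisibility regimes (equality when $\gamma=3$, and $\operatorname{Mon}^2(\Lambda,h)=\langle\widetilde{\uO}^+(\Lambda_h),\sigma_v\rangle$ when $\gamma=1$), so the index factor equals $1$ and drops out.

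It then remains to bound $|\uO(D(\Lambda_h))|$ by $O(d^{\varepsilon})$ and to substitute $|\operatorname{disc}(\Lambda_h)|=\frac{6d}{\gamma^2}$. Since $U^{\oplus2}\oplus E_8(-1)^{\oplus2}$ is unimodular, $D(\Lambda_h)=D(Q_h(-1))$, which is generated by at most $\operatorname{rk}Q_h(-1)=3$ elements and, away from the prime $3$ carried by the fixed summand $A_2(-1)$, is cyclic of order dividing $\frac{2d}{\gamma^2}$. Applying \cite[Proposition~3.12(ii)]{GHS10} — which, as in the Remark following Theorem~\ref{thm:irratKumn}, may be read as a statement about the discriminant group of $Q_h(-1)$ — together with the elementary estimate $2^{\rho(k)}\le\nu(k)=O(k^{\varepsilon})$ from \cite[Equation~(6.4)]{ABL23}, one obtains $|\uO(D(\Lambda_h))|\le C_{\varepsilon}\cdot|\operatorname{disc}(\Lambda_h)|^{\varepsilon}$. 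Plugging in $|\operatorname{disc}(\Lambda_h)|=\frac{6d}{\gamma^2}\le 6d$ yields $\irr(Y)\le C_{\varepsilon}\cdot d^{14+\varepsilon}$ when $\gamma=3$, $\irr(Y)\le C_{\varepsilon}\cdot d^{\frac{27}{2}+\varepsilon}$ when $\gamma=1$, and $\irr(Y)\le C_{\varepsilon}\cdot d^{13+\varepsilon}$ when $\gamma=1$ and $8\nmid d$; taking the maximum over $\gamma\in\{1,3\}$ gives the universal bound $d^{14+\varepsilon}$.

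The lattice-theoretic heavy lifting — producing a $d$-independent $\Lambda_\#$ with a primitive embedding of $\Lambda_h$ and with $\operatorname{Mon}^2(\Lambda,h)$ extendable — is already done in Lemma~\ref{lemma:monOG10} (via Lagrange's four-square theorem for the $\bZ\ell$-factor and the observation that every value of the $A_2(-1)$-pairing is divisible by $3$, which makes the reflection along $v$ integral on $\Lambda_\#$). So the one place where this proof really has to work is the arithmetic estimate $|\uO(D(\Lambda_h))|=O(d^{\varepsilon})$, uniform in $d$ and in the divisibility class: one must rule out that a large power of a small prime dividing $\gcd\!\bigl(6,\tfrac{2d}{\gamma^2}\bigr)$ inflates the orthogonal group of the discriminant quadratic form. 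I expect this to be the only genuine obstacle; the rest is bookkeeping with signatures and the identity $|\operatorname{disc}(\Lambda_h)|=\frac{6d}{\gamma^2}$.
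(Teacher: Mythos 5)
Your setup is exactly the paper's: realize the (irreducible) moduli space as $\cP_{\Lambda_h}(\operatorname{Mon}^2(\Lambda,h))$, feed the $d$-independent primitive embeddings of Lemma~\ref{lemma:monOG10} into Lemma~\ref{lemma:irrationality-general}, note that the index factor is $1$ because $\widetilde{\uO}^+(\Lambda_h)\subseteq\operatorname{Mon}^2(\Lambda,h)$, and the exponents $14$, $\tfrac{27}{2}$, $13$ come out of $1+\tfrac{m'}{2}$ for $m'=26,25,24$. All of that bookkeeping is correct. But the one step you yourself flag as the place where the proof ``really has to work'' --- the estimate $|\uO(D(\Lambda_h))|=O(d^{\varepsilon})$ --- is not actually established by your argument. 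You invoke \cite[Proposition~3.12(ii)]{GHS10}, but that result concerns the discriminant group of $h^{\perp}$ in the K3$^{[n]}$ lattice (extended by the remark after Theorem~\ref{thm:irratKumn} only to the Kum$_n$ lattice, precisely because there the relevant data is a rank-two lattice $Q_h(-1)$ of the same shape), and it comes with the coprimality hypothesis on $\gamma$, $\tfrac{2d}{\gamma}$, $\tfrac{2(n-1)}{\gamma}$; there is no analogue of ``$n$'' in the OG10 situation, $Q_h(-1)$ has rank three, and for $\gamma=1$ the discriminant form is $D(\bZ\ell)\oplus D(A_2(-1))\cong\bZ/2d\bZ\oplus\bZ/3\bZ$, which is not covered by that proposition (in particular not when $3\mid d$). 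Note also that the generic bound of Lemma~\ref{lem:boundorthogonalgroup}, $|\uO(D)|\leq|\operatorname{disc}|^{\ell}$, is far too weak here, and the statement $|\uO(D(M))|=O(|\operatorname{disc}(M)|^{\varepsilon})$ is false for general discriminant forms, so one cannot treat this as routine.

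This is exactly why the paper's proof of Theorem~\ref{thm:OG10_bound} does the estimate by hand rather than by citation: for $\gamma=3$ it uses that $D(\Lambda_h)\cong D(Q_h(-1))$ is cyclic of order $r=2d/3$ with an explicit generator, so an isometry is multiplication by some $a$ with $a^2\equiv 1 \bmod r$, giving at most $2^{\rho(d)+2}$ elements; for $\gamma=1$ it writes an isometry as a $2\times 2$ matrix $(a_{ij})$ on the generators $\tfrac{\ell}{2d}$ and $\tfrac{2}{3}e_1+\tfrac{1}{3}e_2$, extracts from the orthogonality congruences~\eqref{eq:orthogonalityexplicit} (after clearing denominators and splitting the cases $3\nmid d$ and $d=3r$) that $a_{11}^2\equiv 1$ and $a_{11}a_{12}\equiv 0 \bmod d$, hence $a_{12}\in\{0,d\}$ and at most $O(2^{\rho(d)})$ choices of $a_{11}$, while $a_{21},a_{22}$ range over a bounded set. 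To complete your proposal you would need to supply this (or an equivalent) explicit analysis of the two discriminant quadratic forms; as written, the citation does not close the gap, and the rest of the argument, while correct, is the part that was already routine.
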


\begin{proof}
    One uses Lemma~\ref{lemma:monOG10}, proceeding as in the proof of Theorem~\ref{thm:irratK3n}, in the case $n=2$ or $\gamma\geq 3$.
    The only different step here is the estimate of $|\uO(D(\Lambda_h))|$. We need to prove that for any $\varepsilon>0$, there exists a $C'_{\varepsilon}>0$ such that 
    $|\uO(D(\Lambda_h))|<C'_\varepsilon\cdot d^{\varepsilon}$.
    
    If $\gamma=3$, then $2d=18t-6$ for some integer $t$. The discriminant group $D\left(\Lambda_h\right)\cong D\left(Q_h(-1)\right)$ is cyclic of order $2d/3$; see \cite[Theorem~3.1]{GHS11}. Further, in this case, $Q_h(-1)$ is given by 
    \[
    Q_h(-1)=\begin{pmatrix}
        -2 & 1 & 0 \\
        1 & -2 & -1\\
        0 & -1 & -2t
    \end{pmatrix}; 
    \]
    see \cite[Section 8]{BBBF23}. Let $\{\delta_1, \delta_2, \delta_3\}$ be a basis for $Q_h(-1)$ and $r=2d/3$. Then $\delta_1+2\delta_2-3\delta_3$ has divisibility $r$, and $\alpha=\frac{\delta_1+2\delta_2-3\delta_3}{r}+\Lambda_h$ is a generator for $D\left(Q_h(-1)\right)$. An element $g\in \uO(D(\Lambda_h))$ is given by~$\alpha\mapsto a\cdot\alpha$, with 
    \[
    (\alpha,\alpha)\cdot\left(a^2-1\right)=\frac{-9}{2d}(a^2-1)=\frac{-3}{r}(a^2-1)\in \mathbb{Z}.
    \]
    Note that $r\equiv -2\mod3$, and in particular  $a^{2}\equiv 1$ modulo $r$. Looking at the decomposition $r=p_1^{k_1}\cdots p_s^{k_s}$, where the $p_i$ are distinct primes, by the Chinese remainder theorem, we see that the number of solutions to~$x^2\equiv 1$~modulo~$r$ is bounded by $2^{\rho(r)+1}$, where $\rho(r)$ is the number of distinct prime factors of $r$. Hence, there are at most $2^{\rho(r)+1}\leq 2^{\rho(2d)+1}\leq 2^{\rho(d)+2}$ possible values for $a$ modulo $r$. Then we proceed as in the proof of Theorem~\ref{thm:irratK3n}.
    
    If instead $\gamma=1$, then we have $D(\Lambda_h)=D(\mathbb{Z}\ell)\oplus D(A_2(-1))$, where the direct sum is with respect to the $\mathbb{Q}/ 2\mathbb{Z}$-valued quadratic form. If $(\ell,e_1,e_2)$ is a canonical basis of $\mathbb{Z}\ell\oplus A_2(-1)$ , then $D(\Lambda_h)$ is generated by $\frac{\ell}{2d},\frac{2}{3}e_1+\frac{1}{3}e_2$ modulo $\Lambda_h$.
     An element $A\in \uO(D(\Lambda_h))$ is determined by the images of the generators, 
     \[ A\left( \frac{\ell}{2d} \right) = a_{11}\left( \frac{\ell}{2d} \right) + a_{21}\left( \frac{2}{3}e_1+\frac{1}{3}e_2 \right),\quad A\left( \frac{2}{3}e_1+\frac{1}{3}e_2 \right) = a_{12}\left( \frac{\ell}{2d} \right) + a_{22}\left( \frac{2}{3}e_1+\frac{1}{3}e_2 \right);  \]
     hence it can be represented by a $2\times 2$ matrix $A=(a_{ij})_{i,j=1,2}$ with  $a_{1j}\in\{0,1,\ldots,2d-1\}$ and $a_{2j}\in\{0,1,2\}$. The fact that $A$ preserves the quadratic form implies that
    \[
        A^t \begin{pmatrix}
        \frac{1}{2d} & 0 \\
        0 & \frac{2}{3}
    \end{pmatrix} A
    \;\equiv\;
    \begin{pmatrix}
        \frac{1}{2d} & 0 \\
        0 & \frac{2}{3}
    \end{pmatrix}
    \;\mod\; \mathbb{Z}, 
     \]
     which can be written down explicitly as
     \begin{equation}\label{eq:orthogonalityexplicit}
     \frac{1}{2d}a_{11}^2+\frac{2}{3}a_{21}^2 \equiv \frac{1}{2d},
     \quad \frac{1}{2d}a_{11}a_{12}+\frac{2}{3}a_{21}a_{22} \equiv 0,
     \quad \frac{1}{2d}a_{12}^2+\frac{2}{3}a_{22}^2 \equiv \frac{2}{3} \quad \mod \mathbb{Z}. 
     \end{equation}
     First suppose  $3\nmid d$. Then multiplying the equations of \eqref{eq:orthogonalityexplicit} by $6d$, we get 
     \[ 
     3\cdot a_{11}^2+4d\cdot  a_{21}^2 \equiv 3,
     \quad 3\cdot a_{11}a_{12}+4d\cdot a_{21}a_{22} \equiv 0,
     \quad 3\cdot a_{12}^2+4d\cdot a_{22}^2 \equiv 4d
     \quad \mod 6d\cdot\mathbb{Z}. 
     \]
     Looking at the last equations modulo $d$ and using that $3$ is coprime to $d$, we get
     \[
    a_{11}^2\equiv 1
    \quad\text{and}\quad
    a_{11}\cdot a_{12}\equiv 0\mod d.
     \]
      In particular, $a_{12} \equiv 0$ modulo $d$, so $a_{12}$ must belong to $\{0,d\}$. Again, since the number of solutions to $x^2\equiv 1$ modulo $d$ is bounded by $2^{\rho(d)+1}$, there are at most $4\cdot 2^{\rho(d)}$ possible values for $a_{11}$ modulo $2d$. Since $a_{21},a_{22}$ can take at most three possible values, all these facts yield $|\uO(D(\Lambda))| \leq 2\cdot (4\cdot 2^{\rho(d)})\cdot 3\cdot 3 = 72\cdot2^{\rho(d)}$. If instead $d=3r$, we can multiply Equations~\eqref{eq:orthogonalityexplicit} by $6r$ and use a similar reasoning. As a consequence, for any $\varepsilon>0$, we see that there exists a $C'_{\varepsilon}>0$ such that $\left|\uO(D(\Lambda_h))\right| \leq C'_{\varepsilon}\cdot  d^{\varepsilon}$.
\end{proof}

\subsection{Hyperk\"{a}hler manifolds of OG6-type}

Let $X$ be a hyperk\"ahler manifold of dimension $2n$ deformation equivalent to O'Grady's six-dimensional example; see \cite{OGr03}. Then the group $H^2(X,\mathbb{Z})$ is isomorphic to the lattice $\Lambda = \Lambda_{\rm OG6}:= U^{\oplus 3}\oplus A_1(-1)^{\oplus 2}$, which is an even lattice of signature $(3,5)$. In this case, we have that (\textit{cf.} \cite{MR21})
\[  \operatorname{Mon}^2(\Lambda)
    = {\uO}^+(\Lambda).
\]
Let $h \in \Lambda$ be a primitive class with $(h,h)=2d>0$. Then its divisibility is either $\gamma=1$ or $\gamma=2$, and the lattice $\Lambda_h$ has signature $(2,5)$ and discriminant $|\operatorname{disc}(\Lambda_h)| = \frac{8d}{\gamma^2}$. We will denote the standard bases of the first two copies of $U$ by $\{e,f\}$ and $\{e_1,f_1\}$ and the canonical basis of $A_1(-1)^{\oplus 2}$ by $\{v_1,v_2\}$.

\begin{lemma}
\label{lemma:OG6orbit}
Up to the action of\, $\operatorname{Mon}^2(\Lambda)$, the class $h$ and the orthogonal complement $\Lambda_h$ have the following form:
\begin{enumerate}
\item If\, $\gamma=1$, then $h=e+df$ and $\Lambda_h = U^{\oplus 2} \oplus \langle e-df \rangle \oplus A_1(-1)^{\oplus 2}$.
\item If\, $\gamma=2$, then $h=2(e+tf)-w$ for some $w\in\{v_1,v_2,v_1+v_2\}$ and some integer $t$. In this case, we have $\Lambda_h = U^{\oplus 2}\oplus Q_h(-1)$,  where
  \begin{equation*}
    Q_h(-1)=
    \begin{cases}
    \langle f-v_1,e-tf,v_2 \rangle & \text{if }\, w=v_1,\\
    \langle f-v_2,e-tf,v_1 \rangle & \text{if }\, w=v_2,\\
    \langle f-v_1,f-v_2,e-tf \rangle & \text{if }\, w=v_1+v_2.
    \end{cases}
  \end{equation*}
\end{enumerate}
\end{lemma}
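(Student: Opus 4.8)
The plan is to put the polarization class $h$ into the stated normal form via Eichler's criterion and then read off $\Lambda_h$ by a short orthogonal-complement computation in a rank-four sublattice. First I would recall that $\operatorname{Mon}^2(\Lambda) = \uO^+(\Lambda)$ contains the stable orthogonal group $\widetilde{\uO}^+(\Lambda)$, and that because $\Lambda_{\rm OG6} = U^{\oplus 3}\oplus A_1(-1)^{\oplus 2}$ splits off two hyperbolic planes, Eichler's criterion (\textit{cf.}\ \cite[Proposition~2.15]{Son22}) applies: two primitive vectors of $\Lambda$ with the same square, the same divisibility, and the same class in $D(\Lambda)$ lie in a single $\widetilde{\uO}^+(\Lambda)$-orbit, hence in a single $\operatorname{Mon}^2(\Lambda)$-orbit. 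I would also record that $D(\Lambda) = D(A_1(-1)^{\oplus 2}) \cong (\bZ/2)^{\oplus 2}$ is generated by the classes of $\tfrac12 v_1$ and $\tfrac12 v_2$, with discriminant quadratic form $q(\tfrac12 v_i) \equiv -\tfrac12$ and $q(\tfrac12(v_1+v_2)) \equiv -1$ in $\bQ/2\bZ$.

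For $\gamma = 1$ the class of $h$ in $D(\Lambda)$ is trivial, and $e+df$ is primitive of square $2d$, of divisibility $1$ (since $(e+df,f) = 1$), with trivial class; so $h$ is $\operatorname{Mon}^2(\Lambda)$-equivalent to $e+df$ by Eichler. Writing $\Lambda = U_1 \oplus (U^{\oplus 2}\oplus A_1(-1)^{\oplus 2})$ with $e+df$ in the first hyperbolic plane $U_1$, the orthogonal complement splits as $\big((e+df)^{\perp}\cap U_1\big)\oplus\big(U^{\oplus 2}\oplus A_1(-1)^{\oplus 2}\big)$; a one-line computation gives $(e+df)^{\perp}\cap U_1 = \bZ(e-df)$ with $(e-df,e-df) = -2d$, which is the asserted form of $\Lambda_h$.

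For $\gamma = 2$ the class of $\tfrac12 h$ is a nonzero $2$-torsion element of $D(\Lambda)$, hence equals that of $\tfrac12 w$ for a unique $w \in \{v_1, v_2, v_1+v_2\}$; comparing $q(\tfrac12 h) \equiv \tfrac14 (h,h) = \tfrac{d}{2}$ with the values above shows $d \equiv 3 \pmod 4$ when $w \in \{v_1,v_2\}$ and $d \equiv 2 \pmod 4$ when $w = v_1+v_2$, which is exactly the divisibility condition needed for $8t + (w,w) = 2d$ to have an integer solution $t$. For such a $t$, the vector $h_0 \colonequals 2(e+tf) - w$ has square $2d$; it is primitive (its $e$-coordinate is $2$, so any integer dividing it divides $2$, while $h_0 \equiv -w \not\equiv 0 \bmod 2\Lambda$ rules out the divisor $2$), it has divisibility $2$ (all pairings with basis vectors are even and $(h_0,f) = 2$), and it satisfies $[\tfrac12 h_0] = [\tfrac12 w] = [\tfrac12 h]$; so by Eichler $h$ is $\operatorname{Mon}^2(\Lambda)$-equivalent to $h_0$. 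Finally, since $h_0$ lies in $L \colonequals U_1 \oplus A_1(-1)^{\oplus 2}$ and $L^{\perp} = U^{\oplus 2}$ inside $\Lambda$, we get $\Lambda_{h_0} = (\Lambda_{h_0}\cap L)\oplus U^{\oplus 2}$, so it remains only to solve the single equation $(x, h_0) = 0$ for $x = ae+bf+c_1 v_1 + c_2 v_2 \in L$; solving for $b$ shows the solution lattice is spanned by $e-tf$ together with $\{f-v_1,\, v_2\}$, $\{f-v_2,\, v_1\}$, or $\{f-v_1,\, f-v_2\}$ according as $w$ is $v_1$, $v_2$, or $v_1+v_2$, which is precisely the claimed description of $Q_h(-1)$.

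I expect the only mildly delicate point to be the bookkeeping at the start of the $\gamma=2$ case: one must check that the congruence on $d$ forced by the discriminant quadratic form is exactly the condition ensuring $2d-(w,w)$ is divisible by $8$, so that a representative of the stated form genuinely exists. The remaining orthogonal-complement computations are short and elementary, and the $\gamma=1$ case is immediate.
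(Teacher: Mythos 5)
Your proof is correct and follows essentially the same route as the paper: normalize $h$ via Eichler's criterion (matching square, divisibility, and the class in $D(\Lambda)$, with the congruence on $d$ mod $4$ determining $w$ and guaranteeing an integral $t$), then compute $\Lambda_h$ by splitting off the summand containing $h$ and solving the single orthogonality equation. You merely spell out details the paper compresses — the verification of the invariants of $2(e+tf)-w$ and the explicit complement computation — so there is nothing to correct.
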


\begin{proof}
Every element $h\in \Lambda$ can be written as $av-w$ with $v\in U^{\oplus 3}$ primitive and $w\in A_1(-1)^{\oplus 2}$. Since $h$ is primitive, by Eichler's criterion, we can assume $w\in\{0,v_1,v_2,v_1+v_2\}$. If $\gamma=1$, then $w=0$ and $a=1$. Then, using Eichler's criterion as in the proof of Lemma~\ref{lemma:kumnhorbit}, we see that $h$ can be taken as $e+df$.

If $\gamma=2$, then $w\in\{v_1,v_2,v_1+v_2\}$. Further, 
$2$ divides $a$ and since $\Lambda$ is even, $(v,v)\in 2\mathbb{Z}$. Then
\[
2d=(h,h)=a^2(v,v)+(w,w)=8t+(w,w) 
\]
for a certain integer $t$. Then 
\[
d=\begin{cases}4t-2&\hbox{ if }w=v_1+v_2,\\
4t-1&\hbox{ if }w\in \{v_1,v_2\}.\end{cases}
\] 
Using Eichler's criterion again, we can take $h=2(e+tf)-w$ with $w\in\{v_1,v_2,v_1+v_2\}$ for $\gamma=2$. Based on this, the lattice $\Lambda_h$ can be computed explicitly.
\end{proof}

The following lemma is here for the sake of completeness; \textit{cf.} \cite[Lemma 7.1]{BBFW24}.

\begin{lemma}
\label{lemma:monOG6}
The group ${\rm{Mon}}^2\left(\Lambda,h\right)$ is at most a degree two extension of\, $\widetilde{\rm{O}}^+\left(\Lambda,h\right)$. More concretely,
\[
{\rm{Mon}}^2\left(\Lambda,h\right)=\left\langle\widetilde{\rm{O}}^+\left(\Lambda,h\right), \sigma\right\rangle, 
\]
where $\sigma$ is the identity if $w\in\{v_1,v_2\}$, and $\sigma=\sigma_\kappa\in{\rm{O}}^+\left(\Lambda,h\right)$ is the reflection along $\kappa=v_1-v_2$ if $\gamma=1$ or $\gamma=2$ and $w=v_1+v_2$.
\end{lemma}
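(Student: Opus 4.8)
The plan is to compute the monodromy group $\operatorname{Mon}^2(\Lambda,h)$ for OG6-type by following the same strategy used for the K3$^{[n]}$ and Kum$_n$ cases, as in \cite[Lemma~3.6 and Proposition~3.7]{BBBF23}, specialized to the lattice $\Lambda = U^{\oplus 3}\oplus A_1(-1)^{\oplus 2}$ and the explicit forms of $h$ and $\Lambda_h$ worked out in Lemma~\ref{lemma:OG6orbit}. Since $\operatorname{Mon}^2(\Lambda) = \uO^+(\Lambda)$ in this case (\textit{cf.} \cite{MR21}), we have $\operatorname{Mon}^2(\Lambda,h) = \uO^+(\Lambda,h) = \uO(\Lambda,h)\cap\uO^+(\Lambda)$, and the task is to identify this subgroup of $\uO^+(\Lambda_h)$. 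The key general principle is that an isometry of $\Lambda_h$ extends to an isometry of $\Lambda$ fixing $h$ if and only if it acts on $D(\Lambda_h)$ compatibly with the glueing data determined by $h$; concretely, the extendable isometries are generated by the stable group $\widetilde{\uO}^+(\Lambda_h)$ together with finitely many reflections along vectors in the rank-two or rank-three summand whose reflections happen to preserve the relevant class in $D(\Lambda)$.

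First I would handle the case $w\in\{v_1,v_2\}$. Here $\Lambda_h$ contains an orthogonal copy of $A_1(-1) = \langle v_2\rangle$ (resp.\ $\langle v_1\rangle$) as a direct summand, and the glueing vector $\frac{h}{\gamma}$ has image in $D(\Lambda)\cong D(A_1(-1)^{\oplus 2})$ that is supported only on the summand that got absorbed into $h$. A direct check—comparing $D(\Lambda_h)$ with $D(\Lambda)$ and the discriminant forms—shows that every isometry of $\Lambda_h$ preserving the orientation already lies in $\widetilde{\uO}^+(\Lambda,h)$, i.e.\ the required $\sigma$ can be taken to be the identity. Then I would treat $w = v_1+v_2$ (and the $\gamma=1$ case $w=0$): now the discriminant group $D(\Lambda_h)$ acquires an extra factor, and the reflection $\sigma_\kappa$ along $\kappa = v_1-v_2$, which satisfies $(\kappa,\kappa)=-4$ and $(\kappa,x)\in 2\bZ$ for all $x\in\Lambda$, is integral on $\Lambda$, fixes $h$ (since $(\kappa,e+df)=0$ and $(\kappa,2(e+tf)-(v_1+v_2))=0$), is orientation-preserving, but acts nontrivially on $D(\Lambda_h)$; so it generates the missing index-two part. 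The verification that $\langle\widetilde{\uO}^+(\Lambda,h),\sigma_\kappa\rangle$ is all of $\uO^+(\Lambda,h)$—and not larger—follows the Eichler-criterion bookkeeping: any $g\in\uO^+(\Lambda,h)$ induces an isometry of $D(\Lambda_h)$, the possible such induced isometries form a group of order at most two, and $\sigma_\kappa$ realizes the nontrivial one, so $g$ differs from an element of $\langle\widetilde{\uO}^+(\Lambda,h),\sigma_\kappa\rangle$ by something in $\widetilde{\uO}^+(\Lambda_h)$.

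The main obstacle I anticipate is the careful identification of $D(\Lambda_h)$ and its discriminant form in each of the three subcases of $\gamma=2$, together with pinning down exactly which isometries of that finite quadratic form are induced by isometries of $\Lambda$ fixing $h$ rather than merely by isometries of $\Lambda_h$; this is where one must invoke Nikulin's theory of discriminant forms and the surjectivity/obstruction statements for extending isometries along primitive embeddings, and where an off-by-a-factor-of-two error is easiest to make. Once that is in place, the statement that $\operatorname{Mon}^2(\Lambda,h)$ is at most a degree-two extension of $\widetilde{\uO}^+(\Lambda,h)$ is immediate, and the explicit description of $\sigma$ drops out of the case analysis. Since this parallels the arguments already carried out in \cite{BBBF23} and \cite{BBFW24}, I would present the proof by reducing to those references wherever the computations are identical and spelling out only the points specific to the $A_1(-1)^{\oplus 2}$ summand.
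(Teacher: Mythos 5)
Your strategy is the same as the paper's: since $\operatorname{Mon}^2(\Lambda)=\uO^+(\Lambda)$ for OG6, the lemma reduces to computing $\uO^+(\Lambda,h)$ relative to $\widetilde{\uO}^+(\Lambda,h)$, bounding the index by $2$ via discriminant considerations, exhibiting $\sigma_\kappa$ as the nontrivial coset exactly when it fixes $h$ (i.e.\ $\gamma=1$ or $w=v_1+v_2$), and showing triviality when $w\in\{v_1,v_2\}$ using the class of $h/2$. Two steps, however, are not right as written. (i) The sentence ``every isometry of $\Lambda_h$ preserving the orientation already lies in $\widetilde{\uO}^+(\Lambda,h)$'' is false: $\uO^+(\Lambda_h)$ is in general strictly larger than $\widetilde{\uO}^+(\Lambda_h)=\widetilde{\uO}^+(\Lambda,h)$ (Lemma~\ref{lemma:exOG6}); for instance $-\mathrm{id}$ lies in $\uO^+(\Lambda_h)$ but acts nontrivially on $D(\Lambda_h)$. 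What your glueing observation actually proves, and what is needed, is that every $g\in\uO^+(\Lambda,h)$ acts trivially on $D(\Lambda)$: $g$ fixes $h$, hence fixes $[h/2]=[w/2]\in D(\Lambda)$, and the unique nontrivial isometry of $D(\Lambda)\cong(\bZ/2\bZ)^{\oplus 2}$ is the swap $\tfrac{v_1}{2}\leftrightarrow\tfrac{v_2}{2}$, which moves that class. (ii) In the remaining cases your upper bound rests on the assertion that the isometries of $D(\Lambda_h)$ induced by elements of $\uO^+(\Lambda,h)$ form a group of order at most two; this is precisely what has to be proved (the full group $\uO(D(\Lambda_h))$ is much larger), and you defer it to unspecified Nikulin bookkeeping.

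The paper's device, which you should adopt, is to measure everything against $D(\Lambda)$ rather than $D(\Lambda_h)$: since $|\uO(D(\Lambda))|=2$, one gets $[\uO^+(\Lambda,h):\widetilde{\uO}^+(\Lambda,h)]\le 2$ in one line, and then the explicit reflection $\sigma_\kappa$ (integral because $(\kappa,\kappa)=-4$ and $(\Lambda,\kappa)\subset 2\bZ$, orientation-preserving, fixing $h$ precisely when $\gamma=1$ or $w=v_1+v_2$, and acting as the swap on $D(\Lambda)$) decides in each case whether the index is $1$ or $2$. With these repairs your argument coincides with the paper's proof; no genuinely new idea is missing, but as written the case $w\in\{v_1,v_2\}$ is justified by a false statement and the index bound is asserted rather than derived.
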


\begin{proof}
Since $\Lambda$ is indefinite of rank eight, the map ${\rm{O}}\left(\Lambda\right)\rightarrow{\rm{O}}\left(D\left(\Lambda\right)\right)$ is surjective; see \cite[Theorem~1.14.2]{Nik80}. Further, $D\left(\Lambda\right)\cong \left(\mathbb{Z}\big/2\mathbb{Z}\right)^{\oplus 2}$ is generated by $\{\frac{v_1}{2},\frac{v_2}{2}\}$. One notes that if $g\in{\rm{O}}\left(D\left(\Lambda\right)\right)$, then $g={\rm{Id}}$ or $g:\frac{v_1}{2}\mapsto\frac{v_2}{2}, \frac{v_2}{2}\mapsto\frac{v_1}{2}$. In particular,  
\[\left[{\rm{O}}^+\left(\Lambda,h\right):\widetilde{\rm{O}}^+\left(\Lambda,h\right)\right]\leq 2.\]
One can check that $\sigma_\kappa\in{\rm{O}}^+\left(\Lambda,h\right)$ fixes $h$ and does not act as the identity on $D(\Lambda)$. Assume $\gamma=1$ and $\kappa=v_1-v_2$. Since $h\in (v_1-v_2)^\perp$, see Lemma~\ref{lemma:OG6orbit}, then $\sigma_k(h)=h$, that is, $\sigma_k\in {\rm{O}}^+\left(\Lambda,h\right)$. Further, 
\[
\sigma_k\left(\frac{v_1}{2}\right)=\frac{v_1}{2}-2\frac{\left(\frac{v_1}{2},\kappa\right)}{(\kappa,\kappa)}\kappa=\frac{v_2}{2}.
\]
In particular, $\sigma_k\not\in \widetilde{\rm{O}}^+\left(\Lambda,h\right)$. The case $\gamma=2$ and $w=v_1+v_2$ is analogous. Finally, assume $\gamma=2$ and $w=v_1$. Let $g\in {\rm{O}}^+\left(\Lambda,h\right)$. Then $g(h)=h$ and $g\left(\frac{h}{2} \right) = \frac{h}{2}$.
In particular, from Lemma~\ref{lemma:OG6orbit} we have $g\left(\frac{h}{2}\right)=g(e+tf)-g\left(\frac{v_1}{2}\right)= (e+tf)-\frac{v_1}{2}$, which implies $g\left(\frac{v_1}{2}\right)\equiv \frac{v_1}{2}\mod \Lambda$. Therefore, there is no element in ${\rm{O}}^+\left(\Lambda\right)$ fixing $h$ and acting as $\frac{v_1}{2}\mapsto \frac{v_2}{2}$ on the discriminant of $\Lambda$. In particular, every such $g$ acts as the identity on $D(\Lambda)$. This shows the equality ${\rm{O}}^+\left(\Lambda,h\right)=\widetilde{\rm{O}}^+\left(\Lambda,h\right)$. The argument is the same if $w=v_2$.
\end{proof}

\begin{lemma}
\label{lemma:exOG6}
We have the equality
$
    \widetilde{\uO}^+(\Lambda,h)
    = \widetilde{\uO}^+(\Lambda_h).
$
\end{lemma}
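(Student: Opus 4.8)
The plan is to deduce this from Nikulin's theory of primitive sublattices \cite{Nik80}; the argument will in fact use nothing about the OG6 lattice beyond the fact that $h$ is primitive with $(h,h)=2d>0$. Throughout, $\widetilde{\uO}^+(\Lambda,h)$ is regarded as a subgroup of $\uO^+(\Lambda_h)$ via restriction, as usual. Since $\Lambda_h=h^{\perp\Lambda}$ is saturated in $\Lambda$ and $(h,h)\neq 0$, the orthogonal sublattice $\bZ h\oplus\Lambda_h$ has finite index in $\Lambda$, and $\Lambda$ corresponds to an isotropic subgroup $H\subseteq D(\bZ h)\oplus D(\Lambda_h)$; because $\Lambda_h$ is saturated, $H$ projects injectively to $D(\bZ h)$, so it is the graph of an injective homomorphism $\phi\colon H_h\to D(\Lambda_h)$ from a subgroup $H_h\subseteq D(\bZ h)$, with image $H_M\colonequals\phi(H_h)$, and one has $D(\Lambda)\cong H^\perp/H$ where $H^\perp$ denotes the orthogonal complement in $D(\bZ h)\oplus D(\Lambda_h)$. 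The first step is the (formal) observation that restriction identifies $\uO(\Lambda,h)$ with $\{\,g_M\in\uO(\Lambda_h)\mid \overline{g}_M|_{H_M}=\mathrm{id}\,\}$, where $\overline{g}_M$ is the isometry induced by $g_M$ on $D(\Lambda_h)$: an isometry fixing $h$ is the identity on $\bZ h$ and hence determined by $g_M\colonequals g|_{\Lambda_h}$; it preserves $H$, which, being a graph over $H_h$, forces $\overline{g}_M$ to fix $H_M$ pointwise; and conversely any such $g_M$ extends over $\Lambda$ by the identity on $\bZ h$.

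Next I would check that under this identification $\widetilde{\uO}^+(\Lambda,h)$ corresponds exactly to $\widetilde{\uO}^+(\Lambda_h)$. The orientation decorations match because $(h,h)>0$: an isometry $g$ fixing $h$ acts as the identity on the positive line $\bR h$, so $g$ preserves the orientation of positive-definite subspaces of $\Lambda\otimes\bR$ if and only if $g|_{\Lambda_h}$ does so on $\Lambda_h\otimes\bR$; this is the same compatibility already used in Lemma~\ref{lemma:monOG6}. The content is then the equivalence, for $g\in\uO(\Lambda,h)$ with $g_M=g|_{\Lambda_h}$, between $g\in\widetilde{\uO}(\Lambda)$ and $g_M\in\widetilde{\uO}(\Lambda_h)$. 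One direction is immediate: if $\overline{g}_M=\mathrm{id}$ on $D(\Lambda_h)$, then $g$ acts trivially on $D(\bZ h)\oplus D(\Lambda_h)$, hence on $D(\Lambda)=H^\perp/H$. For the other, suppose $g$ acts trivially on $H^\perp/H$. Since $g$ fixes $h$ it acts on $D(\bZ h)\oplus D(\Lambda_h)$ as $\mathrm{id}\oplus\overline{g}_M$, so for $v=(v_1,v_2)\in H^\perp$ we get $g(v)-v=(0,\overline{g}_M(v_2)-v_2)\in\ker\pi_1$, where $\pi_1$ is the first projection; but $g(v)-v\in H$ by assumption, and $H\cap\ker\pi_1=0$ because $H\to D(\bZ h)$ is injective, so $g$ fixes $H^\perp$ pointwise. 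Finally the second projection $\pi_2\colon H^\perp\to D(\Lambda_h)$ is surjective: given $\beta\in D(\Lambda_h)$, the homomorphism $\xi\mapsto-b_{D(\Lambda_h)}(\beta,\phi(\xi))$ on $H_h$ extends to $D(\bZ h)$ and, because the discriminant form of the nonzero rank-one lattice $\bZ h$ is nondegenerate, is represented by some $\alpha\in D(\bZ h)$, and then $(\alpha,\beta)\in H^\perp$; since $\pi_2$ intertwines the action of $g$ with that of $\overline{g}_M$, and $g$ fixes $H^\perp$ pointwise, we conclude $\overline{g}_M=\mathrm{id}$ on $D(\Lambda_h)$. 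Combining this with the extendability from the first step yields $\widetilde{\uO}^+(\Lambda,h)=\widetilde{\uO}^+(\Lambda_h)$.

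I do not expect a serious obstacle. The two steps that carry the weight are the injectivity of $H\to D(\bZ h)$ — which is precisely the saturation of $\Lambda_h=h^{\perp\Lambda}$ — and the surjectivity of $\pi_2$ — which is nondegeneracy of the rank-one discriminant form — both elementary. If one prefers to avoid the abstract gluing argument, the alternative is to use the explicit descriptions of $\Lambda_h$ and of the embedding $\bZ h\oplus\Lambda_h\hookrightarrow\Lambda$ from Lemma~\ref{lemma:OG6orbit} and to compare discriminant forms by hand in the finitely many cases; the only mildly delicate point in that approach is again making the ``$+$''-decorations correspond under restriction.
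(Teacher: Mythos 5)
Your proof is correct, but it takes a genuinely different route from the paper's. The paper gets the containment $\widetilde{\uO}^+(\Lambda_h)\subseteq\widetilde{\uO}^+(\Lambda,h)$ from Lemma~\ref{lem:O+tilde} and proves the converse by verifying, case by case in the situation of Lemma~\ref{lemma:OG6orbit} and with explicit generators of $D(\Lambda_h)$, the inclusion $\Lambda_h^\vee\subseteq\bQ\cdot h+\Lambda^\vee$; from this a short direct computation shows that any $g\in\uO(\Lambda,h)\cap\widetilde{\uO}^+(\Lambda)$ moves each $x\in\Lambda_h^\vee$ by an element of $\Lambda_h$. You instead run everything through Nikulin's gluing formalism: $D(\Lambda)\cong H^\perp/H$ for the glue group $H$ of $\bZ h\oplus\Lambda_h\subseteq\Lambda$, injectivity of $H\to D(\bZ h)$ from saturation of $\Lambda_h$, and surjectivity of $\pi_2\colon H^\perp\to D(\Lambda_h)$ from nondegeneracy of the rank-one discriminant form, together with the (correct) remark that the orientation conditions match because $g$ fixes the positive vector $h$. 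All these steps check out, and they indeed use nothing about the OG6 lattice beyond $h$ being primitive of positive square, so your argument is uniform and case-free. What the paper's computation buys in exchange is the explicit description of $D(\Lambda_h)$, which is not a throwaway: it is reused in the proof of Theorem~\ref{sec3:them:OG6} to bound $|\uO(D(\Lambda_h))|$. Note finally that your surjectivity step and the paper's inclusion $\Lambda_h^\vee\subseteq\bQ\cdot h+\Lambda^\vee$ are two incarnations of the same general fact (surjectivity of the restriction $\Lambda^\vee\to\Lambda_h^\vee$ for the primitive sublattice $\Lambda_h$), so the two proofs are close in substance, with yours phrased at the level of discriminant forms and the paper's at the level of explicit dual vectors.
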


\begin{proof}
The inclusion $\widetilde{\uO}^+(\Lambda_h)\subset \widetilde{\uO}^+(\Lambda,h)$ follows from Lemma~\ref{lem:O+tilde}. To prove the converse, let us first show that if $x\in\Lambda_h^{\vee}$, then $x\in\bQ\cdot h + \Lambda^{\vee}$; since $\Lambda_h\subset\Lambda^{\vee}$, it suffices to check this for those $x\in\Lambda_h^{\vee}$ whose classes generate $D(\Lambda_h)$. Let us proceed case by case:

\begin{enumerate}
\item First assume  $\gamma=1$. Then in the notation of Lemma~\ref{lemma:OG6orbit}, one can compute that
\[
    D(\Lambda_h) = \left\langle
        \frac{1}{2d}(e-df), \frac{1}{2}v_1,\frac{1}{2}v_2
    \right\rangle. 
\]
Observe that $\frac{1}{2}v_1,\frac{1}{2}v_2\in \Lambda^{\vee}$ and $\frac{1}{2d}(e-df) = \frac{1}{2d}(e+df)-f = \frac{1}{2d}h-f$ with $f\in \Lambda$.

\item Now assume $\gamma=2$. Due to Lemma~\ref{lemma:OG6orbit}, we can assume $h = 2(e+tf)-\omega$ with $\omega\in\{v_1,v_2,v_1+v_2\}$. If $\omega=v_1$, then one can verify using the same lemma that
\[
    D(\Lambda_h) = D(Q_h(-1)) = \left\langle
        \frac{1}{2d}h - \frac{1}{2}v_1,
        \frac{1}{d}h - f,
        \frac{1}{2}v_2
    \right\rangle = \left\langle
        \frac{1}{2d}h - \frac{1}{2}v_1,
        \frac{1}{2}v_2
    \right\rangle, 
\]
where the third equality comes from the relation $\frac{1}{d}h-f = 2\left(\frac{1}{2d}h-\frac{1}{2}v_1\right)- (f-v_1)$ and the fact that $f-v_1\in Q_h(-1)$. Observe that the generators are all contained in $\mathbb{Q}\cdot h+\Lambda^{\vee}$. The case $\omega=v_2$ can be solved in a similar way. If instead $\omega=v_1+v_2$, then one can verify that
\begin{align*}
    D(\Lambda_h) = D(Q_h(-1))
    &= \left\langle
        \frac{1}{2d}h - \frac{1}{2}v_1,
        \frac{1}{2d}h - \frac{1}{2}v_2,
        \frac{1}{d}h - f
    \right\rangle \\
    &= \left\langle
        \frac{1}{2d}h - \frac{1}{2}v_1,
        \frac{1}{2d}h - \frac{1}{2}v_2
    \right\rangle.
\end{align*}
Observe that these generators are contained in $\mathbb{Q}\cdot h+\Lambda^{\vee}$.
\end{enumerate}
This completes the proof for the inclusion $\Lambda_h^{\vee}\subset\bQ\cdot h + \Lambda^{\vee}$.
Now, pick any $g\in\uO(\Lambda,h)\cap \widetilde{\uO}^+(\Lambda)$ and $x\in\Lambda_h^{\vee}$. Then $x\in\mathbb{Q}\cdot h + \Lambda^{\vee}$, which allows us to write $x=\alpha \cdot h + y$ with $\alpha\in\mathbb{Q}$ and $y\in\Lambda^{\vee}$. It follows that
\[
    g(x) - x
    = g(\alpha\cdot h) + g(y) - \alpha\cdot h - y
    = g(y) - y \in \Lambda.
\]
We also have
$$
    (g(x)-x, h)
    = (g(x), h) - (x, h)
    = (x, g^{-1}(h)) - (x, h)
    = (x, h) - (x, h)
    = 0.
$$
These two relations imply that $g(x) - x\in\Lambda_h$. Hence $g\in\widetilde{\uO}^+(\Lambda_h)$. This finishes the proof of the equality 
$
    \widetilde{\uO}^+(\Lambda_h)
    = \uO(\Lambda,h)\cap\widetilde{\uO}^+(\Lambda).
$
\end{proof}

As a consequence, we have the following extendability result. 

\begin{cor}
\label{sec3:cor:extMon}
In all cases listed in Lemma~\ref{lemma:OG6orbit}, there is a primitive embedding $\Lambda_h\hookrightarrow\Lambda_{\#}$ such that the group ${\rm{Mon}}^2\left(\Lambda,h\right)$ extends. 
\begin{enumerate}
\item\label{sec3:c:eMon-1} If\, $\gamma=1$, then $\Lambda_{\#} = U^{\oplus 2}\oplus A_{1}(-1)^{\oplus 7}$, and if $d$ is not divisible by $8$, then $A_{1}(-1)^{\oplus 7}$ can be replaced by $A_{1}(-1)^{\oplus 6}$.
\item\label{sec3:c:eMon-2}  If\, $\gamma=2$ and $w\in \{v_1,v_2\}$, then $\Lambda_{\#} = U^{\oplus 2}\oplus E_8(-1)$.
\item\label{sec3:c:eMon-3}  If\, $\gamma=2$ and $w=v_1+v_2$, then $\Lambda_{\#} =U^{\oplus 2}\oplus A_3(-1)\oplus A_1(-1)^{\oplus 4}$.

\end{enumerate}
\end{cor}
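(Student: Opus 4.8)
The plan is to handle all three cases through a single reduction and then construct the required embeddings one at a time. In every case of Lemma~\ref{lemma:OG6orbit} we have $\Lambda_h = U^{\oplus 2}\oplus Q_h(-1)$ with $Q_h(-1)$ negative definite of rank three, and in every case the target is of the shape $\Lambda_\# = U^{\oplus 2}\oplus M$; hence a primitive embedding $Q_h(-1)\hookrightarrow M$ will induce a primitive embedding $\Lambda_h\hookrightarrow\Lambda_\#$. By Lemmas~\ref{lemma:monOG6} and~\ref{lemma:exOG6}, $\operatorname{Mon}^2(\Lambda,h)=\langle\widetilde{\uO}^+(\Lambda_h),\sigma\rangle$, where $\sigma$ is the identity when $w\in\{v_1,v_2\}$ and is the reflection $\sigma_\kappa$ along $\kappa=v_1-v_2$ otherwise. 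Since $\widetilde{\uO}^+(\Lambda_h)$ is extendable with respect to any embedding by Lemma~\ref{lem:O+tilde}, extendability of $\operatorname{Mon}^2(\Lambda,h)$ reduces to extending $\sigma$; and because $\sigma$ restricts to the identity on $U^{\oplus 2}$, it suffices to extend $\sigma|_{Q_h(-1)}$ to an isometry of $M$, after which $\mathrm{id}_{U^{\oplus 2}}\oplus(\text{that isometry})$ gives an orientation-preserving extension to $\Lambda_\#$. So everything comes down to choosing, in each case, a primitive embedding $Q_h(-1)\hookrightarrow M$ compatible with $\sigma_\kappa$ when $\sigma_\kappa$ is present.

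For case~\eqref{sec3:c:eMon-1} ($\gamma=1$), Lemma~\ref{lemma:OG6orbit} gives $Q_h(-1)\cong\bZ(-2d)\oplus A_1(-1)^{\oplus 2}$ with the second factor spanned by $v_1,v_2$, and $\sigma_\kappa$ fixes $\bZ(-2d)$ and swaps $v_1\leftrightarrow v_2$. I would embed $\bZ(-2d)$ into $A_1(-1)^{\oplus 5}$ by Lagrange's four square theorem, sending a generator to $a_1e_1+\cdots+a_4e_4+e_5$ with $d-1=a_1^2+\cdots+a_4^2$ (primitive because of the coefficient $1$), and send $v_1,v_2$ to $e_6,e_7$; the coordinate transposition $e_6\leftrightarrow e_7$ then extends $\sigma_\kappa$, landing $\Lambda_h$ in $U^{\oplus 2}\oplus A_1(-1)^{\oplus 7}$. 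When $8\nmid d$, one instead writes $d=a_1^2+\cdots+a_4^2$ with coprime $a_i$ using \cite[Theorem~1]{CH07}, embeds $\bZ(-2d)$ primitively into $A_1(-1)^{\oplus 4}$, and uses $e_5,e_6$ for $v_1,v_2$, landing in $A_1(-1)^{\oplus 6}$. For case~\eqref{sec3:c:eMon-2} ($\gamma=2$, $w\in\{v_1,v_2\}$), one has $\sigma=\mathrm{id}$, so $\operatorname{Mon}^2(\Lambda,h)=\widetilde{\uO}^+(\Lambda_h)$ and any primitive embedding will do; since $Q_h(-1)$ is an even negative definite lattice of rank three, it embeds primitively into $E_8(-1)$ by \cite[Theorem~14.1.15]{Huy16}.

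The case I expect to be the main obstacle is~\eqref{sec3:c:eMon-3} ($\gamma=2$, $w=v_1+v_2$), because there $\sigma_\kappa$ acts nontrivially on $Q_h(-1)$ but the prescribed target $A_3(-1)\oplus A_1(-1)^{\oplus 4}$ carries no orthogonal splitting obviously adapted to it. Writing $d=4t-2$ and $\alpha_1=f-v_1$, $\alpha_2=f-v_2$, $\alpha_3=e-tf$, one has $\alpha_1^2=\alpha_2^2=-2$, $\alpha_3^2=-2t$, $(\alpha_1,\alpha_2)=0$, $(\alpha_1,\alpha_3)=(\alpha_2,\alpha_3)=1$; since $\kappa=\alpha_2-\alpha_1$ has square $-4$, the reflection $\sigma_\kappa$ swaps $\alpha_1\leftrightarrow\alpha_2$ and fixes $\alpha_3$. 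The key observation is that this transposition can be realized as the restriction of the diagram automorphism $\tau$ of $A_3(-1)$: writing $\{\delta_1,\delta_2,\delta_3\}$ for the simple roots (so $\delta_i^2=-2$, $(\delta_1,\delta_2)=(\delta_2,\delta_3)=1$, $(\delta_1,\delta_3)=0$), I would map $\alpha_1\mapsto\delta_1$, $\alpha_2\mapsto\delta_3$, and $\alpha_3\mapsto\delta_2+u$ with $u\in A_1(-1)^{\oplus 4}$ of square $-2(t-1)$ provided by Lagrange applied to $t-1$. A short Gram-matrix check shows this is an isometric embedding; it is primitive because the $A_3(-1)$-component of a rational combination of the images lies in $A_3(-1)$ only when all coefficients are integers, the $\delta_i$ being a $\bZ$-basis. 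Finally $\tau\oplus\mathrm{id}_{A_1(-1)^{\oplus 4}}$ is an isometry of $M=A_3(-1)\oplus A_1(-1)^{\oplus 4}$ swapping $\delta_1\leftrightarrow\delta_3$ and fixing $\delta_2+u$, hence it restricts to $\sigma_\kappa$ on the image of $Q_h(-1)$ and extends it. The point requiring care is precisely the choice placing the two norm $-2$ vectors $\alpha_1,\alpha_2$ at the two ends of the $A_3$ diagram, so that the swap becomes geometric, while the remaining one-dimensional direction $\alpha_3$ is absorbed harmlessly into $A_1(-1)^{\oplus 4}$.
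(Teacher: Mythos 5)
Your proof is correct and follows essentially the same route as the paper: reduce via Lemmas~\ref{lemma:monOG6} and~\ref{lemma:exOG6} (plus Lemma~\ref{lem:O+tilde}) to extending the single reflection $\sigma_\kappa$, embed the definite part of $\Lambda_h$ into the stated $\Lambda_{\#}$ using Lagrange's/the coprime four-squares theorem, and extend by an isometry acting as the identity on $U^{\oplus 2}$. The only differences are cosmetic: in cases~(1) and~(3) you realize the extension as a coordinate transposition, respectively the $A_3$ diagram automorphism (with $e-tf\mapsto\delta_2+u$, $u^2=-2(t-1)$), where the paper instead checks that the reflection along $\kappa$ is integral on $\Lambda_{\#}$; both verifications go through.
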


\begin{proof}
By Lemmas~\ref{lemma:monOG6} and~\ref{lemma:exOG6}, it suffices to show that $\sigma_\kappa$ extends to $\Lambda_{\#}$ is all cases. We proceed case by case:

\eqref{sec3:c:eMon-1}~
By Lemma~\ref{lemma:OG6orbit}, $\Lambda_h = U^{\oplus 2} \oplus \langle e-df \rangle \oplus A_1(-1)^{\oplus 2}$ and, as in the proof of Lemma~\ref{lemma:monOG10}, Lagrange's four square theorem induces a primitive embedding of $\langle e-df \rangle$ in $A_1(-1)^{\oplus 5}$ (resp.\ $A_1(-1)^{\oplus 4}$ if $d$ is not divisible by $8$). This gives us an embedding 
\[
\Lambda_h=U^{\oplus 2} \oplus \langle e-df \rangle \oplus A_1(-1)^{\oplus 2}\longhookrightarrow U^{\oplus 2} \oplus A_1(-1)^{\oplus 5} \oplus A_1(-1)^{\oplus 2}.
\]
Further, note that if we see $\kappa$ as an element in $\Lambda_{\#}$ and we call $\widetilde{\sigma}_\kappa\in{\rm{O}}^+\left(\Lambda_{\#}\otimes\mathbb{Q}\right)$ the corresponding reflection 
\begin{equation}
\label{sec3:eq:ref}
\widetilde{\sigma}_\kappa\colon v\longmapsto v-2\frac{(v,\kappa)}{(\kappa,\kappa)}\kappa,
\end{equation}
then since $(\kappa,\kappa)=-4$ and $(v,\kappa)$ is divisible by $2$ for all $v\in \Lambda_{\#}$, the reflection is integral, that is, $\widetilde{\sigma}_\kappa\in {\rm{O}}^+\left(\Lambda_{\#}\right)$. Since $\widetilde{\sigma}_\kappa\mid_{\Lambda_h}=\sigma_\kappa$, this shows the  extendability in the first case.

\eqref{sec3:c:eMon-2}~
Note that $\ell(\Lambda_h)\leq 3$; see the proof of Lemma~\ref{lemma:exOG6}. This fact, together with \cite[Theorem~14.1.15]{Huy16}, shows that in this case, there exists a primitive embedding $\Lambda_h \hookrightarrow U^{\oplus 2}\oplus E_{8}(-1)$. The extendability of ${\rm{Mon}}^2\left(\Lambda,h\right)$ follows from Lemmas~\ref{lemma:monOG6} and~\ref{lemma:exOG6}, and from the fact that $\widetilde{\rm O}^+(\Lambda_h)$ is always extendable.

\eqref{sec3:c:eMon-3}~  
Let $\left\{\delta_1,\delta_2,\delta_3\right\}$ be a basis of $A_3(-1)$ and $\{e_1,\ldots,e_4\}$ a basis of $A_1(-1)^{\oplus 4}$. By Lagrange's four square theorem, there are integers $a_1,\ldots,a_4$ such that $t-2=a_1^2+\cdots+a_4^2$.
In particular, there is a primitive embedding $Q_h(-1)\hookrightarrow A_3(-1)\oplus A_1(-1)^{\oplus 4}$ given by 
\[
f-v_1\longmapsto \delta_1,\quad f-v_2\longmapsto \delta_3,\quad e-tf\longmapsto a_1e_1+\cdots+a_4e_4-\delta_1-\delta_2-\delta_3.
\]
Further, from Lemma~\ref{lemma:monOG6}, $\kappa\in \Lambda_h\subset \Lambda_{\#}$ is given by $\kappa=\delta_3-\delta_1$. One notes that $(\kappa,\kappa)=-4$ and $(v,\kappa)$ is even for all $v\in \Lambda_{\#}$. In particular, the reflection \eqref{sec3:eq:ref} is integral and restricts to $\sigma_\kappa$ on $\Lambda_{k}$. This shows the extendability in the third case.
\end{proof}

Let $\mathcal{M}_{\mathrm{OG6},\; 2d}^\gamma$ be the moduli space of hyperk\"{a}hler manifolds of type OG6 and polarized by a primitive ample class of degree $2d$ and divisibility $\gamma$. By \cite[Propositions~3.4 and~3.6]{Son22}, the space $\mathcal{M}_{\mathrm{OG6},\; 2d}^\gamma$ is irreducible. Moreover, the proof of Lemma~\ref{lemma:OG6orbit} shows that if the moduli space is non-empty, then either $\gamma=1$, or $\gamma=2$ and $d\equiv 2,3\mod 4$.

\begin{thm}
\label{sec3:them:OG6}
For any $\varepsilon>0$, there exists a $C_\varepsilon>0$ independent of $d$ and $\gamma$ such that 
$$
    \irr\left(
        \mathcal{M}_{\mathrm{OG6},\; 2d}^\gamma
    \right)
    \leq C_\varepsilon\cdot d^{6+\varepsilon}.
$$
If instead $\gamma=1$ or $\gamma=2$ and $d\equiv 3\mod 4$, the bound can be refined to
\[
 \irr\left(
        \mathcal{M}_{\mathrm{OG6},\; 2d}^\gamma
    \right)
    \leq C_\varepsilon\cdot d^{\frac{11}{2}+\varepsilon}.
\]
Finally, if $\gamma=1$ and $d$ is not divisible by $8$,
the bound can be further refined to
\[
 \irr\left(
        \mathcal{M}_{\mathrm{OG6},\; 2d}^\gamma
    \right)
    \leq C_\varepsilon\cdot d^{5+\varepsilon}.
\]

\end{thm}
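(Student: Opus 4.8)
The plan is to apply Lemma~\ref{lemma:irrationality-general} to $\Lambda=\Lambda_h$ and $\Gamma=\operatorname{Mon}^2(\Lambda,h)$, exactly as in the proofs of Theorems~\ref{thm:irratK3n} and~\ref{thm:OG10_bound}. The space $\mathcal{M}_{\mathrm{OG6},2d}^\gamma$ is irreducible by \cite[Propositions~3.4 and~3.6]{Son22}, and for a primitive $h$ as in Lemma~\ref{lemma:OG6orbit} it is birational to $\Omega(\Lambda_h)/\operatorname{Mon}^2(\Lambda,h)$, with $\Lambda_h$ of signature $(2,5)$ and $|\operatorname{disc}(\Lambda_h)|=8d/\gamma^2$. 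By Corollary~\ref{sec3:cor:extMon} there is in each case a primitive embedding $\Lambda_h\hookrightarrow\Lambda_{\#}$ into a lattice of signature $(2,m')$ independent of $d$ along which $\operatorname{Mon}^2(\Lambda,h)$ is extendable, so the lemma gives
\[
\irr(Y)\;\leq\; C\cdot\bigl[\widetilde{\uO}^+(\Lambda_h):\operatorname{Mon}^2(\Lambda,h)\cap\widetilde{\uO}^+(\Lambda_h)\bigr]\cdot|\uO(D(\Lambda_h))|\cdot\Bigl(\tfrac{8d}{\gamma^2}\Bigr)^{1+\frac{m'}{2}}
\]
with $C$ depending only on $\Lambda_{\#}$. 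By Lemmas~\ref{lemma:monOG6} and~\ref{lemma:exOG6} one has $\widetilde{\uO}^+(\Lambda_h)=\widetilde{\uO}^+(\Lambda,h)\subseteq\operatorname{Mon}^2(\Lambda,h)$, so the first bracket equals $1$, and the problem reduces to estimating $|\uO(D(\Lambda_h))|$ and identifying $m'$.

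The step I expect to be the main obstacle is the estimate $|\uO(D(\Lambda_h))|\leq C'_\varepsilon\cdot d^\varepsilon$ for every $\varepsilon>0$; I would prove it exactly as for OG10 in Theorem~\ref{thm:OG10_bound}. From Lemma~\ref{lemma:OG6orbit} one writes the discriminant group and its $\mathbb{Q}/2\mathbb{Z}$-valued quadratic form explicitly: for $\gamma=1$ it is $\mathbb{Z}/2d\oplus(\mathbb{Z}/2)^{\oplus 2}$, and for $\gamma=2$ it is the discriminant group of the explicit rank-three lattice $Q_h(-1)$, which is cyclic of order $2d$. Representing an isometry by an integer matrix on a fixed generating set and imposing preservation of the quadratic form, one finds that the ``large'' coordinate of the image of each generator is confined, by a congruence of the form $x^2\equiv(\text{square})$ modulo a divisor of $4d$, to $O(2^{\rho(d)})$ residues --- the same Chinese-remainder-theorem count on solutions of $x^2\equiv 1$ used in the proof of Theorem~\ref{thm:OG10_bound} --- while the remaining coordinates lie in a part of bounded order and contribute only an absolute constant. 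Since $2^{\rho(d)}=O(d^\varepsilon)$ by \cite[Equation~(6.4)]{ABL23}, this gives $|\uO(D(\Lambda_h))|=O_\varepsilon(d^\varepsilon)$.

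It then remains to collect exponents. Substituting this and $8d/\gamma^2\leq 8d$ into the displayed inequality gives $\irr(Y)\leq C_\varepsilon\cdot d^{1+\frac{m'}{2}+\varepsilon}$. With the embeddings of Corollary~\ref{sec3:cor:extMon} one has $m'\leq 10$ in all cases, which yields the uniform bound $d^{6+\varepsilon}$. For $\gamma=1$ one has $m'=9$; for $\gamma=2$ with $d\equiv 3\bmod 4$ one can, since $\operatorname{Mon}^2(\Lambda,h)=\widetilde{\uO}^+(\Lambda_h)$ is then extendable along \emph{any} primitive embedding, replace the embedding of Corollary~\ref{sec3:cor:extMon} by an explicit primitive embedding of $\Lambda_h$ into $U^{\oplus 2}\oplus A_3(-1)\oplus A_1(-1)^{\oplus 4}$ obtained from a sum-of-squares construction, again with $m'=9$; in both cases this gives the refinement $d^{\frac{11}{2}+\varepsilon}$. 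Finally, when $\gamma=1$ and $8\nmid d$, the coprime-four-squares variant of the embedding of Corollary~\ref{sec3:cor:extMon} gives $m'=8$ and hence $d^{5+\varepsilon}$. Taking $C_\varepsilon$ to be the maximum of the finitely many constants obtained completes the proof.
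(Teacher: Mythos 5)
Most of your plan coincides with the paper's proof: reduce to Lemma~\ref{lemma:irrationality-general} via the embeddings of Corollary~\ref{sec3:cor:extMon}, note that the index factor is $1$ by Lemmas~\ref{lemma:monOG6} and~\ref{lemma:exOG6}, and bound $|\uO(D(\Lambda_h))|=O_\varepsilon(d^\varepsilon)$ by the explicit matrix/congruence count as in Theorem~\ref{thm:OG10_bound}; your sketch of that count is looser than the paper's (the paper also pins down the off-diagonal entries, e.g.\ $a_{12},a_{13}\in\{0,d\}$ when $\gamma=1$, rather than dismissing them as lying in ``a part of bounded order''), but it is the same argument. The bookkeeping of exponents for $\gamma=1$ (with and without $8\nmid d$) and for the uniform $d^{6+\varepsilon}$ bound is also exactly the paper's.

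The genuine gap is the refinement $d^{\frac{11}{2}+\varepsilon}$ in the case $\gamma=2$, $d\equiv 3\bmod 4$. You correctly observe that Corollary~\ref{sec3:cor:extMon} only supplies $\Lambda_\#=U^{\oplus 2}\oplus E_8(-1)$ there (signature $(2,10)$, hence exponent $6$), that $\operatorname{Mon}^2(\Lambda,h)=\widetilde{\uO}^+(\Lambda_h)$ so any primitive embedding would do, and that one therefore needs a $d$-independent receptacle of signature $(2,9)$; but you merely assert ``an explicit primitive embedding of $\Lambda_h$ into $U^{\oplus 2}\oplus A_3(-1)\oplus A_1(-1)^{\oplus 4}$ obtained from a sum-of-squares construction'' without giving it, and the obvious construction fails for this $Q_h(-1)=\langle f-v_1,\,e-tf,\,v_2\rangle$ (Gram matrix with $(f-v_1)^2=v_2^2=-2$, $(f-v_1,e-tf)=1$, $(e-tf)^2=-2t$, $d=4t-1$): an odd pairing can only come from the $A_3(-1)$ summand, so the image of $f-v_1$ must be an $A_3$-root; if the image of $v_2$ is also taken in $A_3(-1)$ one runs into a parity contradiction ($-2\alpha+\beta=1$ and $\beta-2\gamma=0$ have no integral solution), and if it is taken to be some $\pm e_i$ only three copies of $A_1(-1)$ remain, so Lagrange's four-square theorem no longer applies and one would need a three-squares representability argument. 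Note also that the paper's own proof does not attempt this: with its Corollary the $11/2$ exponent comes out in the case $\gamma=2$, $d\equiv 2\bmod 4$ (where $\Lambda_\#=U^{\oplus 2}\oplus A_3(-1)\oplus A_1(-1)^{\oplus 4}$ has signature $(2,9)$), while $d\equiv 3\bmod 4$ only yields $6$, so the printed congruence in the statement appears inconsistent with the paper's argument. If you want to actually prove the statement as written, your idea can be repaired by choosing a different receptacle: map $f-v_1\mapsto\delta_1$, $e-tf\mapsto\delta_2+a_1e_1+\cdots+a_4e_4$ with $t-1=a_1^2+\cdots+a_4^2$, and $v_2\mapsto e_5$ into $A_2(-1)\oplus A_1(-1)^{\oplus 5}$; this is primitive, gives $\Lambda_\#=U^{\oplus 2}\oplus A_2(-1)\oplus A_1(-1)^{\oplus 5}$ of signature $(2,9)$, and then your exponent count goes through. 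As it stands, however, the key embedding for that case is missing, so the refinement is not established.
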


\begin{proof}
The proof uses Corollary~\ref{sec3:cor:extMon} and is analogous to the proof of Theorem~\ref{thm:irratK3n}. The only difference is the estimate of $|\uO(D(\Lambda_h))|$. We want to prove that for any $\varepsilon>0$, there exists a $C'_{\varepsilon}>0$ such that $|\uO(D(\Lambda_h))| \leq C'_{\varepsilon}\cdot d^{\varepsilon}$. 

In the case $\gamma=1$, we can use the description of $D(\Lambda_h)$ given in the proof of Lemma~\ref{lemma:exOG6} and then proceed as in the proof of Theorem~\ref{thm:OG10_bound}: An element $A\in {\rm{O}}\left(D\left(\Lambda_h\right)\right)$ is determined by what it does to the generators $\left\{\frac{1}{2d}(e-df),\frac{1}{2}v_1,\frac{1}{2}v_2\right\}$. In particular, it can be represented by a $3\times3$ matrix $A=(a_{ij})$ with $a_{1j}\in \{0,1,\ldots,2d-1\}$ and $a_{2j}, a_{3j}\in\{0,1,2\}$. Preserving the quadratic form translates into
\[
        A^t \begin{pmatrix}
        \frac{1}{2d} & 0 & 0\\
        0 & \frac{1}{2} & 0\\
        0 & 0 & \frac{1}{2}
    \end{pmatrix} A
    \;\equiv\;
    \begin{pmatrix}
        \frac{1}{2d} & 0 & 0\\
        0 & \frac{1}{2} & 0\\
        0 & 0 & \frac{1}{2}
    \end{pmatrix}
    \;\mod\; \mathbb{Z}.
     \]
Multiplying the resulting six equations by $2d$ and then reducing modulo $d$, one gets
\[
a_{11}^2\equiv 1, \;\; a_{11}\cdot a_{12}\equiv a_{11}\cdot a_{13}\equiv a_{12}^2\equiv a_{12}\cdot a_{13}\equiv a_{13}^2\equiv 0\;\;\mod d.
\]
In particular, $a_{12}, a_{13}\in \{0,d\}$ and, since the number of solutions of $x^2\equiv 1\mod d$ is in ${\rm{O}}(d^\varepsilon)$, we obtain the desired bound for $\left|{\rm{O}}\left(D(\Lambda_h)\right)\right|$ in this case.

Suppose instead that $\gamma = 2$ and $d \equiv 3 \mod 4$. Then we can assume that $h=2(e+tf)-w$ as in Lemma~\ref{lemma:OG6orbit}, with $w\in\{v_1,v_2\}$. We can then use the description of the discriminant group given in the proof of Lemma~\ref{lemma:exOG6} and reason as in the proof of Theorem~\ref{thm:OG10_bound}. 

In the last case of $\gamma=2$ and $d\equiv 2 \mod 4$,  we can assume that $h=2(e+tf)-w$ with $w=v_1+v_2$. The proof of Lemma~\ref{lemma:exOG6} shows that in this case
\[
D(\Lambda_h) = \left\langle \frac{1}{2d}h-\frac{1}{2}v_1,\frac{1}{2d}h-\frac{1}{2}v_2\right\rangle = \left\langle \frac{1}{2d}h-\frac{1}{2}v_1,\frac{1}{2}v_2-\frac{1}{2}v_1\right\rangle. 
\]
Using this latter set of generators, we can proceed as in the proof of Theorem~\ref{thm:OG10_bound} and compute that an element of $\uO(D(\Lambda_h))$ corresponds to a $2\times 2$ matrix $A=(a_{ij})$ with $a_{1j}\in \{0,\dots,2d-1\}$ and $a_{2j}\in \{0,1\}$ subject to the orthogonality condition 
\[
        A^t \begin{pmatrix}
        \frac{d+1}{2d} & \frac{1}{2} \\[.6ex]
        \frac{1}{2} & 0 
    \end{pmatrix} A
    \;\equiv\;
    \begin{pmatrix}
        \frac{d+1}{2d} & \frac{1}{2} \\[.6ex]
        \frac{1}{2} & 0 
    \end{pmatrix}
    \;\mod\; \mathbb{Z}.
     \]
In particular, the entries must satisfy
\[ a_{11}^2 \equiv 1, \quad a_{11}\cdot a_{12} \equiv 0 \quad \mod d, \]
and we conclude as in the proof of Theorem~\ref{thm:OG10_bound}.
\end{proof}

\section{Moduli spaces of abelian surfaces and K3 surfaces}
\label{sec:Ab_surf}

This section consists of two parts. First, we study the irrationality of moduli spaces $\mathcal{A}_{(1,d)}$ of $(1,d)$-polarized abelian surfaces. Then we revisit our study in \cite{ABL23} about the irrationality of moduli spaces $\cF_d$ of primitively polarized K3 surfaces.

\subsection{Abelian surfaces of type (1, d)}

Our strategy in bounding the degree of irrationality of $\mathcal{A}_{(1,d)}$ starts by realizing it as a double cover of an appropriate period space using the construction of Gritsenko and Hulek \cite{GH98}. The construction will provide a bound for the degree of irrationality when $d$ is square-free. Our main task here is to reduce the general case to this one via a geometric argument by O'Grady; see \cite{OGr89}.

Let $\mathbb{H}_2$ be the Siegel upper-half space of $2\times 2$ symmetric complex matrices $\tau$ with positive-definite imaginary part. Consider the usual action of the symplectic group $\operatorname{Sp}(4,\mathbb{Q})$ on $\mathbb{H}_2$ and the arithmetic group
\[
    \Gamma_{1,d} = \left\{
    \begin{pmatrix}
        \mathbb{Z}&\mathbb{Z}&\mathbb{Z}&d\mathbb{Z} \\ d\mathbb{Z}&\mathbb{Z}&d\mathbb{Z}&d\mathbb{Z} \\ \mathbb{Z}&\mathbb{Z}&\mathbb{Z}&d\mathbb{Z}\\ \mathbb{Z}&\frac{1}{d}\mathbb{Z}&\mathbb{Z}&\mathbb{Z}
    \end{pmatrix}
    \right\}
    \cap{\rm{Sp}}\left(4,\mathbb{Q}\right).
\]
Then we have $\mathcal{A}_{(1,d)} = \mathbb{H}_2/\Gamma_{1,d}$. Consider the Euclidean lattice $L=\mathbb{Z}e_1\oplus \cdots\oplus \mathbb{Z}e_4$, where $(e_i,e_i) = 1$ and $(e_i,e_j) = 0$ for $i\neq j$. Then the space of bivectors $L\wedge L$ forms a rank six lattice, where the pairing $(x,y)\in \mathbb{Z}$ is defined by requiring that
\[
    x\wedge y
    = (x,y)\cdot (e_1\wedge e_2\wedge e_3\wedge e_4)
    \;\in\;
    \bigwedge^{4}L.
\]
Fix $w_d:= e_1\wedge e_3+de_2\wedge e_4\in L\wedge L$ and consider the \emph{integral paramodular group of level $d$}
\[
    \widetilde{\Gamma}_{1,d} = \{ g\in \mathrm{GL}(L) \,|\, (\wedge^2 g)(w_d)=w_d \}.
\]
Then the orthogonal complement $\Lambda_d = \left(w_d\right)^\perp\subset L\wedge L$ has the form
$$
    \Lambda_d\cong U^{\oplus 2}\oplus \mathbb{Z}\ell, 
    \quad\text{where }
    (\ell,\ell) = 2d.
$$
The group $\widetilde{\Gamma}_{1,d}$ acts naturally on the lattice $\Lambda_d$ via the group homomorphism
\[
    \widetilde{\Gamma}_{1,d}
    \longrightarrow \uO(\Lambda_d),
    \quad
    g \longmapsto (\wedge^2 g)|_{\Lambda_d}.
\]
It turns out that 
$
    \Gamma_{1,d}
    = \mathbb{I}_{d}^{-1}\widetilde{\Gamma}_{1,d}\mathbb{I}_d
$, 
where $\mathbb{I}_d = \operatorname{diag}(1,1,1,d)$. In particular, the group $\Gamma_{1,d}$ acts on $\Lambda_d$ via the homomorphism
\begin{equation}
\label{sec7:eq:Gamma_Otilde}
    \Gamma_{1,d}\longrightarrow\uO(\Lambda_d),
    \quad
    g\longmapsto\wedge^2\left(\mathbb{I}_d g \mathbb{I}_d^{-1}\right).  
\end{equation}

It is proved in \cite[Lemma~1.1]{GH98} that the image of this map lies inside $\widetilde{\uO}\left(\Lambda_d\right)$. In the case that $d$ is square-free, one of the key results in \cite{GH98} asserts the existence of an index two extension 
\[
    \Gamma_{1,d}\subset\Gamma_{1,d}^+
    \quad\text{together with a map}\quad
    \Gamma_{1,d}^+\longrightarrow \widetilde{\uO}^+\left(\Lambda_d\right)
\]
which induces a birational morphism from $\mathcal{A}_{(1,d)}^+\colonequals\mathbb{H}_2\big/\Gamma_{1,d}^+$ to the quotient $\Omega(\Lambda_d)/\widetilde{\uO}^+(\Lambda_d)$. This implies the following statement.

\begin{prop}[\textit{cf.} \protect{\cite[Proposition 1.4]{GH98}}]
\label{prop:absurfGH}
Assume that $d$ is square-free. Then there exists a dominant morphism of degree two
\[
    \mathcal{A}_{(1,d)}
    \longrightarrow\widetilde{\cP}^+_{\Lambda_d}
    = \Omega(\Lambda_d)\big/\widetilde{\uO}^+\left(\Lambda_d\right).
\]
\end{prop}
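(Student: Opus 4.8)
The plan is to realize the desired map as a two-step composition. First I would take the quotient morphism
\[
    \pi\colon \mathcal{A}_{(1,d)} = \mathbb{H}_2/\Gamma_{1,d}
    \longrightarrow \mathbb{H}_2/\Gamma_{1,d}^+ = \mathcal{A}_{(1,d)}^+
\]
attached to the index-two inclusion $\Gamma_{1,d}\subset\Gamma_{1,d}^+$; this is a finite morphism of quasi-projective varieties, being the quotient of a quasi-projective variety by the finite group $\Gamma_{1,d}^+/\Gamma_{1,d}\cong\mathbb{Z}/2\mathbb{Z}$. Then I would postcompose with the birational morphism $\phi\colon\mathcal{A}_{(1,d)}^+\to\widetilde{\cP}^+_{\Lambda_d}=\Omega(\Lambda_d)/\widetilde{\uO}^+(\Lambda_d)$ furnished by \cite[Proposition~1.4]{GH98} via the homomorphism $\Gamma_{1,d}^+\to\widetilde{\uO}^+(\Lambda_d)$ recalled above (this step rests on the identification of $\mathbb{H}_2$ with the period domain of $\Lambda_d$, which has signature $(2,3)$). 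The composite $\phi\circ\pi$ is then a morphism of quasi-projective varieties, and it is dominant because both $\pi$ and $\phi$ are.

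It remains to compute the degree. Since $\phi$ is birational and the three varieties all have dimension three, $\deg(\phi\circ\pi)=\deg(\pi)$, so it is enough to check that $\deg(\pi)=2$. The degree of $\pi$ equals the order of $\Gamma_{1,d}^+/\Gamma_{1,d}$ modulo the subgroup acting trivially on $\mathcal{A}_{(1,d)}$, so I would show that the nontrivial coset acts effectively. The kernel of the action of $\Gamma_{1,d}^+$ on $\mathbb{H}_2$ is $\Gamma_{1,d}^+\cap\{\pm I\}=\{\pm I\}$, and since $-I\in\Gamma_{1,d}$ (which one reads off the matrix description of $\Gamma_{1,d}$), this kernel already lies in $\Gamma_{1,d}$; hence a representative $\gamma_0$ of the nontrivial coset acts nontrivially on $\mathbb{H}_2$. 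As the stabilizer in $\Gamma_{1,d}^+$ of a very general point $\tau\in\mathbb{H}_2$ is exactly $\{\pm I\}$, an identity $\gamma_0\tau=\delta\tau$ with $\delta\in\Gamma_{1,d}$ would force $\gamma_0\in\pm\Gamma_{1,d}=\Gamma_{1,d}$, a contradiction; thus $\pi$ is generically two-to-one and $\deg(\phi\circ\pi)=2$.

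I do not anticipate a serious difficulty, since the substantive input — the construction of the overgroup $\Gamma_{1,d}^+$ and the birationality of $\mathcal{A}_{(1,d)}^+\to\Omega(\Lambda_d)/\widetilde{\uO}^+(\Lambda_d)$ — is precisely \cite[Proposition~1.4]{GH98}, which we are entitled to invoke. The only point requiring genuine care is the degree count: one must rule out that enlarging $\Gamma_{1,d}$ to its index-two overgroup leaves $\mathcal{A}_{(1,d)}$ unchanged, and the observation $-I\in\Gamma_{1,d}$ — so that all scalars are already accounted for at the smaller level — is exactly what prevents this.
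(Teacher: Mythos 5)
Your proposal is correct and follows essentially the same route as the paper, which simply composes the degree-two quotient $\mathcal{A}_{(1,d)}\to\mathcal{A}_{(1,d)}^+=\mathbb{H}_2/\Gamma_{1,d}^+$ coming from the index-two extension $\Gamma_{1,d}\subset\Gamma_{1,d}^+$ of \cite{GH98} with the birational morphism $\mathcal{A}_{(1,d)}^+\to\Omega(\Lambda_d)/\widetilde{\uO}^+(\Lambda_d)$ from \cite[Proposition~1.4]{GH98}. Your extra check that the quotient really has degree two (via $-I\in\Gamma_{1,d}$ and the generic stabilizer being $\{\pm I\}$) is a detail the paper leaves implicit, but it is accurate and does not change the argument.
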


This allows us to bound the degree of irrationality of $\mathcal{A}_{(1,d)}$ using $\widetilde{\cP}^+_{\Lambda_d}$.

\begin{prop}
\label{prop:boundabeliansquarefree}
Suppose that $d$ is square-free. Then, for every $\varepsilon>0$, there exists a constant $C_\varepsilon>0$ independent of $d$ such that
\[
    \irr\left(\mathcal{A}_{(1,d)}\right)
    \leq C_\varepsilon\cdot d^{4+\varepsilon}.
\]
\end{prop}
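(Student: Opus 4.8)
The plan is to reduce to a period space via Proposition~\ref{prop:absurfGH} and then apply the machinery of Section~\ref{sect:orthShimuraIrr}. Proposition~\ref{prop:absurfGH} gives a dominant morphism $\mathcal{A}_{(1,d)}\to\widetilde{\cP}^+_{\Lambda_d}$ of degree two between threefolds, so composing a general projection $\widetilde{\cP}^+_{\Lambda_d}\dashrightarrow\bP^3$ of degree $\irr(\widetilde{\cP}^+_{\Lambda_d})$ with it yields $\irr(\mathcal{A}_{(1,d)})\le 2\cdot\irr(\widetilde{\cP}^+_{\Lambda_d})$. It therefore suffices to bound $\irr(\widetilde{\cP}^+_{\Lambda_d})=\irr\bigl(\Omega(\Lambda_d)/\widetilde{\uO}^+(\Lambda_d)\bigr)$, where $\Lambda_d\cong U^{\oplus 2}\oplus\bZ\ell$ with $(\ell,\ell)=\pm 2d$. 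For this I would produce a primitive embedding of $\Lambda_d$ into a fixed even lattice $\Lambda_\#$ of signature $(2,m')$ independent of $d$ and invoke Lemma~\ref{lemma:irrationality-general} with $\Gamma=\widetilde{\uO}^+(\Lambda_d)$, which is automatically extendable by Lemma~\ref{lem:O+tilde}.

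The crucial point is to make $m'$ as small as possible, since Lemma~\ref{lemma:irrationality-general} contributes a factor $|\operatorname{disc}(\Lambda_d)|^{1+m'/2}$ and $|\operatorname{disc}(\Lambda_d)|=2d$; the target exponent $4$ forces $m'=6$. Here the square-freeness of $d$ enters: $d$ square-free implies $8\nmid d$, so by \cite[Theorem~1]{CH07} we may write $d=a_1^2+a_2^2+a_3^2+a_4^2$ with $\gcd(a_1,a_2,a_3,a_4)=1$. Exactly as in the proof of Lemma~\ref{lemma:embeddingsplitlattice}, sending a generator of $\bZ\ell$ to $a_1e_1+\dots+a_4e_4$ gives a \emph{primitive} embedding $\bZ\ell\hookrightarrow A_1(-1)^{\oplus 4}$ (replace $A_1(-1)$ by $A_1$ if the sign convention makes $\ell$ positive), hence a primitive embedding $\Lambda_d\hookrightarrow\Lambda_\#\colonequals U^{\oplus 2}\oplus A_1(-1)^{\oplus 4}$ into an even lattice of signature $(2,6)$ not depending on $d$.

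Now apply Lemma~\ref{lemma:irrationality-general} with $\Lambda=\Lambda_d$, $\Gamma=\widetilde{\uO}^+(\Lambda_d)$, and $\Lambda_\#$ as above, for which $[\widetilde{\uO}^+(\Lambda_d):\Gamma\cap\widetilde{\uO}^+(\Lambda_d)]=1$; with $m'=6$ this produces a constant $C$, depending only on $\Lambda_\#$, with
\[
    \irr(\widetilde{\cP}^+_{\Lambda_d})
    \le C\cdot|\uO(D(\Lambda_d))|\cdot|\operatorname{disc}(\Lambda_d)|^{4}
    = C\cdot|\uO(D(\Lambda_d))|\cdot(2d)^{4}.
\]
It remains to absorb $|\uO(D(\Lambda_d))|$ into a factor $d^\varepsilon$: the discriminant group $D(\Lambda_d)=D(\bZ\ell)$ is cyclic of order $2d$, and preserving its bilinear form forces an isometry to be multiplication by some $a$ with $a^2\equiv 1\pmod{2d}$, so $|\uO(D(\Lambda_d))|\le 2^{\rho(2d)+1}\le 2^{\rho(d)+2}$, which is $O(d^\varepsilon)$ for every $\varepsilon>0$ by the divisor bound recalled in the proof of Theorem~\ref{thm:irratK3n} (cf. \cite[Equation~(6.4)]{ABL23}). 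Combining these inequalities gives $\irr(\mathcal{A}_{(1,d)})\le 2\,\irr(\widetilde{\cP}^+_{\Lambda_d})\le C_\varepsilon\cdot d^{4+\varepsilon}$, as claimed.

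The main obstacle is the construction of $\Lambda_\#$ of the minimal signature $(2,6)$. A naive embedding $\bZ\ell\hookrightarrow A_1(-1)^{\oplus 5}$ obtained from $d-1=b_1^2+\dots+b_4^2$ is primitive for \emph{every} $d$, but only gives $m'=7$ and hence the weaker exponent $\tfrac92$; it is precisely the hypothesis $8\nmid d$ (a consequence of square-freeness), via the four-coprime-squares theorem, that allows the reduction to $m'=6$. Everything else is a routine application of the lemmas of Section~\ref{sect:orthShimuraIrr} together with the standard bound on the number of square roots of unity modulo $2d$.
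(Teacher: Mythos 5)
Your proof is correct and follows essentially the same route as the paper: reduce via Proposition~\ref{prop:absurfGH} to $\widetilde{\cP}^+_{\Lambda_d}$, use square-freeness to write $d$ as a sum of four coprime squares and thereby primitively embed $\Lambda_d$ into the fixed lattice $U^{\oplus 2}\oplus A_1(-1)^{\oplus 4}$ of signature $(2,6)$, and then apply Lemma~\ref{lemma:irrationality-general} together with the $O(d^{\varepsilon})$ bound on $|\uO(D(\Lambda_d))|$ for the cyclic discriminant group of order $2d$. The only difference is that you spell out the count of square roots of unity explicitly, which the paper leaves implicit.
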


\begin{proof}
From Proposition~\ref{prop:absurfGH}, we see that
$
    \irr(\mathcal{A}_{(1,d)})
    \leq 2\irr(\widetilde{\cP}_{\Lambda_d}).
$
Let us bound the latter using Lemma~\ref{lemma:irrationality-general}. Recall that $\Lambda_d \cong U^{\oplus 2}\oplus\bZ(-2d)$. Since $d$ is square-free, it is not divisible by $8$, and thus can be written as a sum of four coprime squares $d=\sum_{i=1}^4 a_i^2$. This induces a primitive embedding of $\Lambda_d$ into $\Lambda_{\#} = U^{\oplus 2} \oplus A_1(-1)^{\oplus 4}$. As $\widetilde{\uO}^+(\Lambda_d)$ is always extendable, Lemma~\ref{lemma:irrationality-general} shows that there exists a constant $C>0$ such that
\[
    \irr(\widetilde{\mathcal{P}}^+_{\Lambda_d})
    \leq C \cdot |\uO(D(\Lambda_d))| \cdot |\operatorname{disc} \Lambda_d|^4
    = 2^4\cdot C \cdot |\uO(D(\Lambda_d))| \cdot d^4.
\]
Because $D(\Lambda_d)\cong \mathbb{Z}/2d\mathbb{Z}$, we see that for every $\varepsilon>0$, there exists a constant $C'_{\varepsilon}>0$ such that we have $|\uO(D(\Lambda_d))| \leq C'_{\varepsilon}\cdot d^{\varepsilon}$. Putting all the inequalities together yields the desired bound.
\end{proof}

In order to extend this result to the general case, let us recall a geometric construction due to O'Grady; see \cite{OGr89}. Let $(A,L) \in \mathcal{A}_{(1,n^2k)}$ be an abelian surface with a polarization of type~$(1,n^2k)$. Then the natural map $\phi_L\colon A \to A^{\vee} = \operatorname{Pic}^0(A)$ satisfies $\operatorname{Ker}\phi_L \cong \left(\bZ/n^2k\bZ\right)^2$, and the subgroup of $n$-torsion points $J\colonequals\operatorname{Ker}\phi_L[n]$ is isomorphic to $\left(\mathbb{Z}/n\mathbb{Z}\right)^2$. Now consider the quotient $f\colon A \to B=A/J$. Then $B$ has a natural polarization $M$ of type $(1,k)$ which satisfies $f^*M=L$. This yields a map
\begin{equation}
\label{eq:mapogrady}
    \mathcal{A}_{(1,n^2k)}
    \longrightarrow \mathcal{A}_{(1,k)}.
\end{equation}

\begin{lemma}
\label{lemma:boundmapogrady}
For any $\varepsilon>0$, there exists a constant $C_{\varepsilon}$ independent of $n$ and $k$ such that the degree of this map is at most $C_{\varepsilon}\cdot (nk)^{4+\varepsilon}$. 
\end{lemma}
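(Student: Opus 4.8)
The plan is to bound the degree of \eqref{eq:mapogrady} by the cardinality of a general fibre, to realize that fibre injectively inside the set of order-$n^2$ subgroups of the $n$-torsion group $B[n]\cong(\mathbb{Z}/n\mathbb{Z})^4$ of a fixed abelian surface, and then to count such subgroups by a routine number-theoretic estimate of the kind already used in this paper.

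Fix a general $(B,M)\in\mathcal{A}_{(1,k)}$; the degree of \eqref{eq:mapogrady} equals the number of isomorphism classes of pairs $(A,L)\in\mathcal{A}_{(1,n^2k)}$ mapping to $(B,M)$. For such a pair, O'Grady's construction provides the quotient isogeny $f\colon A\to B=A/J$, where $J=\ker\phi_L[n]\cong(\mathbb{Z}/n\mathbb{Z})^2$ and $f^*M=L$. Let $\hat f\colon B\to A$ be the isogeny with $\hat f\circ f=[n]_A$; then $f\circ\hat f=[n]_B$, the subgroup $H\colonequals\ker\hat f=f(A[n])\subset B[n]$ has order $n^2$, and $\hat f$ factors as $B\xrightarrow{q_H}B/H\xrightarrow{\ \sim\ }A$. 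Writing $f_H\colon B/H\to B$ for the unique isogeny with $f_H\circ q_H=[n]_B$ (uniqueness because $q_H$ is surjective, and both $f\circ\hat f$ and $f_H\circ q_H$ equal $[n]_B$), the identification $B/H\xrightarrow{\sim}A$ carries $f_H$ to $f$, hence $f_H^*M$ to $f^*M=L$; that is, $(A,L)\cong(B/H,f_H^*M)$ as polarized abelian surfaces. Therefore the assignment $(A,L)\mapsto H$ is a well-defined \emph{injection} of the fibre of \eqref{eq:mapogrady} over $(B,M)$ into
\[
    \mathcal{S}_n\colonequals\bigl\{\,H\subset B[n]\;:\;|H|=n^2\,\bigr\}.
\]

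It remains to bound $|\mathcal{S}_n|$. Since $B[n]\cong(\mathbb{Z}/n\mathbb{Z})^4$ and, for coprime $n_1,n_2$, the subgroups of $(\mathbb{Z}/n_1n_2\mathbb{Z})^4$ are products of subgroups of the two factors, the function $n\mapsto|\mathcal{S}_n|$ is multiplicative. For a prime power $n=p^e$ the number of subgroups of $(\mathbb{Z}/p^e\mathbb{Z})^4$ is $O(p^{4e})$: when $e=1$ this is immediate from the Gaussian binomial coefficients (the number of $2$-planes in $\mathbb{F}_p^4$ is $(p^2+1)(p^2+p+1)$), and the general case follows from the classical formula for the number of subgroups of a finite abelian $p$-group. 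Hence $|\mathcal{S}_n|\leq C^{\rho(n)}n^4$ for an absolute constant $C$, with $\rho(n)$ the number of distinct primes dividing $n$; since $C^{\rho(n)}=O(n^{\varepsilon})$ for every $\varepsilon>0$ — exactly as $2^{\rho(n)}=O(n^{\varepsilon})$ is used in \cite[Equation~(6.4)]{ABL23} — we obtain $|\mathcal{S}_n|=O(n^{4+\varepsilon})$, so the degree of \eqref{eq:mapogrady} is at most $C_{\varepsilon}\cdot n^{4+\varepsilon}\leq C_{\varepsilon}\cdot(nk)^{4+\varepsilon}$.

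The step demanding the most care is the injection in the second paragraph: one must verify that $H=\ker\hat f$ is genuinely determined by the isomorphism class of $(A,L)$, and conversely that $(A,L)$ is recovered up to isomorphism as $(B/H,f_H^*M)$ — in particular that the two sources of ambiguity, namely replacing $\hat f$ by $\hat f\circ\alpha$ for $\alpha\in\Aut(A)$ and replacing $M$ by one of the $\Hom(\ker f,\mathbb{Q}/\mathbb{Z})$-many polarizations on $B$ with the same pull-back to $A$, alter $(A,L)$ only by an isomorphism. Once the fibre sits inside $\mathcal{S}_n$, the rest is standard; a sharper bound of order $O(n^{3+\varepsilon})$ would follow by restricting to those $H$ isotropic for the Weil pairing attached to $M$, but this refinement is unnecessary.
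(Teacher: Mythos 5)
Your proof is correct, and it reconstructs $(A,L)$ from a different piece of data than the paper does. The paper follows O'Grady \cite{OGr89} and tracks $H=f(\operatorname{Ker}\phi_L)\cong(\mathbb{Z}/nk\mathbb{Z})^2\subset B[nk]$, recovering $A$ as the dual of $B^\vee\to B^\vee/\phi_M(H)$; it then counts subgroups of $(\mathbb{Z}/nk\mathbb{Z})^4$ isomorphic to $(\mathbb{Z}/nk\mathbb{Z})^2$ via \cite{But87,Bir35}, which is where the bound $C_\varepsilon(nk)^{4+\varepsilon}$ comes from. You instead track the kernel $H=\ker\hat f=f(A[n])\subset B[n]$ of the complementary isogeny $\hat f$ with $\hat f\circ f=[n]_A$, and your isogeny bookkeeping (showing $f_H$ corresponds to $f$ under $B/H\cong A$, hence $(A,L)\cong(B/H,f_H^*M)$ using $f^*M=L$) is accurate. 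This buys a genuinely sharper, $k$-independent bound $C_\varepsilon\cdot n^{4+\varepsilon}$, which of course implies the stated $C_\varepsilon\cdot(nk)^{4+\varepsilon}$ and, if propagated through Theorem~\ref{thm:boundabsurf} with $n=(d/k(d))^{1/2}$, would even improve the universal exponent there. Two small points of presentation rather than substance: (i) you do not need (and have not quite shown) that $(A,L)\mapsto H$ is a \emph{well-defined injection} — a different identification of $A/J$ with $B$ may replace $H$ by its image under an automorphism of $(B,M)$; what your second paragraph actually proves, and all that the count requires, is that every class in the fibre lies in the image of $H\mapsto[(B/H,f_H^*M)]$, so the fibre has at most $|\mathcal{S}_n|$ elements; (ii) since $J$ need not be a direct summand of $A[n]$, the subgroup $H$ need not be isomorphic to $(\mathbb{Z}/n\mathbb{Z})^2$, so counting \emph{all} order-$n^2$ subgroups, as you do, is the right move; the estimate $O(p^{4e})$ for these in $(\mathbb{Z}/p^e\mathbb{Z})^4$ should be justified by the Birkhoff--Butler formula (the dominant type is $(\mathbb{Z}/p^e\mathbb{Z})^2$, counted exactly as in the paper, and the finitely many other types per bounded deficit contribute lower-order terms; any stray polynomial factor in $e$ is in any case absorbed into $n^\varepsilon$), after which multiplicativity and $C^{\rho(n)}=O(n^\varepsilon)$ finish the argument exactly as in the paper.
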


\begin{proof}
Let $\phi_M\colon B \rightarrow B^{\vee}$ be the isogeny induced by $M$. As in the proof of \cite[Proposition~5.1]{OGr89}, the image $H = f(\operatorname{Ker} \phi_L)$ is a subgroup of $B[nk]$ isomorphic to $\left(\mathbb{Z}/nk\mathbb{Z}\right)^2$ such that $\phi_M(H)$ equals the kernel of $f^*\colon B^{\vee} \to A^{\vee}$. In particular, since $f\colon A \to B$ is the dual of $f^*\colon B^{\vee} \to A^{\vee} = B^{\vee}/\phi_M(H)$, we see that we can reconstruct $A$ from $\phi_M$ and $H$. This shows that the cardinality of the fiber of \eqref{eq:mapogrady} over $(B,M)$ is bounded by the number of subgroups $H \subset B[nk]$ isomorphic to $(\mathbb{Z}/nk\mathbb{Z})^2$. Since $B[nk] \cong (\mathbb{Z}/nk\mathbb{Z})^4$, this number can be explicitly computed: If $nk$ is a prime power $p^r$, then \cite[Equation (1)]{But87} and \cite[Theorem 8.1]{Bir35} show that it equals $p^{4r-4}\cdot (p^2+1)\cdot (p^2+p+1) \leq C \cdot p^{4r}$ for a certain $C>1$ independent of $p$. If we write $nk=p_1^{r_1}\cdots p_s^{r_s}$ for $s=\rho(nk)$ as a product of pairwise distinct primes, then the Chinese remainder theorem implies that the number of subgroups is bounded by $C^{\rho(nk)}\cdot (nk)^4$, and $C^{\rho(nk)} = {\rm O}((nk)^{\varepsilon})$ for all $\varepsilon>0$. Indeed, if $C\leq 2^N$ for fixed $N$, then $C^{\rho(nk)}\leq \left(2^{\rho(nk)}\right)^N={\rm O}((nk)^{N\cdot\varepsilon})={\rm O}((nk)^{\varepsilon})$. 
\end{proof}

With this we can get a universal bound. For a positive integer $d$ factored into primes as $d=p_1^{2h_1+r_1} \cdots p_s^{2h_s+r_s}$, where $h_i\geq 0$ and $r_i\in \{0,1\}$, we then  define 
\[
k(d)=p_1^{r_1}\cdots p_s^{r_s}.
\]

\begin{thm}
\label{thm:boundabsurf}
For any $\varepsilon>0$, there exists a constant $C_{\varepsilon}>0$ independent of $d$ such that
\[
    \irr(\mathcal{A}_{(1,d)})
    \leq C_{\varepsilon}\cdot \left(d^{2+\varepsilon}\cdot k(d)^{6+\varepsilon}\right). 
\]
In particular, since $k(d)\leq d$, a universal polynomial bound is given by
\[
    \irr(\mathcal{A}_{(1,d)})
    \leq C_{\varepsilon}\cdot d^{8+2\varepsilon}.
\]
\end{thm}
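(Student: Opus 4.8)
The strategy is to reduce the general case to the square-free case of Proposition~\ref{prop:boundabeliansquarefree} by means of the O'Grady map \eqref{eq:mapogrady}, using Lemma~\ref{lemma:boundmapogrady} to control its degree.

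First I would write $d = n^2 k$ with $k = k(d) = p_1^{r_1}\cdots p_s^{r_s}$ square-free and $n = p_1^{h_1}\cdots p_s^{h_s}$, so that $n \le \sqrt{d}$ and $n^2 = d/k$. Taking this $k$ in place of the letter ``$k$'' in \eqref{eq:mapogrady}, the O'Grady construction produces a morphism $\mathcal{A}_{(1,d)} = \mathcal{A}_{(1,n^2 k)} \to \mathcal{A}_{(1,k)}$; it is dominant, since both sides are irreducible threefolds and, by the reconstruction argument in the proof of Lemma~\ref{lemma:boundmapogrady}, its fibres are finite and nonempty. Composing with a projection $\mathcal{A}_{(1,k)} \dashrightarrow \mathbb{P}^3$ of degree $\irr(\mathcal{A}_{(1,k)})$ and using the inequality $\irr(X) \le \deg(f)\cdot\irr(Y)$ for a dominant generically finite map $f\colon X \dashrightarrow Y$, one obtains
\[
    \irr(\mathcal{A}_{(1,d)}) \;\le\; \deg\!\bigl(\mathcal{A}_{(1,n^2 k)} \to \mathcal{A}_{(1,k)}\bigr)\cdot \irr(\mathcal{A}_{(1,k)}).
\]

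Next I would bound the two factors, applying the previous results with parameter $\varepsilon/2$. Lemma~\ref{lemma:boundmapogrady} bounds the degree of the O'Grady map by a constant times $(nk)^{4+\varepsilon/2}$, and, since $k$ is square-free, Proposition~\ref{prop:boundabeliansquarefree} gives $\irr(\mathcal{A}_{(1,k)}) \le C'\cdot k^{4+\varepsilon/2}$. Multiplying and using $n^2 = d/k$,
\[
    (nk)^{4+\varepsilon/2}\cdot k^{4+\varepsilon/2}
    = n^{4+\varepsilon/2}\,k^{8+\varepsilon}
    = \frac{d^2}{k^2}\,n^{\varepsilon/2}\,k^{8+\varepsilon}
    = d^2\,n^{\varepsilon/2}\,k^{6+\varepsilon}.
\]
Since $n \le \sqrt{d}$ we have $n^{\varepsilon/2} \le d^{\varepsilon/4}$, so this is at most $d^{2+\varepsilon}\,k(d)^{6+\varepsilon}$, which is the first inequality; the ``in particular'' statement follows at once from $k(d) \le d$.

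The argument involves no serious obstacle once Proposition~\ref{prop:boundabeliansquarefree} and Lemma~\ref{lemma:boundmapogrady} are in hand. The only points requiring care are the choice of the factorization $d = n^2 k$ with $k$ square-free, the verification that the O'Grady map is genuinely dominant (so that submultiplicativity of $\irr$ applies), and the bookkeeping of the error exponents: one must feed $\varepsilon/2$ rather than $\varepsilon$ into the two inputs so that the surplus factor $n^{\varepsilon/2}$ can be absorbed via $n \le \sqrt{d}$.
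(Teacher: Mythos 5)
Your proposal is correct and follows essentially the same route as the paper: applying the O'Grady map $\mathcal{A}_{(1,d)}\to\mathcal{A}_{(1,k(d))}$ with degree controlled by Lemma~\ref{lemma:boundmapogrady}, invoking Proposition~\ref{prop:boundabeliansquarefree} for the square-free target, and multiplying the two bounds. Your bookkeeping with $\varepsilon/2$ and the absorption of $n^{\varepsilon/2}$ via $n\leq\sqrt{d}$ matches (indeed slightly sharpens) the exponent accounting in the paper's proof.
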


\begin{proof}
Applying Lemma~\ref{lemma:boundmapogrady} with $n=\left(d/k(d)\right)^{\frac{1}{2}}$ and $k=k(d)$, we get a map
\[
    \mathcal{A}_{(1,d)}
    \longrightarrow
    \mathcal{A}_{(1,k(d))}
\]
whose degree is at most $C'_{\varepsilon}\cdot\left(n\cdot k(d)\right)^{4+\varepsilon}=C'_{\varepsilon}\cdot (d\cdot k(d))^{2+\frac{1}{2}\varepsilon}$.
Since $k(d)$ is square-free, Proposition~\ref{prop:boundabeliansquarefree} gives for each $\varepsilon>0$ a constant $C''_{\varepsilon}>0$ such that
\[
    \irr\left(\mathcal{A}_{(1,k(d))}\right)
    \leq C''_{\varepsilon} \cdot k(d)^{4+\varepsilon}.
\]
Hence, with $C_\varepsilon=C'_{\varepsilon}\cdot C''_{\varepsilon}$, we see that for all $\varepsilon>0$, there exists a constant $C_{\varepsilon}>0$ independent of $d$ such that 
\[
    \irr\left(\mathcal{A}_{(1,d)} \right)
    \leq C_{\varepsilon} \cdot \left(d^{2+\frac{1}{2}\varepsilon}\cdot k(d)^{6+\frac{3}{2}\varepsilon}\right)\leq C_{\varepsilon}\cdot d^{8+2\varepsilon}.
\]
This completes the proof.
\end{proof}

One can improve the above bound for infinitely many series of $d$ as in the case of K3 surfaces treated in \cite{ABL23}. One instance of this is when $k(d)$ is fixed, \textit{e.g.}~when $d$ is a square. We can also consider quadratic series on $d$. Consider positive integers $a,b,c\in \mathbb{Z}$ such that $4ac-b^2<0$. This defines a negative-definite rank two lattice
\begin{equation}
\label{eq:latticeranktwo}
    Q(-1)\colonequals\begin{pmatrix}
        -2a & b \\
        b & -2c
    \end{pmatrix}.
\end{equation}

\begin{prop}
\label{sec7:thm:d2}
Assume $d$ is a perfect square. Then for any $\varepsilon>0$, there exists a constant $C_{\varepsilon}>0$ independent of $d$ such that
\[    \irr\left(\mathcal{A}_{(1,d)}\right)
    \leq C_{\varepsilon}\cdot d^{2+\varepsilon}.
\]
Further, fix $a,b,c\in\mathbb{Z}$ which satisfy $4ac-b^2 < 0$. Suppose that $d$ is square-free and that it is of the form $d = aX^2-bXY+cY^2$. Then for any $\varepsilon>0$, there exists a constant $C_\varepsilon = C_\varepsilon(a,b,c) > 0$ such that
\[
    \irr\left(\mathcal{A}_{(1,d)}\right)
    \leq C_\varepsilon\cdot d^{2+\varepsilon}.
\]
\end{prop}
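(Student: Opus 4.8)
The plan is to treat the two assertions separately, reducing each to results already established in the paper. For $d$ a perfect square the first assertion is essentially a corollary: every prime divides $d$ to an even power, so $k(d)=1$ in the notation preceding Theorem~\ref{thm:boundabsurf}, and that theorem gives at once $\irr(\mathcal{A}_{(1,d)})\le C_\varepsilon\cdot d^{2+\varepsilon}\cdot k(d)^{6+\varepsilon}=C_\varepsilon\cdot d^{2+\varepsilon}$. Unwound, this is the O'Grady covering of Lemma~\ref{lemma:boundmapogrady} applied with $n=\sqrt d$ and $k=1$, landing in $\mathcal{A}_{(1,1)}$, which is rational (and in any case has bounded degree of irrationality by Proposition~\ref{prop:boundabeliansquarefree} with $d=1$).

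For the quadratic series I would run the strategy of Proposition~\ref{prop:boundabeliansquarefree}, but threaded through a \emph{fixed} ambient lattice attached to $(a,b,c)$. Since $d$ is square-free, Proposition~\ref{prop:absurfGH} reduces the problem to bounding $\irr(\widetilde{\cP}^+_{\Lambda_d})$, where $\Lambda_d\cong U^{\oplus 2}\oplus\mathbb{Z}(-2d)$. Let $Q(-1)$ be the negative-definite rank-two lattice of \eqref{eq:latticeranktwo} with basis $\{z_1,z_2\}$; the vector $v\colonequals Xz_1+Yz_2$ has $(v,v)=-2(aX^2-bXY+cY^2)=-2d$, and $\gcd(X,Y)=1$ because $d$ is square-free, so $v$ is primitive. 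Hence $1\mapsto v$ defines a primitive embedding $\mathbb{Z}(-2d)\hookrightarrow Q(-1)$, and thereby a primitive embedding $\Lambda_d\hookrightarrow\Lambda_\#\colonequals U^{\oplus 2}\oplus Q(-1)$ into a lattice of signature $(2,4)$ depending only on $(a,b,c)$. As $\widetilde{\uO}^+(\Lambda_d)$ is extendable along any embedding (Lemma~\ref{lem:O+tilde}), this yields a morphism $\widetilde{\psi}^+\colon\widetilde{\cP}^+_{\Lambda_d}\to\cP^+_{\Lambda_\#}$ of degree at most $|\uO(D(\Lambda_d))|$ onto its image (Lemma~\ref{lem:bound-Hdg-partners}).

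The key step is then to bound $\irr$ of that image. With $w$ a generator of the rank-one lattice $\Lambda_d^{\perp\Lambda_\#}=v^{\perp Q(-1)}$ one has $\Omega(\Lambda_d)=w^\perp\subset\Omega(\Lambda_\#)$, so the image of $\widetilde{\psi}^+$ is Zariski-dense in a component of a Heegner divisor $Y_{t,\gamma}\subset\cP^+_{\Lambda_\#}$ with $t=\tfrac12|(w,w)|$. From the standard identity $2d\cdot 2t=|\operatorname{disc}(Q(-1))|\cdot[Q(-1):\mathbb{Z}v\oplus\mathbb{Z}w]^2$ for the orthogonal decomposition $\mathbb{Z}v\oplus\mathbb{Z}w\subset Q(-1)$, together with the fact that this index divides $|(v,v)|=2d$ (a one-line computation in a basis of $Q(-1)$ completing $v$), I would deduce the linear bound $t\le d\cdot|\operatorname{disc}(Q(-1))|=O_{a,b,c}(d)$. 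Lemma~\ref{lem:irr_Heegner} applied to $\Lambda_\#$ (so $m=4$) then gives $\irr(\overline{Y}_{t,\gamma})\le C\cdot t^{2}=O_{a,b,c}(d^{2})$; since $D(\Lambda_d)\cong\mathbb{Z}/2d\mathbb{Z}$ forces $|\uO(D(\Lambda_d))|=O(d^{\varepsilon})$ exactly as in the proof of Proposition~\ref{prop:boundabeliansquarefree}, combining all of this yields $\irr(\widetilde{\cP}^+_{\Lambda_d})=O_{a,b,c,\varepsilon}(d^{2+\varepsilon})$, and the degree-two map of Proposition~\ref{prop:absurfGH} delivers the stated bound.

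I expect the main difficulty to lie precisely in this last step. One must \emph{not} feed the embedding $\Lambda_d\hookrightarrow\Lambda_\#$ into Lemma~\ref{lemma:irrationality-general}, since with $m'=4$ that route produces only the weaker exponent $|\operatorname{disc}(\Lambda_d)|^{1+m'/2}\sim d^{3}$; the gain to $d^{2}$ comes from passing to the image Heegner divisor and invoking the sharper Lemma~\ref{lem:irr_Heegner}. The crux of the accompanying bookkeeping is the linear growth $t=O(d)$ of the Heegner parameter, which holds precisely because $\mathbb{Z}v$ is primitive inside the \emph{fixed} rank-two lattice $Q(-1)$, forcing the relevant index to divide $2d$. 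A minor additional point is keeping track of signs so that $\Lambda_\#$ genuinely has signature $(2,4)$ and the Heegner-divisor formalism of Lemma~\ref{lem:irr_Heegner} applies.
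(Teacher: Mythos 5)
Your argument is correct, and on the second bound it deviates from the paper's own (very terse) proof in a way that is worth recording. For the perfect-square case you do exactly what the paper does: $k(d)=1$ and Theorem~\ref{thm:boundabsurf}. For the quadratic series the paper simply says that the primitive embedding $\mathbb{Z}(-2d)\hookrightarrow Q(-1)$ gives a primitive embedding $\Lambda_d\hookrightarrow U^{\oplus 2}\oplus Q(-1)$ and that one then ``proceeds as in Proposition~\ref{prop:boundabeliansquarefree}''; taken literally, that recipe runs through Lemma~\ref{lemma:irrationality-general}, whose exponent is $1+\tfrac{m'}{2}=3$ for the signature-$(2,4)$ lattice $\Lambda_{\#}=U^{\oplus 2}\oplus Q(-1)$, and hence only yields $d^{3+\varepsilon}$. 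You correctly identify this and instead exploit that the orthogonal complement of $\Lambda_d$ in $\Lambda_{\#}$ has rank one, so the image of $\widetilde{\psi}^+$ sits inside a Heegner divisor $Y_{t,\gamma}$ with $t=\tfrac12|(w,w)|$; your discriminant computation $2d\cdot 2t=|\operatorname{disc}Q(-1)|\cdot[Q(-1):\mathbb{Z}v\oplus\mathbb{Z}w]^2$ together with the divisibility of the index by $2d$ gives $t=O_{a,b,c}(d)$, and Lemma~\ref{lem:irr_Heegner} (exponent $\tfrac{m'}{2}=2$, legitimate since $m'=4\geq 3$) then produces the stated $d^{2+\varepsilon}$ after the $O(d^{\varepsilon})$ bound on $|\uO(D(\Lambda_d))|$ from Lemma~\ref{lem:bound-Hdg-partners} and the degree-two cover of Proposition~\ref{prop:absurfGH}. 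This is precisely the refinement implicitly needed by the paper (the same issue occurs in the K3 analogue, where the claimed $d^{10+\varepsilon}$ also requires the Heegner-divisor estimate rather than Lemma~\ref{lemma:irrationality-general}, which would give $d^{11+\varepsilon}$), so your version is, if anything, the more complete argument. Two further points in your favour: you justify primitivity of $v=Xz_1+Yz_2$ by noting that square-freeness of $d$ forces $\gcd(X,Y)=1$, a detail the paper leaves implicit; and you flag the sign issue in \eqref{eq:latticeranktwo}, where the stated condition $4ac-b^2<0$ is inconsistent with $Q(-1)$ being negative definite (the intended hypothesis is that $aX^2-bXY+cY^2$ is positive definite, i.e.\ $a>0$ and $b^2-4ac<0$), which is needed for $\Lambda_{\#}$ to have signature $(2,4)$.
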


\begin{proof}
The first bound follows from Theorem~\ref{thm:boundabsurf} with $k(d)=1$. For the second bound, the assumption on $d$ implies that there exist a primitive embedding of $\mathbb{Z}(-2d)$ into $Q(-1)$ and consequently a primitive embedding of $U^{\oplus 2}\oplus\mathbb{Z}(-2d)$ into $U^{\oplus 2}\oplus Q(-1)$. Then the proof proceeds as in Proposition~\ref{prop:boundabeliansquarefree}.
\end{proof}

\subsection{Revisiting the case of K3 surfaces}

Let $\cF_{2d}$ be the moduli space of K3 surfaces with a primitive polarization of degree~$2d$. Recall that $\cF_{2d}$ is birational to
$
    \widetilde{\mathcal{P}}^+_{\Lambda_d}
    = \Omega(\Lambda_d)/\widetilde{\uO}^+(\Lambda_d)
$, 
where $\Lambda_d$ is the lattice
\[
    \Lambda_d\colonequals
    U^{\oplus 2}\oplus E_8(-1)^{\oplus 2}\oplus\mathbb{Z}\ell,
    \qquad (\ell,\ell) = -2d.
\]
Let $Q(-1)$ be a negative-definite lattice. Then every primitive embedding of $\mathbb{Z}(-2d)$ into $Q(-1)$ induces a primitive embedding of $\Lambda_d$ into $U^{\oplus 2}\oplus E_8(-1)^{\oplus 2} \oplus Q(-1)$. Since the stable orthogonal group $\widetilde{\uO}^+(\Lambda_d)$ is extendable with respect to such an embedding, we can use Lemma~\ref{lemma:irrationality-general} to bound $\irr(\cF_{2d})$. The general bound \cite[Theorem 1.1]{ABL23} is obtained by taking $Q(-1)=E_{8}(-1)$. The bounds \cite[Theorem 1.2]{ABL23} are obtained by choosing $Q(-1)=A_2(-1)$ in the case of associated special cubic fourfolds, $Q(-1)=A_1(-1)^{\oplus 2}$ in the case of associated special Gushel--Mukai fourfolds, and
\[
    Q(-1) = \begin{pmatrix}
        -2 & 0 \\
        0 & -2n
    \end{pmatrix}
    \quad\text{or}\quad
    \begin{pmatrix}
        -2 & -1 \\
        -1 & -\frac{n+1}{2}
    \end{pmatrix}
\]
in the case of associated special hyperk\"{a}hler fourfolds. We can get bounds for various series of $d$ by choosing suitable $Q(-1)$.

\begin{prop}
For any $\varepsilon>0$, there exists a constant $C_{\varepsilon}>0$ such that for any $d$ not divisible by $8$, one has
$$
    \irr\left(\mathcal{F}_{2d}\right)
    \leq C_{\varepsilon}\cdot d^{12+\varepsilon}.
$$
If\, $d$ is not congruent to $0, 4, 7\mod 8$, then
\[
    \irr\left(\mathcal{F}_{2d}\right)
    \leq C_{\varepsilon}\cdot d^{\frac{23}{2}+\varepsilon}.
\]
If\, $a,b,c$ are positive integers such that $4ac-b^2<0$, then
\[
{\rm{irr}}\left(\mathcal{F}_{2d}\right)\leq C_{\varepsilon}\cdot d^{10+\varepsilon}
\]
for all $d$ of the form $d=aX^2-bXY+cY^2$ with $X$ and $Y$ coprime.
\end{prop}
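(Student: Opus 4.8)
Since $\mathcal{F}_{2d}$ is birational to $\widetilde{\mathcal{P}}^+_{\Lambda_d}=\Omega(\Lambda_d)/\widetilde{\uO}^+(\Lambda_d)$ with $\Lambda_d=U^{\oplus 2}\oplus E_8(-1)^{\oplus 2}\oplus\mathbb{Z}\ell$ and $(\ell,\ell)=-2d$, so that $|\operatorname{disc}(\Lambda_d)|=2d$ and $D(\Lambda_d)\cong\mathbb{Z}/2d\mathbb{Z}$, the plan is to run the argument of this subsection with three different auxiliary lattices. In each case I would produce a negative-definite lattice $Q(-1)$ with a primitive embedding $\mathbb{Z}(-2d)\hookrightarrow Q(-1)$; this induces a primitive embedding $\Lambda_d\hookrightarrow\Lambda_{\#}\colonequals U^{\oplus 2}\oplus E_8(-1)^{\oplus 2}\oplus Q(-1)$, with respect to which $\widetilde{\uO}^+(\Lambda_d)$ is extendable by Lemma~\ref{lem:O+tilde}. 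Since $\Gamma=\widetilde{\uO}^+(\Lambda_d)$ here, the index $[\widetilde{\uO}^+(\Lambda_d):\Gamma\cap\widetilde{\uO}^+(\Lambda_d)]$ is $1$; and since $D(\Lambda_d)$ is cyclic of order $2d$, one has $|\uO(D(\Lambda_d))|=O(d^\varepsilon)$ for every $\varepsilon>0$ by the divisor bound, exactly as in \cite{ABL23}. Thus everything reduces to choosing $Q(-1)$ and reading off $m'=\operatorname{rk}(\Lambda_{\#})-2$.

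For the first bound I would take $Q(-1)=A_1(-1)^{\oplus 4}$: by \cite[Theorem~1]{CH07} one can write $d=a_1^2+a_2^2+a_3^2+a_4^2$ with the $a_i$ coprime precisely when $8\nmid d$, and then $\sum_i a_i e_i$ is a primitive vector of square $-2d$, yielding the desired primitive embedding. Here $\operatorname{rk}(\Lambda_{\#})=24$, so $m'=22$, and the second inequality of Lemma~\ref{lemma:irrationality-general} gives $\irr(\mathcal{F}_{2d})\leq C\cdot|\uO(D(\Lambda_d))|\cdot(2d)^{1+m'/2}$, hence $\irr(\mathcal{F}_{2d})\leq C_\varepsilon\cdot d^{12+\varepsilon}$. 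For the second bound I would take $Q(-1)=A_1(-1)^{\oplus 3}$ instead: a primitive embedding $\mathbb{Z}(-2d)\hookrightarrow A_1(-1)^{\oplus 3}$ is the same as a primitive representation $d=a_1^2+a_2^2+a_3^2$, which exists exactly when $d\not\equiv 0,4,7\mod 8$. Now $\operatorname{rk}(\Lambda_{\#})=23$, so $m'=21$, and the same estimate gives $\irr(\mathcal{F}_{2d})\leq C_\varepsilon\cdot d^{23/2+\varepsilon}$.

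For the third bound fix $a,b,c$ as in the statement and let $Q(-1)$ be the rank two lattice of \eqref{eq:latticeranktwo}, with basis $\{e_1,e_2\}$; under the hypothesis on $a,b,c$ it is negative definite, and if $d=aX^2-bXY+cY^2$ with $\gcd(X,Y)=1$ then $\ell\colonequals Xe_1+Ye_2$ is a primitive vector with $(\ell,\ell)=-2d$, so we obtain a primitive embedding $\Lambda_d\hookrightarrow\Lambda_{\#}$ with $\operatorname{rk}(\Lambda_{\#})=22$, i.e.\ $m'=20$. The key new feature is that the orthogonal complement $\Lambda_d^{\perp\Lambda_{\#}}$, being $\ell^{\perp Q(-1)}$, now has rank one; running the proof of Lemma~\ref{lemma:irrationality-general}, the image $\psi_\Gamma(\widetilde{\mathcal{P}}^+_{\Lambda_d})$ is a Zariski-open subset of a component of a Heegner divisor $\overline{Y}_{t,\gamma'}\subset\mathcal{P}^+_{\Lambda_{\#}}$, where $t$ is the absolute value of the $(1\times 1)$ moment matrix of a basis of $\Lambda_d^{\perp\Lambda_{\#}}$; by the estimate in that same proof, $t\leq|\operatorname{disc}(\Lambda_{\#})|\cdot|\operatorname{disc}(\Lambda_d)|=2d\cdot|\operatorname{disc}(Q(-1))|=O(d)$. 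Applying Lemma~\ref{lem:irr_Heegner} (valid since $m'=20\geq 3$) in place of Lemma~\ref{lem:irr_Kudla} gives $\irr(\psi_\Gamma(\widetilde{\mathcal{P}}^+_{\Lambda_d}))\leq C\cdot t^{m'/2}$, which is $O(d^{10})$ because $t=O(d)$ and $m'/2=10$; multiplying by $\deg(\psi_\Gamma)\leq|\uO(D(\Lambda_d))|=O(d^\varepsilon)$ (Lemma~\ref{lem:bound-Hdg-partners-gamma}, with trivial index factor) yields $\irr(\mathcal{F}_{2d})\leq C_\varepsilon\cdot d^{10+\varepsilon}$, with $C_\varepsilon$ allowed to depend on $a,b,c$.

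The arithmetic inputs---which integers are primitive sums of three or four squares---are classical, and the first two bounds are direct applications of Lemma~\ref{lemma:irrationality-general}. The one step needing a little care is the third: the improvement from the $d^{11+\varepsilon}$ that the second inequality of Lemma~\ref{lemma:irrationality-general} would literally give, down to $d^{10+\varepsilon}$, comes entirely from noticing that the complement is one-dimensional, so the relevant special cycle is a Heegner divisor and Lemma~\ref{lem:irr_Heegner} applies with exponent $m'/2$ rather than $1+m'/2$. One must therefore verify that $\psi_\Gamma(\widetilde{\mathcal{P}}^+_{\Lambda_d})$ really lies in a component of $\overline{Y}_{t,\gamma'}$---immediate from $\Omega(\Lambda_d)=w^{\perp}\subset\Omega(\Lambda_{\#})$ for $w$ spanning $\Lambda_d^{\perp\Lambda_{\#}}$---and that $t$ stays $O(d)$, which is where fixing $\Lambda_{\#}$, equivalently the boundedness of $|\operatorname{disc}(Q(-1))|$ in terms of $a,b,c$, is used.
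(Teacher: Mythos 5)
Your proof is correct. For the first two bounds it is exactly the paper's argument: the same primitive embeddings $\mathbb{Z}(-2d)\hookrightarrow A_1(-1)^{\oplus 4}$ (four coprime squares for $8\nmid d$, via \cite{CH07}) and $\mathbb{Z}(-2d)\hookrightarrow A_1(-1)^{\oplus 3}$ (three coprime squares for $d\not\equiv 0,4,7\pmod 8$; the paper invokes \cite{HK82} and allows finitely many small exceptions, absorbed into $C_\varepsilon$), followed by the second inequality of Lemma~\ref{lemma:irrationality-general} with $m'=22$, resp.\ $m'=21$, and $|\uO(D(\Lambda_d))|=O(d^\varepsilon)$. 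The genuine divergence is the third bound: the paper's proof cites only Lemma~\ref{lemma:irrationality-general}, which with $m'=20$ literally yields $|\operatorname{disc}(\Lambda_d)|^{1+\frac{m'}{2}}=O(d^{11})$, i.e.\ only $d^{11+\varepsilon}$; the stated exponent $10$ requires precisely the refinement you spell out, namely that $\Lambda_d^{\perp\Lambda_{\#}}$ has rank one, so $\psi_\Gamma(\widetilde{\cP}^+_{\Lambda_d})$ lies in a Heegner divisor and Lemma~\ref{lem:irr_Heegner} applies with exponent $\frac{m'}{2}=10$ in place of the exponent $1+\frac{m'}{2}$ of Lemma~\ref{lem:irr_Kudla}, combined with $t=O(d)$ (constant depending on $a,b,c$) and $\deg(\psi_\Gamma)\leq|\uO(D(\Lambda_d))|=O(d^{\varepsilon})$. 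On this point your write-up is more careful than the paper's one-line citation and is the argument actually needed (it is also how the analogous bounds are obtained in \cite{ABL23}). Two harmless caveats: the hypothesis ``$4ac-b^2<0$'' in the statement is a sign slip (for $aX^2-bXY+cY^2$ to represent positive $d$ and for the lattice \eqref{eq:latticeranktwo} to be negative definite one needs $b^2-4ac<0$), so your remark that $Q(-1)$ is negative definite under the stated hypothesis should be read with the corrected sign; and the primitive three-squares criterion may be invoked only for $d$ large, with the finitely many exceptional $d$ absorbed into $C_\varepsilon$.
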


\begin{proof}
Since $d$ is not divisible by $8$, it can be written as a sum of four coprime squares $d = \sum_{i=1}^4a_i^2$. This induces a primitive embedding of $\mathbb{Z}(-2d)$ into $A_1(-1)^{\oplus 4}$, 
and one can get the first bound from Lemma~\ref{lemma:irrationality-general}. For the second bound, it follows from \cite[Korollar 1]{HK82} (see also \cite[Section 2]{BBBF23}) that if $d$ is large enough and $d\not\equiv 0,4,7\mod 8$, then it can be expressed as the sum of three coprime squares. In particular, there exists a primitive embedding $\mathbb{Z}\ell\hookrightarrow A_1(-1)^{\oplus 3}$ which leads to the second bound using Lemma~\ref{lemma:irrationality-general}. In the last case, we have a primitive embedding of $\mathbb{Z}(2d)$ into the lattice $Q(-1)$ defined by \eqref{eq:latticeranktwo}, which yields the desired bound from Lemma~\ref{lemma:irrationality-general}.
\end{proof}

\begin{rmk}
When $a=b=1$, the condition of being able to write $d$ in the form $X^2-XY+Y^2$ with $\gcd(X,Y)=1$ is equivalent to the condition $2d\equiv 0,2\mod 6$ and $2d$ not divisible by $4$, $9$ or any odd prime $p\equiv 2\mod 3$. This is the standard necessary and sufficient condition for a polarized K3 surface of degree $2d$ to admit an associated labeled special cubic fourfold; see \cite{Has00} or \cite[Theorem~23]{Has16}.
\end{rmk}


\newcommand{\etalchar}[1]{$^{#1}$}

\end{document}